\title{The rank filtration via a filtered bar construction}
\author{G. Z. Arone}
\address{Department of Mathematics, Stockholm University, 106 91 Stockholm, Sweden}
\email{gregory.arone@math.su.se}
\author{K. Lesh}
\address{Department of Mathematics, Union College, Schenectady, NY 12308 USA}
\email{leshk@union.edu}
\urladdr{http://www.math.union.edu/~leshk}
\numberwithin{equation}{section}
\theoremstyle{plain} 
\newtheorem{theorem}[equation]{Theorem}
\newtheorem{lemma}[equation]{Lemma}
\newtheorem{proposition}[equation]{Proposition}
\newtheorem{corollary}[equation]{Corollary}
\theoremstyle{definition}
\newtheorem{definition}[equation]{Definition}
\newtheorem{diag}[equation]{Diagram}
\newtheorem{example}[equation]{Example}
\newtheorem{remark}[equation]{Remark}
\newtheorem*{remark*}{Remark}
\theoremstyle{plain} 
\newcommand{\defining}[1]{{\emph{#1}}}
\newcommand{\integers}{{\mathbb{Z}}}
\newcommand{\naturals}{{\mathbb{N}}}
\newcommand{\complexes}{{\mathbb{C}}}
\newcommand{\Upk}{U\kern-2.5pt\left(p^{k}\right)}
\newcommand{\Uof}[1]{U\kern-2pt\left(#1\right)}
\newcommand{\Sphere}{{\bf{S}}}
\newcommand{\Ktheory}[1]{{\bf k}{#1}}
\newcommand{\stabilization}[1]{\partial_1 #1}
\newcommand{\realization}[1]{\left|\largestrut #1\right|}
\newcommand{\Inj}{\operatorname{Inj}}
\newcommand{\Gr}{\operatorname{Gr}}
\newcommand{\definedas}{\coloneqq}
\newcommand{\filter}[2]{\Rcal_{#1}#2}
\DeclareMathOperator{\id}{id}
\newcommand{\FilteredBK}[2]{\Ccal\filter{#1}{\Ccal^\bullet #2}}
\newcommand{\Hugestrut}{\mbox{{\Huge\strut}}}
\newcommand{\LARGEstrut}{\mbox{{\LARGE\strut}}}
\newcommand{\Largestrut}{\mbox{{\Large\strut}}}
\newcommand{\largestrut}{\mbox{{\large\strut}}}
\newcommand{\smallstrut}{\mbox{{\small\strut}}}
\newcommand{\collapsemap}[1]{p_{#1}}
\def\doCal#1{%
\ifx#1\doAllCalEnd\def\doAllCal{\relax}\else%
 \expandafter\edef\csname#1cal\endcsname{{\noexpand\mathcal #1}}\fi}
\def\doAllCal#1{\doCal#1\doAllCal}
\def\doBar#1{%
\ifx#1\doAllBarEnd\def\doAllBar{\relax}\else%
 \expandafter\edef\csname#1bar\endcsname{{\noexpand\overline{#1}}}\fi}
\def\doAllBar#1{\doBar#1\doAllBar}
\def\doWiggle#1{%
\ifx#1\doAllWiggleEnd\def\doAllWiggle{\relax}\else%
 \expandafter\edef\csname#1wiggle\endcsname{{\noexpand\tilde{#1}}}\fi}
\def\doAllWiggle#1{\doWiggle#1\doAllWiggle}
\def\doBold#1{%
\ifx#1\doAllBoldEnd\def\doAllBold{\relax}\else%
 \expandafter\edef\csname#1bold\endcsname{{\noexpand\bf #1}}\fi}
\def\doAllBold#1{\doBold#1\doAllBold}
\newcommand{\whatever}{\text{--}}
\newcommand{\colim}{\operatorname{colim}\,}
\newcommand{\hocolim}{\operatorname{hocolim}\,}
\newcommand{\Id}{\operatorname{Id}}
\DeclareMathOperator{\Sp}{Sp}
\newcommand{\filtered}{\Top^{\mathrm{fi}}}
\newcommand{\Top}{\operatorname{Top}_*}
\newcommand{\MacLane}{Mac\,Lane }
\newcommand{\pdash}{$p$\kern1.3pt-}
\newcommand{\X}{X}   
\DeclareMathOperator{\Barr}{Bar}
\newcommand{\finiteset}[1]{\finitesetUnpointed{#1}_{+}}
\newcommand{\finitesetUnpointed}[1]{{\bf{#1}}}
\newcommand{\xlongrightarrow}[1]{\xrightarrow{\ #1\ }}
\newcommand\restr[2]{{
  \left.\kern-\nulldelimiterspace 
  #1 
  \vphantom{\big|} 
  \right|_{#2} 
  }}
\newcommand{\thatdiagram}{
\begin{gathered}
\xymatrix{
A_{\bullet}
     \ar[r]^-{f}
     \ar[d]^-{i}
  &  B_{\bullet}\ar[d]_-{j}\\
\Ecal_{\bullet}\ar[r]^-{p}_-{\simeq} \ar@/^1.0pc/[u]^-{r}
  &  \Dcal_{\bullet}. \ar@/_1.0pc/[u]_-{q}
}
\end{gathered}
}
\newtheorem*{CommutingSquareLemma}{Lemma~\ref{lemma: commuting squares}}
\newcommand{\CommutingSquareLemmaText}{
The outer square in \eqref{diagram: square of auxiliaries} is strictly commutative:
$f\circ r=q\circ p$. The inner square in \eqref{diagram: square of auxiliaries}
commutes up to homotopy, $j\circ f\simeq p\circ i$.
}
\newtheorem*{Linearity Theorem}{Theorem~\ref{theorem: linearity theorem}}
\newcommand{\LinearityTheoremText}
{For each $m$, the $\Gamma$-space $\realization{\FilteredBK{m}{\Fcal}}$ is special.}
\newtheorem*{BarFiltrationTheorem}{Theorem~\ref{theorem: bar filtration}}
\newcommand{\BarFiltrationTheoremText}
{Assume that $\Fcal$ is an augmented special $\Gamma$-space, and that the $\Ccal$ is the monad associated to an $E_\infty$-operad acting on $\Fcal$. 
Then there is an equivalence of spectra
\[
\stabilization{\!\left(\filter{m}\Fcal\right)}
\simeq
\Ktheory{\realization{\FilteredBK{m}{\Fcal}}}.
\]
That is, the sequence of spectra in the stable rank filtration
of~$\Fcal$ 
is equivalent to the infinite delooping of the following sequence of special $\Gamma$-spaces:
\[
*
\simeq \realization{\FilteredBK{0}{\Fcal}}
\to \realization{\FilteredBK{1}{\Fcal}}
\to \cdots
\to \realization{\FilteredBK{m}{\Fcal}}
\to \cdots
\realization{\Ccal \Ccal^\bullet \Fcal}
\simeq {\Fcal}.
\]
}
\begin{document}

\begin{abstract}    
Suppose $\Fcal$ is a special $\Gamma$-space equipped with a natural transformation $\Fcal\to\Sp^\infty$. Segal's infinite loop space machine~\cite{Segal-Categories} associates with $\Fcal$ a spectrum, denoted~$\Ktheory{\Fcal}$, equipped with a map $\Ktheory{\Fcal}\to H\integers$. In our previous work~\cite{Arone-Lesh-Fundamenta} we constructed a filtration of $\Ktheory{\Fcal}$ by a sequence of spectra, which we called the
\defining{stable rank filtration of~$\Fcal$}. In this paper we give a new construction of the stable rank filtration. The new construction is combinatorial in nature and avoids the process of stabilization. In particular,
we construct a sequence of special $\Gamma$-spaces whose group completion yields the stable rank filtration.
\end{abstract}

\maketitle


\section{Introduction}
\label{section: introduction}

Recall that a
\defining{$\Gamma$-space} is a pointed functor $\Fcal\colon \Gamma\to \Top$ from pointed finite sets to pointed topological spaces. Such a functor $\Fcal$ is said to be \defining{special} if the natural ``linearization" map $\Fcal(X\vee Y)\to \Fcal X\times \Fcal Y$ is a weak equivalence (and \defining{very special} if $\pi_{0}\Fcal$ takes values in groups as well). If $\Fcal$ is special, then Segal's group completion 
associates with $\Fcal$ an infinite loop space~\cite{Segal-Categories}. We denote the spectrum corresponding to the group completion of $\Fcal$ by~$\Ktheory{\Fcal}$, and call it the \defining{infinite delooping of~$\Fcal$}. It is worth noting that the process of group completion is not reversible: there is no canonical way to ``un-group-complete'' a very special $\Gamma$-space.

In a previous work~\cite{Arone-Lesh-Fundamenta}, we constructed a sequence of spectra filtering the spectrum $\Ktheory{\Fcal}$ when $\Fcal$ is an ``augmented" special $\Gamma$-space (see below). We called the sequence the
\defining{stable rank filtration}\footnote{
%
Actually, in~\cite{Arone-Lesh-Fundamenta} we used the term
``modified stable rank filtration," to distinguish our filtration from the related ``stable rank filtration" previously constructed by Rognes \cite{Rognes-Topology}. For simplicity of terminology in our current discussion, however, we will drop the word ``modified."
}
of~$\Fcal$. In this paper we give a new
construction of that stable rank filtration.
A notable feature of the new construction is that it produces
more than just a filtration of the spectrum $\Ktheory{\Fcal}$
by a sequence of spectra, as in our previous work.
Instead, it produces a filtration
of the actual special $\Gamma$-space, $\Fcal$,
by a sequence of $\Gamma$-spaces that are themselves \emph{also} special;
applying the processes of group completion and infinite delooping to our new sequence yields the stable rank filtration.

To state our results more precisely, we observe that a prototypical example of a special $\Gamma$-space is the infinite symmetric product functor, $\Sp^\infty$, which is equipped with a natural filtration by the (not special) $\Gamma$-spaces~$\Sp^{m}$.
Following our earlier work (e.g.,~\cite{Arone-Lesh-Fundamenta}), we say that a $\Gamma$-space $\Fcal$ is \defining{augmented} if there is a natural transformation $\epsilon\colon\Fcal\to \Sp^\infty$ with the property that the preimage of the basepoint of $\Sp^\infty X$ contains only the basepoint of~$\Fcal X$.
In this case, pulling back~$\epsilon$ over the
inclusions $\Sp^{m}\hookrightarrow\Sp^{\infty}$ gives a filtration of $\Fcal$ by (in general not special) $\Gamma$-subspaces. We call this
the \defining{unstable rank filtration of~$\Fcal$},
and denote it~$\filter{m}\Fcal$:
\begin{equation}\label{eq: unstable rank filtration}
\begin{gathered}
\xymatrix{
\filter{0}\Fcal\ \ar[r]\ar[d]
    & \ \filter{1}\Fcal\ \ar[r]\ar[d]
    & \ \cdots\  \ar[r]
    & \ \filter{m}\Fcal\ \ar[r]\ar[d]
    & \ \cdots\ \ar[r]
    & \ \Fcal\ar[d]^{\epsilon}
\\
\Sp^{0}\ \ar[r]
    & \ \Sp^{1}\ \ar[r]
    & \ \cdots\  \ar[r]
    & \ \Sp^{m}\ \ar[r]
    & \ \cdots\  \ar[r]
    & \ \Sp^{\infty}.
}
\end{gathered}
\end{equation}

It is also possible to associate a spectrum to a $\Gamma$-space that is not special.
Namely, given a $\Gamma$-space~$\Fcal$, define the spectrum $\stabilization{\Fcal}$
\begin{equation}   \label{eq: stabilize a Gamma space}
\stabilization{\Fcal}\definedas
\left\{
\Fcal(S^0), \Fcal\left(S^1\right), \ldots,
          \Fcal\left(S^n\right), \ldots
\right\}.
\end{equation}
We call the spectrum $\stabilization{\Fcal}$ the \defining{stabilization of~$\Fcal$}. The notation reflects the fact that the stabilization is the same as the first Goodwillie derivative of $\Fcal$~\cite{Goodwillie-Calculus-I}.
If $\Fcal$ is actually a special $\Gamma$-space, then the two ways of obtaining a spectrum are equivalent:  $\stabilization{\Fcal}\simeq\Ktheory{\Fcal}$
\cite[Theorem~4.2]{Bousfield-Friedlander}. Hence the process of stabilization can be regarded as a generalization of Segal's construction that associates a spectrum to a special $\Gamma$-space.

When $\Fcal$ is an augmented $\Gamma$-space, we can consider the $\Gamma$-spaces in the unstable rank filtration of~$\Fcal$ \eqref{eq: unstable rank filtration} and look at their stabilizations
\eqref{eq: stabilize a Gamma space} to obtain a filtration of
the stabilization~$\stabilization{\Fcal}$ of~$\Fcal$. We use the term
\defining{stable rank filtration of~$\Fcal$}
to refer to the sequence of spectra
\begin{equation}\label{eq: rank}
\stabilization{\!\left(\filter{0}\Fcal\right)}
     \rightarrow \stabilization{\!\left(\filter{1}\Fcal\right)}
     \rightarrow \cdots
     \rightarrow \stabilization{\!\left(\filter{m}\Fcal\right)}
     \rightarrow \cdots
     \rightarrow \stabilization{\Fcal}.
\end{equation}
In previous work, \cite{Arone-Lesh-Fundamenta}, we studied in detail
the rank filtration of two particular $\Gamma$-spaces.
The first is $\Sp^\infty$, where the stable rank filtration of $\stabilization{\Sp^\infty}\simeq H\integers$ is equivalent to
the filtration of $H\integers \simeq \Sp^\infty(\Sphere)$ by the spectra $\Sp^m(\Sphere)$
(here $\Sphere$ indicates the sphere spectrum).
Our other example was the augmented $\Gamma$-space that represents connective complex $K$-theory, which we denote by $\Fcal_U$ (see Example~\ref{example: U}).
Its stabilization is $\stabilization{\Fcal_U}\simeq bu$,
the connective $K$-theory spectrum, and
the stable rank filtration of $\Fcal_U$
turns out to be equivalent to Rognes's
stable rank filtration of~$bu$ \cite[Proposition 4.10]{Arone-Lesh-Fundamenta}.

For a general $\Gamma$-space~$\Gcal$, there is no reason for the infinite loop space
$\Omega^{\infty}\left(\stabilization{\Gcal}\right)$ to be the group completion of a conceptually illuminating special $\Gamma$-space.
For example, an augmented special $\Gamma$-space~$\Fcal$
does not seem to have an obvious natural model for the infinite loop space $\Omega^\infty\left(\stabilization{\filter{m}\Fcal}\right)$ as the group completion of a special $\Gamma$-space
related to~$\Fcal$. In particular, there does not seem to be 
an obvious reason \emph{a priori} for the map of spectra 
\[
\stabilization{\filter{m}\Fcal}\to \stabilization{\Fcal},
\]
which is the $m$-th stage of the stable rank filtration~\eqref{eq: rank}, to be induced by a map from some special $\Gamma$-space to~$\Fcal$. One of the contributions of this paper is to provide just such a model for the stable rank filtration.
The construction occurs on the level of $\Gamma$-spaces, not spectra; it is thus
entirely combinatorial, avoiding the process of stabilization that is at the heart of both our previous construction and that of Rognes. More precisely,
we construct a sequence of special $\Gamma$-spaces
\begin{equation}\label{eq: uncompleted}
*\simeq
\realization{\FilteredBK{0}{\Fcal}}
\to
\realization{\FilteredBK{1}{\Fcal}}
\to \cdots \to
\realization{\FilteredBK{m}{\Fcal}}
\to \cdots \to
\realization{\Ccal \Ccal^\bullet \Fcal}
{\simeq} \, {\Fcal}
\end{equation}
such that \eqref{eq: rank} is equivalent to the infinite delooping of~\eqref{eq: uncompleted}.
The argument has a similar flavor to our main construction
in~\cite{Arone-Lesh-Crelle}. Namely, we construct a large simplicial space
that models an augmented special $\Gamma$-space~$\Fcal$ in a way that is friendly to the rank filtration, and then we carefully filter the larger construction.  While the current work is
in service of a long-term program, we hope that it will also be of some independent interest.

Before describing the construction of
\eqref{eq: uncompleted}
in more detail, we say a few words about our intended application. By work of Kuhn (and Priddy) it is known that the ``chain complex of spectra'' associated with the symmetric powers filtration of $H\integers$ is exact~\cite{Kuhn-Whitehead, Kuhn-Priddy}. In our own previous work \cite{Arone-Lesh-Crelle, Arone-Lesh-Fundamenta}, we conjectured that
an analogous statement holds for the stable rank filtration of $bu$.
Because Kuhn's theorem was originally conjectured by G.~Whitehead, we dubbed our conjecture ``the $bu$-analogue of the Whitehead conjecture.''
This paper is part of a program to give a unified proof of Kuhn's theorem and its $bu$-analogue.
The new construction of the rank filtration in this paper will play a key role in our proof. Broadly speaking, the plan of our program is to use the filtration of the
bar construction $\Ccal\Ccal^\bullet \Fcal$ by
$\largestrut\FilteredBK{m}{\Fcal}$ as a model for the stable rank filtration, and to leverage the interplay between the
rank filtration and the filtration of  $\Ccal\Ccal^\bullet \Fcal$ by simplicial skeleta.

To describe the main construction of this paper, consider the following setup. Suppose that $\Fcal$ is an augmented (Definition~\ref{definition: augmented}) special $\Gamma$-space, and $\Ocal$ is an $E_\infty$-operad that acts on~$\Fcal$. We prove in Proposition~\ref{proposition: augmentation preserved} that the action of $\Ocal$ must preserve the augmentation.
Associated to $\Ocal$ we have a monad~$\Ccal$ 
(see Example~\ref{example: C}), which
has a left action on $\Fcal$ in the sense that there is a natural transformation $\mu_{\Fcal}\colon \Ccal\Fcal\to \Fcal$ satisfying appropriate associativity and unit axioms. The action $\Ccal$ also respects the augmentation, in a sense made precise in Lemma~\ref{lemma: augmentation}.

We form the two-sided bar construction $\Barr(\Ccal, \Ccal, \Fcal)$, a simplicial augmented $\Gamma$-space that is given in $k$-th simplicial degree by~$\Ccal^{k+1}\Fcal$, and is simplicially augmented by $\mu_{\Fcal}\colon\Ccal\Fcal\rightarrow\Fcal$:
\[
\Fcal
\xleftarrow{\ \mu_{\Fcal}\ }
\left\{\Largestrut
\xymatrix{
\Ccal \Fcal\
   \ar[r]
&\ \Ccal^{2}\Fcal\
         \ar@<-.6ex>[l]
         \ar@<.6ex>[l]
         \ar@<-.5ex>[r]
         \ar@<.5ex>[r]
&\ \Ccal^{3}\Fcal\
         \ar@<-1.0ex>[l]
         \ar[l]
         \ar@<1.0ex>[l]
\ldots
\quad\Ccal^{k+1}\Fcal\quad
\ldots
}
\right\}.
\]
The geometric realization of $\Barr(\Ccal, \Ccal, \Fcal)$ is again a $\Gamma$-space. The unit map
for $\Ccal$ gives a natural section
$\Fcal\rightarrow\Ccal\Fcal$ of the simplicial augmentation~$\mu_{\Fcal}$, and it
induces an ``extra degeneracy" throughout the bar construction. As a result,
the augmentation $\mu_{\Fcal}$ induces not only a map, but an actual equivalence
$\realization{
\Barr(\Ccal, \Ccal, \Fcal)}
       \xrightarrow{\ \simeq\ }{\Fcal}$.
One may view $\Barr(\Ccal, \Ccal, \Fcal)$ as defining a resolution of $\Fcal$ by free
$\Ccal$-modules.

Both the unstable rank filtration \eqref{eq: unstable rank filtration} and the stable rank filtration \eqref{eq: rank} of~$\Fcal$ can be obtained as (different!) simplicial filtrations of $\Barr(\Ccal, \Ccal, \Fcal)$. First, we discuss the \emph{unstable} filtration, which is much easier than the stable one. (See Section~\ref{section: bar} for details.) For each~$k$, the (non-special)
$\Gamma$-space~$\Ccal^k\Fcal$ has a natural augmentation
$\Ccal^k\Fcal\to  \Sp^\infty$.
It follows that the augmented $\Gamma$-space $\Ccal^k\Fcal$ is filtered by
$\Gamma$-subspaces~$\filter{m}\Ccal^k\Fcal$. Because the action of $\Ccal$ on $\Fcal$ respects the augmentation, the augmentation of $\Ccal^k\Fcal$ 
is compatible with face and degeneracy maps in the bar construction. It follows that we can apply $\filter{m}$ levelwise to~$\Barr(\Ccal, \Ccal, \Fcal)$.
We obtain, for each $m$, a simplicial $\Gamma$-subspace, which we
denote~$\filter{m}\Barr(\Ccal, \Ccal, \Fcal)$. Explicitly, it has the following form:
\begin{equation*}
\filter{m}\Fcal
\longleftarrow
{\underbrace{
\Hugestrut \left\{\Largestrut
\xymatrix{
\filter{m}\Ccal\Fcal
   \ar[r]
&\ \filter{m}\Ccal^{2}\Fcal \
         \ar@<-.6ex>[l]
         \ar@<.6ex>[l]
         \ar@<-.5ex>[r]
         \ar@<.5ex>[r]
&\ \filter{m}\Ccal^{3}\Fcal \
         \ar@<-1.0ex>[l]
         \ar[l]
         \ar@<1.0ex>[l]
\ldots
\quad\filter{m}\Ccal^{k+1}\Fcal\quad
\ldots
}
\right\}.
}_{\filter{m}\Barr(\Ccal, \Ccal, \Fcal)}}
\end{equation*}
Letting $m$ vary, we obtain a sequence of simplicial $\Gamma$-spaces
\begin{equation*}
\filter{1}\Barr(\Ccal, \Ccal, \Fcal)
\to \cdots \to
\filter{m}\Barr(\Ccal, \Ccal, \Fcal)
\to \cdots \to
\Ccal \Ccal^\bullet \Fcal=\Barr(\Ccal, \Ccal, \Fcal).
\end{equation*}
We can apply the geometric realization functor to each simplicial $\Gamma$-space, and it is not hard to show that the result is equivalent to the unstable rank filtration of $\Fcal$ (see Proposition~\ref{proposition: tautology}).

For the \emph{stable} rank filtration, we consider another, more sneaky way to filter $\Ccal\Ccal^\bullet \Fcal=\Barr(\Ccal, \Ccal, \Fcal)$ by simplicial subobjects: we filter $\Ccal\Ccal^k\Fcal$ by $\Ccal\filter{m}\Ccal^k\Fcal$ for each~$k$.
Although it is not immediately obvious, the face and degeneracy maps in $\Barr(\Ccal, \Ccal, \Fcal)$ turn out to respect this filtration
(see Proposition~\ref{proposition: structure maps respect filtration}).
In other words, for every $m$ there is a simplicial $\Gamma$-space,
denoted~$\FilteredBK{m}{\Fcal}$,
whose $k$-th space is~$\Ccal\filter{m}\Ccal^{k}\Fcal$:
\small
\begin{equation}\label{eq: intro filtered}
\FilteredBK{m}{\Fcal}=
\left\{
\xymatrix{
\Ccal \filter{m}\Fcal
   \ar[r]
&\ \Ccal\filter{m}\Ccal\Fcal \
         \ar@<-.6ex>[l]
         \ar@<.6ex>[l]
         \ar@<-.5ex>[r]
         \ar@<.5ex>[r]
&\ \Ccal\filter{m}\Ccal^{2}\Fcal \
         \ar@<-1.0ex>[l]
         \ar[l]
         \ar@<1.0ex>[l]
\ldots
\quad\Ccal\filter{m}\Ccal^{k}\Fcal\quad
\ldots
}
\right\}
.
\end{equation}
\normalsize
It is worth noting that unlike
$\Ccal\Ccal^{\bullet}\Fcal$ and $\filter{m}\Ccal\Ccal^{\bullet}\Fcal$ above, the simplicial
$\Gamma$-space in~\eqref{eq: intro filtered} has no obvious augmentation
or extra degeneracy.

Letting $m$ vary in~\eqref{eq: intro filtered}, we obtain a sequence of simplicial $\Gamma$-spaces
\small{
\begin{equation}\label{eq: bar filtration}
*\simeq
\FilteredBK{0}{\Fcal}
\to
\FilteredBK{1}{\Fcal}
\to \cdots \to
\FilteredBK{m}{\Fcal}
\to \cdots \to
\Ccal \Ccal^\bullet \Fcal=\Barr(\Ccal, \Ccal, \Fcal).
\end{equation}}
Our main theorem (Theorem~\ref{theorem: bar filtration} below) says that the corresponding sequence of geometric realizations gives a model for the stable rank filtration.

An important technical result needed in our work is that for each~$m$,
the geometric realization $\realization{\FilteredBK{m}{\Fcal}}$ is a special $\Gamma$-space.
It turns out that for most $m$ and $k$, the $\Gamma$-space $\Ccal\filter{m}{\Ccal^k\Fcal}$ at each simplicial level is
\emph{not} special (see\eqref{eq: nonexample}). Therefore, there is no obvious reason for the geometric realization
$\realization{\FilteredBK{m}{\Fcal}}$
to be a special $\Gamma$-space. Nevertheless, it turns out that it is. This is the key technical result of the paper.

\begin{Linearity Theorem}
\LinearityTheoremText
\end{Linearity Theorem}

The proof is surprisingly intricate, and Sections~\ref{section: strategy}--\ref{section: maps out of E} are devoted to the details.
But once we have the theorem, 
it is fairly straightforward to prove our main result, which is that filtration~\eqref{eq: bar filtration} induces the stable rank filtration \eqref{eq: rank} via geometric realization and
infinite delooping.

\begin{BarFiltrationTheorem}
\BarFiltrationTheoremText
\end{BarFiltrationTheorem}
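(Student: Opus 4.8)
The plan is to deduce the theorem from the Linearity Theorem (Theorem~\ref{theorem: linearity theorem}) together with the Bousfield--Friedlander comparison. Since $\realization{\FilteredBK{m}{\Fcal}}$ is special by Theorem~\ref{theorem: linearity theorem}, \cite[Theorem~4.2]{Bousfield-Friedlander} gives $\Ktheory{\realization{\FilteredBK{m}{\Fcal}}}\simeq\stabilization{\realization{\FilteredBK{m}{\Fcal}}}$, so it suffices to produce an equivalence $\stabilization{(\filter{m}\Fcal)}\simeq\stabilization{\realization{\FilteredBK{m}{\Fcal}}}$, natural in $m$. Both sides arise by realizing simplicial $\Gamma$-subspaces of $\Barr(\Ccal,\Ccal,\Fcal)$: on the one hand $\FilteredBK{m}{\Fcal}$, with $k$-th space $\Ccal\filter{m}{\Ccal^k\Fcal}$ (Proposition~\ref{proposition: structure maps respect filtration}); on the other hand $\filter{m}\Barr(\Ccal,\Ccal,\Fcal)$, with $k$-th space $\filter{m}{(\Ccal\Ccal^k\Fcal)}$, whose realization is $\filter{m}\Fcal$ (Proposition~\ref{proposition: tautology}). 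Because the operad action respects the augmentation (Lemma~\ref{lemma: augmentation}), the rank of an element $[c;y_1,\dots,y_n]$ of $\Ccal\Ccal^k\Fcal$ is $\sum_i\operatorname{rk}(y_i)$; hence total rank $\le m$ forces every coordinate $y_i$ to have rank $\le m$, so $\filter{m}{(\Ccal\Ccal^k\Fcal)}\subseteq\Ccal\filter{m}{\Ccal^k\Fcal}$. As both are simplicial subobjects of $\Barr(\Ccal,\Ccal,\Fcal)$, these inclusions assemble into a map of simplicial $\Gamma$-spaces
\[
\iota\colon\ \filter{m}\Barr(\Ccal,\Ccal,\Fcal)\ \longrightarrow\ \FilteredBK{m}{\Fcal},
\]
natural in $m$.

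The heart of the argument is that $\iota$ becomes an equivalence after stabilization, in each simplicial degree separately. In degree $k$, factor the two inclusions through the common copy of $\filter{m}{\Ccal^k\Fcal}$: the monad unit gives $s\colon\filter{m}{\Ccal^k\Fcal}\to\filter{m}{(\Ccal\Ccal^k\Fcal)}$ and $s'\colon\filter{m}{\Ccal^k\Fcal}\to\Ccal\filter{m}{\Ccal^k\Fcal}$ with $\iota\circ s=s'$, so it is enough to see that $\stabilization{s}$ and $\stabilization{s'}$ are equivalences. For this I would use the standard filtration of the free $E_\infty$-object $\Ccal$ by number of operad inputs, whose $n$-th graded piece has the form $\Ocal(n)_+\wedge_{\Sigma_n}(\whatever)^{\wedge n}$. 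For $\stabilization{s'}$, the graded pieces are $\Ocal(n)_+\wedge_{\Sigma_n}(\filter{m}{\Ccal^k\Fcal})^{\wedge n}$; for $n\ge 2$ these are built from $n$-fold reduced smash powers of a reduced $\Gamma$-space, so evaluated on $S^j$ they are at least $(nj-1)$-connected and hence have contractible stabilization. As stabilization is exact and commutes with the sequential colimit over $n$, it follows that $\stabilization{s'}$ is an equivalence. For $\stabilization{s}$ one intersects the length filtration of $\Ccal\Ccal^k\Fcal$ with $\filter{m}$; the $n$-th graded piece becomes $\Ocal(n)_+\wedge_{\Sigma_n}P_n$, where $P_n\subseteq(\Ccal^k\Fcal)^{\wedge n}$ is the locus of total rank $\le m$. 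Although $P_n$ is only a subspace of the smash power, it is a finite union of the subobjects $\filter{r_1}{\Ccal^k\Fcal}\wedge\cdots\wedge\filter{r_n}{\Ccal^k\Fcal}$ over multi-ranks with $\sum_i r_i\le m$, and both these subobjects and their pairwise intersections are $(nj-1)$-connected on $S^j$; a Mayer--Vietoris argument then shows $P_n$ itself is $(nj-1)$-connected on $S^j$, so $\Ocal(n)_+\wedge_{\Sigma_n}P_n$ has contractible stabilization for $n\ge 2$ (the filtration even stabilizes after finitely many steps, since $\Ccal^k\Fcal$ is augmented). Hence $\stabilization{s}$ is an equivalence, and therefore so is $\stabilization{\iota}$ in each simplicial degree.

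The rest is formal. Stabilization commutes with geometric realization of a simplicial $\Gamma$-space (both evaluation on $S^j$ and the spectrum structure maps commute with realization), and a degreewise equivalence of (good) simplicial spectra realizes to an equivalence; applying this to $\stabilization{\iota}$, and using Propositions~\ref{proposition: tautology} and~\ref{proposition: structure maps respect filtration}, yields
\[
\stabilization{(\filter{m}\Fcal)}\ \simeq\ \stabilization{\realization{\filter{m}\Barr(\Ccal,\Ccal,\Fcal)}}\ \xrightarrow{\ \simeq\ }\ \stabilization{\realization{\FilteredBK{m}{\Fcal}}}\ \simeq\ \Ktheory{\realization{\FilteredBK{m}{\Fcal}}},
\]
the last equivalence by Theorem~\ref{theorem: linearity theorem} and \cite[Theorem~4.2]{Bousfield-Friedlander}. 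Every map in this chain is induced by an inclusion of simplicial subobjects of $\Barr(\Ccal,\Ccal,\Fcal)$, so the equivalences are compatible as $m$ varies; at the top they recover the extra-degeneracy equivalence $\realization{\Ccal\Ccal^\bullet\Fcal}\simeq\Fcal$ and $\stabilization{\Fcal}\simeq\Ktheory{\Fcal}$ for the special $\Gamma$-space $\Fcal$, and the case $m=0$ is immediate since $\filter{0}\Fcal\simeq*\simeq\realization{\FilteredBK{0}{\Fcal}}$. This identifies the displayed sequence of special $\Gamma$-spaces and its infinite delooping with the stable rank filtration.

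The step I expect to be the main obstacle is the degreewise equivalence $\stabilization{\iota}$ of the second paragraph: one must control the subspace $P_n$ of total rank $\le m$ inside the smash power $(\Ccal^k\Fcal)^{\wedge n}$ and verify that the rank filtration interacts with the length filtration of $\Ccal$ as cleanly as claimed --- this is where the hypotheses (the augmentation and the $E_\infty$-action) are really used. Everything else is bookkeeping with geometric realizations and appeals to Theorem~\ref{theorem: linearity theorem}, Propositions~\ref{proposition: tautology} and~\ref{proposition: structure maps respect filtration}, and \cite{Bousfield-Friedlander}.
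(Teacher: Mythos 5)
Your proposal is correct and follows essentially the same route as the paper: reduce via Theorem~\ref{theorem: linearity theorem} and \cite[Theorem~4.2]{Bousfield-Friedlander}, identify $\filter{m}\Fcal$ with $\realization{\filter{m}(\Ccal\Ccal^\bullet\Fcal)}$ by the extra-degeneracy argument (Proposition~\ref{proposition: tautology}), map that into $\realization{\FilteredBK{m}{\Fcal}}$ by the inclusion of Lemma~\ref{lemma: switch}, and check that this map is a degreewise stable equivalence by two-out-of-three through the unit maps $\filter{m}\Ccal^k\Fcal\to\filter{m}(\Ccal\Ccal^k\Fcal)$ and $\filter{m}\Ccal^k\Fcal\to\Ccal\filter{m}\Ccal^k\Fcal$ --- this is precisely the paper's Lemmas~\ref{lemma: filtered Freudenthal} and~\ref{lemma: stable equivalence}. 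The only real divergence is local: where the paper establishes the filtered stable-equivalence ingredient by quoting \cite[Lemma 3.13]{Arone-Lesh-Fundamenta} (that $\bigvee_n\filter{m}\Fcal\to\filter{m}(\Fcal^{\times n})$ is a stable equivalence) together with the Lydakis connectivity lemma, you re-derive it from scratch via the operadic length filtration of $\Ccal$, a connectivity estimate for the rank-truncated smash powers $P_n$, and a Mayer--Vietoris argument over the multi-rank strata. That buys self-containedness at the cost of some point-set care (prolongation of $\filter{m}$ to spheres, goodness of the unions), which is exactly the bookkeeping the citation is there to absorb; otherwise the two arguments are the same.
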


Thus~\eqref{eq: bar filtration} gives a new model for the stable rank filtration, by means of a sequence of special $\Gamma$-spaces, as stated at the beginning of the introduction. In future work we will use Theorem~\ref{theorem: bar filtration} to give a simplicial model for the subquotients of the stable rank filtration in two cases of interest: the first case is $\Fcal=\Sp^\infty$, and the second is $\Fcal=\Fcal_U$, the $\Gamma$-space representing complex $K$-theory (see Examples~\ref{example: U} and~\ref{example: bu example operad action}).
It is known that the subquotients in these two cases can be described, respectively, in terms of the complex of partitions, $\Pcal_{m}$, and the complex of direct-sum decompositions,~$\Lcal_{m}$~\cite{Arone-Dwyer, Arone-Lesh-Fundamenta}.
In particular, Theorem~\ref{theorem: bar filtration} will enable us to incorporate the complexes $\Pcal_{m}$ and $\Lcal_{m}$ into the simplicial bar construction $\Barr(\Ccal, \Ccal, \Fcal)$ in such a way that
they appear in the subquotients of the filtration
\emph{as simplicial spaces}.
The interplay between the homological properties of
$\Pcal_{m}$ and~$\Lcal_{m}$ (\cite{ADL2, ADL3}) on one hand, and of the bar construction on the other hand, will play a key role in our planned proof of Kuhn's theorem and its $bu$-analogue.

The organization of the paper is as follows.
In Section~\ref{section: preliminaries} we gather some background material on augmented $\Gamma$-spaces and their rank filtrations.
In Section~\ref{section: operad actions} we recall the connection between special $\Gamma$-spaces and actions of $E_\infty$-operads. We then show that if $\Fcal$ is an augmented special $\Gamma$-space, then {\it any} action of an $E_\infty$-operad on $\Fcal$ respects the augmentation.
Strictly speaking, the main result of this section 
(Proposition~\ref{proposition: augmentation preserved}) is not needed
in the rest of the paper, but it seems to clarify the general picture.
In Section~\ref{section: bar} we introduce the two-sided bar construction $\Barr(\Ccal, \Ccal, \Fcal)$, where $\Fcal$ is an augmented special $\Gamma$-space with an 
action of an $E_\infty$-monad $\Ccal$.
The bar construction gives a resolution of $\Fcal$ by free $\Ccal$-modules, and we show how the unstable rank filtration can be realized as a simplicial filtration of the bar construction.
In Section~\ref{section: stable filtration}
we introduce our main construction: the filtration of the bar construction that will serve as a model for the stable rank filtration. We prove that our filtration really is a model for the stable rank filtration, modulo the key theorem that the terms in our filtration are special $\Gamma$-spaces (Theorem~\ref{theorem: linearity theorem}).

Sections~\ref{section: strategy}--\ref{section: maps out of E} are devoted to the proof of Theorem~\ref{theorem: linearity theorem}. In Section~\ref{section: strategy} we outline the strategy of the proof, state several intermediate results, and give the proof assuming these results. The remaining sections are devoted to proving the intermediate results.

\bigskip
\noindent{\bf Notation}\\

Let $\Ical$ be the category of finite unpointed sets and injective functions, with standard objects given by
$\finitesetUnpointed{m}=\{1,2,\ldots,m\}$, the set of
the first $m$ positive integers. For a finite \emph{pointed} set, we use $\finiteset{m}=\{0,1,\ldots, m\}$ for the pointed finite set with basepoint~$0$ and the first $m$ positive integers as its non-basepoint elements. Let $\Gamma$ denote the category of finite pointed sets and pointed functions. The set of nonnegative integers is denoted by~$\naturals$.

The monad associated with an $E_\infty$-operad is denoted by~$\Ccal$. We use $\otimes$ for a (strict) coend construction.

We write $\Top$ for the category of pointed topological spaces, and $\filtered$ for the category of filtered topological spaces (Definition~\ref{definition: Topfi}).

\section{Background on $\Gamma$-spaces}
\label{section: preliminaries}

In this section we gather some preparatory material on $\Gamma$-spaces and their rank filtrations.
We begin with some generalities about (special) $\Gamma$-spaces, and we review from \cite{Arone-Lesh-Fundamenta} the notion of $\Gamma$-spaces that are augmented over $\Sp^\infty$.
In the bulk of the section, we discuss in some detail the
special augmented $\Gamma$-space~$\Ccal$, which is the monad
associated to an $E_\infty$-operad. In later sections, we will be studying filtrations of spaces $\Ccal^k X$ obtained by iterating the augmented functor~$\Ccal$. We prepare for this work by considering
a more general set-up. In particular, we define a natural filtration on $\Ccal X$ where $X$ is a pointed space that is itself equipped with a filtration (for example, $X=\Ccal Y$).
Then we view $\Ccal$ as a functor from the category of filtered spaces to itself.
This point of view will be used in Section~\ref{section: stable filtration} in the proof that
the face and degeneracy maps in the simplicial $\Gamma$-space $\Ccal\Ccal^\bullet \Fcal$
respect the subobject $\Ccal\filter{m}\Ccal^\bullet \Fcal$, and so diagram~\eqref{eq: intro filtered} exists.

We note that if $\Fcal$ is a $\Gamma$-space, we can evaluate $\Fcal$ on pointed simplicial sets, or pointed topological spaces, by means of homotopy left Kan extension.
As a rule, we will not distinguish notationally between a $\Gamma$-space $\Fcal$ and its homotopy left Kan extensions.
Conversely, suppose $\Fcal$ is a pointed homotopy functor from the category of pointed spaces (or simplicial sets) to pointed spaces.
Somewhat loosely, we will say that such a functor $\Fcal$ is a $\Gamma$-space if $\Fcal$ is naturally equivalent to the homotopy left Kan extension of its restriction to~$\Gamma$.

Recall that a $\Gamma$-space $\Fcal$ is \defining{special} if the natural map $\Fcal(X\vee Y)\to \Fcal(X)\times \Fcal(Y)$ is an equivalence. The following is a basic example of a special $\Gamma$-space.

\begin{example}   \label{example: Sp}
Let $\Sp^\infty$ be the infinite symmetric product functor from pointed spaces to pointed spaces. Let $\finiteset{m}=\{0,1,\ldots, m\}$ denote the pointed set with basepoint~$0$ and the first $m$ positive integers as its non-basepoint elements.
There is a canonical isomorphism $\Sp^\infty\left(\finiteset{m}\right)\cong \naturals^m$,
where $\naturals$ is the set of non-negative integers. The map
$\Sp^\infty\left(\finiteset{m}\vee\finiteset{n}\right)
\to\Sp^\infty\left(\finiteset{m}\right)\times\Sp^\infty\left(\finiteset{n}\right)$
can be identified with a bijection $\naturals^{m+n}\xlongrightarrow{\cong}\naturals^m\times \naturals^n$. In particular, this map is a homotopy equivalence, so $\Sp^\infty$ is a special $\Gamma$-space.

When $X$ is a pointed space, it is convenient to have a topological description of $\Sp^\infty(X)$ as well. In this case there is a homeomorphism
\begin{equation*}
\Sp^\infty(X)\cong\underset{m\to \infty}{\colim} \Sp^m(X)=\underset{m\to \infty}{\colim} X^m/_{\Sigma_m},
\end{equation*}
where the maps in the colimit are basepoint inclusions.
The Dold-Thom theorem
tells us that the infinite delooping of the functor $\Sp^\infty$ is the Eilenberg-\MacLane spectrum~$H\integers$.
\end{example}

\begin{definition}   \label{definition: augmented}
An augmentation of a $\Gamma$-space $\Fcal$ is a natural transformation $\epsilon\colon \Fcal\to \Sp^\infty$ with the property that for every $\finiteset{n}$ the preimage of the basepoint under the augmentation map $\epsilon\left(\finiteset{n}\right)\colon \Fcal\left(\finiteset{n}\right)\to \Sp^\infty\left(\finiteset{n}\right)$ consists of just the basepoint of~$\Fcal\left(\finiteset{n}\right)$.
\end{definition}

Obviously the $\Gamma$-space $\Sp^\infty$ itself is augmented, via the identity transformation.
We have two other principal examples of augmented special $\Gamma$-spaces in mind,
denoted $\Ccal$ and $\Fcal_U$. The first one plays a central role in this paper, while the second one will be important in a sequel.

\begin{example}    \label{example: C}
We recall from~\cite{May-Geometry} the construction of the monad associated with an $E_\infty$-operad~$\Ocal$. The underlying symmetric sequence of $\Ocal$ consists of a sequence of contractible free $\Sigma_n$-spaces $\{E\Sigma_n\mid n=0, 1, \ldots, \}$, and we assume that $E\Sigma_0=*$. We denote the resulting $\Gamma$-space by~$\Ccal$.

Let $\Ical$ be the category of finite unpointed sets and injective functions, which contains ``standard" objects
$\finitesetUnpointed{n}=\{1,2,\ldots,n\}$. The operad structure of~$\Ocal$ endows the set of spaces $\left\{E\Sigma_n\right\}$ with the structure of a contravariant functor from $\Ical$ to topological spaces, $\finitesetUnpointed{n}\mapsto E\Sigma_n$
\cite[Notations~2.3]{May-Geometry}. On the other hand, a pointed space $X$ gives rise to a {\it covariant} functor from $\Ical$ to topological spaces given by
$\finitesetUnpointed{n}\mapsto X^{n}$. Here the functoriality is defined using basepoint inclusions. Define the functor $X\mapsto \Ccal(X)$ to be the strict coend of these two functors on~$\Ical$:
\begin{equation} \label{eq: coend formula}
\Ccal(X)\definedas E\Sigma_n\otimes_{n\in \Ical} X^n.
\end{equation}
Note that $\Ccal$ commutes with geometric realization.
This implies that $\Ccal$ is equivalent to the homotopy left Kan extension of its restriction to~$\Gamma$, and so
$\Ccal$ is a $\Gamma$-space. In fact, $\Ccal$ is actually a special $\Gamma$-space, and the Barratt-Priddy-Quillen Theorem says that
the infinite delooping of~$\Ccal$ is the sphere spectrum~$\Sphere$.

To define the augmentation on~$\Ccal$, we note that the functor
$\Sp^\infty=\underset{n\to \infty}{\colim} \Sp^n$
has a similar formula
to~\eqref{eq: coend formula}, where each $E\Sigma_n$ is replaced with a point:
\[
\Sp^\infty(X)\cong *\otimes_{n\in \Ical} X^n.
\]
The augmentation of $\Ccal$ is induced by the maps $E\Sigma_n\to *$, which induce a natural transformation
\[
\xymatrix{
\Ccal(X)=E\Sigma_n\otimes_{n\in\Ical} X^{n}  \ar[d]_{\epsilon}
\\
\Sp^\infty(X)= \ast \otimes_{n\in\Ical} X^n.
}
\]

A second point of view on the augmentation is that there is a natural homotopy equivalence
\begin{equation} \label{eq: CX as hocolim}
\Ccal(X)\simeq \underset{n\in \Ical}{\hocolim} X^n
\end{equation}
and a natural homeomorphism
\[
\Sp^\infty(X) \cong \underset{n\in \Ical}{\colim} X^n.
\]
In these terms the augmentation $\epsilon\colon\Ccal\to \Sp^\infty$ is the natural map from the homotopy colimit to the strict colimit.
\end{example}
\medskip

\begin{example}    \label{example: U}
Another example of an augmented $\Gamma$-space, which will play an important role in our intended future application, is the special $\Gamma$-space representing connective complex $K$-theory. We denote it by $\Fcal_U$, and for concreteness we describe an explicit model for it.

Let $\Inj(V,W)$ denote the space of unitary linear transformations of complex vector spaces
$V\rightarrow W$.
On an object $\finiteset{m}$ of~$\Gamma$, we define $\Fcal_{U}$ by a disjoint union indexed by $m$-tuples of non-negative integers:
\[
\Fcal_U\left(\strut\finiteset{m}\right)
    =\coprod_{(k_1,\ldots, k_m)\in \naturals^m}
     \Inj\left(\complexes^{k_1+\cdots+k_m}, \complexes^\infty\right)
          /_{U(k_1)\times\cdots\times U(k_m)}.
\]
Note that the component corresponding to $(0,\ldots,0)$ is a single point,
which is by definition the basepoint
of~$\Fcal_U\left(\strut\finiteset{m}\right)$.

We must also define $\Fcal_U$ on morphisms. First, the intuition. A point in
$\Fcal_U\left(\strut\finiteset{m}\right)$ is essentially an ordered $m$-tuple of pairwise orthogonal subspaces of~$\complexes^\infty$. Given a pointed map $\alpha\colon \finiteset{m}\to \finiteset{n}$, we must take an ordered $m$-tuple of subspaces of~$\complexes^{\infty}$, and get from it an ordered $n$-tuple of subspaces. To do this, we take all subspaces in
the given $m$-tuple whose indices have the same (non-basepoint) image in~$\finiteset{n}$,
and we add them together. If some elements of $\finiteset{m}$ are sent by $\alpha$ to the basepoint, then the corresponding subspaces are dropped.

Now we describe the action of $\Fcal_U$ on morphisms more precisely.
Given a pointed map
$\alpha\colon \finiteset{m}\to \finiteset{n}$,
we must define
$\Fcal(\alpha)\colon
      \Fcal\left(\strut\finiteset{m}\right)
      \to
      \Fcal\left(\strut\finiteset{n}\right)$.
We begin with the induced map on the indexing sets. Given an $m$-tuple $(k_1, \ldots, k_m)$,
define the $n$-tuple $\alpha(k_1, \ldots, k_m)$ by summing over preimages of elements of~$\finiteset{n}$:
\begin{align*}
\alpha(k_1, \ldots, k_m)  &=\left(l_1, \ldots, l_n\right)\\
\mbox{ where }l_i                       &=\Sigma_{j\in\alpha^{-1}(i)} k_j  \mbox{\quad for $i=1, \ldots, n$.}
\end{align*}
Note that if some non-basepoint element $i\in\finiteset{m}$ goes to the basepoint, i.e.
$\alpha(i)=0\in\finiteset{n}$, and if in addition $k_{i}>0$, then
$\sum_{\finiteset{n}}l_{i} < \sum_{\finiteset{m}}k_{j}$.

Finally, for each $i$, we choose an isomorphism
$
\complexes^{l_i}\xlongrightarrow{\cong} \complexes^{\Sigma_{j\in\alpha^{-1}(i)} k_j}.
$
 We add them together to give a unitary morphism
\begin{equation}   \label{eq: sum of maps}
\complexes^{l_1+\cdots+l_n}\longrightarrow \complexes^{k_1+\cdots+k_m},
\end{equation}
which in turn induces a well-defined map
\[
\Inj\left(\complexes^{k_1+\cdots+k_m}, \complexes^\infty\right)
          /_{U(k_1)\times\cdots\times U(k_m)}
\longrightarrow
\Inj\left(\complexes^{l_1+\cdots+l_n}, \complexes^\infty\right)
          /_{U(l_1)\times\cdots\times U(l_n)}.
\]
We obtain a well-defined map
$\Fcal_{U}\left(\finiteset{m}\right)
   \xrightarrow{\Fcal_{U}(\alpha)}
   \Fcal_{U}\left(\strut\finiteset{n}\right)$
by carrying out this procedure on each component of the domain.

The augmentation $\Fcal_U\left(\strut\finiteset{m}\right)\to \Sp^\infty\left(\strut\finiteset{m}\right)$ is the map that collapses each connected component to a single point:
\begin{equation}    \label{eq: define augmentation FU}
\begin{gathered}
\xymatrix{
\coprod\limits_{(k_1,\ldots, k_m)\in \naturals^m}
     \Inj\left(\complexes^{k_1+\cdots+k_m}, \complexes^\infty\right)
     /_{U(k_1)\times\cdots\times U(k_m)}
     \ar[d]^-{\epsilon}\\
\coprod\limits_{(k_1,\ldots, k_m)\in {\naturals}^{m}} \hspace{-1em} *
\quad \cong \naturals^m.
}
\end{gathered}
\end{equation}
Notice that $\Fcal_U(\finiteset{1})\cong \coprod_{k=0}^\infty BU(k)$,
and $\Fcal_U\left(\strut\finiteset{m}\right)\simeq \Fcal_U(\finiteset{1})^m$, meaning that
$\Fcal_U$ is actually a special $\Gamma$-space.
It is well known that the infinite delooping of ${\Fcal_U}$ is the connective
$K$-theory spectrum~$bu$. For more details see~\cite{Segal-Categories}.
\end{example}

\medskip

We return to generalities, this time regarding filtered spaces.
We continue to assume that $\Fcal$ is an augmented $\Gamma$-space, i.e., a $\Gamma$-space equipped with an augmentation $\epsilon\colon\Fcal\to \Sp^\infty$. As described in the introduction, the filtration of $\Sp^\infty$ by~$\Sp^m$ induces a filtration of $\Fcal$ by subfunctors~$\filter{m}\Fcal$, where $\filter{m}\Fcal$ is defined as the $\Gamma$-space that is the strict pullback of the diagram
\[
\Fcal\xrightarrow{\ \epsilon\ } \Sp^\infty \longleftarrow \Sp^m.
\]

We need to understand the filtration above not only when it is applied to an augmented
functor~$\Fcal$, but also when it is applied, for example, to a composite functor
$\Ccal\Fcal$ or a higher iterate~$\Ccal^{k}\Fcal$.
For this purpose we temporarily drop $\Fcal$ from the picture, and we consider
the general problem of filtering the functor $\Ccal$ when it is
applied to a \emph{filtered} space (such as an output of an augmented $\Gamma$-space~$\Fcal$).
The basic idea is as follows. The functor $\Ccal(X)$ is built out of powers
of~$X$, i.e., spaces of the form $X^n$, where maps between them are induced by basepoint inclusions. Suppose $X$ is itself a filtered space, with the basepoint having filtration zero.
Then powers of $X$ are equipped with a product filtration \eqref{eq: product filtration}, and basepoint inclusions are filtration-preserving maps. In this way the product filtration of the spaces $X, X^2, \ldots, X^n, \ldots$ induce a natural filtration of $\Ccal(X)$. (It is, in general, different from the ``usual" one. See Example~\ref{example: filtration one}.) The remainder of the section formalizes this notion.

\begin{definition}         \label{definition: Topfi}
A filtered space is a pointed topological space, together with a filtration by subspaces
\begin{equation}\label{eq: filtered X}
*=X_0 \hookrightarrow X_1 \hookrightarrow \cdots \hookrightarrow X_m \hookrightarrow \cdots X=\bigcup_m X_m.
\end{equation}
As indicated in~\eqref{eq: filtered X}, we require that $X_0$ is the basepoint of~$X$.
\end{definition}

Let $\filtered$ be the category of filtered pointed spaces. We can make use of the previously-defined notation $\filter{m}$ in this context.
In \eqref{eq: unstable rank filtration}, we used $\filter{m}$ for the $m$-th unstable rank filtration of an augmented $\Gamma$-space.
In the context of~$\filtered$, we use it
to denote the operation of taking the $m$-th filtration of a filtered space.

\begin{definition}    \label{definition: filtered space}
Let $\filter{m}\colon\filtered\to \filtered$ be the functor that truncates a filtered space at stage~$m$. So if $X$ is a filtered space as in~\eqref{eq: filtered X}, then $\filter{m} X$ is the filtered space
\[
*=X_0 \hookrightarrow X_1 \hookrightarrow \cdots \hookrightarrow X_m \xrightarrow{=} X_m \xrightarrow{=} \cdots X_m.
\]
\end{definition}

One wants to have wedges and products in $\filtered$, so we need to define appropriate filtrations on those constructions.
Suppose $X$ and $Y$ are filtered spaces. The \defining{wedge filtration} on $X\vee Y$ will play a role in the proof of Lemma~\ref{lemma: filtered Freudenthal}. It is defined by the evident formula:
\begin{equation}    \label{eq: wedge filtration}
\filter{m}{(X\vee Y)}=\filter{m}X \vee \filter{m}Y.
\end{equation}
On the other hand, the \defining{product filtration} on $X\times Y$ is defined by the formula
\begin{equation}   \label{eq: define product filtration}
\filter{m}{\left(X\times Y\right)}
=\bigcup_{m_1+m_2\le m}\filter{m_1}X\times \filter{m_2}Y.
\end{equation}
Inductively, a filtered space $X$ has a product filtration on $X^n$ for $n\geq 0$:
\begin{equation}     \label{eq: product filtration}
\filter{m}\left(X^n\right)
     = \bigcup_{m_1+\cdots+m_n\le m}
          \filter{m_1}X\times\cdots\times\filter{m_n}X.
\end{equation}

Note that the product filtration of $X^n$ is invariant under the action of~$\Sigma_n$,
and we have assumed that the basepoint has filtration zero. Recall that $\Ical$ denotes the category of unpointed finite sets and injections. It follows that if $X$ is an object of~$\filtered$, then the
functor $\Ical\to \Top$ defined by $\finitesetUnpointed{n}\mapsto X^{n}$
is actually a functor from $\Ical$ to~$\filtered$. Hence we can construct filtrations of $\Ccal(X)$ (Example~\ref{example: C}) and $\Sp^{\infty}(X)$ that take the filtration of $X$ into account.

\begin{definition}     \label{definition: define filtration CX}
Suppose $X$ is a filtered space. We define natural filtrations on $\Ccal(X)$ and $\Sp^{\infty}(X)$ by the formulas
\begin{align*}   \label{eq: define filtration CX}
\filter{m}\Ccal(X)
   &\definedas E\Sigma_n \otimes_{n\in\Ical} \filter{m} \left(X^n\right)\\
\filter{m}\Sp^{\infty}(X)
   &\definedas \quad *\   \otimes_{n\in\Ical} \filter{m} \left(X^n\right).
\end{align*}
\end{definition}

Informally speaking, a point in $\filter{m}\Sp^{\infty}(X)$ consists of a finite subset of~$X$, say $\{x_1, \ldots, x_n\}$, subject to the condition that the filtrations of $x_1, \ldots, x_n$ add to at most~$m$. A~point in $\filter{m}\Ccal(X)$ consists of that same data, but ``decorated'' with a point in~$E\Sigma_n$.
With this definition, $\Ccal$ and $\Sp^{\infty}$ will be considered to be functors from $\filtered$ to $\filtered$ for the rest of the paper,
except when explicitly stated otherwise.

\begin{example}\label{example: filtration one}
\hfill
\begin{enumerate}
\item
Suppose that $X$ is a trivially filtered space, i.e. $X$~is concentrated in
filtration~$1$, so $X=\filter{1}X$. 
The filtrations of $\Ccal(X)$ and $\Sp^{\infty}(X)$ in this case are actually the ``usual" ones,
\begin{align*}
\filter{m}\Ccal(X)
    &\simeq
    \left(\coprod_{k=1}^{m}E\Sigma_{k}\times_{\Sigma_{k}}X^k\right)/\sim\\
\filter{m}\Sp^{\infty}(X)
    &=\Sp^{m}(X)
\end{align*}
where $\sim$ indicates the basepoint identifications made by the coend.

\item
Now suppose given a general filtered space~$X$; we compute $\filter{1}\Ccal(X)$. We need to know
$\filter{m}\left(X^n\right)$ when $m=1$, and in this case
there is a homeomorphism
$\filter{1}\left(X^n\right)\cong \left(\filter{1}X\right)^{\vee n}\largestrut $.
Therefore
\[
\filter{1}\Ccal(X) = E\Sigma_n \otimes_{n\in\Ical}
\left(\strut\filter{1}X\right)^{\vee n}.
\]
An easy calculation shows that there is a homeomorphism followed by a homotopy equivalence
\[
\filter{1}\Ccal(X) \cong {E\Sigma_1}_+\wedge \filter{1} X\simeq \filter{1}X.
\]
Observe that this is \emph{not} the same as the ``usual" filtration of~$\Ccal X$.
\end{enumerate}
\end{example}
\medskip

The filtration obtained by applying~$\Ccal$, and more generally iterates of~$\Ccal$, to a filtered space $X$ plays an important role in subsequent sections. In particular, we will be looking at a bar construction using the monad structure for~$\Ccal$, and we need to know that the monad structure maps preserve the filtrations of Definition~\ref{definition: define filtration CX}.
We first note that $\Sp^{\infty}$ has the desired property. Let
$\mu_{\Sp^{\infty}}(X)\colon\Sp^{\infty}\Sp^{\infty}(X)\to \Sp^{\infty}(X)$ and $\eta_{\Sp^{\infty}}(X)\colon X\to \Sp^{\infty}(X)$ be the monad structure maps for~$\Sp^{\infty}$.

\begin{lemma}   \label{lemma: Spinf structure maps}
The maps $\eta_{\Sp^{\infty}}(X)$ and $\mu_{\Sp^{\infty}}(X)$ are maps of filtered spaces.
\end{lemma}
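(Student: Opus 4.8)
The plan is to describe the filtrations on $\Sp^\infty(X)$ and on $\Sp^\infty\Sp^\infty(X)$ explicitly by means of a ``filtration degree'' function that is additive under formal sums, and then to observe that the monad unit and multiplication both preserve this degree. Recall from Example~\ref{example: C} that $\Sp^\infty(X)=*\otimes_{n\in\Ical}X^n\cong\colim_{n\in\Ical}X^n$ is the free abelian topological monoid generated by the pointed space $X$, with the basepoint of $X$ as the unit; in particular every point of $\Sp^\infty(X)$ has a unique expression as a finite formal sum $x_1+\cdots+x_k$ of non-basepoint points of $X$. For a filtered space $Y$, let $\nu_Y\colon Y\to\naturals$ send $y$ to the least $j$ with $y\in Y_j$ (so $\nu_Y$ vanishes on the basepoint), so that $\filter{m}Y=\nu_Y^{-1}(\{0,1,\ldots,m\})$ as subspaces of $Y$. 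The first point I would establish --- by unwinding the product filtration~\eqref{eq: product filtration} and passing to the colimit over $\Ical$ in Definition~\ref{definition: define filtration CX} --- is the additivity identity $\nu_{\Sp^\infty X}(x_1+\cdots+x_k)=\nu_X(x_1)+\cdots+\nu_X(x_k)$: a point of $\filter{m}(X^n)$ is precisely an $n$-tuple $(x_1,\ldots,x_n)$ with $\sum_i\nu_X(x_i)\le m$ (take $m_i=\nu_X(x_i)$), and the basepoint-inclusion and permutation identifications defining the colimit preserve the total degree. In particular $\nu_{\Sp^\infty X}$ is additive for the commutative-monoid structure of $\Sp^\infty(X)$.

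Granting this, the two structure maps are handled at once. The unit $\eta_{\Sp^\infty}(X)\colon X\to\Sp^\infty(X)$ sends $x$ to the one-term sum $x$, so $\nu_{\Sp^\infty X}(\eta_{\Sp^\infty}(X)(x))=\nu_X(x)$, whence $\eta_{\Sp^\infty}(X)$ carries $\filter{m}X$ into $\filter{m}\Sp^\infty(X)$ for every $m$. For the multiplication, apply the additivity identity with $X$ replaced by the filtered space $\Sp^\infty(X)$: a point of $\Sp^\infty\Sp^\infty(X)$ is a finite formal sum $\xi_1+\cdots+\xi_k$ with each $\xi_i\in\Sp^\infty(X)$, and $\nu_{\Sp^\infty\Sp^\infty X}(\xi_1+\cdots+\xi_k)=\sum_i\nu_{\Sp^\infty X}(\xi_i)$. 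The monad multiplication $\mu_{\Sp^\infty}(X)$ evaluates such a formal sum as an honest sum inside the monoid $\Sp^\infty(X)$, and since $\nu_{\Sp^\infty X}$ is additive for that monoid structure we get $\nu_{\Sp^\infty X}\bigl(\mu_{\Sp^\infty}(X)(\xi_1+\cdots+\xi_k)\bigr)=\sum_i\nu_{\Sp^\infty X}(\xi_i)=\nu_{\Sp^\infty\Sp^\infty X}(\xi_1+\cdots+\xi_k)$. Hence $\mu_{\Sp^\infty}(X)$ preserves filtration degree exactly, so it carries $\filter{m}\Sp^\infty\Sp^\infty(X)$ into $\filter{m}\Sp^\infty(X)$ for every $m$. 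Since $\eta_{\Sp^\infty}(X)$ and $\mu_{\Sp^\infty}(X)$ are continuous (they are part of the monad structure of $\Sp^\infty$) and the $\filter{m}$'s carry the subspace topology, the restricted maps are automatically continuous, so both are maps of filtered spaces.

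There is no serious obstacle here; the one step meriting care is the additivity identity in the first paragraph, where one must check that the colimit identifications over $\Ical$ --- basepoint inclusions and permutations --- neither merge formal sums of different total degree nor keep a degree-$\le m$ formal sum out of every $\filter{m}(X^n)$. This verification is routine, and it is precisely the bookkeeping --- now with an $E\Sigma_n$-coordinate carried along passively --- that will reappear when the analogous statement is proved for the monad $\Ccal$ in place of $\Sp^\infty$.
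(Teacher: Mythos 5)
Your proposal is correct and is in substance the same argument as the paper's: the additivity identity for your degree function $\nu$ is exactly the paper's observation that the iterated product filtration on $X^{k_1}\times\cdots\times X^{k_n}$ agrees with the product filtration on $X^{k_1+\cdots+k_n}$, and the unit case is the inclusion $X\cong X^1\subseteq \Sp^{\infty}X$ as filtered spaces. The paper states this more tersely, but no new idea separates the two proofs.
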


\begin{proof}
The lemma is true for $\eta_{\Sp^\infty}(X)$ by inspection, since $X\cong X^1\subseteq\Sp^{\infty}X$ as a filtered space. For $\mu_{\Sp^{\infty}}(X)$, we need only observe that by~\eqref{eq: product filtration}, the iterated product filtration on the left side of
\[
X^{k_{1}}\times\ldots\times\X^{k_{n}}\rightarrow X^{k_{1}+\ldots+k_{n}}
\]
is isomorphic to the product filtration on the right side.
\end{proof}

\begin{remark}      \label{remark: pullback square}
It follows immediately from Definition~\ref{definition: define filtration CX}  that
the augmentation $\epsilon\colon\Ccal\to\Sp^{\infty}$ is filtration-preserving.
Further, there is a strict pullback diagram:
\[
\xymatrix{
\filter{m}\Ccal(X)
     \ar[d]\ar[r]
     &\Ccal(X)
      \ar[d]\\
\filter{m}\Sp^{\infty}(X)
     \ar[r]
     &\Sp^{\infty}(X).
}
\]
Observe that, unlike \eqref{eq: unstable rank filtration},
the notation $\filter{m}$ in this situation takes the filtration of $X$ into account.
That is, the diagram is not tautological because it expresses a pullback of endofunctors
of~$\filtered$.
\end{remark}

Now let  $\eta_{\Ccal}$ and $\mu_{\Ccal}$ be the monad unit and multiplication maps for~$\Ccal$.

\begin{lemma}\label{lemma: filtered monad}
Let $X$ be a filtered space. Then the structure maps $\mu_{\Ccal}(X)\colon\Ccal\Ccal(X)\to \Ccal(X)$ and $\eta_{\Ccal}(X)\colon X\to \Ccal X$ for the monad~$\Ccal$ preserve the filtration of Definition~\ref{definition: define filtration CX}.
\end{lemma}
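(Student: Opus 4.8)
The plan is to deduce both statements from the corresponding facts about $\Sp^{\infty}$ recorded in Lemma~\ref{lemma: Spinf structure maps}, using two features of the augmentation $\epsilon\colon\Ccal\to\Sp^{\infty}$: it is a map of monads, and by Remark~\ref{remark: pullback square} it presents $\filter{m}\Ccal(Y)$ as the strict pullback $\Ccal(Y)\times_{\Sp^{\infty}(Y)}\filter{m}\Sp^{\infty}(Y)$ for \emph{every} filtered space~$Y$ — the key being that this may be applied with $Y=\Ccal X$ as well as with $Y=X$. Concretely, $\epsilon_{Y}$ restricts to an identification of $\filter{m}\Ccal(Y)$ with the subspace $\{\,w\in\Ccal(Y)\mid\epsilon_{Y}(w)\in\filter{m}\Sp^{\infty}(Y)\,\}$ of $\Ccal(Y)$.

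The unit case is immediate: $\eta_{\Ccal}(X)$ factors as $X\cong X^{1}\hookrightarrow E\Sigma_{1}\otimes_{n\in\Ical}X^{n}=\Ccal(X)$, and since $\filter{m}(X^{1})=\filter{m}X$ by \eqref{eq: product filtration}, Definition~\ref{definition: define filtration CX} shows $\eta_{\Ccal}(X)$ carries $\filter{m}X$ into $E\Sigma_{1}\otimes\filter{m}(X^{1})\subseteq\filter{m}\Ccal(X)$.

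For the multiplication, fix $m$ and let $z\in\filter{m}\Ccal\Ccal(X)$. Applying Remark~\ref{remark: pullback square} to the filtered space $\Ccal X$, this says exactly that $\epsilon_{\Ccal X}(z)\in\filter{m}\Sp^{\infty}(\Ccal X)$. Next, $\Sp^{\infty}(\epsilon_{X})$ is filtration-preserving, since $\Sp^{\infty}$ is an endofunctor of $\filtered$ and $\epsilon_{X}\colon\Ccal X\to\Sp^{\infty}X$ is a map of filtered spaces; and $\mu_{\Sp^{\infty}}(X)$ is filtration-preserving by Lemma~\ref{lemma: Spinf structure maps}. Hence $\bigl(\mu_{\Sp^{\infty}}(X)\circ\Sp^{\infty}(\epsilon_{X})\circ\epsilon_{\Ccal X}\bigr)(z)\in\filter{m}\Sp^{\infty}(X)$. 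But $\epsilon$ is a map of monads, so that composite equals $\epsilon_{X}\circ\mu_{\Ccal}(X)$; thus $\epsilon_{X}\bigl(\mu_{\Ccal}(X)(z)\bigr)\in\filter{m}\Sp^{\infty}(X)$, and the pullback description of $\filter{m}\Ccal(X)$ forces $\mu_{\Ccal}(X)(z)\in\filter{m}\Ccal(X)$. This is the desired inclusion $\mu_{\Ccal}(X)\bigl(\filter{m}\Ccal\Ccal(X)\bigr)\subseteq\filter{m}\Ccal(X)$.

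I do not expect a genuine obstacle here; the two points needing a moment's care are bookkeeping in nature. First, the filtration in play on $\Ccal\Ccal X$ is the twice-iterated one of Definition~\ref{definition: define filtration CX}, not the ``usual'' filtration of a free $\Ccal$-algebra (contrast Example~\ref{example: filtration one}), so one should avoid arguing with the latter; this is precisely why the proof routes through the pullback squares rather than through an explicit cells-and-degrees count. Second, one should record that $\epsilon$ intertwines the monad multiplications, $\epsilon_{X}\circ\mu_{\Ccal}(X)=\mu_{\Sp^{\infty}}(X)\circ\Sp^{\infty}(\epsilon_{X})\circ\epsilon_{\Ccal X}$, which is the translation of the fact that the collapse maps $E\Sigma_{n}\to*$ are compatible with the operad structure maps (Example~\ref{example: C}). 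As an alternative one can argue directly from the explicit formula for $\mu_{\Ccal}$: it merely concatenates the $X$-coordinates of a point of $\Ccal\Ccal X$ and recombines the $E\Sigma$-decorations, and since the iterated product filtration \eqref{eq: product filtration} depends only on the multiset of filtration degrees of the $X$-coordinates — which $\mu_{\Ccal}$ does not change — the conclusion follows; but the route above is cleaner.
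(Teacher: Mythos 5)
Your proof is correct and follows essentially the same route as the paper: both the unit case (via $X\cong X^{1}\subseteq\Ccal X$) and the multiplication case rest on Lemma~\ref{lemma: Spinf structure maps}, the pullback description of $\filter{m}\Ccal(\whatever)$ from Remark~\ref{remark: pullback square} (applied to $X$ and to $\Ccal X$), and the fact that $\epsilon$ is a map of monads. The only difference is cosmetic: you chase an element through the pullback, whereas the paper invokes the universal property of the strict pullback in a commuting cube.
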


\begin{proof}
Consider the following cube, with one missing arrow:
\[
\xymatrix@=1em
{
& \filter{m}(\Ccal\Ccal X)\, \ar[rr]\ar'[d][dd]
& & \Ccal\Ccal X \ar[dl]^-{\mu_{\Ccal}}
\ar[dd]^-{\epsilon\circ\epsilon}
\\
\filter{m}(\Ccal X)\, 
                    \ar[rr]\ar[dd]
& & \Ccal X \ar[dd]^(.35){\epsilon}
\\
& \filter{m}(\Sp^{\infty}\Sp^{\infty} X)\, \ar'[r][rr]\ar[dl]
& & \Sp^{\infty}\Sp^{\infty}X \ar[dl]^-{\mu_{\Sp^{\infty}}}
\\
\filter{m}(\Sp^{\infty}X)\, \ar[rr]
& & \Sp^{\infty}X 
}
\]
The bottom face commutes by Lemma~\ref{lemma: Spinf structure maps}.
The front and back faces are pullback squares,
as indicated in Remark~\ref{remark: pullback square}.
The right face commutes because $\epsilon\colon\Ccal\to\Sp^{\infty}$ is a map of monads.
Because the cube commutes and the front face is a strict pullback, we obtain a unique map $\filter{m}(\Ccal\Ccal X)\to\filter{m}(\Ccal X)$ that makes the entire diagram commute, which establishes that $\mu_{\Ccal}(X)$ is filtration preserving.
\end{proof}

For the unit map, observe that
$X\xrightarrow{\simeq} E\Sigma_{1}\times X\subseteq\Ccal X$, and
the filtration of $E\Sigma_{1}\times X$ comes entirely from the filtration of~$X$.

\begin{lemma}    \label{lemma: boring lemma}
Let $X$ and $Y$ be objects of~$\filtered$. Then the map
\[
\Ccal(X\times Y)\xrightarrow{\,\Ccal\pi_{1}\times\Ccal\pi_{2}\,}
      \Ccal X\times\Ccal Y
\]
preserves filtration.
\end{lemma}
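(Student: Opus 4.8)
The plan is to unwind the coend definition of~$\Ccal$ from Definition~\ref{definition: define filtration CX} and keep track of how the filtration degree of a point of $\Ccal(X\times Y)$ is distributed between its image in $\Ccal X$ and its image in $\Ccal Y$. The crux is that it is \emph{not} enough to note that $\Ccal\pi_1$ and $\Ccal\pi_2$ are each filtration-preserving---which they are, since $\pi_1,\pi_2$ are filtration-preserving and $\Ccal$ is an endofunctor of~$\filtered$: that only shows the image of $\filter{m}\Ccal(X\times Y)$ lands in $\filter{m}\Ccal X\times\filter{m}\Ccal Y$, whereas the product filtration~\eqref{eq: define product filtration} requires the strictly smaller subspace $\bigcup_{a+b\le m}\filter{a}\Ccal X\times\filter{b}\Ccal Y$. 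So the real work is to exhibit, for each point of $\filter{m}\Ccal(X\times Y)$, a splitting $a+b\le m$ witnessing that its image lies in $\filter{a}\Ccal X\times\filter{b}\Ccal Y$.

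First I would take a point $\xi\in\filter{m}\Ccal(X\times Y)$ and choose a representative $\xi=[e;(z_1,\dots,z_n)]$ with $e\in E\Sigma_n$ and $(z_1,\dots,z_n)\in\filter{m}\bigl((X\times Y)^n\bigr)$. By the product filtration formula~\eqref{eq: product filtration} there are integers $m_1,\dots,m_n$ with $\sum_i m_i\le m$ and $z_i\in\filter{m_i}(X\times Y)$ for each~$i$; and by~\eqref{eq: define product filtration} there are, for each~$i$, integers $a_i,b_i\ge 0$ with $a_i+b_i\le m_i$ and a decomposition $z_i=(x_i,y_i)$ with $x_i\in\filter{a_i}X$ and $y_i\in\filter{b_i}Y$. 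Since $\pi_1$ and $\pi_2$ are pointed maps, $\Ccal\pi_1\times\Ccal\pi_2$ sends $\xi$ to the pair $\bigl([e;(x_1,\dots,x_n)],\,[e;(y_1,\dots,y_n)]\bigr)$. Putting $a=\sum_i a_i$ and $b=\sum_i b_i$ gives $a+b=\sum_i(a_i+b_i)\le\sum_i m_i\le m$; and since $(x_1,\dots,x_n)\in\filter{a_1}X\times\cdots\times\filter{a_n}X\subseteq\filter{a}(X^n)$ we get $[e;(x_1,\dots,x_n)]\in\filter{a}\Ccal X$, and likewise $[e;(y_1,\dots,y_n)]\in\filter{b}\Ccal Y$. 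Hence the image of $\xi$ lies in $\filter{a}\Ccal X\times\filter{b}\Ccal Y\subseteq\filter{m}(\Ccal X\times\Ccal Y)$, as required.

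The one step needing a little care---and the only real obstacle---is that $\xi$ admits many representatives, since the coend over~$\Ical$ imposes identifications along basepoint inclusions and along non-bijective injections, so the integers $a$ and~$b$ produced above are not canonical. This is harmless: the conclusion asserts only the \emph{existence} of a witnessing splitting, and the computation extracts one from any chosen representative, different choices merely yielding different but equally valid witnesses. Finally, continuity of the resulting map $\filter{m}\Ccal(X\times Y)\to\filter{m}(\Ccal X\times\Ccal Y)$ is automatic, since it is the restriction of the continuous map $\Ccal\pi_1\times\Ccal\pi_2$ to the relevant subspaces---both $\filter{m}\Ccal(-)$ and $\filter{m}(-\times-)$ being genuine subspaces by Remark~\ref{remark: pullback square} and the definition~\eqref{eq: define product filtration}.
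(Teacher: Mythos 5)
Your proof is correct, but it takes a genuinely different route from the paper. You argue directly at the level of the coend defining $\Ccal$: choose a representative $[e;(z_1,\dots,z_n)]$ of a point of $\filter{m}\Ccal(X\times Y)$, split each $z_i=(x_i,y_i)$ with filtration degrees $a_i+b_i\le m_i$, and observe that the two images $[e;(x_1,\dots,x_n)]$ and $[e;(y_1,\dots,y_n)]$ then land in $\filter{a}\Ccal X$ and $\filter{b}\Ccal Y$ with $a+b\le m$; your remark that the non-canonicity of the witness $(a,b)$ is harmless, since the map is already defined and only existence of a witness is needed, is exactly the right disposal of the representative issue. The paper instead reuses the cube-diagram template of Lemma~\ref{lemma: filtered monad}: it performs essentially your combinatorial bookkeeping only at the level of $\Sp^\infty$ (where points are bare finite collections of pairs, with no $E\Sigma_n$ decorations or coend identifications to worry about), producing the dotted arrow $\filter{m}\Sp^\infty(X\times Y)\to\filter{m}(\Sp^\infty X\times\Sp^\infty Y)$, and then transfers the conclusion to $\Ccal$ formally using that $\filter{m}\Ccal(-)$ is the strict pullback of $\filter{m}\Sp^\infty(-)$ along the augmentation (Remark~\ref{remark: pullback square}). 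What your approach buys is self-containedness: you never invoke the augmentation, Lemma~\ref{lemma: Spinf structure maps}, or the pullback description. What the paper's approach buys is that the point-level check happens in the simplest possible model and the passage to $\Ccal$ is purely categorical (the same device it uses for the monad structure maps), at the cost of quietly needing that the front face of the cube --- involving the product filtration on $\Ccal X\times\Ccal Y$ --- is again a pullback. Both arguments ultimately rest on the same observation, that the filtration degree of a pair dominates the sum of the degrees of its two coordinates.
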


\begin{proof}
We consider a similar diagram to the one in the previous lemma:
\[
\xymatrix@=1em
{
& \filter{m}(\Ccal (X\times Y)) \ar[rr]\ar'[d][dd]
& & \Ccal(X\times Y) \ar[dl]^(.4){\Ccal\pi_{1}\times\Ccal\pi_{2}  }\ar[dd]^-{\epsilon}
\\
\filter{m}(\Ccal X\times\Ccal Y) 
                    \ar[rr]\ar[dd]
& & \Ccal X\times\Ccal Y \ar[dd]^(.3){\epsilon\times\epsilon}
\\
& \filter{m}(\Sp^{\infty}(X\times Y)) \ar'[r][rr]\ar@{-->}[dl]
& & \Sp^{\infty}(X\times Y) \ar[dl]^(.4){\qquad\Sp^{\infty}\pi_{1}\times\Sp^{\infty}\pi_{2}}
\\
\filter{m}(\Sp^{\infty}X \times\Sp^{\infty}Y) \ar[rr]
& & \Sp^{\infty}X \times\Sp^{\infty}Y.
}
\]
Once we show that the dotted arrow exists,
the lemma follows exactly as in the proof of Lemma~\ref{lemma: filtered monad}.

A point in $ \filter{m}(\Sp^{\infty}(X\times Y))$ is a collection of pairs $(x_i, y_i)$ such that the sum of filtrations is at most $m$. Such a point is mapped
by $\Sp^{\infty}\pi_{1}\times\Sp^{\infty}\pi_{2}$ to the pair $(\{x_i\},\{y_i\})$ where the first term is a collection of points in $X$ the second is a collection of points in $Y$. The sum of the filtrations is still at most~$m$, which means that this point belongs to $\filter{m}(\Sp^{\infty}X \times\Sp^{\infty}Y)$.
As a result, the dotted arrow exists, and the lemma follows.
\end{proof}

\section{Operad actions on augmented special $\Gamma$-spaces}
\label{section: operad actions} 

In this section we review the connection between the operadic and $\Gamma$-space approaches to infinite loop spaces by recalling 
the action of an $E_\infty$-operad on a special $\Gamma$-space. We then show that if $\Fcal$ is an augmented special $\Gamma$-space, then {\it any} action of an $E_\infty$-operad on $\Fcal$ respects the augmentation.

\begin{remark*}
Strictly speaking, the general result of this section 
(Proposition~\ref{proposition: augmentation preserved}) is not needed
for the examples to which we want to apply the rest of the paper. In all of those examples
(such as Example~\ref{example: bu example operad action}), there is a specific operad acting explicitly on a given $\Gamma$-space, and it is easy to verify directly that the action preserves the augmentation. But it is reassuring to know that in fact the action must be augmentation-preserving for general reasons. We thank the referee for prompting us to think this through.
\end{remark*}

\noindent {\bf\smallstrut{Notation.}} Throughout this section, we sometimes label arrows in diagrams by the underlying map that induces them, leaving the reader to infer from context what natural transformations have been applied to the underlying map. In general, we use $\mu$~for an operad action, $\lambda$~for a ``linearization" map \eqref{eq: special equivalence}, $\epsilon$~for an augmentation, and $\alpha$~for the assembly map \eqref{eq: assembly}. \\

To set the stage, let  $\Ocal=\{E{\Sigma_k}\mid k\ge 0\}$ denote a generic $E_\infty$-operad, and let 
$\Fcal$ be a special $\Gamma$-space. An \defining{action} of~$\Ocal$ on $\Fcal$ consists of natural transformations (one for each $k\ge 0$)
\[
\mu\colon E\Sigma_k \times \Fcal^k\to \Fcal.
\]
The natural transformations are required to satisfy the usual symmetry, associativity, and unit relations. 
It is known that for every special $\Gamma$-space $\Fcal$ there is an $E_\infty$-operad acting on it. Segal outlined at the end of~\cite{Segal-Categories} a general construction that associates to a special $\Gamma$-space an $E_\infty$-operad acting on it. A variant of this statement was proved by Mandell in~\cite[Theorems 1.5 and 1.6]{Mandell-Inverse}. In Mandell's variant, he fixes a particular $E_\infty$-operad (the Barratt-Eccles operad) and shows that every special $\Gamma$-space is equivalent to one on which the Barrat-Eccles operad acts.



In addition to an action of an $E_\infty$-operad, one could consider an action of the commutative operad, whose spaces are all given by points. In this case, an action consists of natural transformations
\[
* \times \Fcal^k\to \Fcal,
\]
again with appropriate symmetry, associativity, and unit relations. That is, a $\Gamma$-space with an action of the commutative monoid takes values in strict commutative monoids; its stabilization is equivalent to a product of Eilenberg-\MacLane spectra.
The canonical example is the special $\Gamma$-space $\Sp^\infty$, which admits an action of the commutative operad via
\[
*\times \Sp^\infty(\whatever)^k 
     \xleftarrow{\ \cong\ }
     \Sp^\infty(\finiteset{k}\wedge \whatever)
     \longrightarrow 
     \Sp^\infty(\whatever),
\]
the second map being induced the fold map $\finiteset{k}\rightarrow\finiteset{1}= S^{0}$. 
It follows that $\Sp^\infty$ also admits a canonical action of any $E_\infty$-operad via the map $E\Sigma_k\to *$.

When evaluated on finite sets, the $\Gamma$-space 
$\Sp^\infty(\whatever)$ takes values in discrete spaces. 
It follows that a natural transformation
$\Fcal(\whatever)\rightarrow\Sp^\infty(\whatever)$ is completely determined by its induced transformation $\pi_{0}\Fcal(\whatever)\rightarrow\pi_{0}\Sp^\infty(\whatever)$.
This observation will be used a couple of times in this section. In the next lemma we use it to prove that the 
property of possessing an augmentation is 
a homotopy invariant of $\Gamma$-spaces.

\begin{lemma}\label{lemma: augmentation invariance}
Suppose $\Fcal$ is a $\Gamma$-space equipped with an augmentation $\Fcal\to \Sp^\infty$ and an equivalence to another $\Gamma$-space $\Fcal\xrightarrow{\simeq}\Gcal$. Then there is a unique augmentation $\Gcal\to \Sp^\infty$ that forms a commuting triangle.
\end{lemma}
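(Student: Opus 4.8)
The plan is to use the observation that immediately precedes the statement: because $\Sp^\infty$ takes values in discrete spaces when evaluated on finite pointed sets, a natural transformation into $\Sp^\infty(\whatever)$ out of any $\Gamma$-space is the same datum as a natural transformation of the $\pi_0$ functors. So I would first reduce the whole problem to $\pi_0$. Applying $\pi_0$ levelwise, the equivalence $\Fcal \xrightarrow{\simeq} \Gcal$ induces an isomorphism of $\Gamma$-sets $\pi_0\Fcal(\whatever) \xrightarrow{\cong} \pi_0\Gcal(\whatever)$, and the augmentation $\Fcal \to \Sp^\infty$ induces a map of $\Gamma$-sets $\pi_0\Fcal(\whatever) \to \pi_0\Sp^\infty(\whatever) = \Sp^\infty(\whatever)$ (the last space is already discrete, so $\pi_0$ does nothing).

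Next I would simply compose: define the candidate augmentation on $\pi_0$ to be
\[
\pi_0\Gcal(\whatever) \xrightarrow{\ \cong\ } \pi_0\Fcal(\whatever) \longrightarrow \Sp^\infty(\whatever),
\]
using the inverse of the isomorphism induced by $\Fcal\xrightarrow{\simeq}\Gcal$. This is a natural transformation of functors on $\Gamma$, hence by the discreteness observation it corresponds to a unique natural transformation $\epsilon_\Gcal\colon \Gcal(\whatever)\to\Sp^\infty(\whatever)$ of $\Gamma$-spaces. By construction the triangle with $\Fcal\to\Gcal$ and the two augmentations commutes on $\pi_0$, and since both composites land in a discrete $\Gamma$-space, commuting on $\pi_0$ is the same as commuting as maps of $\Gamma$-spaces; so the triangle commutes. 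Uniqueness is immediate: any augmentation $\Gcal\to\Sp^\infty$ making the triangle commute must, on $\pi_0$, equal the composite above (because $\pi_0$ of $\Fcal\to\Gcal$ is an isomorphism), and a map to a discrete $\Gamma$-space is determined by its effect on $\pi_0$.

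I should also check that the resulting $\epsilon_\Gcal$ really is an \emph{augmentation} in the sense of Definition~\ref{definition: augmented}, i.e.\ that the preimage of the basepoint of $\Sp^\infty(\finiteset{n})$ is just the basepoint of $\Gcal(\finiteset{n})$. This is the only step that is not completely formal: one has to see that the basepoint-preimage condition transfers across an equivalence. Here the point is that for the \emph{discrete} target, ``the preimage of the basepoint is just the basepoint'' is really a statement about $\pi_0$: $(\pi_0\epsilon_\Fcal)^{-1}(\ast) = \{\ast\}$ in $\pi_0\Fcal(\finiteset{n})$, together with the fact that the basepoint component of $\Fcal(\finiteset{n})$ is contractible is not needed — what is needed is only that the set-level preimage condition, read off from $\pi_0$, is carried along by the $\pi_0$-isomorphism induced by $\Fcal\xrightarrow{\simeq}\Gcal$, which sends the basepoint component to the basepoint component. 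I expect this basepoint-preimage bookkeeping to be the only mildly delicate point; everything else is a formal consequence of the discreteness of $\Sp^\infty$ on finite sets.
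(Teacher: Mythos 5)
Your core argument is essentially the paper's: since $\Sp^\infty$ takes discrete values on finite pointed sets, a map into it is determined by $\pi_0$, so the levelwise $\pi_0$-isomorphism induced by $\Fcal\xrightarrow{\simeq}\Gcal$ produces a unique map $\Gcal(X)\to\Sp^\infty(X)$ completing the triangle for each $X$, and uniqueness forces naturality. The paper's proof consists of exactly this and nothing more.

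One caution about your last paragraph. The condition of Definition~\ref{definition: augmented} — that the point-set preimage of the basepoint of $\Sp^\infty\left(\finiteset{n}\right)$ be exactly the basepoint of $\Gcal\left(\finiteset{n}\right)$ — is \emph{not} a statement about $\pi_0$, and it does not transfer across a mere weak equivalence: what the $\pi_0$ argument gives is only that the preimage under the induced map $\epsilon_\Gcal$ is the entire basepoint component of $\Gcal\left(\finiteset{n}\right)$, which is weakly contractible but need not be a single point. So your claim that the ``basepoint-preimage bookkeeping'' is carried along by the $\pi_0$-isomorphism is not justified as stated. This does not put you at odds with the paper's own argument, which never verifies the preimage condition and in effect reads ``augmentation'' in this lemma simply as a natural transformation to $\Sp^\infty$; but if one insists on the strict condition of Definition~\ref{definition: augmented}, it genuinely can fail for $\Gcal$, and no argument along these lines can recover it.
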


\begin{proof}
For every finite set~$X$, the space $\Sp^\infty(X)$ is discrete. We have a diagram
\[
\Sp^\infty(X)\longleftarrow \Fcal(X) \xlongrightarrow{\simeq} \Gcal(X),
\]
and hence there is a {\it unique} map $\Gcal(X)\to \Sp^\infty(X)$ that completes this diagram to a commutative triangle. Because of the uniqueness, this map is natural in $X$ and is therefore a natural transformation 
$\Gcal\rightarrow\Sp^\infty$ of $\Gamma$-spaces.
\end{proof}

The following proposition is the main result of this section. It says that an action of an $E_\infty$-operad on an augmented $\Gamma$-space is automatically augmentation-preserving. 

\begin{proposition}\label{proposition: augmentation preserved}
Suppose $\Fcal$ is an augmented special $\Gamma$-space, and $\Ocal$ is an $E_\infty$-operad acting on $\Fcal$. 
Then the action of $\Ocal$ is augmentation-preserving, in the sense that
for every~$k$, the following diagram commutes:
\begin{equation*}
\begin{gathered}
\xymatrix{
E\Sigma_k \times \Fcal^k
    \ar[r]^-{\mu}\ar[d]_-{{*}\times\epsilon^k}   & \Fcal\ar[d]_-{\epsilon} \\
{*} \times \left(\Sp^{\infty}\right)^k
    \ar[r]^-{\mu} &\Sp^\infty.
}
\end{gathered}
\end{equation*}
\end{proposition}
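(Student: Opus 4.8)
The plan is to reduce the commutativity of the square to an elementary statement about commutative monoids by passing to~$\pi_{0}$. First I would note that it suffices to check the square after evaluating on each finite pointed set~$\finiteset{n}$ and applying~$\pi_{0}$: the space $\Sp^\infty(\finiteset{n})$ is discrete, so two maps into it agree as soon as they induce the same map on path components, and since $E\Sigma_{k}$ is contractible we have $\pi_{0}\!\left(E\Sigma_{k}\times\Fcal(\finiteset{n})^{k}\right)\cong\pi_{0}\Fcal(\finiteset{n})^{k}$. Thus the claim becomes: the two maps $\pi_{0}\Fcal(\finiteset{n})^{k}\to\pi_{0}\Sp^\infty(\finiteset{n})\cong\naturals^{n}$ — one obtained by applying $\pi_{0}\epsilon$ and then the $k$-ary operation $\pi_{0}\mu$ on~$\pi_{0}\Sp^\infty$, the other by applying the $k$-ary operation $\pi_{0}\mu$ on $\pi_{0}\Fcal$ and then $\pi_{0}\epsilon$ — coincide.

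Next I would record what the operad action looks like on~$\pi_{0}$. Since every $E\Sigma_{j}$ is path-connected, the operad axioms force the induced $k$-ary operation $\pi_{0}\mu$ on $\pi_{0}\Fcal(\finiteset{n})$ to be the $k$-fold iterate of the binary one, which is associative and commutative; and naturality of the operad unit (evaluated on~$\finiteset{0}$) shows its value is the basepoint component, so that component is the monoid unit. The identical remarks apply to $\Sp^\infty$ with its commutative-operad action, for which the induced operation on $\pi_{0}\Sp^\infty(\finiteset{n})=\naturals^{n}$ is iterated addition. Hence it is enough to prove that $\pi_{0}\epsilon(\finiteset{n})$ is a homomorphism from $\pi_{0}\Fcal(\finiteset{n})$, equipped with the binary operadic operation, to $(\naturals^{n},+)$.

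The heart of the argument, and the step I expect to require the most care, is to identify this binary operadic operation on $\pi_{0}\Fcal(\finiteset{n})$ with the binary operation coming from the $\Gamma$-structure, namely $(a,b)\mapsto\Fcal(\nabla)\!\left(\lambda^{-1}(a,b)\right)$, where $\lambda\colon\pi_{0}\Fcal(\finiteset{n}\vee\finiteset{n})\xrightarrow{\cong}\pi_{0}\Fcal(\finiteset{n})^{2}$ is induced by the linearization equivalence and $\nabla\colon\finiteset{n}\vee\finiteset{n}\to\finiteset{n}$ is the fold map. Here I would use naturality of the operad action over~$\Gamma$: applied to the two retractions $\finiteset{n}\vee\finiteset{n}\to\finiteset{n}$ it shows $\lambda$ is an isomorphism of commutative monoids onto the product (with the componentwise operadic structure), so that the operadic product of $\Fcal(\iota_{1})(a)$ and $\Fcal(\iota_{2})(b)$ in $\pi_{0}\Fcal(\finiteset{n}\vee\finiteset{n})$ equals $\lambda^{-1}(a,b)$, using that the basepoint is the unit; applied to~$\nabla$, and using $\nabla\iota_{1}=\nabla\iota_{2}=\id$, it then gives that $\Fcal(\nabla)\!\left(\lambda^{-1}(a,b)\right)$ is the operadic product of~$a$ and~$b$. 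Since the left-hand side is by definition the $\Gamma$-structure sum $a+b$, the two binary operations agree.

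With this identification the proof concludes immediately: $\epsilon\colon\Fcal\to\Sp^\infty$ is a natural transformation of $\Gamma$-spaces, so $\pi_{0}\epsilon(\finiteset{n})$ automatically respects the $\Gamma$-structure operations (again by naturality along the retractions and the fold map), hence it respects the binary operadic operation on both sides, hence it intertwines their $k$-fold iterates — which is exactly the identity we reduced to. I do not expect the augmentation hypothesis (injectivity over the basepoint) to enter the argument; it applies to any natural transformation $\Fcal\to\Sp^\infty$ of $\Gamma$-spaces.
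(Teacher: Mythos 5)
Your argument is correct, and it proves the proposition by a more elementary packaging of the same underlying idea. The paper argues at the space level: it introduces the assembly map \eqref{eq: assembly}, shows that $\lambda\circ\alpha$ is homotopic to the projection off $E\Sigma_k$ (Lemma~\ref{lemma: reverse}), deduces that the operad action composed with $\lambda$ agrees up to homotopy with the map induced by the fold (Lemma~\ref{lemma: inverse assembly}), and only at the very end uses discreteness of $\Sp^\infty$ on finite sets to upgrade homotopy commutativity of the right-hand square to strict commutativity. You pass to $\pi_0$ immediately (same discreteness, plus connectivity of $E\Sigma_k$), reduce the $k$-ary operation to iterates of the binary one, and identify the binary operadic product on $\pi_0\Fcal(\finiteset{n})$ with the $\Gamma$-structure operation $\Fcal(\nabla)\circ\lambda^{-1}$; this identification is precisely the $\pi_0$-shadow of the paper's two lemmas, proved from the same inputs (specialness to invert $\lambda$ on $\pi_0$, naturality of the action along the inclusions, the retractions and the fold, and the fact that the basepoint component is the operadic unit, which---as is implicit in the paper's ``the basepoint acts as the identity''---comes from the nullary operation landing at the basepoint by naturality over $\finiteset{0}\to\finiteset{n}$, using that $\Fcal(\finiteset{0})$ is trivial). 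What your route buys is that no assembly map and no explicit homotopies are needed, only discrete commutative-monoid algebra; the paper's space-level formulation instead yields the conceptual remark that the assembly map is a homotopy inverse to linearization. One terminological nit: the monoid unit comes from the nullary operation $E\Sigma_0=*$, not from the operad unit in $\Ocal(1)$, though the naturality argument you give for it is exactly right. You are also correct that the injectivity-over-the-basepoint clause of the augmentation hypothesis plays no role; the paper's proof likewise uses only that $\epsilon$ is a natural transformation into the discrete-valued $\Sp^\infty$.
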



Before we get to the proof of
Proposition~\ref{proposition: augmentation preserved}, we need to develop some tools, the first of which is the assembly map. Let $X_1, \ldots, X_k$ be pointed finite sets, 
and define $X=X_1\vee\cdots\vee X_k$. If $\Fcal$ is a $\Gamma$-space, 
the pinch maps $X\rightarrow X_{i}$ induce a ``linearization" map
\begin{equation} \label{eq: special equivalence} 
\lambda\colon \Fcal(X)\longrightarrow  \Fcal(X_1)\times \cdots \times \Fcal(X_k).
\end{equation} 
If $\Fcal$ is special, then \eqref{eq: special equivalence} is 
a weak equivalence, but there is not an immediately apparent homotopy inverse. 
However, because $\Fcal$ is 
special, it has an action of an $E_{\infty}$-operad. 
Our next lemma says that {\emph{any}} such action allows us to construct a
kind of inverse for \eqref{eq: special equivalence}, as follows. 
For each~$i$ the inclusion 
$X_i\hookrightarrow X$ induces a map 
$\Fcal\left(X_i\right)\hookrightarrow \Fcal(X)$. Taking the product 
of these maps over $i$, and composing with the operad action on~$\Fcal$,
defines the \defining{assembly map,} 
\begin{equation}\label{eq: assembly}
\alpha\colon E\Sigma_k\times \Fcal(X_1)\times \cdots \times \Fcal(X_k) 
      \longrightarrow E\Sigma_k\times \Fcal(X)^k 
      \longrightarrow \Fcal(X).
\end{equation}

\begin{lemma}\label{lemma: reverse}
Let $\Fcal$ be a special $\Gamma$-space, and fix an action of an $E_\infty$-operad. Let $X=X_1\vee\cdots\vee X_k$ be a finite pointed set, as above. Then the composite 
$\lambda\circ\alpha$ is homotopic to the map that projects off~$E\Sigma_{k}$. 
\end{lemma}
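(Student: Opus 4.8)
The plan is to compute $\lambda\circ\alpha$ one coordinate at a time, reduce to a statement about a single slot of the operad action, and then invoke the contractibility of $E\Sigma_1$ together with the unit axiom.

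First I would record how the structural maps of the wedge interact with $\Fcal$. Write $\iota_i\colon X_i\hookrightarrow X$ for the inclusions and $p_j\colon X\to X_j$ for the pinch maps, so that $p_j\circ\iota_i$ is $\id_{X_i}$ when $i=j$ and the constant map to the basepoint when $i\ne j$. Since $\Fcal$ is a pointed functor, $\Fcal(p_j\iota_i)$ is the identity when $i=j$ and is constant at the basepoint of $\Fcal(X_j)$ otherwise. Using the naturality of the operad action $\mu$ with respect to $p_j\colon X\to X_j$, the $j$-th component of $\lambda\circ\alpha(e;f_1,\dots,f_k)$ equals
\[
\Fcal(p_j)\,\mu\!\left(e;\Fcal(\iota_1)f_1,\dots,\Fcal(\iota_k)f_k\right)
=\mu\!\left(e;\Fcal(p_j\iota_1)f_1,\dots,\Fcal(p_j\iota_k)f_k\right)
=\mu(e;\ast,\dots,f_j,\dots,\ast),
\]
with $f_j$ in the $j$-th slot and basepoints in the others. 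In particular $\lambda\circ\alpha$ is the product over $j$ of maps that depend only on $e$ and $f_j$, so it suffices to produce, for each $j$, a homotopy $\mu(e;\ast,\dots,f_j,\dots,\ast)\simeq f_j$ that is natural in the pair $(e,f_j)$.

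To do this I would use the operad-algebra axioms to reduce the $k$-ary operation to a $1$-ary one. Writing the basepoint of $\Fcal(X_j)$ as the value $\mu_0(\mathrm{pt})$ of the nullary operation (legitimate since $E\Sigma_0=\ast$) and writing $f_j=\mu_1(u;f_j)$ for the operadic unit $u\in E\Sigma_1$, the associativity relation for the action yields
\[
\mu(e;\ast,\dots,f_j,\dots,\ast)
=\mu_1\!\left(\gamma(e;\mathrm{pt},\dots,u,\dots,\mathrm{pt});f_j\right)
=\mu_1\!\left(\psi_j(e);f_j\right),
\]
where $\psi_j\colon E\Sigma_k\to E\Sigma_1$ is the continuous map $e\mapsto\gamma(e;\mathrm{pt},\dots,u,\dots,\mathrm{pt})$ coming from the operad structure map with a single $1$ in input $j$ and $0$'s elsewhere.

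Finally, because $E\Sigma_1$ is contractible, $\psi_j$ is homotopic, through maps $E\Sigma_k\to E\Sigma_1$, to the constant map at $u$; choose such a homotopy $G_j\colon E\Sigma_k\times I\to E\Sigma_1$ with $G_j(-,0)=\psi_j$ and $G_j(-,1)\equiv u$. Then $(e,f_j,t)\mapsto\mu_1(G_j(e,t);f_j)$ is a homotopy from $\mu_1(\psi_j(e);f_j)=\mu(e;\ast,\dots,f_j,\dots,\ast)$ to $\mu_1(u;f_j)=f_j$, the last step by the unit axiom; assembling these homotopies over $j=1,\dots,k$ gives the desired homotopy from $\lambda\circ\alpha$ to the projection off $E\Sigma_k$. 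I do not expect any serious obstacle: the construction is entirely formal, and it uses nothing about $\Fcal$ beyond being a pointed functor carrying an $E_\infty$-action. The one place requiring care is the bookkeeping with the operad-algebra axioms that produces the identity $\mu(e;\ast,\dots,f_j,\dots,\ast)=\mu_1(\psi_j(e);f_j)$ — in particular the convention, implicit in the phrase ``unit relations,'' that the nullary operation picks out the basepoint — but once that identity is in hand, the rest is immediate.
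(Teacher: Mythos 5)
Your proof is correct and takes essentially the same route as the paper: your coordinatewise computation of $\lambda\circ\alpha$ via naturality of $\mu$ along the pinch maps is exactly the content of the paper's diagram~\eqref{diagram: assembly}, and your identity $\mu(e;\ast,\dots,f_j,\dots,\ast)=\mu_1(\psi_j(e);f_j)$ together with the contractibility of $E\Sigma_1$ and the unit axiom simply makes explicit the paper's assertion that the basepoint acts as the identity. There is no gap; if anything, your version records the nullary/unit bookkeeping that the paper leaves implicit.
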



\begin{proof}
Consider the following homotopy commutative diagram, 
where the bottom square's vertical equivalences come from the assumption that $\Fcal$ is special,
and the horizontal maps are given by the operad action
(in the bottom row, the product over $i$ of the action for each $\Fcal(X_{i})$): 
\begin{equation}\label{diagram: assembly}
\begin{gathered}
\xymatrix{
E\Sigma_k \times \Fcal(X_1)\times \cdots \times \Fcal(X_k) 
      \ar[d]\ar[rd]^-{\alpha} & \\ 
E\Sigma_k \times\left(\largestrut\Fcal(X)\right)^{k} 
      \ar[r]^{\mu}\ar[d]_{\lambda^k}^\simeq
    &\Fcal(X)
      \ar[d]_{\lambda}^\simeq\\
E\Sigma_k \times \left(\largestrut\Fcal(X_1)\times \cdots \times \Fcal(X_k)\right)^k 
     \ar[r]_-{\mu^k}
     &\Fcal(X_1)\times \cdots \times \Fcal(X_k).
}
\end{gathered}
\end{equation}
The composite of the left column sends each $\Fcal(X_i)$ by the identity map to the $i$-th copy of~$\Fcal(X_i)$ (the ``diagonal" copy) and to the basepoint of the non-diagonal copies
of~$\Fcal(X_i)$. However, the bottom row maps into $\Fcal(X_{i})$ in
the bottom right corner via the operad action 
$E\Sigma_{k}\times \Fcal(X_{i})^k\rightarrow\Fcal(X_i)$, and the basepoint
of $\Fcal(X_{i})$ acts as the identity. It follows that the counterclockwise composite map from top left to bottom right
simply projects off~$E\Sigma_{k}$. Hence the clockwise composition does likewise. 
\end{proof}

\begin{remark*} 
The ``linearization" map $\lambda$ of~\eqref{eq: special equivalence} is an 
equivalence of symmetric multi-functors when $\Fcal$ is special. 
In view of Lemma~\ref{lemma: reverse}, we like to think of the map 
$\alpha$ as a homotopy inverse to this equivalence. Notice however that the source of $\alpha$ is not the target 
of~\eqref{eq: special equivalence}, but instead plays the role 
typically played by a cofibrant replacement. 
\end{remark*}

The second tool we need for the proof of Proposition~\ref{proposition: augmentation preserved} 
is a connection between an operad action and the fold map.
On the one hand, the map $\finiteset{k}\rightarrow\finiteset{1}$ that takes $1,\ldots,k$ to $1$ induces the fold map 
$\bigvee_{k}X=\finiteset{k}\wedge X\rightarrow X$ and therefore a map 
\[
\Fcal(\finiteset{k}\wedge X) \xrightarrow{\Fcal(\rm{fold})} \Fcal(X).
\]
On the other hand, we have the linearization map \eqref{eq: special equivalence},
\[
\lambda\colon \Fcal(\finiteset{k}\wedge X)\rightarrow 
  \Fcal(\finiteset{1}\wedge X)^k=\Fcal(X)^k,
\]
the map whose components are induced by sending each non-basepoint element of 
$\finiteset{k}$ in turn to the 
non-basepoint element of $\finiteset{1}$ and which is an homotopy equivalence when $\Fcal$ is special. 

\begin{lemma}\label{lemma: inverse assembly}
Let $\Fcal$ be a special $\Gamma$-space, with a fixed action of an $E_{\infty}$-operad. The following diagram is homotopy commutative, where the horizontal maps are, respectively, projection and the operad action:
\[
\xymatrix{
E\Sigma_k \times \Fcal(\finiteset{k}\wedge X)
     \ar[r]^-{\pi}
     \ar[d]_-{\lambda}^-{\simeq}
&
\Fcal(\finiteset{k}\wedge X)\ar[d]^{\Fcal(\rm{fold})}
\\
  E\Sigma_k \times \Fcal(\finiteset{1}\wedge X)^k \ar[r]_-{\mu}
  & \Fcal(\finiteset{1}\wedge X).
}
\]
\end{lemma}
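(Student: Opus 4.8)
The plan is to obtain the lemma as a formal consequence of Lemma~\ref{lemma: reverse}, applied to the wedge decomposition $\finiteset{k}\wedge X=X_1\vee\cdots\vee X_k$ in which every summand $X_i$ is a copy of $X$. For this decomposition the linearization map of~\eqref{eq: special equivalence} is precisely the map $\lambda$ in the statement, and the assembly map $\alpha$ of~\eqref{eq: assembly} is built from the summand inclusions $\iota_i\colon X_i\hookrightarrow\finiteset{k}\wedge X$. The one elementary fact I would record at the outset is that $\operatorname{fold}\circ\iota_i=\id_X$ in $\Gamma$ for every $i$.

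First I would show that $\Fcal(\operatorname{fold})\circ\alpha=\mu$ on the nose, as maps $E\Sigma_k\times\Fcal(X)^k\to\Fcal(X)$. This is pure naturality: since $\mu\colon E\Sigma_k\times\Fcal^k\to\Fcal$ is a natural transformation in the argument of $\Fcal$, naturality along $\operatorname{fold}\colon\finiteset{k}\wedge X\to X$ gives $\Fcal(\operatorname{fold})\circ\mu=\mu\circ(\id\times\Fcal(\operatorname{fold})^{\times k})$; precomposing with the map $\id\times\prod_i\Fcal(\iota_i)$ that begins the definition of $\alpha$, and using $\Fcal(\operatorname{fold})\circ\Fcal(\iota_i)=\Fcal(\operatorname{fold}\circ\iota_i)=\id$, collapses the right-hand composite to $\mu$. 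Consequently the composite $\mu\circ(\id\times\lambda)$ along the left and bottom edges of the square equals $\Fcal(\operatorname{fold})\circ\alpha\circ(\id\times\lambda)$, so the lemma reduces to showing $\alpha\circ(\id\times\lambda)\simeq\pi$ as maps $E\Sigma_k\times\Fcal(\finiteset{k}\wedge X)\to\Fcal(\finiteset{k}\wedge X)$.

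For that last homotopy I would postcompose with $\lambda$, which is an equivalence because $\Fcal$ is special. On one side, $\lambda\circ\pi$ is simply $\lambda$ followed by the projection away from the $E\Sigma_k$ factor. On the other side, $\lambda\circ\alpha\circ(\id\times\lambda)=(\lambda\circ\alpha)\circ(\id\times\lambda)$, and Lemma~\ref{lemma: reverse} identifies $\lambda\circ\alpha$ up to homotopy with the projection off $E\Sigma_k$; hence this composite is also homotopic to $(\id\times\lambda)$ followed by that projection, which is again $\lambda\circ\pi$. Since $\lambda$ is a homotopy equivalence --- here, as throughout, we assume our spaces have the homotopy type of CW complexes --- we may cancel it on the left, concluding $\alpha\circ(\id\times\lambda)\simeq\pi$ and therefore $\Fcal(\operatorname{fold})\circ\pi\simeq\mu\circ(\id\times\lambda)$.

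None of these steps is a serious obstacle; once Lemma~\ref{lemma: reverse} is in hand the argument is essentially bookkeeping. The two points that warrant care are keeping straight that the two occurrences of ``$\mu$'' in the first step live over different objects of $\Gamma$ and are related precisely by naturality of the operad action along the fold map, and the cancellation of $\lambda$ at the end, which uses that a homotopy detected after applying the specialness equivalence can be transported back. Conceptually, the content of the lemma is just that, for a special $\Gamma$-space, the $E_\infty$-operad action models the ``addition'' encoded by the fold map of the $\Gamma$-structure, and Lemma~\ref{lemma: reverse} is exactly the tool that makes this precise.
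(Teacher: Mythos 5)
Your proof is correct and follows essentially the same route as the paper: decompose $\finiteset{k}\wedge X$ as $k$ copies of $X$, use naturality of $\mu$ along the fold map together with $\operatorname{fold}\circ\iota_i=\id$ to get $\Fcal(\operatorname{fold})\circ\alpha=\mu$, and then invoke Lemma~\ref{lemma: reverse} (cancelling the equivalence $\lambda$) to identify $\alpha\circ(\id\times\lambda)$ with the projection $\pi$. The only difference is cosmetic: you spell out the cancellation of $\lambda$ that the paper leaves implicit when it says $\pi$ ``factors'' through $\lambda$ and $\alpha$.
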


\begin{proof}
By Lemma~\ref{lemma: reverse}, the projection map $\pi$ (top row) factors as the composite 
of $\lambda$ (left column) and the assembly map~$\alpha$ 
\eqref{eq: assembly} for the case $X_1=\cdots =X_k=X$.
We reproduce the diagram of interest below, with $\alpha$ inserted as the upward diagonal composite. The upper triangle commutes: 
\[
\xymatrix{
E\Sigma_k \times \Fcal(\finiteset{k}\wedge X)
     \ar[dd]_-{\lambda}
     \ar[rr]^{\pi}
 && \Fcal(\finiteset{k}\wedge X)\ar[dd]^{{\rm{fold}}}\\ 
&E\Sigma_k \times \Fcal(\finiteset{k}\wedge X)^k 
     \ar[ur]_-{\mu}
     \ar@/^.5pc/@<.1ex>[dl]^-{\Fcal({\rm{fold}})^k}
      \\
E\Sigma_k \times \Fcal(\finiteset{1}\wedge X)^k 
     \ar[rr]_-{\mu}
     \ar@/^.5pc/@<-.1ex>[ur]
     &&\Fcal(\finiteset{1}\wedge X).
}
\]
The key point is that the first map in $\alpha$ (lower left corner to middle)  is actually a section of the map induced by the fold map; that is, going from the bottom left corner to the middle and then back again is the identity. Further, the lower right triangle commutes by naturality of the operad action, and the lemma follows. 
\end{proof}

Now we are ready to prove that any $E_{\infty}$-operad action on an augmented special $\Gamma$-space must preserve the augmentation.

\begin{proof}[Proof of Proposition~\ref{proposition: augmentation preserved}]
Recall that $\Fcal$ is a special $\Gamma$-space equipped with a natural transformation
$\epsilon\colon\Fcal\rightarrow\Sp^{\infty}$. Let $X$ be a pointed finite set. We must show that the right square in the following diagram (strictly) commutes: 
\[
\xymatrix{
E\Sigma_k \times \Fcal(\finiteset{k}\wedge X) 
       \ar[d]_{\ast\times\epsilon}\ar[r]_-\simeq^-{\lambda} 
    & E\Sigma_k \times \Fcal(X)^k \ar[r]^-{\mu}\ar[d]_{\ast\times\epsilon^k}  
    & \Fcal(X)\ar[d]_{\epsilon} \\
\ast\times\Sp^\infty(\finiteset{k}\wedge X) \ar[r]_-\cong 
    &\ast\times \Sp^{\infty}(X)^k \ar[r]_-\cong 
    &\Sp^\infty(X).
}
\]
The composite horizontal map in the bottom row is induced by the folding map (on the nose), and by Lemma~\ref{lemma: reverse} the same is true up to homotopy for the top row.
Hence the outer rectangle commutes up to homotopy. On the other hand,
the left square is strictly commutative. Hence the right square commutes up to
homotopy, and since $\Sp^\infty(X)$ is discrete, the right square commutes strictly.
\end{proof}

To close this section, we return to the special $\Gamma$-space of 
Example~\ref{example: U}. As we mentioned before, there is a general construction that associates to a special $\Gamma$-space an $E_\infty$-operad acting on it. But for a specific $\Fcal$ there may be a particular $E_\infty$-operad acting naturally on $\Fcal$. The following example will play a major role in future applications. 

\begin{example}  \label{example: bu example operad action} 
Recall that in Example~\ref{example: U} we defined
\[
\Fcal_U\left(\strut\finiteset{m}\right)
   =\coprod_{(k_1,\ldots, k_m)\in \naturals^m}
    \Inj\left(\complexes^{k_1+\cdots+k_m}, \complexes^\infty\right)
          /_{U(k_1)\times\cdots\times U(k_m)}.
\]
To streamline notation, we appeal to the concept of Grassman manifolds and write
\[
\Gr\left(k_1, \ldots, k_m\right)= \Inj(\complexes^{k_1+\cdots+k_m}, \complexes^\infty)/_{U(k_1)\times\cdots\times U(k_m)},
\]
so that we have
\[
\Fcal_U\left(\strut\finiteset{m}\right)
   =\coprod_{(k_1,\ldots, k_m)\in \naturals^m}
   \Gr\left(k_1, \ldots, k_m\right) .
\]

We want to use the linear isometries operad as the $E_\infty$-operad acting on~$\Fcal_U$.
Since its $n$-th space is
$\Inj\left(\Largestrut(\complexes^\infty)^n, \complexes^\infty\right)$,
we need to describe maps of the form
\begin{equation}\label{eq: isometries action}
\Inj\left(\Largestrut(\complexes^\infty)^n, \complexes^\infty\right)
\times
\Fcal_U\left(\strut\finiteset{m}\right)^n \to \Fcal_U\left(\strut\finiteset{m}\right).
\end{equation}
We proceed one component at a time. Suppose that $\Gr_{1}, \ldots,\Gr_{n}$ are components of
$\Fcal_U\left(\strut\finiteset{m}\right)$, say with
\begin{align*}
\Gr_{1}&=\Gr\left(i_{1,1}, \ldots, i_{1,m}\right)\\
\Gr_{2}&=\Gr\left(i_{2,1}, \ldots, i_{2,m}\right)\\
  \vdots \\
\Gr_{n}&=\Gr\left(i_{n,1}, \ldots, i_{n,m}\right).
\end{align*}
To find the component of $\Fcal_U\left(\strut\finiteset{m}\right)$ that will be the
target of~\eqref{eq: isometries action}, we add together the dimensions of corresponding subspaces
in $\Gr_{1},\ldots,\Gr_{n}$:
\[
k_{1}=\sum_{j=1}^{n} i_{j,1},\ \ldots\ ,k_{m}=\sum_{j=1}^{n} i_{j,m}.
\]
To define a map
\begin{equation}\label{eq: action map U}
\Inj\left(\Largestrut(\complexes^\infty)^n, \complexes^\infty\right)
\times
\Gr_{1}\times \cdots\times \Gr_{n}
\rightarrow
\Gr\left(k_{1},\ldots,k_{m}\right),
\end{equation}
let $(\alpha,f_{1},...,f_{n})$ represent an element of the domain. Hence
$\alpha\colon \left(\complexes^{\infty}\right)^{n}\rightarrow\complexes$, and
$f_{j}:\complexes^{i_{j,1}+...+i_{j,m}}\rightarrow\complexes^{\infty}$.
We define the action map \eqref{eq: action map U} by
\[
\left(\strut\alpha, [f_1], \ldots, [f_n]\right)
\mapsto
\left[\strut\alpha\circ(f_1\times\cdots\times f_n)\right].
\]

Let us check from first principles 
that the action of the linear isometries operad on 
$\Fcal_U$ is augmentation-preserving. We need to check commutativity of the diagram
\begin{equation*}
\begin{gathered}
\xymatrix{
\Inj\left(\largestrut(\complexes^\infty)^n, \complexes^\infty\right)
         \times \left(\Fcal_{U}\right)^n
    \ar[r]\ar[d]   & \Fcal_{U}\ar[d]^-{\epsilon} \\
\left(\Sp^{\infty}\right)^n
    \ar[r]  &\Sp^\infty.
}
\end{gathered}
\end{equation*}
Going around clockwise, we apply \eqref{eq: action map U}, and then the augmentation
\eqref{eq: define augmentation FU} to obtain $(k_{1},\ldots,k_{m})\in\naturals^{m}$.
Going around counterclockwise, we have
$\Gr_{1},\ldots,\Gr_{n}$ mapping, respectively, to $\left(i_{1,1}, \ldots, i_{1,m}\right)$, $\ldots$,
$\left(i_{n,1}, \ldots, i_{n,m}\right)$ in~$\naturals^{m}$. The monoid multiplication
$\left(\Sp^{\infty}\right)^n\rightarrow\Sp^\infty$ adds coordinate-wise to obtain
$(k_{1},\ldots,k_{m})\in\naturals^{m}$, and so the diagram commutes as required.
\end{example}

\section{The bar construction and the unstable rank filtration}
\label{section: bar}
In this section, we take an augmented special $\Gamma$-space~$\Fcal$ and set up a two-sided bar construction
\[
\Barr(\Ccal, \Ccal, \Fcal)\definedas\Ccal\Ccal^{\bullet} \Fcal.
\]
The bar construction is to be a simplicial object over {\emph{augmented}} $\Gamma$-spaces, so we use the results of Section~\ref{section: operad actions} to establish that actually any $E_{\infty}$-operad action on $\Fcal$ gives a corresponding action of $\Ccal$ that respects the augmentation. 
Finally, we exhibit a straightforward filtration of $\Barr(\Ccal, \Ccal, \Fcal)$
by simplicial subspaces denoted
$\filter{m}\Barr(\Ccal, \Ccal, \Fcal)$ or $\filter{m}\Ccal\Ccal^{\bullet}\Fcal$, whose geometric realizations are equivalent to
the \emph{unstable} rank filtration of~$\Fcal$~\eqref{eq: unstable rank filtration}. Our principal goal for this paper, however, is achieved in Section~\ref{section: stable filtration}, where we explain how to filter
$\Barr(\Ccal, \Ccal, \Fcal)$ to get the \emph{stable} rank filtration, which is a more subtle matter.

\bigskip

Having discussed the concept of an action of an $E_{\infty}$-operad~$\Ocal$ on an augmented special $\Gamma$-space~$\Fcal$ in Section~\ref{section: operad actions}, we return to the monad $\Ccal$ associated to~$\Ocal$ (see~\eqref{eq: coend formula}).
The action $\mu$ of the operad $\Ocal$ on $\Fcal$ factors through 
the quotient map to $\Ccal\Fcal$, thereby giving a left action
$\mu_{\Fcal}$ of the monad $\Ccal$ on~$\Fcal$ \cite[Proposition 2.8]{May-Geometry}:
\begin{equation}    \label{eq: Ccal action}
\begin{gathered}
\xymatrix{
\coprod_{n\in\naturals}^{}E\Sigma_{n}\times\Fcal^{n}
    \ar@{->>}[dr]
    \ar[rr]^-{\mu}
     && \Fcal\\
&\Ccal\Fcal \ar[ur]_-{\mu_{\Fcal}}
     }
\end{gathered}
\end{equation}
That is, there is a natural transformation
$\mu_{\Fcal}\colon\Ccal \Fcal\to \Fcal$, satisfying the usual associativity and unit relations.
Further, we saw in Proposition~\ref{proposition: augmentation preserved}
that an $E_\infty$-operad acting on a special $\Gamma$-space is automatically augmentation-preserving. It follows that the associated monad action is augmentation-preserving as well, in a sense that is made precise in the following lemma.

\begin{lemma}\label{lemma: augmentation}
For any action of an $E_{\infty}$-operad $\Ocal$ on a special $\Gamma$-space~$\Fcal$, the associated action of the monad $\Ccal$ on~$\Fcal$ preserves the augmentation,
in the sense that the following diagram commutes:
\begin{equation*}    
\begin{gathered}
\xymatrix{
\Ccal \Fcal
    \ar[d]^-{\epsilon\circ\epsilon}
    \ar[r]^-{\mu_{\Fcal}}
     & \Fcal \ar[d]^-{\epsilon}\\
\Sp^\infty \Sp^\infty \ar[r]_-{\mu_{\Sp^{\infty}}}
     &\Sp^\infty.
     }
\end{gathered}
\end{equation*}
\end{lemma}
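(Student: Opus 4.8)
The plan is to deduce the lemma formally from Proposition~\ref{proposition: augmentation preserved}, exploiting the fact that $\mu_{\Fcal}$ is, by construction, the factorization of the operad action through the quotient presenting $\Ccal\Fcal$. First I would recall from~\eqref{eq: Ccal action} that $\mu_{\Fcal}\colon\Ccal\Fcal\to\Fcal$ is defined so that its precomposition with the natural quotient map $q\colon\coprod_{n}E\Sigma_{n}\times\Fcal^{n}\epi\Ccal\Fcal$ is the operad action~$\mu$. Analogously, the commutative-operad action on $\Sp^{\infty}$ discussed in Section~\ref{section: operad actions} is the factorization of the monad multiplication $\mu_{\Sp^{\infty}}\colon\Sp^{\infty}\Sp^{\infty}\to\Sp^{\infty}$ through the corresponding quotient $q'\colon\coprod_{n}*\times(\Sp^{\infty})^{n}\epi\Sp^{\infty}\Sp^{\infty}$; this is just the observation that the free commutative monoid monad has multiplication given by ``adding everything up.''

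Next I would record the tautological commuting square
\[
\xymatrix{
\coprod_{n}E\Sigma_{n}\times\Fcal^{n} \ar[r]^-{q} \ar[d]_-{\coprod_{n}*\times\epsilon^{n}} & \Ccal\Fcal \ar[d]^-{\epsilon\circ\epsilon} \\
\coprod_{n}*\times(\Sp^{\infty})^{n} \ar[r]_-{q'} & \Sp^{\infty}\Sp^{\infty}
}
\]
which expresses the compatibility of the coend presentations of $\Ccal\Fcal$ and $\Sp^{\infty}\Sp^{\infty}$ (Example~\ref{example: C}) with the augmentations $\Ccal\to\Sp^{\infty}$ and $\Fcal\to\Sp^{\infty}$: commutativity is immediate, since the first $\epsilon$ collapses each $E\Sigma_{n}$ to a point and the second applies the augmentation of $\Fcal$ coordinatewise.

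Finally, to prove the lemma I would note that it suffices to show $\epsilon\circ\mu_{\Fcal}=\mu_{\Sp^{\infty}}\circ(\epsilon\circ\epsilon)$ as maps $\Ccal\Fcal\to\Sp^{\infty}$, and since $q$ is surjective (hence epi in $\Top$) it is enough to verify this after precomposing with~$q$. On one side $\epsilon\circ\mu_{\Fcal}\circ q=\epsilon\circ\mu$; on the other, using the square above together with the factorization of $\mu_{\Sp^{\infty}}$, the composite $\mu_{\Sp^{\infty}}\circ(\epsilon\circ\epsilon)\circ q$ equals the commutative-operad action on $\Sp^{\infty}$ precomposed with $\coprod_{n}*\times\epsilon^{n}$. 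Restricting to the $n$-th summand $E\Sigma_{n}\times\Fcal^{n}$, the required identity $\epsilon\circ\mu=\mu\circ(*\times\epsilon^{n})$ is exactly the commutativity asserted in Proposition~\ref{proposition: augmentation preserved}, so the two precompositions agree and the lemma follows.

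I do not expect a serious obstacle: all the homotopical content has already been extracted in Proposition~\ref{proposition: augmentation preserved}, and what remains is the bookkeeping of passing from the operad action to the monad action via its defining quotient. The only points requiring a little care are the identification of $\mu_{\Sp^{\infty}}$ with the factorization of the commutative-operad action on $\Sp^{\infty}$, and the commutativity of the square relating the two quotient presentations to the augmentations --- both of which are routine unwindings of the coend formulas of Example~\ref{example: C}.
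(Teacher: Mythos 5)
Your argument is correct and is essentially the paper's own proof: both unwind $\mu_{\Fcal}$ via the quotient $\coprod_n E\Sigma_n\times\Fcal^n\epi\Ccal\Fcal$, observe the left-hand square commutes by naturality, invoke Proposition~\ref{proposition: augmentation preserved} for the outer rectangle, and use that the quotient map is an epimorphism to conclude the right-hand square commutes. The only cosmetic difference is that you make explicit the identification of $\mu_{\Sp^{\infty}}$ precomposed with the quotient as the commutative-operad action on $\Sp^{\infty}$, which the paper leaves implicit.
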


\begin{proof}
We unwind diagram~\eqref{eq: Ccal action} to be the top row of the following diagram, with vertical maps given by augmentations: 
\begin{equation*}    
\begin{gathered}
\xymatrix{
\displaystyle\coprod_{n\in\naturals}^{}E\Sigma_{n}\times\Fcal^{n}
    \ar@{->>}[r]
    \ar[d]_{*\times\epsilon^n}
&\Ccal \Fcal
    \ar[d]^{\epsilon\circ\epsilon}
    \ar[r]^{\mu_{\Fcal}}
& \Fcal \ar[d]^{\epsilon}\\
\displaystyle\coprod_{n\in\naturals}^{} *\times\left(\Sp^{\infty}\right)^{n}
    \ar@{->>}[r]
&\Sp^\infty \Sp^\infty
    \ar[r]_-{\mu_{\Sp^{\infty}}}
&\Sp^\infty.
     }
\end{gathered}
\end{equation*}
By Proposition~\ref{proposition: augmentation preserved} the outer rectangle commutes, and we must prove that the right-hand square commutes.

However, the left square of the diagram commutes, by naturality of two kinds: (i)~the map of operads $\{E\Sigma_{n}\}$ to $\{*\}$ from the $E_{\infty}$-operad to the commutative operad, and (ii) the natural transformation $\Fcal\rightarrow\Sp^{\infty}$. Finally, the maps beginning the horizontal rows are epimorphisms, which forces the right-hand square to commute as well.
\end{proof}

We are ready for the question of filtering $\Barr(\Ccal, \Ccal, \Fcal)$ by simplicial subspaces in order to obtain the unstable rank filtration. 
Given an action of an $E_\infty$-operad~$\Ocal$ on an augmented special $\Gamma$-space~$\Fcal$, 
Lemma~\ref{lemma: augmentation} indicates that we have an augmentation-preserving action of the associated monad $\Ccal$ on~$\Fcal$.
As we said in the introduction, one may now form the two-sided bar construction,
 $\Barr(\Ccal, \Ccal, \Fcal)$, also denoted~$\Ccal\Ccal^\bullet\Fcal$.
It is a simplicial \emph{augmented} $\Gamma$-space, whose $k$-th simplicial dimension is the augmented $\Gamma$-space~$\Ccal^{k+1}\Fcal$, and
which is also simplicially augmented to~$\Fcal$:
\begin{equation}\label{eq: resolution of Fcal}
\Fcal
\longleftarrow
\left\{\Largestrut
\xymatrix{
\Ccal \Fcal\
   \ar[r]
&\ \Ccal^{2}\Fcal\
         \ar@<-.6ex>[l]
         \ar@<.6ex>[l]
         \ar@<-.5ex>[r]
         \ar@<.5ex>[r]
&\ \Ccal^{3}\Fcal\
         \ar@<-1.0ex>[l]
         \ar[l]
         \ar@<1.0ex>[l]
\ldots
\quad\Ccal^{k+1}\Fcal\quad
\ldots
}
\right\}.
\end{equation}
Denoting the monad multiplication of $\Ccal$ by  $\mu_{\Ccal}\colon\Ccal\Ccal\rightarrow\Ccal$, the face
and degeneracy maps are given by
\begin{alignat*}{3}
d_{i} &= \Ccal^{i}\,\mu_{\Ccal}\left(\Ccal^{k-i-1}\Fcal\right)
       &\colon \Ccal\Ccal^k \Fcal \to  \Ccal\Ccal^{k-1} \Fcal
       & \mbox{\quad  for } i=0, \ldots, k-1,
\\
d_{k} &= \qquad\Ccal^{k}\,\mu_{\Fcal}
       &\colon \Ccal\Ccal^k \Fcal \to  \Ccal\Ccal^{k-1} \Fcal,
       \\
s_{i} &= \Ccal\Ccal^{i}\,\eta_{\Ccal}(\Ccal^{k-i}\Fcal)
       &\colon \Ccal\Ccal^k \Fcal \to  \Ccal\Ccal^{k+1} \Fcal
       & \mbox{\quad  for } i=0, \ldots, k.
\end{alignat*}

It follows from Lemma~\ref{lemma: filtered monad}, coupled with Lemma~\ref{lemma: augmentation}, that the face and degeneracy maps respect the
natural filtration of $\Barr(\Ccal, \Ccal, \Fcal)$. By this we mean that for each value of $k, i,$ and $m$, the face map $d_i$ restricts to a map
\[
\filter{m}{\Ccal\Ccal^k \Fcal} \to  \filter{m}{\Ccal\Ccal^{k-1} \Fcal}
\]
and likewise for the degeneracy map $s_{i}$.
As a consequence, $\Barr(\Ccal, \Ccal, \Fcal)$ is an augmented simplicial object in the category of filtered $\Gamma$-spaces.

In the usual way, one thinks of $\Barr(\Ccal,\Ccal,\Fcal)$ as a \emph{resolution} of $\Fcal$ by free $\Ccal$-algebras. This is because the unit map
$\eta_{\Ccal}(\Fcal)\colon \Fcal\rightarrow\Ccal\Fcal$, provides an extra degeneracy for the simplicial object $\Barr(\Ccal, \Ccal, \Fcal)$, and hence the natural map $|\Barr(\Ccal,\Ccal,\Fcal)|\to \Fcal$ is an equivalence.
Proposition~\ref{proposition: tautology} below establishes that
in our setting, where we know that the action of $\Ccal$ on~$\Fcal$ is augmentation-preserving, the map $|\Barr(\Ccal,\Ccal,\Fcal)|\to \Fcal$ is actually an equivalence of
\emph{filtered} objects.

\begin{definition}
A map of filtered spaces $X\to Y$ is a \defining{filtered equivalence} if it restricts to a weak equivalence $\filter{m}X\xrightarrow{\ \simeq\ } \filter{m}Y$ for each $m\ge 0$.
\end{definition}

\begin{proposition}\label{proposition: tautology}
Let $\Fcal$ be an augmented special $\Gamma$-space with a chosen action of an $E_{\infty}$-operad. Then the action map $\mu_{\Fcal}\colon\Ccal\Fcal\to \Fcal$ induces a filtered equivalence $\realization{\Ccal\Ccal^\bullet \Fcal}\xrightarrow{\simeq} \Fcal$. The unit map $\eta_{\Ccal}\colon \Fcal\to \Ccal\Fcal$ induces the inverse filtered equivalence
$\Fcal\xrightarrow{\simeq} \realization{\Ccal\Ccal^\bullet \Fcal}$.
\end{proposition}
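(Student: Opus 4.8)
The plan is to establish the filtered equivalence by combining the classical extra-degeneracy argument for bar constructions with the compatibility of all the relevant structure maps with filtrations, which we have already verified in Section~\ref{section: preliminaries}. The key observation is that every map appearing in the standard contracting homotopy for $\Barr(\Ccal,\Ccal,\Fcal)$ is built from $\eta_\Ccal$, $\mu_\Ccal$, and $\mu_\Fcal$, and each of these is filtration-preserving: for $\eta_\Ccal$ and $\mu_\Ccal$ this is Lemma~\ref{lemma: filtered monad}, while for $\mu_\Fcal$ it follows from Lemma~\ref{lemma: augmentation} together with the pullback description of $\filter{m}\Ccal$ in Remark~\ref{remark: pullback square} (the augmentation $\epsilon\colon\Fcal\to\Sp^\infty$ intertwines $\mu_\Fcal$ with $\mu_{\Sp^\infty}$, and $\mu_{\Sp^\infty}$ is filtered by Lemma~\ref{lemma: Spinf structure maps}).

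\textbf{Step 1: Filtered simplicial structure.} First I would record that, by the discussion immediately preceding this proposition, $\Barr(\Ccal,\Ccal,\Fcal)$ is an augmented simplicial object in $\filtered$ (applied levelwise, valued in filtered $\Gamma$-spaces), with the face and degeneracy maps restricting to maps of the truncated pieces $\filter{m}\Ccal\Ccal^k\Fcal$. Hence for each fixed $m$ we obtain an augmented simplicial filtered $\Gamma$-space $\filter{m}\Barr(\Ccal,\Ccal,\Fcal)$ with augmentation to $\filter{m}\Fcal$, and geometric realization commutes with the operation $\filter{m}$ in the sense that $\filter{m}\realization{\Ccal\Ccal^\bullet\Fcal}\cong\realization{\filter{m}\Ccal\Ccal^\bullet\Fcal}$ (realization is a colimit and $\filter{m}$ of a colimit of filtered spaces is computed levelwise on the $m$-th stages, since the maps $\filter{m}X_j\to X_j$ are all inclusions). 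This reduces the claim to showing, for each fixed $m$, that the augmentation $\realization{\filter{m}\Ccal\Ccal^\bullet\Fcal}\to\filter{m}\Fcal$ is a weak equivalence, with inverse induced by $\eta_\Ccal$.

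\textbf{Step 2: Extra degeneracy.} For fixed $m$, the unit $\eta_\Ccal(\filter{m}\Fcal)\colon\filter{m}\Fcal\to\Ccal\filter{m}\Fcal\hookrightarrow\filter{m}\Ccal\Fcal$ — again filtration-preserving by Lemma~\ref{lemma: filtered monad} — together with the maps $\Ccal^j\eta_\Ccal(\Ccal^{k-j}\filter{m}\Fcal)$ provides the standard extra degeneracy $s_{-1}$ on $\filter{m}\Barr(\Ccal,\Ccal,\Fcal)$ as an augmented simplicial object. The usual simplicial-identity check (verbatim as in \cite{May-Geometry}, since all identities are formal consequences of the monad axioms) shows this exhibits $\filter{m}\Fcal$ as a simplicial deformation retract of $\filter{m}\Barr(\Ccal,\Ccal,\Fcal)$; applying $\realization{-}$ gives that $\eta_\Ccal$ and $\mu_\Fcal$ induce mutually inverse homotopy equivalences $\filter{m}\Fcal\simeq\realization{\filter{m}\Ccal\Ccal^\bullet\Fcal}$. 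Since this holds for every $m$, compatibly with the stabilization maps (all the contracting homotopies are natural in $m$ because $\filter{m}\hookrightarrow\filter{m+1}$ commutes with $\eta_\Ccal$, $\mu_\Ccal$, $\mu_\Fcal$), we conclude that $\mu_\Fcal$ and $\eta_\Ccal$ induce inverse filtered equivalences, as claimed.

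\textbf{The main obstacle} I anticipate is Step~1: making precise that $\filter{m}$ genuinely commutes with geometric realization of a simplicial filtered space, and that the ``filtered simplicial object'' really does have the extra degeneracy \emph{landing in the right filtration stage at each level}. The first point requires care because $\filter{m}\Ccal\Ccal^k\Fcal$ uses the product filtration on powers of $\Ccal^k\Fcal$ (Definition~\ref{definition: define filtration CX}), not the naive one, so one must check that a face or degeneracy map, which a priori only respects the total filtration by Lemma~\ref{lemma: filtered monad}, does restrict to the $m$-th stages — but this is exactly the content of the paragraph preceding the proposition. The second point is a routine but slightly fiddly bookkeeping exercise: one checks that $\eta_\Ccal$ applied at level $k$ takes $\filter{m}\Ccal\Ccal^k\Fcal$ into $\filter{m}\Ccal\Ccal^{k+1}\Fcal$, which again follows because $\eta_\Ccal$ is filtered and the product filtration on $(\Ccal^{k+1}\Fcal)^n$ restricts compatibly along $X\simeq E\Sigma_1\times X\subseteq\Ccal X$. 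Once these are in place, the homotopy-theoretic content is entirely classical and the proof is short.
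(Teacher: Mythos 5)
Your proposal is correct and takes essentially the same route as the paper: the paper's proof likewise observes that $\eta_{\Ccal}$ is filtration-preserving (Lemma~\ref{lemma: filtered monad}), so that for each $m$ the subobject $\filter{m}\Ccal\Ccal^{\bullet}\Fcal$ is an augmented simplicial object with a filtered extra degeneracy, and concludes that the augmentation and unit induce mutually inverse equivalences $\realization{\filter{m}\Ccal\Ccal^{\bullet}\Fcal}\simeq\filter{m}\Fcal$ for every~$m$. Your extra care about $\filter{m}$ commuting with realization and about the face/degeneracy maps respecting the filtration is exactly the content the paper delegates to the discussion preceding the proposition, so nothing essential differs.
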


\begin{proof}
The unit map $\eta_{\Ccal}\colon\Fcal\to \Ccal\Fcal$ preserves filtration
(Lemma~\ref{lemma: filtered monad}),
and therefore endows $\Ccal\Ccal^\bullet \Fcal$ with a filtered extra degeneracy.
This means that for each~$m$, we get an augmented simplicial subobject of~\eqref{eq: resolution of Fcal}
that is equipped with an extra degeneracy:
\[
\xymatrix{
\filter{m}{\Fcal}
   {\bf\ar@{-->}[r]<-.8ex>}
&
\mbox{\huge $\{$} \filter{m}{\Ccal\Fcal}\
   \ar[r]
   \ar[l]
   \ar@{-->}[r]<-1.4ex>
&\ \filter{m}{\Ccal^{2}\Fcal}\
         \ar@<-.6ex>[l]
         \ar@<.6ex>[l]
         \ar@<-.5ex>[r]
         \ar@<.5ex>[r]
         \ar@{-->}[r]<-1.9ex>
&\ \filter{m}{\Ccal^{3}\Fcal}\
         \ar@<-1.0ex>[l]
         \ar[l]
         \ar@<1.0ex>[l]
\ldots
\quad\filter{m}{\Ccal^{k+1}\Fcal}\quad
\ldots \mbox{\huge $\}$}.
}
\]
It follows that an equivalence is induced by the augmentation
$\filter{m}{\Ccal\Fcal} \to \filter{m}{\Fcal}$,
giving
$\realization{\filter{m}\Ccal\Ccal^\bullet \Fcal}
\xrightarrow{\,\simeq\,}  \filter{m}\Fcal$,
with the inverse equivalence given by the unit map. See~\cite[Section 4.5]{Riehl-Categorical} for more detail on simplicial objects with an extra degeneracy.
\end{proof}

\section{The stable rank filtration of the bar construction}
\label{section: stable filtration}

We continue to use $\Fcal$ to denote an augmented special $\Gamma$-space,
which is equipped with an action of~$\Ccal$. By Lemma~\ref{lemma: augmentation}, the action preserves the augmentation. In Section~\ref{section: bar}, we showed a straightforward way to filter $\Ccal\Ccal^{\bullet}\Fcal$ by simplicial $\Gamma$-subspaces: the $m$-th filtration of $\Ccal\Ccal^{\bullet}\Fcal$ has $\filter{m}\Ccal\Ccal^{k}\Fcal$ in the $k$-th
simplicial dimension. The face and degeneracy maps are respected for straightforward reasons, and so we obtain simplicial $\Gamma$-spaces $\filter{m}\Ccal\Ccal^{\bullet}\Fcal$, and the resulting realizations give the
\emph{unstable} rank filtration of~$\Fcal$.

In the current section we discuss another, less obvious way to filter $\Ccal\Ccal^{\bullet}\Fcal$. We filter \emph{inside} the outermost factor of~$\Ccal$ in the double bar construction to define subobjects~$\Ccal\filter{m}\Ccal^{\bullet}\Fcal$. The goal is to show that this new filtration produces special simplicial $\Gamma$-spaces, and that their infinite deloopings are the \emph{stable} rank filtration of~$\Fcal$~\eqref{eq: rank}, which is the main goal of this paper (Theorem~\ref{theorem: bar filtration}).
There are three key adjectives here: in order of logical dependence, ``simplicial," ``special," and ``stable."

We outline the section by describing the three steps required in order to accomplish the goal of the previous paragraph. First, the adjective ``simplicial": it is not immediately obvious that for a fixed~$m$ the $\Gamma$-subspaces $\Ccal\filter{m}\Ccal^{k}\Fcal$
actually assemble over~$k$ into a simplicial $\Gamma$-space. Hence the first part of this section is devoted to proving the following proposition. (The proof of Proposition~\ref{proposition: structure maps respect filtration} begins with the paragraph preceding
Lemma~\ref{lemma: switch}.)

\begin{proposition}  \label{proposition: structure maps respect filtration}
The face and degeneracy maps of
$\Barr\left(\Ccal,\Ccal,\Fcal\right)=\Ccal\Ccal^\bullet\Fcal$ respect
the filtration of $\Ccal\Ccal^{\bullet}\Fcal$ by $\FilteredBK{m}{\Fcal}$.
That is, $\FilteredBK{m}{\Fcal}$ is a simplicial $\Gamma$-subspace of $\Barr\left(\Ccal,\Ccal,\Fcal\right)$.
\end{proposition}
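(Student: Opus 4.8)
The plan is to verify, one structure map at a time, that for fixed $m$ every face map $d_{i}$ and degeneracy map $s_{i}$ of $\Barr(\Ccal,\Ccal,\Fcal)$ carries the subspace $\Ccal\filter{m}{\Ccal^{k}\Fcal}\subseteq\Ccal\Ccal^{k}\Fcal$ into the corresponding subspace of the adjacent simplicial degree. The guiding observation is that, because $\Fcal$ is augmented, $\Ccal^{k}\Fcal$ is an object of $\filtered$, obtained by iterating the functor $\Ccal\colon\filtered\to\filtered$, and that \emph{every structure map except the outermost face map $d_{0}$} is obtained by applying an iterate of $\Ccal$ to a filtration-preserving map. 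Since $\Ccal$ takes filtered maps to filtered maps and preserves the subspace inclusions $\filter{m}{\Ccal^{k}\Fcal}\hookrightarrow\Ccal^{k}\Fcal$ --- both immediate from the coend formula for $\Ccal$ and the definition~\eqref{eq: product filtration} of the product filtration --- each such structure map will automatically restrict to the subspaces $\Ccal\filter{m}{\Ccal^{\bullet}\Fcal}$ once its ``inner'' part is known to be filtration-preserving.

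Concretely, for $1\le i\le k-1$ I would rewrite $d_{i}=\Ccal^{i}\mu_{\Ccal}(\Ccal^{k-i-1}\Fcal)$ as $\Ccal(g_{i})$ with $g_{i}\definedas\Ccal^{i-1}\mu_{\Ccal}(\Ccal^{k-i-1}\Fcal)\colon\Ccal^{k}\Fcal\to\Ccal^{k-1}\Fcal$; similarly $d_{k}=\Ccal(g_{k})$ with $g_{k}\definedas\Ccal^{k-1}\mu_{\Fcal}$, and $s_{i}=\Ccal(h_{i})$ with $h_{i}\definedas\Ccal^{i}\eta_{\Ccal}(\Ccal^{k-i}\Fcal)$ for $0\le i\le k$. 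The maps $g_{i}$ and $h_{i}$ are maps of filtered spaces by Lemma~\ref{lemma: filtered monad}, which says $\mu_{\Ccal}$ and $\eta_{\Ccal}$ preserve the filtration of Definition~\ref{definition: define filtration CX}, together with the fact that $\Ccal^{j}$ preserves filtered maps. For $g_{k}$ I would first record --- by the same cube argument that proves Lemma~\ref{lemma: filtered monad}, but invoking Lemma~\ref{lemma: augmentation} (together with Lemma~\ref{lemma: Spinf structure maps} and Remark~\ref{remark: pullback square}) in place of the ``map of monads'' input --- that the action map $\mu_{\Fcal}\colon\Ccal\Fcal\to\Fcal$ is itself filtration-preserving, so that $g_{k}=\Ccal^{k-1}\mu_{\Fcal}$ is filtered as well. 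A filtration-preserving map restricts to a map $\filter{m}{\Ccal^{k}\Fcal}\to\filter{m}{\Ccal^{k-1}\Fcal}$ (or $\to\filter{m}{\Ccal^{k+1}\Fcal}$) of underlying spaces, so applying $\Ccal$ produces exactly the required restriction of $d_{i}$, $d_{k}$, and $s_{i}$.

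The one map that escapes this pattern is $d_{0}=\mu_{\Ccal}(\Ccal^{k-1}\Fcal)\colon\Ccal\Ccal\Ccal^{k-1}\Fcal\to\Ccal\Ccal^{k-1}\Fcal$, which multiplies the \emph{unfiltered} outermost copy of $\Ccal$ with the first filtered copy; it is not $\Ccal$ of a map, and Lemma~\ref{lemma: filtered monad} --- which concerns the fully filtered $\filter{m}{(\Ccal\Ccal X)}$, not $\Ccal\filter{m}{(\Ccal X)}$ --- does not apply. Here the argument I would give is combinatorial. Put $X\definedas\Ccal^{k-1}\Fcal$; a point of $\Ccal\filter{m}{\Ccal X}=\Ccal\filter{m}{\Ccal^{k}\Fcal}$ is a ``bag of bags'' $[e;z_{1},\dots,z_{n}]$ with each $z_{j}=[e_{j};w_{j,1},\dots,w_{j,n_{j}}]$ a point of $\filter{m}{\Ccal X}$. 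By Definition~\ref{definition: define filtration CX} and~\eqref{eq: product filtration}, the condition $z_{j}\in\filter{m}{\Ccal X}$ says that the $X$-filtrations of $w_{j,1},\dots,w_{j,n_{j}}$ add up to at most $m$, and \emph{since filtrations are non-negative} this forces every individual $w_{j,\ell}$ to lie in $\filter{m}{X}$. The monad multiplication $\mu_{\Ccal}$ sends $[e;z_{1},\dots,z_{n}]$ to $[\,\gamma(e;e_{1},\dots,e_{n});\,w_{1,1},\dots,w_{n,n_{n}}\,]$ --- it merely concatenates the elements $w_{j,\ell}$ into a single list without altering any of them --- so the image lies in $\Ccal\filter{m}{X}=\Ccal\filter{m}{\Ccal^{k-1}\Fcal}$, which is the claim for $d_{0}$.

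Having checked all generating face and degeneracy maps, one concludes that $\{\Ccal\filter{m}{\Ccal^{k}\Fcal}\}_{k}$, with the restricted structure maps, is a simplicial $\Gamma$-subspace $\FilteredBK{m}{\Fcal}$ of $\Barr(\Ccal,\Ccal,\Fcal)$. I expect the main obstacle to be exactly the $d_{0}$ case: it is the lone structure map outside the ``apply $\Ccal$ to a filtered map'' paradigm, and it is where one must carefully distinguish the two filtrations of $\Ccal(-)$ that figure in the paper --- the Definition~\ref{definition: define filtration CX} filtration $\filter{m}{\Ccal(-)}$ versus ``apply $\Ccal$ to the $m$-th truncation,'' $\Ccal\filter{m}{(-)}$. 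Non-negativity of filtration values is precisely the feature that makes these two compatible under $\mu_{\Ccal}$.
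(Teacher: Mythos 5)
Your proposal is correct and follows essentially the same route as the paper: every structure map except $d_{0}$ is handled by writing it as $\Ccal$ applied to a filtration-preserving inner map (with $\mu_{\Fcal}$ made filtration-preserving via Lemma~\ref{lemma: augmentation}, exactly as the paper does), and $d_{0}$ is the one genuinely special case. Your point-level ``bag of bags'' argument for $d_{0}$ is just a concrete rendition of the paper's Lemma~\ref{lemma: switch} (non-negativity of filtration gives $\filter{m}\Ccal\subseteq\Ccal\filter{m}$) composed with $\mu_{\Ccal}$ applied to the truncation, i.e.\ Lemma~\ref{lemma: filtration of composition, Greg's version}, so the content coincides.
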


Second, the adjective ``special."
The individual $\Gamma$-spaces $\Ccal\filter{m}\Ccal^{k}\Fcal$ are not special for most values of $m$ and~$k$, so it
is somewhat surprising that, when we fix~$m$, assemble over~$k$,
and apply geometric realization, we obtain a special $\Gamma$-space.
We were not able to see an obvious reason for this to happen, but it does, and this is the major technical result of this paper.

\begin{theorem}\label{theorem: linearity theorem}
\LinearityTheoremText
\end{theorem}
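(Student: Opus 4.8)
The plan is to show that for fixed $m$, the $\Gamma$-space $\realization{\FilteredBK{m}{\Fcal}}$ takes wedges to products up to weak equivalence, even though the simplicial degrees $\Ccal\filter{m}\Ccal^k\Fcal$ do not individually do so. The key structural observation is that $\filter{m}\Ccal^k(\whatever)$, as an endofunctor of $\filtered$, is built by iterating a construction that is \emph{filtered-linear} in a suitable sense: by Lemma~\ref{lemma: boring lemma}, the natural map $\Ccal(X\times Y)\to\Ccal X\times\Ccal Y$ is filtration-preserving, and by the product-filtration formula~\eqref{eq: product filtration}, the filtration degree of a point in $X\times Y$ is the sum of the degrees in the two factors. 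The hope is to leverage this additivity of filtration degrees across a wedge/product so that, after realization, the ``cross terms'' that obstruct speciality at each fixed simplicial level are killed by a filtered version of the Freudenthal/Blakers--Massey phenomenon --- this is surely the role of the stated but not-yet-proved Lemma~\ref{lemma: filtered Freudenthal} and Lemma~\ref{lemma: commuting squares} referenced in the preamble.

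First I would set up the comparison map. Since $\Fcal$ is special, $\Fcal(X\vee Y)\xrightarrow{\lambda}\Fcal(X)\times\Fcal(Y)$ is an equivalence, and one checks it is compatible with the augmentation to $\Sp^\infty$ (by Lemma~\ref{lemma: augmentation invariance}-type naturality, and the fact that $\Sp^\infty$ is strictly additive). This gives, levelwise in simplicial degree $k$, a map
\[
\Ccal\filter{m}\Ccal^k\Fcal(X\vee Y)\longrightarrow \Ccal\filter{m}\Ccal^k\Fcal(X)\times\Ccal\filter{m}\Ccal^k\Fcal(Y),
\]
natural in $k$, hence a map of simplicial $\Gamma$-spaces, hence after realization a map
\[
\realization{\FilteredBK{m}{\Fcal}}(X\vee Y)\longrightarrow \realization{\FilteredBK{m}{\Fcal}}(X)\times\realization{\FilteredBK{m}{\Fcal}}(Y).
\]
The task is to prove this is an equivalence. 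I would do this by induction on $m$: the base case $m=0$ is trivial since $\FilteredBK{0}{\Fcal}\simeq *$, and $m=1$ follows from the explicit computation $\filter{1}\Ccal X\cong {E\Sigma_1}_+\wedge\filter{1}X\simeq\filter{1}X$ of Example~\ref{example: filtration one}, which is manifestly linear. For the inductive step I would filter each $\Ccal\filter{m}\Ccal^k\Fcal$ by the subobjects $\Ccal\filter{m-1}\Ccal^k\Fcal$ (giving a subquotient filtration on the whole simplicial object, compatible across $k$ by Proposition~\ref{proposition: structure maps respect filtration}), and analyze the associated graded. The subquotient $\realization{\FilteredBKquotient{m}{\Fcal}}$ should be expressible, using the wedge filtration~\eqref{eq: wedge filtration} and the product-filtration decomposition, in terms of the ``weight $m$ part'' of $\Ccal$ applied to the degree-$m$ stratum of $\Ccal^k\Fcal$, which is where the combinatorics of partitions of $m$ enters.

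The main obstacle --- and the reason the theorem is called surprising --- is that speciality genuinely fails before realization, so the argument cannot be purely formal/levelwise: one must use that geometric realization of a simplicial space is a homotopy colimit and that homotopy colimits interact well with the relevant connectivity estimates. Concretely, I expect the heart of the proof to be a connectivity/stable-splitting argument: one shows that the ``error term'' measuring the failure of $\Ccal\filter{m}\Ccal^k\Fcal$ to be special is, in each simplicial degree $k$, highly connected relative to $k$ (the connectivity growing with $k$ because the cross terms in the product filtration involve products of two or more positive-filtration pieces, each contributing a smash factor), so that upon realizing (which roughly means taking a homotopy colimit over $\Delta^{\op}$, filtered by skeleta) these errors wash out. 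Making the relative connectivity estimate precise, and checking that it is uniform enough to survive realization, is the technical core; this is presumably exactly what Sections~\ref{section: strategy}--\ref{section: maps out of E} are devoted to, via the auxiliary simplicial spaces $\Ecal_\bullet,\Dcal_\bullet$ of the diagram~\eqref{diagram: square of auxiliaries} and the comparison Lemma~\ref{lemma: commuting squares}. My proposal would thus be: (1)~construct explicit auxiliary resolutions $\Ecal_\bullet\xrightarrow{\simeq}\Dcal_\bullet$ that interpolate between $\FilteredBK{m}{\Fcal}$ and a manifestly special model; (2)~prove the square of~\eqref{diagram: square of auxiliaries} commutes strictly on one pair of legs and up to homotopy on the other (Lemma~\ref{lemma: commuting squares}); (3)~deduce speciality of the realization by transporting it across these equivalences, using a filtered Freudenthal theorem to control the comparison in each weight stratum and induction on $m$.
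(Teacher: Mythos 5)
Your central mechanism does not work, and it is not the mechanism the paper uses. You propose that the failure of speciality in each simplicial degree is an ``error term'' that is highly connected (with connectivity growing in $k$), to be washed out after realization by a filtered Freudenthal/Blakers--Massey argument and an induction on $m$ through the associated graded of the filtration by $\Ccal\filter{m-1}\Ccal^k\Fcal$. But the levelwise failure is not highly connected at all: in the paper's own counterexample~\eqref{eq: nonexample} (with $\Fcal=\Sp^\infty$, $X=Y=S^0$, $m=2$, $k=0$) the two sides are $\left(\Ccal S^{0}\right)^{\times 5}$ and $\left(\Ccal S^{0}\right)^{\times 4}$, a discrepancy already visible on $\pi_0$, and errors in \emph{low} simplicial degrees are not cancelled in the realization by good behavior in high degrees --- they can only be cancelled by the simplicial identifications themselves. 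Moreover, a filtered Freudenthal statement such as Lemma~\ref{lemma: filtered Freudenthal} produces only \emph{stable} equivalences (that is exactly its role later, in the proof of Theorem~\ref{theorem: bar filtration}), whereas speciality is an unstable assertion about the space-level map $\realization{\FilteredBK{m}{\Fcal}}(X\vee Y)\to \realization{\FilteredBK{m}{\Fcal}}(X)\times\realization{\FilteredBK{m}{\Fcal}}(Y)$, so stable control of weight strata cannot close the argument; and induction on $m$ via the associated graded faces the additional problem that speciality does not obviously pass across cofiber sequences, since cofibers do not commute with products and the subquotients are not special either.

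Your closing three-step outline does guess the shape of the actual argument, but the engine is different from what you describe and you do not supply it. In the paper, $\Ecal_k$ and $\Dcal_k$ are wedges of terms $E_k^i=\Ccal\filter{m}\Ccal^{k-i+1}\left(\Ccal^{i-1}\Fcal X\vee\Ccal^{i-1}\Fcal Y\right)$ and $D_k^i=\Ccal\filter{m}\Ccal^{k-i}\left(\Ccal^{i}\Fcal X\times\Ccal^{i}\Fcal Y\right)$, and the comparison $p\colon\Ecal_\bullet\to\Dcal_\bullet$ is a \emph{levelwise} weak equivalence for the elementary reason that the innermost wedge is converted to a product using that $\Ccal$ (resp.\ $\Fcal$) is special --- no connectivity estimate enters anywhere. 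The real content is simplicial bookkeeping: verifying that $\Ecal_\bullet$ and $\Dcal_\bullet$ are simplicial spaces (the delicate ``borderline'' face maps built from the maps $\beta$ and $\gamma$ of Section~\ref{section: two structure maps}), that $B_\bullet\hookrightarrow\Dcal_\bullet$ admits a simplicial deformation retraction $q$, that $A_\bullet$ is a simplicial retract of $\Ecal_\bullet$ via $r$, and that $f\circ r=q\circ p$ while $j\circ f\simeq p\circ i$; then $f$ is a retract of $p$ in the homotopy category and hence a weak equivalence on realizations. Since your proposal leaves these constructions unspecified and replaces the decisive retract-of-a-levelwise-equivalence step with a connectivity argument that fails at $\pi_0$, it has a genuine gap as it stands.
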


The summarized proof of Theorem~\ref{theorem: linearity theorem} is given in Section~\ref{section: strategy}.
The details occupy Sections~\ref{section: two structure maps}--\ref{section: maps out of E}.

Given Theorem~\ref{theorem: linearity theorem}, we can bring in the third adjective, ``stable."
First we recall some concepts from the introduction.
The \defining{stabilization} of~$\Fcal$ is the spectrum
\begin{equation*}   
\stabilization{\Fcal}\definedas
\left\{
\Fcal(S^0), \Fcal\left(S^1\right), \ldots,
          \Fcal\left(S^n\right), \ldots
\right\}.
\end{equation*}
Given that $\Fcal$ is a $\Gamma$-space
with an augmentation-preserving action of the monad~$\Ccal$,
the \defining{unstable rank filtration} of~$\Fcal$ is the sequence of (not necessarily special) $\Gamma$-spaces
\begin{equation*}
\filter{0}\Fcal
     \rightarrow  \filter{1}\Fcal
     \rightarrow \cdots
     \rightarrow  \filter{m}\Fcal
     \rightarrow \cdots
     \rightarrow  \Fcal,
\end{equation*}
and the \defining{stable rank filtration} of~$\Fcal$ is the sequence of \emph{spectra}
\begin{equation*}
\stabilization{\!\left(\filter{0}\Fcal\right)}
     \rightarrow \stabilization{\!\left(\filter{1}\Fcal\right)}
     \rightarrow \cdots
     \rightarrow \stabilization{\!\left(\filter{m}\Fcal\right)}
     \rightarrow \cdots
     \rightarrow \stabilization{\Fcal}.
\end{equation*}
Further, recall that if $\Gcal$ is a special $\Gamma$-space, then it has an \defining{infinite delooping} $\Ktheory\Gcal$, a spectrum whose infinite loop space is equivalent to the group completion of~$\Gcal$.
The last part of this section proves the main theorem of our paper, which is the following.

\begin{theorem}\label{theorem: bar filtration}
\BarFiltrationTheoremText
\end{theorem}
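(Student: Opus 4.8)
The plan is to deduce the theorem from the Linearity Theorem (Theorem~\ref{theorem: linearity theorem}) together with Proposition~\ref{proposition: tautology}, the one substantive extra input being that the outermost copy of $\Ccal$ in $\FilteredBK{m}{\Fcal}$ is invisible after stabilization.

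First I would reduce to a statement about stabilizations. By the Linearity Theorem, $\realization{\FilteredBK{m}{\Fcal}}$ is a special $\Gamma$-space, so \cite[Theorem~4.2]{Bousfield-Friedlander} identifies its infinite delooping with its stabilization, $\Ktheory{\realization{\FilteredBK{m}{\Fcal}}}\simeq\stabilization{\realization{\FilteredBK{m}{\Fcal}}}$, naturally in $m$. Since stabilization (the first Goodwillie derivative) commutes with geometric realization of simplicial $\Gamma$-spaces --- both being homotopy colimits --- this is in turn equivalent to $\realization{\stabilization{\FilteredBK{m}{\Fcal}}}$, the realization of the simplicial spectrum obtained by stabilizing $\FilteredBK{m}{\Fcal}$ levelwise. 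It therefore suffices to produce a natural equivalence
\[
\realization{\stabilization{\FilteredBK{m}{\Fcal}}}\ \simeq\ \stabilization{\filter{m}\Fcal}.
\]
The two ends of the asserted sequence then come for free: $\Ccal\filter{0}\Ccal^{k}\Fcal=\Ccal(*)=*$ for every $k$, since $\filter{0}\Ccal^{k}\Fcal$ is the preimage of the basepoint under the augmentation and $\Ccal$ is pointed, so $\realization{\FilteredBK{0}{\Fcal}}\simeq*$; and the top of the filtration is $\realization{\Ccal\Ccal^{\bullet}\Fcal}\simeq\Fcal$ by Proposition~\ref{proposition: tautology}, together with $\Ktheory{\Fcal}\simeq\stabilization{\Fcal}$ since $\Fcal$ is special.

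Next I would set up a comparison with the unstable filtration. For a finite pointed set $X$, a point of $\Ccal\Ccal^{k}\Fcal(X)=\Ccal\bigl(\Ccal^{k}\Fcal(X)\bigr)$ whose image under the augmentation lies in $\Sp^{m}(X)$ is a configuration of points of $\Ccal^{k}\Fcal(X)$ whose ranks sum to at most $m$; after deleting the ones of rank $0$ (each of which is the basepoint, since $\Ccal^{k}\Fcal$ is augmented in the sense of Definition~\ref{definition: augmented}), the surviving points have positive rank summing to at most $m$, so each has rank at most $m$, i.e.\ lies in $\filter{m}\Ccal^{k}\Fcal(X)$. Consequently $\filter{m}\Ccal^{k+1}\Fcal\hookrightarrow\Ccal^{k+1}\Fcal$ factors through $\Ccal\filter{m}\Ccal^{k}\Fcal$, and, because the face and degeneracy maps of $\Barr(\Ccal,\Ccal,\Fcal)$ preserve both $\filter{m}\Ccal\Ccal^{\bullet}\Fcal$ (Section~\ref{section: bar}) and $\Ccal\filter{m}\Ccal^{\bullet}\Fcal$ (Proposition~\ref{proposition: structure maps respect filtration}), these inclusions assemble into a map of simplicial $\Gamma$-spaces
\[
\iota\colon\ \filter{m}\Barr(\Ccal,\Ccal,\Fcal)\ \longrightarrow\ \FilteredBK{m}{\Fcal}.
\]
I claim $\iota$ is a levelwise stable equivalence. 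Granting the claim, $\realization{\stabilization{\iota}}$ is an equivalence, and since $\realization{\stabilization{\filter{m}\Barr(\Ccal,\Ccal,\Fcal)}}\simeq\stabilization{\realization{\filter{m}\Barr(\Ccal,\Ccal,\Fcal)}}\simeq\stabilization{\filter{m}\Fcal}$ by Proposition~\ref{proposition: tautology}, we obtain the desired natural equivalence $\Ktheory{\realization{\FilteredBK{m}{\Fcal}}}\simeq\stabilization{\filter{m}\Fcal}$; naturality in $m$ holds because every map above is induced by the evident inclusions of $m$-th rank filtrations into $(m{+}1)$-st.

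It remains to establish the claim. Fix $k$ and write $\Gcal=\Ccal^{k}\Fcal$; we must show the inclusion $\filter{m}\Ccal\Gcal\hookrightarrow\Ccal\filter{m}\Gcal$ is a stable equivalence. By the same rank count as above, a point of $\Ccal\filter{m}\Gcal$ of rank at most $m$ has all coordinates of rank at most $m$, so $\filter{m}\Ccal\Gcal$ is precisely the $m$-th stage of the rank filtration of the augmented $\Gamma$-space $\Ccal\filter{m}\Gcal$. Hence the cofiber of $\iota$ is $\Ccal\filter{m}\Gcal\big/\filter{m}{(\Ccal\filter{m}\Gcal)}$, which carries the rank filtration of $\Ccal\filter{m}\Gcal$ and so has subquotients the rank-$j$ layers of $\Ccal\filter{m}\Gcal$ for $j>m$. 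By the standard stable splitting of $\Ccal$ applied to a filtered object, the rank-$j$ layer of $\Ccal\filter{m}\Gcal$ is a finite wedge of objectwise homotopy-orbit constructions
\[
{E\Sigma_{r}}_{+}\wedge_{\prod_{i}\Sigma_{a_i}}\bigwedge_{i}\bigl(\filter{i}\Gcal/\filter{i-1}\Gcal\bigr)^{\wedge a_i},
\]
one for each partition $j=\sum_{i} i\,a_i$ with every part $i\le m$, so that $r=\sum_{i}a_i\ge 2$ whenever $j>m$. Each rank layer $\filter{i}\Gcal/\filter{i-1}\Gcal$ is a reduced $\Gamma$-space, so takes $(n-1)$-connected values on $S^{n}$; an objectwise smash of $r\ge 2$ of them (then passed through homotopy orbits and finite wedges) takes at least $(2n-1)$-connected values on $S^{n}$, and therefore has trivial stabilization. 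Since stabilization commutes with the wedge, homotopy-orbit, and filtration-colimit operations involved, $\stabilization{\cof(\iota)}\simeq*$, so $\iota$ is a stable equivalence.

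The hard part, I expect, is exactly this last step --- that filtering \emph{inside} the outermost $\Ccal$ is stably invisible. It rests at bottom on the elementary observation that an objectwise smash of two reduced $\Gamma$-spaces is stably trivial (a connectivity count); what needs genuine care is the bookkeeping around the compatibilities of stabilization --- with geometric realization, with homotopy orbits, and with the rank-filtration colimit --- and the connectivity of the layers $\filter{i}\Gcal/\filter{i-1}\Gcal$. The rest is formal, and the whole argument is, of course, conditional on the Linearity Theorem, whose intricate proof occupies the remaining sections.
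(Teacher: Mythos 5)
Your proposal is correct, and its skeleton is the same as the paper's: reduce via Theorem~\ref{theorem: linearity theorem} and \cite[Theorem~4.2]{Bousfield-Friedlander} to a comparison of stabilizations, use Proposition~\ref{proposition: tautology} to identify $\realization{\filter{m}{(\Ccal\Ccal^\bullet\Fcal)}}$ with $\filter{m}\Fcal$, and compare the two simplicial subobjects of $\Barr(\Ccal,\Ccal,\Fcal)$ via the levelwise inclusion $\filter{m}\Ccal\Gcal\hookrightarrow\Ccal\filter{m}\Gcal$ of Lemma~\ref{lemma: switch}, shown to be a levelwise stable equivalence and hence a stable equivalence after realization. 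Where you genuinely diverge is the proof of that levelwise statement (the paper's Lemma~\ref{lemma: stable equivalence}): the paper runs a two-out-of-three argument on $\filter{m}\Gcal\to\filter{m}\Ccal\Gcal\to\Ccal\filter{m}\Gcal$, using that the unit of $\Ccal$ is a stable equivalence in the filtered sense (Lemma~\ref{lemma: filtered Freudenthal}, which rests on \cite[Lemma~3.13]{Arone-Lesh-Fundamenta}, the statement that $\bigvee_n\filter{m}\Gcal\to\filter{m}\left(\Gcal^{\times n}\right)$ is a stable equivalence); you instead analyze the cofiber directly, using your (correct) observation that $\filter{m}\Ccal\Gcal$ is exactly the $m$-th filtration stage of $\Ccal\filter{m}\Gcal$, identifying the rank-$j$ layers for $j>m$ as homotopy-orbit constructions on smashes of at least two reduced layers, and killing them by a connectivity count. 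Your route is essentially a from-scratch proof of the same input and I see no gap in it, but it carries exactly the burden you flag: one must verify that the filtration stages are levelwise cofibrations so that the associated-graded splitting computes the cofiber, and that prolongation to spheres is compatible with the objectwise smash, orbit, and colimit constructions. The paper sidesteps this bookkeeping by quoting its earlier wedge-versus-product lemma, at the price of being less self-contained; your argument also makes visible \emph{why} filtering inside the outermost $\Ccal$ is stably invisible (the discrepancy is concentrated in terms of operadic arity at least two), which the paper's formal two-out-of-three argument conceals.
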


The discussion leading up to the proof of  Theorem~\ref{theorem: bar filtration} begins with
Definition~\ref{definition: stable equivalence}, and the actual proof appears at the end of the section.

\bigskip


Now that we have described the results, we begin the first task, showing that for a fixed~$m$, the object $\FilteredBK{m}{\Fcal}$ is a simplicial subobject
of~$\Barr(\Ccal, \Ccal, \Fcal)=\Ccal\Ccal^\bullet \Fcal$. We must check that the face and degeneracy maps of $\Ccal\Ccal^\bullet \Fcal$ respect~$\FilteredBK{m}{\Fcal}$. We begin this verification with the
following elementary lemma.
Recall
from Definitions~\ref{definition: filtered space} and~\ref{definition: define filtration CX}
that $\Ccal$ and $\filter{m}$ are to be considered endofunctors of~$\filtered$, the category of filtered pointed spaces. An immediate consequence of
Definition~\ref{definition: filtered space} is that there is a natural transformation $\filter{m}\rightarrow\Id$ of endofunctors of~$\filtered$.

\begin{lemma}\label{lemma: switch}
The natural inclusion
$\filter{m}\circ\,\Ccal
\hookrightarrow
        \Id\circ\,\Ccal=\Ccal$,
induced by the natural transformation $\filter{m}\rightarrow\Id$ applied to~$\Ccal$,
factors uniquely as a composition
\[
\filter{m}\circ\,\Ccal\hookrightarrow \Ccal\circ\filter{m}\hookrightarrow \Ccal\circ\,\Id=\Ccal,
\]
where the second map is $\Ccal$ applied to $\filter{m}\rightarrow\Id$.
\end{lemma}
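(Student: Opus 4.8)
The plan is to prove the statement by working one filtered space $X$ at a time, constructing the factoring map $\filter{m}\Ccal(X)\to\Ccal(\filter{m}X)$ explicitly at the level of the coend that defines $\Ccal$ (Definition~\ref{definition: define filtration CX}), and then deducing uniqueness and naturality formally from the fact that the second map in the claimed factorization is a monomorphism in $\filtered$.

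The only substantive point is the elementary observation that if $m_1,\dots,m_n\ge 0$ and $m_1+\cdots+m_n\le m$, then $m_i\le m$ for every $i$. Combined with the product-filtration formula~\eqref{eq: product filtration}, this yields an identity of subspaces of $X^n$,
\[
\filter{k}\left(X^n\right)=\filter{k}\left((\filter{m}X)^n\right)\qquad\text{for all }k\le m,
\]
since truncating $X$ at stage $m$ changes none of the filtration levels that can be reached by tuples of total filtration at most $k\le m$. Applying the coend $E\Sigma_n\otimes_{n\in\Ical}(-)$ to the case $k=m$ identifies $\filter{m}\Ccal(X)$ with $\filter{m}\Ccal(\filter{m}X)$; composing with the inclusion of the $m$-th filtration stage $\filter{m}\Ccal(\filter{m}X)\hookrightarrow\Ccal(\filter{m}X)$ then defines the desired map $\filter{m}\circ\Ccal\to\Ccal\circ\filter{m}$. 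I would next check it is a map of filtered spaces: for filtration degrees $k\le m$ this is exactly the displayed identity, and for $k\ge m$ it is automatic because $\filter{m}\Ccal(X)$ has already stabilized at stage $m$. That post-composing with $\Ccal$ applied to the inclusion $\filter{m}X\hookrightarrow X$ (i.e. to the natural transformation $\filter{m}\to\Id$) recovers the natural inclusion $\filter{m}\Ccal(X)\hookrightarrow\Ccal(X)$ is then immediate, since on underlying spaces all the maps in sight are induced by the evident inclusions $\filter{m}\left(X^n\right)\subseteq(\filter{m}X)^n\subseteq X^n$, compatibly with the coend structure.

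Finally, $\Ccal$ applied to $\filter{m}X\hookrightarrow X$ is a monomorphism in $\filtered$ — it is injective on underlying spaces (it is $\Ccal$ of the inclusion $X_m\hookrightarrow X$) and a levelwise subspace inclusion — so any factorization of $\filter{m}\Ccal(X)\hookrightarrow\Ccal(X)$ through it is unique, and, being assembled from natural constructions (equivalently, by the standard fact that a natural transformation factors uniquely through a monomorphism of functors), it is natural in $X$. The argument is pure bookkeeping; the only thing requiring care is keeping straight the three filtered structures in play — the filtration on $\Ccal(X)$ induced by that of $X$, its truncation $\filter{m}\Ccal(X)$, and the filtration on $\Ccal(\filter{m}X)$ — and I do not anticipate any genuine obstacle.
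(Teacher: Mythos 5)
Correct, and essentially the paper's own argument: both proofs come down to the inclusion $\filter{m}\left(X^n\right)\subseteq\left(\filter{m}X\right)^n$ supplied by the product filtration, followed by the coend $E\Sigma_n\otimes_{n\in\Ical}(-)$, with uniqueness deduced from the fact that $\Ccal(\filter{m}X)\to\Ccal(X)$ is a monomorphism of $\Gamma$-spaces. The only divergence is cosmetic: you route the construction through the identity $\filter{m}\Ccal(X)=\filter{m}\Ccal(\filter{m}X)$ and add the (harmless, implicit in the paper) check that the factoring map is filtration-preserving, while you assert rather than verify the injectivity of $\Ccal(\filter{m}X)\hookrightarrow\Ccal(X)$, which the paper justifies by noting that the coend identifications are the same basepoint identifications in source and target.
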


\begin{proof}
Let $X$ be an object of~$\filtered$.
The product filtration on $X^n$, given in~\eqref{eq: product filtration}, tells us that
there is a natural factoring
\[
\xymatrix{
\strut\filter{m}\left(X^n\right)\,
    \ar@{^{(}->}[r]
    \ar@{^{(}->}[d]
    &\LARGEstrut\left(\filter{m} X\right)^n
    \ar@{_{(}->}[dl]
        \\
X^n
.
}
\]
We take the coend $E\Sigma_{n}\otimes_{n\in\Ical}\mbox{---}$ at each corner to get
\begin{equation}   \label{eq: take coends}
\begin{gathered}
\xymatrix{
\strut E\Sigma_n \otimes_{n\in \Ical}\filter{m}\left(X^n\right)\,
    \ar[r]
    \ar[d]
&
E\Sigma_n \otimes_{n\in \Ical}\left(\filter{m} X\right)^n
    \ar[dl]
        \\
E\Sigma_n \otimes_{n\in \Ical}X^n
.
}
\end{gathered}
\end{equation}
Substituting for the spaces in diagram~\eqref{eq: take coends} using
the definition of~$\Ccal X$ and $\filter{m}\Ccal X$ given, respectively,
in \eqref{eq: coend formula}
and Definition~\ref{definition: define filtration CX}, we obtain
\begin{equation*}  
\begin{gathered}
\xymatrix{
\filter{m}\Ccal X\
    \ar[r]
    \ar[d]
    &\LARGEstrut\Ccal\filter{m} X
    \ar[dl]
        \\
\Ccal X,
}
\end{gathered}
\end{equation*}
which establishes the existence of a factoring of the vertical map
through~$\Ccal\filter{m}X$, as required.

To show that the factorization is unique,
it is sufficient to show that the diagonal arrow in
\eqref{eq: take coends} is a monomorphism of $\Gamma$-spaces.  Recall that the coends in the source and the target are quotient spaces as indicated below:
\[
\xymatrix{
\coprod_{n} E\Sigma_n \times_{\Sigma_n} \left(\filter{m}X\right)^n
    \ar@{^{(}->}[d]
   \ar@{->>}[r]
& E\Sigma_n \otimes_{n\in \Ical}\left(\filter{m}X\right)^n
   \ar[d]
\\
\coprod_{n} E\Sigma_n \times_{\Sigma_n} X^n
   \ar@{->>}[r]
&
E\Sigma_n \otimes_{n\in \Ical}X^n.
}
\]
The map $\filter{m}X\rightarrow X$ is an inclusion, so
the left vertical arrow is a monomorphism.
The equivalence relations defining horizontal arrows are the same basepoint identifications in the top row and the bottom row. Hence the right column is an inclusion also.
\end{proof}

Showing that the face maps in the bar construction
$\Ccal\Ccal^\bullet \Fcal$
respect the subobjects $\Ccal\filter{m}\Ccal^\bullet \Fcal$
mostly comes from naturality; the
face maps in the bar construction come from the multiplication $\mu_{\Ccal}\colon\Ccal\Ccal\to\Ccal$, and all but one of them take place ``inside" of the filtration functor~$\filter{m}$. However, the outermost one (i.e. $d_{0}$) requires ``multiplying across $\filter{m}$." The following lemma identifies the target of that map.

\begin{lemma}\label{lemma: filtration of composition, Greg's version}
The natural transformation $\mu_{\Ccal}\colon\Ccal\Ccal\to \Ccal$ restricts, for every $m$, to a unique natural transformation
\[
\Ccal\filter{m}\Ccal \to \Ccal\filter{m}.
\]
\end{lemma}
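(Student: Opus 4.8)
The plan is to imitate the cube argument in the proof of Lemma~\ref{lemma: filtered monad}. Fix a filtered space~$X$. Applying the natural transformation $\filter{m}\rightarrow\Id$ of endofunctors of~$\filtered$ inside $\Ccal$, and recalling from the coend computation in the proof of Lemma~\ref{lemma: switch} that $\Ccal$ carries subspace inclusions of $\Gamma$-spaces to subspace inclusions, one obtains a monomorphism $\Ccal\filter{m}\Ccal X\hookrightarrow\Ccal\Ccal X$. The task is then to factor the composite $\Ccal\filter{m}\Ccal X\hookrightarrow\Ccal\Ccal X\xrightarrow{\mu_{\Ccal}(X)}\Ccal X$ through the monomorphism $\Ccal\filter{m}X\hookrightarrow\Ccal X$. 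Since the latter map is a monomorphism, any such factoring is unique, and uniqueness forces it to be natural in~$X$; so it suffices to produce the factoring for a single~$X$.

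To produce it, I would build a commuting cube whose top face is the square
\[
\begin{gathered}
\xymatrix{
\Ccal\filter{m}\Ccal X \ar[r] \ar@{-->}[d] & \Ccal\Ccal X \ar[d]^-{\mu_{\Ccal}} \\
\Ccal\filter{m}X \ar[r] & \Ccal X,
}
\end{gathered}
\]
with the dashed arrow being the one we want; whose bottom face is the analogous square with $\Sp^\infty$ in place of $\Ccal$ throughout and $\mu_{\Sp^\infty}$ in place of $\mu_{\Ccal}$; and whose four side faces are built from the augmentation $\epsilon\colon\Ccal\to\Sp^\infty$ --- this is exactly the shape of the cube in the proof of Lemma~\ref{lemma: filtered monad}. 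The right face commutes because $\epsilon$ is a map of monads, and the back face commutes by naturality of~$\epsilon$. Two inputs remain. First, the bottom face must make sense, i.e.\ $\mu_{\Sp^\infty}(X)$ must restrict to a map $\Sp^\infty\filter{m}\Sp^\infty X\to\Sp^\infty\filter{m}X$ (once it does, commutativity of the bottom square is automatic). Second, the front face
\[
\begin{gathered}
\xymatrix{
\Ccal\filter{m}X \ar[r] \ar[d]_-{\epsilon} & \Ccal X \ar[d]^-{\epsilon} \\
\Sp^\infty\filter{m}X \ar[r] & \Sp^\infty X
}
\end{gathered}
\]
must be a strict pullback. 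Given these two inputs, the universal property of the front face extracts the dashed arrow from the commuting cube, exactly as in Lemma~\ref{lemma: filtered monad}; and the dashed arrow is then automatically filtration-preserving, since every solid arrow in the cube is (using Lemma~\ref{lemma: filtered monad} for $\mu_{\Ccal}$) and, as one reads off from Definition~\ref{definition: define filtration CX}, $\Ccal\filter{m}X$ carries exactly the subspace filtration it inherits from $\Ccal X$.

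Both inputs are checked on the configuration-space descriptions of Definition~\ref{definition: define filtration CX}. For the front face, a point of $\Ccal X\times_{\Sp^\infty X}\Sp^\infty\filter{m}X$ is a decorated configuration in $X$ whose underlying undecorated configuration consists of points of $\filter{m}X$, and this is precisely a point of $\Ccal\filter{m}X=\Ccal(\filter{m}X)$; the identification is a homeomorphism. This is the direct analogue of Remark~\ref{remark: pullback square}, the only difference being that here we are dealing with $\Ccal\filter{m}$ rather than with $\filter{m}\Ccal$, and it is important to keep the two apart. For the bottom face, a point of $\Sp^\infty\filter{m}\Sp^\infty X$ is a finite collection $\{\xi_1,\dots,\xi_n\}$ of finite configurations in~$X$, each $\xi_i$ having the property that the filtrations of its points sum to at most~$m$, and $\mu_{\Sp^\infty}$ sends this collection to the union $\xi_1\cup\cdots\cup\xi_n$. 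Any point $x$ of $X$ occurring in the union lies in some $\xi_i$, and since the filtrations of the points of $\xi_i$ are nonnegative with sum at most~$m$, the filtration of $x$ is at most~$m$, i.e.\ $x\in\filter{m}X$; hence the union is a point of $\Sp^\infty\filter{m}X$, as required.

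The heart of the matter is this last, elementary observation: a filtration bound of the form ``the filtrations sum to at most $m$'' is inherited by every sub-collection, hence by every single point. I do not anticipate any real obstacle beyond the bookkeeping of the cube of Lemma~\ref{lemma: filtered monad} and the need to keep the various occurrences of the truncation functor $\filter{m}$ straight --- in particular to distinguish $\Ccal\filter{m}$ from $\filter{m}\Ccal$.
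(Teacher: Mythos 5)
Your proof is correct, but it takes a genuinely different route from the paper's. The paper's argument is purely formal and much shorter: it applies $\Ccal$ to the factorization $\filter{m}\Ccal\hookrightarrow\Ccal\filter{m}\hookrightarrow\Ccal$ of Lemma~\ref{lemma: switch} to get $\Ccal\filter{m}\Ccal\hookrightarrow\Ccal\Ccal\filter{m}$, composes with $\mu_{\Ccal}$ evaluated at the truncated object (i.e.\ $\mu_{\Ccal}(\filter{m})\colon\Ccal\Ccal\filter{m}\to\Ccal\filter{m}$), and uses the naturality square of $\mu_{\Ccal}$ with respect to $\filter{m}\hookrightarrow\Id$ to see that this composite is the restriction of $\mu_{\Ccal}$; uniqueness comes, as in your argument, from $\Ccal\filter{m}\hookrightarrow\Ccal$ being a monomorphism. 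You instead replay the cube-and-pullback argument of Lemma~\ref{lemma: filtered monad}, which obliges you to supply two inputs the paper's route never needs: the restriction of $\mu_{\Sp^{\infty}}$ to a map $\Sp^{\infty}\filter{m}\Sp^{\infty}X\to\Sp^{\infty}\filter{m}X$ (your elementary point that a bound on the sum of filtrations bounds each individual filtration), and the assertion that $\Ccal\filter{m}X$ is the strict pullback of $\Ccal X\to\Sp^{\infty}X\leftarrow\Sp^{\infty}\filter{m}X$. Both are true, and you are right to stress that the latter is not Remark~\ref{remark: pullback square} (which has the truncation outside, $\filter{m}\Ccal X$); but it is a new claim, justified only at the level of configurations, so your route carries an extra verification at roughly the same informal point-set level as the paper's own remark. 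Your further observations are sound: uniqueness plus the monomorphism does yield naturality in $X$ for free, and the dashed arrow is automatically filtration-preserving because, by Definition~\ref{definition: define filtration CX}, the filtration of $\Ccal\filter{m}X$ is the one restricted from $\Ccal X$. What your approach buys is an explicit identification of the $\Sp^{\infty}$-level content of the lemma and a pullback description that may be useful elsewhere; what the paper's approach buys is brevity and the avoidance of any new pullback or configuration-space checks, since everything needed is already packaged in Lemma~\ref{lemma: switch} and the naturality of $\mu_{\Ccal}$.
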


\begin{proof}
It follows from Lemma~\ref{lemma: switch} that the map
$\Ccal \filter{m}\Ccal \to \Ccal\Ccal$ factors as
$\Ccal \filter{m}\Ccal  \to \Ccal\Ccal\filter{m}\to \Ccal\Ccal$.
We can expand to a commutative diagram by applying
$\mu_{\Ccal}\colon \Ccal\Ccal\to \Ccal$ to the natural transformation $\filter{m}\hookrightarrow\Id$, as follows:
\begin{equation*}
\begin{gathered}
\xymatrix{
\Ccal \filter{m}\Ccal\
\ar@{^{(}->}[r]
& \Ccal\Ccal\filter{m}\
\ar@{^{(}->}[r]
\ar[d]_{\mu_{\Ccal}(\filter{m})}
& \Ccal\Ccal
\ar[d]^{\mu_{\Ccal}(\Id)}
   \\
&
\Ccal\filter{m}\
\ar@{^{(}->}[r]
&
\Ccal.
}
\end{gathered}
\end{equation*}
The lemma follows by composing
$\Ccal \filter{m}\Ccal  \hookrightarrow \Ccal\Ccal\filter{m}$ with
just the left vertical arrow of the square.
\end{proof}

\medskip

With these tools, we can show that for each fixed $m\in\integers_{+}$, the set
of $\Gamma$-spaces $\left\{\Ccal\filter{m}\Ccal^{k}\Fcal\right\}_{k}$  assembles into a simplicial subobject of $\Barr\left(\Ccal,\Ccal,\Fcal\right)$,
proving Proposition~\ref{proposition: structure maps respect filtration}.

\begin{proof}
[Proof of Proposition~\ref{proposition: structure maps respect filtration}]
As before, let $\mu_{\Ccal}\colon \Ccal^2\to \Ccal$ and $\eta_{\Ccal}\colon\Id\to \Ccal$ denote the multiplication and the unit of the monad~$\Ccal$.
Recall that the face and degeneracy maps of
$\Barr\left(\Ccal,\Ccal,\Fcal\right)=\Ccal\Ccal^{\bullet}\Fcal$
in simplicial dimension $k$
are given by the formulas
\begin{alignat}{3}
d_{i} &= \Ccal^{i}\,\mu_{\Ccal}\left(\Ccal^{k-i-1}\Fcal\right)
       &\colon \Ccal\Ccal^k \Fcal \to  \Ccal\Ccal^{k-1} \Fcal
       & \mbox{\quad  for } i=0, \ldots, k-1,
       \nonumber\\
d_{k} &= \qquad\Ccal^{k}\,\mu_{\Fcal}
       &\colon \Ccal\Ccal^k \Fcal \to  \Ccal\Ccal^{k-1} \Fcal,
       \label{eq: face map i}
       \\
s_{i} &= \Ccal\Ccal^{i}\,\eta_{\Ccal}(\Ccal^{k-i}\Fcal)
       &\colon \Ccal\Ccal^k \Fcal \to  \Ccal\Ccal^{k+1} \Fcal
       & \mbox{\quad  for } i=0, \ldots, k.
       \nonumber
\end{alignat}

First we tackle the face maps. To show that $d_{i}$ respects the filtration, we must establish that the restriction of
$d_{i}$ to $\Ccal\filter{m}\Ccal^{k}\Fcal$ factors through
$\Ccal\filter{m}\Ccal^{k-1}\Fcal$. That is, we need a map of
$\Gamma$-spaces
$\dwiggle_{i}\colon \Ccal\filter{m}\Ccal^{k}\Fcal\to \Ccal\filter{m}\Ccal^{k-1}\Fcal$ that makes the following diagram commute:
\begin{equation}   \label{eq: filtered faces}
\begin{gathered}
\xymatrix{
\Largestrut\Ccal\filter{m}
                  \Ccal^{k}\Fcal
      \ar@{-->}[r]^-{\dwiggle_{i}}
      \ar@{^{(}->}[d]
      &
\Largestrut\Ccal\filter{m}
                             \Ccal^{k-1}\Fcal
      \ar@{^{(}->}[d]
\\
\Ccal\Ccal^{k}\Fcal\ar[r]_-{d_{i}}
     &
     \Ccal\Ccal^{k-1}\Fcal.
}
\end{gathered}
\end{equation}
Note that if $\dwiggle_{i}$ exists, then it is necessarily unique, because the vertical maps are monomorphisms of $\Gamma$-spaces. Likewise, it follows from the injectivity of the vertical maps that once the maps $\dwiggle_{i}$ are proven to exist, as well as the degeneracy maps $\swiggle_{i}$ defined below, they must satisfy the simplicial identities.

For $i\geq 1$ in~\eqref{eq: filtered faces}, the face map
$d_i\colon \Ccal\Ccal^{k}\Fcal \to \Ccal\Ccal^{k-1}\Fcal$ of the bar construction $\Ccal\Ccal^{\bullet}\Fcal$ is obtained, according to~\eqref{eq: face map i}, by applying $\Ccal$ to the map
\begin{equation}\label{eq: business part of d_i}
\Ccal^{i-1}\,\Ccal^2 \left(\Ccal^{k-i-1}\Fcal\right)
\xrightarrow{\ \Ccal^{i-1}\,\mu_{\Ccal}\left(\Ccal^{k-i-1}\Fcal\right)\ }
\Ccal^{i-1}\,\Ccal\left(\Ccal^{k-i-1}\Fcal\right).
\end{equation}
Hence for $i\geq 1$, we can define the map
$\dwiggle_{i}\colon \Ccal\filter{m}\Ccal^{k} \to \Ccal\filter{m}\Ccal^{k-1}$ by applying $\Ccal\filter{m}$ to the map~\eqref{eq: business part of d_i}.
For $i=0$, however, the face map $d_{0}$ in the bar construction is not $\Ccal$ applied to another map, but rather multiplies together the outermost factors of~$\Ccal$ in~$\Ccal\Ccal^{k}\Fcal$. In this case the map $\dwiggle_0$ is given by taking the map $\Ccal\filter{m}\Ccal\to \Ccal\filter{m}$ that is given to us by Lemma~\ref{lemma: filtration of composition, Greg's version} and applying it to~$\Ccal^{k-1}\Fcal$.

Similarly, for the degeneracies we require that for each $i$ there exists a (necessarily unique) map
$\swiggle_{i}\colon
\Ccal\filter{m}\Ccal^{k}\Fcal\to \Ccal\filter{m}\Ccal^{k+1}\Fcal$
that makes the following diagram commute:
\begin{equation*}
\begin{gathered}
\xymatrix{
\Largestrut\Ccal\filter{m}\Ccal^{k}\Fcal
      \ar@{-->}[r]^-{\swiggle_{i}}
      \ar@{^{(}->}[d]
      &
\Largestrut\Ccal\filter{m}\Ccal^{k+1}\Fcal
      \ar@{^{(}->}[d]
\\
\Ccal\Ccal^{k}\Fcal\ar[r]_-{s_{i}}
     &
     \Ccal\Ccal^{k+1}\Fcal.
}
\end{gathered}
\end{equation*}
In this case, for all $i\geq 0$ the desired map is obtained by applying $\Ccal\filter{m}$ to the map
\[
\Ccal^{i}\,\eta_{\Ccal}(\Ccal^{k-i}\Fcal)
\colon
\Ccal^{k}\Fcal\to \Ccal^{k+1}\Fcal.
\]
\end{proof}

With Proposition~\ref{proposition: structure maps respect filtration} in hand,
we obtain an increasing sequence of simplicial subobjects  of $\Barr(\Ccal, \Ccal, \Fcal)$,
as follows.

\begin{diag}     \label{diagram: filtered simplicial}
\[
\xymatrix@C=2em{
\stackrel{\vdots}{\Ccal\filter{m-1} \Fcal}\
   \ar[r]
   \ar@{}[d]|*=0[@]{\subseteq}
&\ \stackrel{\vdots}{\Ccal\filter{m-1}(\Ccal\Fcal)}\
   \ar@{}[d]|*=0[@]{\subseteq}
         \ar@<-.6ex>[l]
         \ar@<.6ex>[l]
         \ar@<-.5ex>[r]
         \ar@<.5ex>[r]
&\ \stackrel{\quad\vdots}{\Ccal\filter{m-1}\left(\Ccal^2\Fcal\right)}\
   \ar@{}[d]|*=0[@]{\subseteq}
         \ar@<-1.0ex>[l]
         \ar[l]
         \ar@<1.0ex>[l]
\ldots
&\ \stackrel{\quad\vdots}{\Ccal\filter{m-1}\left(\Ccal^k\Fcal\right)}\
   \ar@{}[d]|*=0[@]{\subseteq}
\ldots
\\
\Ccal\filter{m} \Fcal\quad
   \ar[r]
      \ar@{}[d]|*=0[@]{\subseteq}
&\quad \Ccal\filter{m}(\Ccal\Fcal)\quad
      \ar@{}[d]|*=0[@]{\subseteq}
         \ar@<-.6ex>[l]
         \ar@<.6ex>[l]
         \ar@<-.5ex>[r]
         \ar@<.5ex>[r]
&\quad \Ccal\filter{m}\left(\Ccal^2\Fcal\right)\
      \ar@{}[d]|*=0[@]{\subseteq}
         \ar@<-1.0ex>[l]
         \ar[l]
         \ar@<1.0ex>[l]
\ldots
&\ \Ccal\filter{m}\left(\Ccal^k\Fcal\right)\
      \ar@{}[d]|*=0[@]{\subseteq}
\ldots
\\
\quad\stackrel{\vdots}{\Ccal\Fcal}\qquad
   \ar[r]
&\qquad \stackrel{\vdots}{\Ccal(\Ccal\Fcal)}\qquad
         \ar@<-.6ex>[l]
         \ar@<.6ex>[l]
         \ar@<-.5ex>[r]
         \ar@<.5ex>[r]
&\quad \stackrel{\quad\vdots}{\Ccal\left(\Ccal^2\Fcal\right)}\quad
         \ar@<-1.0ex>[l]
         \ar[l]
         \ar@<1.0ex>[l]
\ldots
&\ \stackrel{\quad\vdots}{\Ccal\left(\Ccal^k\Fcal\right)}\
\ldots
}
\]
\end{diag}
As stated in
Theorem~\ref{theorem: linearity theorem}, toward the beginning of the section, the geometric realization of each row
of Diagram~\ref{diagram: filtered simplicial} actually gives a
\emph{special} simplicial $\Gamma$-space, i.e.
takes wedges to products up to homotopy, and we assume this for the remainder of the section.
The importance is that for special $\Gamma$-spaces, infinite delooping and stabilization are equivalent processes \cite[Theorem~4.2]{Bousfield-Friedlander}.
The proof  of Theorem~\ref{theorem: linearity theorem}
is deferred to Sections~\ref{section: strategy}--\ref{section: maps out of E}.
In the remainder of this section,
we prove the principal result of our paper, Theorem~\ref{theorem: bar filtration},
assuming Theorem~\ref{theorem: linearity theorem}.

Recall that the goal is to prove that the spectrum given by the infinite delooping of the $m$-th row of Diagram~\ref{diagram: filtered simplicial}, the $\Gamma$-space
$\realization{\Ccal\filter{m}\Ccal^{\bullet}\Fcal}$,
is equivalent to
the stabilization of~$\filter{m}\Fcal$,
\begin{equation*}   
\stabilization{(\filter{m}\Fcal)}\definedas
\left\{
\filter{m}\Fcal(S^0), \filter{m}\Fcal\left(S^1\right), \ldots,
          \filter{m}\Fcal\left(S^n\right), \ldots
\right\},
\end{equation*}
which is the $m$-th spectrum in the stable rank filtration of~$\Fcal$.

Our approach is to relate $\filter{m}\Fcal$ and
$\realization{\Ccal\filter{m}\Ccal^{\bullet}\Fcal}$ through the $\Gamma$-space,
$\realization{\filter{m}{(\Ccal\Ccal^\bullet \Fcal)}}$, which was studied in Section~\ref{section: bar}, culminating in Proposition~\ref{proposition: tautology}. Lemma~\ref{lemma: switch} provides us with a map
\begin{equation}   \label{eq: marker}
\realization{\filter{m}{(\Ccal\Ccal^\bullet \Fcal)}}
\longrightarrow
\realization{\Ccal\filter{m}{(\Ccal^\bullet \Fcal)}},
\end{equation}
and a key point in the proof of Theorem~\ref{theorem: bar filtration} is that this map induces an equivalence of spectra between the stablizations.
To study this point, we recall the term ``stable equivalence," as a useful criterion for a natural transformation of $\Gamma$-spaces to induce an equivalence of stabilizations.

\begin{definition}  \label{definition: stable equivalence}
A natural transformation between $\Gamma$-spaces $\alpha\colon \Fcal \to \Gcal$ is a \defining{stable equivalence} if there exists a constant $c$ such that, whenever $X$ is an $(n-1)$-connected pointed space, the map
$\alpha(X)\colon \Fcal(X) \to \Gcal(X)$ is $(2n-c)$-connected.
\end{definition}

\begin{lemma}
If $\alpha\colon \Fcal \to \Gcal$ is a stable equivalence of $\Gamma$-spaces, then $\alpha$ induces an equivalence of spectra on the stabilizations
$\stabilization{\alpha}\colon\stabilization{\Fcal}
\xrightarrow{\, \simeq\, }
\stabilization{\Gcal}$.
\end{lemma}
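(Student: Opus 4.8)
The plan is to show that $\stabilization{\alpha}$ induces an isomorphism on every stable homotopy group; since $\stabilization{\Fcal}$ and $\stabilization{\Gcal}$ need not be $\Omega$-spectra, this is precisely what ``equivalence of spectra'' should mean here. Recall that for a spectrum $\{E_n\}$ one has, for each integer~$k$,
\[
\pi_k\bigl(\{E_n\}\bigr)=\colim_n \pi_{k+n}(E_n),
\]
the colimit being taken along the maps induced by the structure maps $S^1\wedge E_n\to E_{n+1}$. The assembly transformations $S^1\wedge\Fcal(-)\to\Fcal(S^1\wedge-)$ that organize $\{\Fcal(S^n)\}$ into the spectrum $\stabilization{\Fcal}$ are natural in~$\Fcal$, so $\stabilization{\alpha}$ is genuinely a morphism of spectra and, for each~$k$, induces a morphism of sequential direct systems $\{\pi_{k+n}(\Fcal(S^n))\}_n\to\{\pi_{k+n}(\Gcal(S^n))\}_n$ compatible with the transition maps.

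The substantive input is the connectivity estimate. The sphere $S^n$ is $(n-1)$-connected, so the hypothesis that $\alpha$ is a stable equivalence with constant~$c$ yields that $\alpha(S^n)\colon\Fcal(S^n)\to\Gcal(S^n)$ is $(2n-c)$-connected, i.e.\ induces an isomorphism on $\pi_i$ for $i<2n-c$ and a surjection on $\pi_{2n-c}$. Specializing to $i=k+n$, the map $\pi_{k+n}(\Fcal(S^n))\to\pi_{k+n}(\Gcal(S^n))$ is an isomorphism as soon as $n>k+c$. Hence for each fixed~$k$ the morphism of direct systems above is an isomorphism in all sufficiently large indices, and a morphism of sequential direct systems that is eventually an isomorphism induces an isomorphism on colimits. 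Therefore $\pi_k(\stabilization{\alpha})$ is an isomorphism for every integer~$k$, so $\stabilization{\alpha}$ is an equivalence of spectra.

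This argument is largely bookkeeping, and I do not expect a serious obstacle. The one point that deserves a word of care is the compatibility of $\stabilization{\alpha}$ with the structure maps of the two spectra --- that is, that the assembly natural transformation used to define stabilization is natural in the $\Gamma$-space variable as well, so that one really obtains a map of direct systems rather than merely a degreewise family of isomorphisms in a range. This is immediate from the construction of the left Kan extension, and with it in hand the colimit comparison goes through verbatim.
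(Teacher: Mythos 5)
Your argument is correct and is essentially the paper's own proof: both use that $S^n$ is $(n-1)$-connected so $\alpha(S^n)$ is $(2n-c)$-connected, which outpaces the degree $k+n$, so the map of colimit systems defining $\pi_k$ is eventually an isomorphism and hence $\partial_1\alpha$ is a $\pi_*$-isomorphism of spectra. The paper simply states this more tersely; your extra care about naturality of the structure maps and the colimit bookkeeping is fine but not a departure in method.
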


\begin{proof}
Recall from \eqref{eq: stabilize a Gamma space} that the $n$-th spaces in
$\stabilization{\Fcal}$ and $\stabilization{\Gcal}$  are
$\Fcal(S^n)$ and~$\Gcal(S^n)$, respectively. If $c$ is the number given in
Definition~\ref{definition: stable equivalence}, then
$\Fcal(S^n)\rightarrow \Gcal(S^n)$ is $(2n-c)$-connected. It follows that the map of spectra $\stabilization{\alpha}\colon\stabilization{\Fcal}
\to\stabilization{\Gcal}$ induces an isomorphism of homotopy groups, and the lemma follows.
\end{proof}

\begin{example}    \label{example: stable equivalence}
The unit $\eta_{\Ccal}\colon \Id\rightarrow\Ccal$ is a stable equivalence. This follows easily from analyzing the filtration of~$\Ccal$.
\end{example}

We will now recall a few elementary properties of stable equivalences. The following lemma says, informally speaking, that stable equivalences form a kind of ``ideal'' in the category of $\Gamma$-spaces.
\begin{lemma}
\label{lemma: compose stable equivalence}
Suppose $\alpha\colon \Gcal_1\to \Gcal_2$ is a stable equivalence, and $\Fcal$ is any $\Gamma$-space. Then the induced natural transformations $\Fcal\alpha\colon \Fcal\Gcal_1\to \Fcal\Gcal_2$ and $\alpha\Fcal\colon \Gcal_1\Fcal\to \Gcal_2\Fcal$ are stable equivalences.
\end{lemma}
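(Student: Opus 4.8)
Fix a constant $c$ witnessing that $\alpha\colon\Gcal_1\to\Gcal_2$ is a stable equivalence, so that $\alpha(Z)\colon\Gcal_1(Z)\to\Gcal_2(Z)$ is $(2n-c)$-connected whenever $Z$ is $(n-1)$-connected (Definition~\ref{definition: stable equivalence}). The plan is to show that this same $c$ works for both $\alpha\Fcal$ and $\Fcal\alpha$. The only outside ingredient is the standard fact that a $\Gamma$-space, viewed as a homotopy functor on pointed spaces, preserves connectivity of spaces (it sends an $(n-1)$-connected space to an $(n-1)$-connected space) and preserves connectivity of maps (it sends a $k$-connected map to a $k$-connected map); see, e.g., \cite{Bousfield-Friedlander} and \cite{Segal-Categories}. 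Since Definition~\ref{definition: stable equivalence} imposes no condition once $2n-c\le 0$, I may assume throughout that $n$ is large, so that every space occurring below is connected.

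I would dispatch $\alpha\Fcal$ first, as it is the easier case. Given an $(n-1)$-connected pointed space $X$, the $\Gamma$-space $\Fcal$ sends it to the $(n-1)$-connected space $\Fcal(X)$ (greater connectivity would only help). Applying the defining property of $\alpha$ to $\Fcal(X)$ then shows that
\[
(\alpha\Fcal)(X)=\alpha\bigl(\Fcal(X)\bigr)\colon\Gcal_1\bigl(\Fcal(X)\bigr)\longrightarrow\Gcal_2\bigl(\Fcal(X)\bigr)
\]
is $(2n-c)$-connected, so $\alpha\Fcal$ is a stable equivalence with constant $c$.

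For $\Fcal\alpha$, I would again start with an $(n-1)$-connected pointed space $X$. Then $\Gcal_1(X)$ and $\Gcal_2(X)$ are $(n-1)$-connected, and, because $\alpha$ is a stable equivalence, the map $\alpha(X)\colon\Gcal_1(X)\to\Gcal_2(X)$ is $(2n-c)$-connected. Applying the $\Gamma$-space $\Fcal$ and using that $\Gamma$-spaces preserve connectivity of maps, the map
\[
(\Fcal\alpha)(X)=\Fcal\bigl(\alpha(X)\bigr)\colon\Fcal\bigl(\Gcal_1(X)\bigr)\longrightarrow\Fcal\bigl(\Gcal_2(X)\bigr)
\]
is $(2n-c)$-connected, so $\Fcal\alpha$ is a stable equivalence with constant $c$, which finishes the argument.

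I expect the only real content to be the two connectivity facts about $\Gamma$-spaces invoked above, and of these the preservation of connectivity of \emph{maps} (used for $\Fcal\alpha$) is the subtler one. If a self-contained treatment were preferred to a citation, I would derive it, in the spirit of Example~\ref{example: stable equivalence}, from the skeletal and simplicial filtrations of the spaces involved together with the fact that $\Fcal$ commutes with filtered colimits and geometric realization.
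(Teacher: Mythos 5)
Your proof is correct and follows essentially the same route as the paper: the paper's entire proof is to invoke the fact that any $\Gamma$-space preserves connectivity of maps (citing \cite[Proposition 5.20]{Lydakis-Gamma}), which is exactly the ingredient you use, spelled out separately for $\alpha\Fcal$ (connectivity of spaces plus the defining property of $\alpha$) and $\Fcal\alpha$ (connectivity of maps).
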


\begin{proof}
The result follows immediately from~\cite[Proposition 5.20]{Lydakis-Gamma}, which says that any $\Gamma$-space preserves connectivity of maps.
\end{proof}

\begin{example}     \label{example: unit}
Lemma~\ref{lemma: compose stable equivalence} tells us that
if $\Fcal$ is a $\Gamma$-space, then the
natural transformation $\eta_{\Ccal}(\Fcal)\colon\Fcal\to\Ccal\Fcal$
induced by the map $\eta_{\Ccal}$ is a stable equivalence.
\end{example}

Recall from~\eqref{eq: marker} that we would like to show that
\[
\realization{\filter{m}{(\Ccal\Ccal^\bullet \Fcal)}}
\longrightarrow
\realization{\Ccal\filter{m}{(\Ccal^\bullet \Fcal)}}
\]
is a stable equivalence.
For this purpose,
we actually need a more refined version of
Example~\ref{example: unit}.
Rather than only knowing that
$\eta_{\Ccal}\colon\Fcal\to\Ccal\Fcal$ is a stable equivalence,
we need a filtered version of this statement to handle the filtered functors involved.

\begin{lemma}\label{lemma: filtered Freudenthal}
Suppose $\Fcal$ is a filtered $\Gamma$-space. Then the map $\eta_{\Ccal}\colon\Fcal\to \Ccal\Fcal$ induces a stable equivalence
$\filter{m}{\Fcal}\to \filter{m}{\Ccal\Fcal}$ for every $m$.
\end{lemma}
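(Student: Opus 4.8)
The plan is to show that $\eta_\Ccal$ is a stable equivalence by controlling the connectivity of the cofiber of $\filter{m}\Fcal(Z)\hookrightarrow\filter{m}\Ccal\Fcal(Z)$, with a bound independent of $m$; this is the filtered refinement of Example~\ref{example: stable equivalence}, and it proceeds along the same lines. Write $W$ for the filtered space $\Fcal(Z)$, so that $\filter{j}W=\filter{j}\Fcal(Z)$. Recall that $\eta_\Ccal$ includes $W$, up to the contractible factor $E\Sigma_1$, as the length-one term of the coend $\Ccal W=E\Sigma_n\otimes_{n\in\Ical}W^n$, and that this inclusion is filtered because $\filter{j}(W^1)=\filter{j}W$; hence $\eta_\Ccal$ does restrict to a map $\filter{m}W\to\filter{m}\Ccal W$, and we let $Q(Z)$ denote its cofiber. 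It suffices to prove that $Q(Z)$ is $(2d-1)$-connected whenever $Z$ is $(d-1)$-connected, with the bound uniform in $m$.

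First I would filter $Q(Z)$. Inside $\filter{m}\Ccal W$ sit the subspaces $\filter{j}\Ccal W$, and an inspection of Definition~\ref{definition: define filtration CX} and the product filtration \eqref{eq: product filtration} shows that $\filter{j}\Ccal W\cap\filter{m}W=\filter{j}W$ for $j\le m$ (a length-one element of $\Ccal W$ has the same weight no matter which representative of the coend it is written in). Thus the quotients $Q_j(Z)\definedas\filter{j}\Ccal W/\filter{j}W$ assemble into an exhaustive increasing filtration $\ast=Q_0(Z)\subseteq Q_1(Z)\subseteq\cdots\subseteq Q_m(Z)=Q(Z)$, and $Q_1(Z)\simeq\ast$ by Example~\ref{example: filtration one}(2) (equivalently: there are no length-$\ge2$ terms in weight one). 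By excision, the subquotient $Q_j(Z)/Q_{j-1}(Z)$ is the cofiber of the length-one inclusion $\filter{j}W/\filter{j-1}W\hookrightarrow\filter{j}\Ccal W/\filter{j-1}\Ccal W$.

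The technical heart of the argument is the identification of $\filter{j}\Ccal W/\filter{j-1}\Ccal W$. Unravelling the coend defining $\Ccal$ together with the product filtration \eqref{eq: product filtration}, the associated graded of the $\filter{\bullet}$-filtration on $\Ccal W$ is naturally identified with $\Ccal$ applied to the associated graded of $W$, graded by total weight: in weight $j$ one obtains a finite wedge, over integers $n\ge1$ and tuples $a_1+\cdots+a_n=j$ with all $a_i\ge1$ (taken up to the $\Sigma_n$-action), of spaces
\[
{E\Sigma_n}_+\wedge_{\Sigma_n}\bigl((\filter{a_1}W/\filter{a_1-1}W)\wedge\cdots\wedge(\filter{a_n}W/\filter{a_n-1}W)\bigr).
\]
Under this identification the $n=1$ summand in weight $j$ is exactly $\filter{j}W/\filter{j-1}W$, the length-one term, so its cofiber $Q_j(Z)/Q_{j-1}(Z)$ is the wedge of the summands with $n\ge2$. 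Now if $Z$ is $(d-1)$-connected with $d\ge1$, then each $\filter{a}\Fcal$, being a $\Gamma$-space, preserves connectivity \cite[Proposition~5.20]{Lydakis-Gamma}, so each $\filter{a}W=\filter{a}\Fcal(Z)$ is $(d-1)$-connected and hence so is each quotient $\filter{a}W/\filter{a-1}W$. Therefore, for $n\ge2$, the smash $(\filter{a_1}W/\filter{a_1-1}W)\wedge\cdots\wedge(\filter{a_n}W/\filter{a_n-1}W)$ is $(nd-1)$-connected, in particular $(2d-1)$-connected; passing to extended powers and to a finite wedge preserves this. Thus every $Q_j(Z)/Q_{j-1}(Z)$ is $(2d-1)$-connected, and climbing the finite filtration $\ast=Q_1(Z)\subseteq\cdots\subseteq Q_m(Z)=Q(Z)$ shows that $Q(Z)$ is $(2d-1)$-connected, uniformly in $m$.

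Finally, since a map with $(2d-1)$-connected cofiber between connected spaces is at least $(2d-2)$-connected (one may even get $(2d)$-connected when $d\ge2$, using the relative Hurewicz theorem), taking the absolute constant $c=2$ makes $\eta_\Ccal\colon\filter{m}\Fcal\to\filter{m}\Ccal\Fcal$ a stable equivalence for every $m$. The only step that is not a formal manipulation is the third one: correctly identifying the associated graded of the $\filter{\bullet}$-filtration on $\Ccal W$ with $\Ccal$ of the associated graded of $W$, and checking that the relevant inclusions of filtered spaces are cofibrations, so that all of these quotients genuinely compute homotopy cofibers with the stated connectivity. This is a standard but slightly delicate bookkeeping computation with the coend formula for $\Ccal$ and the product filtration; everything else reduces to connectivity of smash products and a finite induction up a filtration.
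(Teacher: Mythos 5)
Your argument is correct in outline, but it takes a genuinely different route from the paper's. The paper's proof is a short coend manipulation: it writes $\filter{m}{\Fcal}$ as the coend of $n\mapsto\filter{m}{(\Fcal^{\times n})}$ over the subcategory $\Ical_{1}\subset\Ical$ of sets of cardinality at most one, observes that the left Kan extension of this functor from $\Ical_{1}$ to $\Ical$ is $n\mapsto\bigvee_{n}\filter{m}{\Fcal}$, and thereby exhibits $\filter{m}{\Fcal}\to\filter{m}{\Ccal\Fcal}$ as induced by the levelwise maps $\bigvee_{n}\filter{m}{\Fcal}\to\filter{m}{(\Fcal^{\times n})}$, each of which is a stable equivalence by \cite[Lemma 3.13]{Arone-Lesh-Fundamenta}. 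You instead estimate connectivity directly: you filter the cofiber by weight and identify the associated graded of the weight filtration on $\Ccal\Fcal(Z)$ with extended smash powers of the associated graded of $\Fcal(Z)$ (a filtered version of May's splitting of $\Ccal$), so that all length-$\geq 2$ pieces are roughly doubly connected. In effect you reprove the content of the cited Lemma 3.13 rather than quoting it; this makes the proof self-contained and even gives a connectivity constant uniform in $m$ (more than the statement requires), at the price of the associated-graded identification and the cofibrancy bookkeeping, which you flag but only sketch --- and that bookkeeping is exactly the heart of the matter, i.e.\ what the citation supplies in the paper's version. One caution on your last step: the assertion that a map of connected spaces with $(2d-1)$-connected cofiber is $(2d-2)$-connected is false in general without simple connectivity (an acyclic space mapping to a point has weakly contractible cofiber); here it is harmless, since for $d\geq 2$ both $\filter{m}{\Fcal}(Z)$ and $\filter{m}{\Ccal\Fcal}(Z)$ are simply connected and the relative Hurewicz theorem applies, while the cases $d\leq 1$ are absorbed into the constant $c$, but the justification should be stated in that form rather than as a general principle.
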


\begin{proof}
Recall that by Definition~\ref{definition: define filtration CX},
\[
\filter{m}{\Ccal\Fcal}=E\Sigma_n \otimes_{n\in \Ical}\filter{m}\left(\Fcal^{\times n}\right).
\]
Let $\Ical_{1}$ be the subcategory of $\Ical$ consisting of objects of cardinality $1$ or~$0$.
We organize the relevant spaces into the following commutative diagram:
\[
\xymatrix{
\filter{m}{\Fcal}
   \ar[r]^{\filter{m}\left(\eta_{\Ccal}(\Fcal)\right)}
   \ar[d]_-{\simeq}
& \filter{m}{\Ccal\Fcal}
   \ar[d]_-{=}^-{\rm{defn}}\\
E\Sigma_n \otimes_{n\in\Ical_{1}}
   \filter{m}\left(\Fcal^{\times n}\right)
   \ar[r]
   \ar[d]_-{\cong}
&
E\Sigma_n\otimes_{n\in \Ical}
   \filter{m}\left(\Fcal^{\times n}\right)
   \ar@{=}[d]\\
E\Sigma_n\otimes_{n\in \Ical}
   \bigvee_n\filter{m}{\Fcal}
   \ar[r]
&
E\Sigma_n\otimes_{n\in \Ical}
   \filter{m}\left(\Fcal^{\times n}\right).
}
\]
The homeomorphism in the lower left column uses that the left Kan extension from $\Ical_{1}$ to $\Ical$ of the functor $n\mapsto \filter{m}(\Fcal^{\times n})$ is the functor $n\mapsto \bigvee_n \filter{m}{\Fcal}$.

The map
$\bigvee_{n}\filter{m}{\Fcal}
\to
\filter{m}
\left(\Fcal^{\times n}
\right)$
is a stable equivalence for each $m$ and $n$
by~\cite[Lemma 3.13]{Arone-Lesh-Fundamenta}. It follows that the bottom map in the diagram is a stable equivalence, which proves the lemma.
\end{proof}

Since \eqref{eq: marker} is induced by Lemma~\ref{lemma: switch}, we
apply Lemma~\ref{lemma: filtered Freudenthal} to show how Lemma~\ref{lemma: switch}
interacts with filtration.

\begin{lemma}\label{lemma: stable equivalence}
Let $\Gcal$ be a filtered $\Gamma$-space. The natural map $\filter{m}\Ccal\Gcal \to \Ccal\filter{m}\Gcal$ provided by Lemma~\ref{lemma: switch} is a stable equivalence.
\end{lemma}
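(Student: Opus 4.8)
The plan is to compare both $\filter{m}\Ccal\Gcal$ and $\Ccal\filter{m}\Gcal$ with a common source, namely $\filter{m}\Gcal$, via the unit of the monad~$\Ccal$. First I would observe that the unit $\eta_{\Ccal}(\Gcal)\colon\Gcal\to\Ccal\Gcal$ carries $\filter{m}\Gcal$ into $\filter{m}\Ccal\Gcal$: in the coend description of Definition~\ref{definition: define filtration CX}, $\eta_{\Ccal}$ is the inclusion of the $n=1$ summand, and $\filter{m}(\Gcal^1)=\filter{m}\Gcal$ by~\eqref{eq: product filtration}, so an element of $\filter{m}\Gcal$ has image of induced filtration at most $m$. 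Write $a\colon\filter{m}\Gcal\to\filter{m}\Ccal\Gcal$ for this restricted unit, $b\colon\filter{m}\Ccal\Gcal\to\Ccal\filter{m}\Gcal$ for the map furnished by Lemma~\ref{lemma: switch}, and $c\colon\filter{m}\Gcal\to\Ccal\filter{m}\Gcal$ for the unit $\eta_{\Ccal}(\filter{m}\Gcal)$.

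Next I would check that the triangle $b\circ a=c$ commutes. Each of $a$, $b$, $c$ is compatible with the canonical inclusions into $\Ccal\Gcal$: for $b$ this is exactly the assertion of Lemma~\ref{lemma: switch} that the composite $\filter{m}\Ccal\hookrightarrow\Ccal\filter{m}\hookrightarrow\Ccal$ is the standard inclusion, while for $a$ and $c$ it is naturality of $\eta_{\Ccal}$ with respect to $\filter{m}\Gcal\hookrightarrow\Gcal$. Hence $b\circ a$ and $c$, composed into $\Ccal\Gcal$, are both equal to the restriction of $\eta_{\Ccal}(\Gcal)$ to $\filter{m}\Gcal$. Since $\Ccal$ carries the levelwise injection $\filter{m}\Gcal\hookrightarrow\Gcal$ to a levelwise injection (this is verified in the proof of Lemma~\ref{lemma: switch}), the map $\Ccal\filter{m}\Gcal\hookrightarrow\Ccal\Gcal$ is a monomorphism of $\Gamma$-spaces, and therefore $b\circ a=c$.

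Finally, $a$ is a stable equivalence by Lemma~\ref{lemma: filtered Freudenthal} (applied to the filtered $\Gamma$-space $\Gcal$), and $c=\eta_{\Ccal}(\filter{m}\Gcal)$ is a stable equivalence by Example~\ref{example: unit} (applied to the $\Gamma$-space $\filter{m}\Gcal$). A routine connectivity estimate shows that stable equivalences satisfy two-out-of-three: if $\alpha$ is $(2n-c_{\alpha})$-connected and $\beta\circ\alpha$ is $(2n-c_{\beta\alpha})$-connected on every $(n-1)$-connected input, then $\beta$ is $(2n-c)$-connected with $c=\max(c_{\alpha},c_{\beta\alpha})$, since in that range $\pi_i(\beta)$ is forced to be an isomorphism (and an epimorphism at the top degree) by factoring $\pi_i(\beta\circ\alpha)=\pi_i(\beta)\circ\pi_i(\alpha)$. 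Applying this to $b\circ a=c$ shows that $b$ is a stable equivalence, which is the assertion of the lemma. The substantive input here is Lemma~\ref{lemma: filtered Freudenthal}, which has already been established; the rest is bookkeeping, so I do not anticipate a real obstacle — the only point needing a little care is the commutativity of the triangle, which is handled by the monomorphism into $\Ccal\Gcal$ above.
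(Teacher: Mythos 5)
Your proof is correct and follows essentially the same route as the paper: factor the unit as $\filter{m}\Gcal\to\filter{m}\Ccal\Gcal\to\Ccal\filter{m}\Gcal$, apply Lemma~\ref{lemma: filtered Freudenthal} to the first map, recognize the composite as the stable equivalence $\eta_{\Ccal}(\filter{m}\Gcal)$, and conclude by two-out-of-three. You merely make explicit two points the paper leaves implicit (the identification $b\circ a=c$ via the monomorphism into $\Ccal\Gcal$, and the connectivity bookkeeping behind two-out-of-three for stable equivalences), which is fine.
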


\begin{proof}
Consider the maps
\[
\filter{m}\Gcal\to \filter{m}\Ccal\Gcal\to \Ccal\filter{m}\Gcal.
\]
The first map is a stable equivalence by Lemma~\ref{lemma: filtered Freudenthal}, and the composed map is a stable equivalence because the map from the identity functor to $\Ccal$ is a stable equivalence (Example~\ref{example: stable equivalence} and Lemma~\ref{lemma: filtered Freudenthal}). It follows that the second map is a stable equivalence, which is what we wanted to prove.
\end{proof}

Now, finally, comes the payoff. We have assembled the tools needed to prove the main theorem.

\begin{proof}[Proof of Theorem~\ref{theorem: bar filtration}]
We are given an augmented special $\Gamma$-space~$\Fcal$. By Lemma~\ref{lemma: augmentation}, the action of the monad~$\Ccal$ on~$\Fcal$ is augmentation-preserving. We want to prove that there is an
equivalence of spectra
\[
\stabilization{\!\left(\filter{m}\Fcal\right)}
\simeq
\Ktheory{\realization{\FilteredBK{m}{\Fcal}}}.
\]
Because $\realization{\Ccal\filter{m}{(\Ccal^\bullet \Fcal)}}$ is a \emph{special}
$\Gamma$-space (Theorem~\ref{theorem: linearity theorem}), we actually know that
$\Ktheory{\realization{\Ccal\filter{m}{(\Ccal^\bullet \Fcal)}}}
\simeq
\stabilization{\realization{\Ccal\filter{m}{(\Ccal^\bullet \Fcal)}}}$
\cite[Theorem~4.2]{Bousfield-Friedlander}, so we want to prove that
\[
\stabilization{\!\left(\filter{m}\Fcal\right)}
\simeq
\stabilization{\realization{\Ccal\filter{m}{(\Ccal^\bullet \Fcal)}}}.
\]

There is a natural map
\begin{equation}    \label{eq: what we want}
\filter{m}\Fcal
    \to \realization{\Ccal\filter{m}\Ccal^{\bullet}\Fcal},
\end{equation}
induced by the unit map $\filter{m}\Fcal\rightarrow\Ccal\filter{m}\Fcal$ followed by inclusion
into the $0$-th simplicial level. By Lemma~\ref{lemma: switch}, there is a factoring of the unit map as
\[
\filter{m}\Fcal
     \rightarrow\filter{m}\Ccal\Fcal
     \rightarrow\Ccal\filter{m}\Fcal
\]
and so \eqref{eq: what we want} factors as a composite
\begin{equation}     \label{eq: factoring}
\filter{m}{\Fcal}
\xrightarrow{\ \simeq\ }
\realization{\filter{m}{(\Ccal\Ccal^\bullet \Fcal)}}
\longrightarrow
\realization{\Ccal\filter{m}{(\Ccal^\bullet \Fcal)}},
\end{equation}
where the first map of the composite is an equivalence by Proposition~\ref{proposition: tautology}. So we only need to prove that the second map induces an equivalence of stabilizations.
But the second map is a stable equivalence in each simplicial degree by Lemma~\ref{lemma: stable equivalence}, and therefore induces a stable equivalence of geometric realizations.
Therefore
$\filter{m}{\Fcal}
\longrightarrow
\realization{\Ccal\filter{m}{(\Ccal^\bullet \Fcal)}}$
is a stable equivalence, yielding an equivalence of spectra
$\stabilization{\filter{m}{\Fcal}}
\xrightarrow{\simeq}
\stabilization{\realization{\Ccal\filter{m}
   (\Ccal^\bullet \Fcal)}}$
as desired.

All these equivalences are compatible with the inclusion maps from $\filter{m}{(-)}$ to $\filter{m+1}{(-)}$. This proves the theorem.
\end{proof}


\section{Strategy and proof of linearity}
\label{section: strategy}

We now begin the proof of Theorem~\ref{theorem: linearity theorem}, which occupies the rest of the paper. Although the proof is relatively simple in structure, it
is surprisingly involved in its details. In this section we define all of the objects in the proof, state several intermediate results, and prove
Theorem~\ref{theorem: linearity theorem} assuming these claims.
The subsequent sections are devoted to the details of proving the intermediate results.
For the remainder of the paper, we assume that $\Fcal$ is an augmented special $\Gamma$-space. The monad $\Ccal$ associated to an $E_\infty$-operad
has a left action on $\Fcal$ denoted~$\mu_{\Fcal}:\Ccal\Fcal\rightarrow\Fcal$, whichx is augmentation-preserving (see Lemma~\ref{lemma: augmentation}).

Let $X$ and $Y$ be finite pointed sets, and let $\collapsemap{X}$ denote the collapse map $X\vee Y\rightarrow X$ that identifies all of $Y$ to the basepoint. To establish
that $\realization{\FilteredBK{m}{\Fcal}}$ is a special $\Gamma$-space, we must show that
for all finite pointed finite sets $X$ and~$Y$, the natural map
\begin{equation}     \label{eq: desired equivalance of SS}
f: \underbrace{\Hugestrut
              \FilteredBK{m}{\Fcal}(X\vee Y)
              }_{\text{Denoted $A_{\bullet}$}
              }
\xrightarrow{\qquad}
   \underbrace{\Hugestrut
              \FilteredBK{m}{\Fcal}X\times \FilteredBK{m}{\Fcal}Y
              }_{\text{Denoted $B_{\bullet}$}
              },
\end{equation}
given by
$\FilteredBK{m}{\Fcal}(\collapsemap{X})
    \times\FilteredBK{m}{\Fcal}(\collapsemap{Y})$,
induces a weak equivalence of realizations.

We point out that
\eqref{eq: desired equivalance of SS} is not actually a levelwise weak equivalence. That is, if we fix a value of~$k$, the map
\begin{equation} \label{eq: not special}
\Ccal\filter{m}\Ccal^k\Fcal(X\vee Y)
\to
\Ccal\filter{m}\Ccal^k\Fcal X \times \Ccal\filter{m}\Ccal^k\Fcal Y
\end{equation}
is usually not a weak equivalence.
For example, in simplicial dimension~$k=0$ when $m=2$, we would be trying to show that
\begin{equation}     \label{eq: nonexample}
\Ccal\filter{2}\Fcal(X\vee Y)
\longrightarrow
\Ccal\filter{2}\Fcal X
\times
\Ccal\filter{2}\Fcal Y
\end{equation}
is a homotopy equivalence. However, an explicit counterexample can be given by using $\Fcal=\Sp^{\infty}$ and $X=Y=S^{0}$. The left side turns out to be
$\left(\Ccal S^{0}\right)^{\times 5}$, while the right side turns out to be $\left(\Ccal S^{0}\right)^{\times 4}$.

Nevertheless, Theorem~\ref{theorem: linearity theorem} asserts that \eqref{eq: desired equivalance of SS} induces a weak equivalence of geometric realizations. The proof strategy is to ``fatten up" the spaces involved.
Let $A_{\bullet}$ and $B_{\bullet}$ be the domain and codomain, respectively, of~\eqref{eq: desired equivalance of SS}.
We embed both $A_{\bullet}$ and $B_{\bullet}$ in much larger simplicial spaces $\Ecal_{\bullet}$ and $\Dcal_{\bullet}$, respectively, and exhibit a map $\Ecal_{\bullet}\rightarrow\Dcal_{\bullet}$ that actually is a levelwise weak equivalence. It then turns out (see diagram~\eqref{diagram: square of auxiliaries}) that $A_{\bullet}\rightarrow B_{\bullet}$ is a retract of
$\Ecal_{\bullet}\rightarrow\Dcal_{\bullet}$ up to a simplicial homotopy, 
and $B_{\bullet}$ is a deformation retract of~$\Dcal_{\bullet}$,
so in the end all four simplicial spaces have weakly equivalent realizations.

To implement the strategy, we first expand $B_{\bullet}$
to a larger simplicial space $\Dcal_{\bullet}$ by gathering  the different ways that
$\Ccal\filter{m}\left(\Ccal^{k}\Fcal\right)$ could turn the wedge
in $A_{\bullet}$ into a product.
For $0\leq i\leq k$, define
\begin{align*}
D_{k}^{i}
   &=\Ccal
   \filter{m}\Ccal^{k-i}
   \left(\Largestrut\Ccal^{i}\Fcal X\times\Ccal^{i}\Fcal Y\right).
\\
\intertext{At one extreme, when $i=0$, we have transformed minimally,
using only $\Fcal$ to make a wedge-to-product transformation
on the very inside of the composite:}
D_{k}^{0}
   &=\Ccal
   \filter{m}\Ccal^{k}
         \left(\Largestrut\Fcal X\times\Fcal Y\right).
   \\
\intertext{At the other extreme, we define one additional space for $i=k+1$, when we have $B_{\bullet}$ itself,}
D_{k}^{k+1}
=
B_{k} &=
\Ccal
     \filter{m}\Ccal^{k}\Fcal X
\times
\Ccal
     \filter{m}\Ccal^{k}\Fcal Y.
\end{align*}
We define $\Dcal_{\bullet}$ by
\begin{equation}  \label{eq: define D}
\Dcal_{k}=\bigvee_{0\leq i\leq k+1}D_{k}^{i}.
\end{equation}

\begin{proposition}    \label{proposition: D is a simplicial space}
$\Dcal_{\bullet}$ is a simplicial space.
\end{proposition}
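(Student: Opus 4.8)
The plan is to define the face and degeneracy operators on $\Dcal_\bullet$ one wedge summand at a time. The organizing principle is that the indexing set $\{0,1,\dots,k+1\}$ of the wedge in $\Dcal_k$ is the set $\Delta[1]_k$ of $k$-simplices of $\Delta[1]$, so $\Dcal_\bullet$ should be a ``twisted cylinder'' fibered over $\Delta[1]$: the column $i=0$ carries the filtered bar construction with $D_{k}^{0}=\Ccal\filter{m}{\Ccal^{k}(\Fcal X\times\Fcal Y)}$, the column $i=k+1$ carries $B_\bullet$ (already a simplicial space by Proposition~\ref{proposition: structure maps respect filtration}, evaluated at $X$ and at $Y$ and multiplied), and the columns $D_{k}^{i}=\Ccal\filter{m}{\Ccal^{k-i}(\Ccal^{i}\Fcal X\times\Ccal^{i}\Fcal Y)}$ in between interpolate by transferring copies of $\Ccal$ from the outer block $\Ccal\filter{m}{\Ccal^{k-i}(-)}$ to the inner block $\Ccal^{i}\Fcal X\times\Ccal^{i}\Fcal Y$. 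Here one uses that $W\definedas\Ccal^{i}\Fcal X\times\Ccal^{i}\Fcal Y$, being a product of free $\Ccal$-algebras, carries a diagonal $\Ccal$-action $a_W=(\mu_\Ccal\times\mu_\Ccal)\circ(\Ccal\pi_1\times\Ccal\pi_2)$, so that $D_{k}^{i}$ is the $(k-i)$-st simplicial level of the filtered bar construction $\Ccal\filter{m}{\Ccal^{\bullet}W}$ of $W$, while $W$ itself is the $(i-1)$-st simplicial level of $\Barr(\Ccal,\Ccal,\Fcal X)\times\Barr(\Ccal,\Ccal,\Fcal Y)$.

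Concretely, on $D_{k}^{i}$ (for $0\le i\le k$) I would take, for $0\le j\le k$: if $j\le k-i$, the face $d_j\colon D_{k}^{i}\to D_{k-1}^{i}$ and the degeneracy $s_j\colon D_{k}^{i}\to D_{k+1}^{i}$ to be the corresponding structure maps of the filtered bar construction of $W$ in degree $k-i$, which exist by the argument of Proposition~\ref{proposition: structure maps respect filtration} applied to the filtered $\Ccal$-algebra $W$ (so $d_0$ comes from Lemma~\ref{lemma: filtration of composition, Greg's version}, and the junction face $d_{k-i}$ applies $a_W$ to the innermost copy of $\Ccal$ of the outer block); if $j>k-i$, the face $d_j\colon D_{k}^{i}\to D_{k-1}^{i-1}$ and the degeneracy $s_j\colon D_{k}^{i}\to D_{k+1}^{i+1}$ to be $\Ccal\filter{m}{\Ccal^{k-i}(-)}$ applied to $d_\ell\times d_\ell$, resp.\ $s_\ell\times s_\ell$, where $d_\ell,s_\ell$ are the structure maps of $\Barr(\Ccal,\Ccal,\Fcal X)$ and $\Barr(\Ccal,\Ccal,\Fcal Y)$ and $\ell=j-(k-i)-1$. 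On $D_{k}^{k+1}=B_k$ the operators are the $\dwiggle_j\times\dwiggle_j$ and $\swiggle_j\times\swiggle_j$ of Proposition~\ref{proposition: structure maps respect filtration}; the only operator that leaves the $B$-column is $d_0$ out of $D_{k}^{k}$, which the combinatorics below forces into $B_{k-1}$ and which I take to be $\Ccal\filter{m}{(\Ccal A\times\Ccal B)}\to\Ccal\filter{m}{\Ccal A}\times\Ccal\filter{m}{\Ccal B}\xrightarrow{\dwiggle_0\times\dwiggle_0}\Ccal\filter{m}{A}\times\Ccal\filter{m}{B}$ with $A=\Ccal^{k-1}\Fcal X$, $B=\Ccal^{k-1}\Fcal Y$, the first arrow being $\Ccal$ applied to the filtration-preserving projections of $\filter{m}{(\Ccal A\times\Ccal B)}$ onto $\filter{m}{\Ccal A}$ and onto $\filter{m}{\Ccal B}$.

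Three things then need to be checked. First, that each operator lands in the claimed summand; this is elementary $\Delta[1]$ bookkeeping — writing a $k$-simplex of $\Delta[1]$ as ``$i=$ the number of vertices sent to $1$,'' one finds that $d_j$ drops $i$ by one exactly when $j>k-i$ and fixes it otherwise, and dually for $s_j$, which is exactly why $\Dcal_k$ has one more summand than $\Dcal_{k-1}$. Second, that every operator preserves filtration; since $\Ccal$ and $\filter{m}{}$ are endofunctors of $\filtered$ and every building block ($\mu_\Ccal$, $\eta_\Ccal$, $\mu_\Fcal$, the maps $\Ccal(U\times V)\to\Ccal U\times\Ccal V$, and the projections $\filter{m}{(P\times Q)}\to\filter{m}{P}$) preserves filtration by Lemmas~\ref{lemma: Spinf structure maps}, \ref{lemma: filtered monad}, \ref{lemma: boring lemma}, \ref{lemma: augmentation}, \ref{lemma: switch}, and~\ref{lemma: filtration of composition, Greg's version}, this is just a matter of assembling those lemmas.

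The third and substantial task — the one I expect to be the main obstacle — is the simplicial identities. Those relating two operators both of which act entirely inside the outer block, or both entirely inside the inner block, are inherited from the standard simplicial identities of a two-sided bar construction, wrapped in $\Ccal\filter{m}{\Ccal^{k-i}(-)}$, together with Proposition~\ref{proposition: structure maps respect filtration} for the $B$-column. What remains are the identities involving a junction or diagonalizing operator, that is, one that changes the column index $i$; I expect all of these to reduce to the single interchange fact that the splitting $\Ccal\pi_1\times\Ccal\pi_2\colon\Ccal(U\times V)\to\Ccal U\times\Ccal V$ commutes with $\mu_\Ccal$ and with $\eta_\Ccal$, which is immediate from naturality of $\mu_\Ccal$ and $\eta_\Ccal$ applied to $\pi_1$ and $\pi_2$ (and from the compatibility of $\pi_1,\pi_2$ with the filtration-level projections). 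Granting this, one argues by cases on the position of the wall between the outer and inner blocks relative to the two indices, and each case becomes a routine diagram chase of natural transformations wrapped in functors. (One could instead organize the whole construction as a totalization of the bisimplicial-type data coming from $\Barr(\Ccal,\Ccal,-)$ in one direction and $\Barr(\Ccal,\Ccal,\Fcal X)\times\Barr(\Ccal,\Ccal,\Fcal Y)$ in the other, and deduce the identities from Eilenberg--Zilber-type compatibilities, but this does not appear to shorten the argument.)
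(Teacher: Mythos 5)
Your construction of $\Dcal_{\bullet}$ coincides summand-by-summand with the paper's: the within-column operators, including the special junction face given by the diagonal action $\beta$ of \eqref{eq: define beta} on $W=\Ccal^{i}\Fcal X\times\Ccal^{i}\Fcal Y$ and the $d_0$ coming from Lemma~\ref{lemma: filtration of composition, Greg's version}, and the column-changing operators obtained by applying the outer functor to products of bar-construction structure maps, are exactly the paper's horizontal, borderline, and vertical maps of Section~\ref{section: D is a simplicial space}. The difference is one of organization rather than substance: you absorb the within-column identities (including the paper's borderline checks \eqref{eq: D borderline face identity} and \eqref{eq: D degeneracy identity}) into the statement that each column is the filtered bar construction of the filtered $\Ccal$-algebra $W$, invoking the standard fact that a product of $\Ccal$-algebras is a $\Ccal$-algebra with structure map $(\mu\times\mu)\circ(\Ccal\pi_1\times\Ccal\pi_2)$, whereas the paper proves that fact by hand (Lemma~\ref{lemma: product of modules}) and then checks the three borderline identities directly, using the free-generator criterion of Lemma~\ref{lemma: check on generators} and Corollary~\ref{corollary: product of multiplications is module map} for the mixed case \eqref{eq: D borderline and other face}. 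Your remaining mixed identities (junction face against a column-changing face or degeneracy) do reduce, as you claim, to naturality of $\mu_{\Ccal}$ and $\eta_{\Ccal}$ against the projections together with the monad and module laws---equivalently, to the fact that products of $\Ccal$-module maps are $\Ccal$-module maps, which is how the paper packages it---so nothing in your plan would fail, though you leave as ``routine chases'' precisely the diagrams the paper writes out. One point in your favor: you explicitly pin down the cross-boundary face $d_{0}\colon D_{k}^{k}\to D_{k-1}^{k}=B_{k-1}$ as the filtered projections followed by $\dwiggle_{0}\times\dwiggle_{0}$; this edge case is not literally covered by the paper's generic recipe (apply $\Ccal\filter{m}{\Ccal^{k-i-1}}$ to $\beta$, which is vacuous at $i=k$), and your choice is the filtered analogue of $\beta$ that the paper implicitly uses and that Section~\ref{section: homotopy of Ds} requires.
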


Proposition~\ref{proposition: D is a simplicial space} is proved as
Proposition~\ref{proposition: proving D is a simplicial space}
in Section~\ref{section: D is a simplicial space}, where we define the face and degeneracy maps for $\Dcal_{\bullet}$ and verify the simplicial identities.
Almost all of the simplicial structure maps are defined in a straightforward way using the monad structure of $\Ccal$ and the module structure of $\Fcal$ over~$\Ccal$. But in each simplicial dimension, there is one face map that needs more detailed attention to define.

As we said above, $D_{k}^{k+1}=B_{k}$. When we define the simplicial structure maps in Section~\ref{section: D is a simplicial space}, we will see that they are respected by the inclusion $B_{\bullet}\subset\Dcal_{\bullet}$. However, much more is true: $B_{\bullet}$~is not just a simplicial subspace, but in fact a simplicial deformation retract of~$\Dcal_{\bullet}$, as established by the following proposition.

\begin{proposition}    \label{proposition: existence of homotopy}
There is an inclusion of simplicial spaces $j: B_{\bullet}\hookrightarrow\Dcal_{\bullet}$
and a simplicial retraction $q: \Dcal_{\bullet}\rightarrow B_{\bullet}$. Further, there is a simplicial homotopy $h$ between the composite
\[
\Dcal_{\bullet}\xrightarrow{\ q\ } B_{\bullet} \xrightarrow{\ j\ }\Dcal_{\bullet}
\]
and the identity map on~$\Dcal_{\bullet}$, which restricts to the identity on the image of $B_{\bullet}$ in~$\Dcal_{\bullet}$.
\end{proposition}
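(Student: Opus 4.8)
The plan is to build $j$, $q$, and $h$ explicitly from the combinatorial description of $\Dcal_k = \bigvee_{0\le i\le k+1} D_k^i$. The inclusion $j\colon B_\bullet\hookrightarrow\Dcal_\bullet$ is already essentially given: recall $D_k^{k+1} = B_k$, so $j$ in simplicial degree $k$ is the wedge summand inclusion $B_k = D_k^{k+1}\hookrightarrow\bigvee_{0\le i\le k+1}D_k^i$. That this is simplicial is part of the content of Proposition~\ref{proposition: D is a simplicial space} (or rather, is checked alongside it in Section~\ref{section: D is a simplicial space}), so I would cite that. For the retraction $q$, the idea is to push every summand $D_k^i$ with $i\le k$ all the way ``out'' to the summand $B_k$. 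The natural tool is the linearization map of~\eqref{eq: special equivalence}: for each $i$, there is a natural transformation
\[
\Ccal\filter{m}\Ccal^{k-i}\!\left(\Ccal^i\Fcal X\times\Ccal^i\Fcal Y\right)
\longrightarrow
\Ccal\filter{m}\Ccal^{k}\Fcal X\times \Ccal\filter{m}\Ccal^{k}\Fcal Y = B_k
\]
obtained by applying $\lambda$ to move the two factors out through the $\Ccal^{k-i}$ and then through $\filter{m}\Ccal\,(-)$ via Lemma~\ref{lemma: boring lemma} and Lemma~\ref{lemma: switch}; on the summand $i=k+1$ this is the identity of $B_k$. Assembling these over the wedge gives a map $q_k\colon\Dcal_k\to B_k$ with $q\circ j=\id$, and one must check that the $q_k$ commute with the face and degeneracy maps defined in Section~\ref{section: D is a simplicial space}. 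The naturality of $\lambda$ handles all the structure maps built from the monad multiplication and the $\Ccal$-module structure; the one subtle face map in each degree (the one singled out as needing ``more detailed attention'') must be checked by hand, but it is designed precisely so that this compatibility holds.

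For the simplicial homotopy $h$, I would construct it one wedge summand at a time. Recall that a simplicial homotopy between $j\circ q$ and $\id_{\Dcal_\bullet}$ amounts to maps $h_j\colon\Dcal_k\to\Dcal_{k+1}$ for $0\le j\le k$ satisfying the standard simplicial-homotopy identities. The geometric picture is that on the summand $D_k^i$ (for $i\le k$), we want to interpolate between ``the element sitting in $D_k^i$'' and ``its image in $B_k=D_k^{k+1}$''; since $B_\bullet$ and $\Dcal_\bullet$ differ only in the choice of wedge summand, and the map $q$ is built from the linearization equivalences $\lambda$ (which are equivalences because $\Fcal$ is special), what we really need is a simplicial homotopy witnessing that on $\Dcal_\bullet$ the summands $D_\bullet^i$ are all simplicially homotopic, as subobjects, to $D_\bullet^{k+1}$. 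Concretely: there is, for each $i<k+1$, a degeneracy-like operator that inserts a unit $\eta_\Ccal$ and a linearization, producing the map $D_k^i\to D_{k+1}^{i+1}$, and iterating these (together with the ordinary degeneracies) across the $h_j$'s gives the desired homotopy; the fact that $h$ restricts to the identity on the image of $B_\bullet$ is automatic because on the summand $D_k^{k+1}=B_k$ the interpolation is already constant. Writing down the explicit formulas for $h_0,\dots,h_k$ on each summand and verifying the simplicial-homotopy identities is routine but lengthy, and I would present it in Section~\ref{section: D is a simplicial space} after the structure maps are in place.

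The main obstacle, I expect, is not any single deep idea but the bookkeeping in verifying that $q$ and $h$ are simplicial: the face maps of $\Dcal_\bullet$ mix summands $D_k^i$ with different $i$, and the ``one special face map per degree'' (the one that is not simply $\Ccal$ applied to a monad map) interacts with the linearization maps $\lambda$ in a way that has to be checked carefully against the simplicial identities. In other words, the real work is confirming that the explicit $q_k$ and $h_j$ defined above are compatible with \emph{that} face map; once that single compatibility is checked, the rest follows from the naturality of $\lambda$, $\eta_\Ccal$, and $\mu_\Ccal$ together with Lemmas~\ref{lemma: switch} and~\ref{lemma: boring lemma}. Since the definition of that face map is deferred to Section~\ref{section: D is a simplicial space}, I would complete the proof of Proposition~\ref{proposition: existence of homotopy} there as well, immediately after Proposition~\ref{proposition: proving D is a simplicial space}.
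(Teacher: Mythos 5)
Your identification of $j$ (the wedge--summand inclusion $B_k=D_k^{k+1}\hookrightarrow\Dcal_k$) and of the fact that $h$ should be constant on that summand agrees with the paper, and your $q$ is, after unwinding, the right map --- though your description of it is off: no linearization $\lambda$, Lemma~\ref{lemma: boring lemma}, or Lemma~\ref{lemma: switch} is needed, since $q_k$ on $D_k^i$ is simply the map into the product whose components are $\Ccal\filter{m}\Ccal^{k-i}(\pi_1)$ and $\Ccal\filter{m}\Ccal^{k-i}(\pi_2)$, i.e.\ pure functoriality applied to the product projections (which are filtration-preserving by definition of the product filtration).

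The genuine gap is the homotopy itself, which is the entire substance of the proposition. Your sketch proposes operators that ``insert a unit $\eta_{\Ccal}$'' to produce maps $D_k^i\to D_{k+1}^{i+1}$ and asserts that iterating them with the ordinary degeneracies gives $h$, with the verification dismissed as routine. But the map that inserts a $\Ccal$ immediately to the left of $\Ccal^i$ inside the parentheses is exactly the operator the paper must exclude from the simplicial structure of $\Dcal_\bullet$ (see the footnote in the proof of Proposition~\ref{proposition: proving D is a simplicial space}): its interaction with the borderline face $d_{k-i}$, which is built from $\beta$, is precisely where the identities are delicate, and you give no formulas and no check that a homotopy in the standard form $h_j\colon\Dcal_k\to\Dcal_{k+1}$ can be assembled from such insertions. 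The paper's homotopy works by a different mechanism: it is given degreewise by interpolating maps $h_k^i\colon\Dcal_k\to\Dcal_k$ ($0\le i\le k+1$), defined on the summand $D_k^n$ to be the identity when $i\le n$ and to be the map $\delta_k^i\colon D_k^n\to D_k^i$ when $n\le i$, where $\delta_k^i$ uses the module-structure map $\beta$ of~\eqref{eq: define beta} to multiply the outermost copies of $\Ccal$ across the parentheses onto both factors of the product, one stage at a time; thus $h_k^0=\id$ and $h_k^{k+1}=j\circ q$ (and $q$ is seen to be simplicial as a byproduct). The compatibilities~\eqref{eq: compatibility with degeneracies} and~\eqref{eq: compatibility with faces} are then verified using not ``naturality of $\lambda$'' but the facts that $\beta$ endows $\Ccal^i\Fcal X\times\Ccal^i\Fcal Y$ with a $\Ccal$-module structure (associativity and unit, Lemma~\ref{lemma: product of modules} and Lemma~\ref{lemma: structure maps give C-modules}) together with the freeness criterion of Lemma~\ref{lemma: check on generators}. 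As it stands, your proposal postpones exactly the construction and verification that the proposition asserts, and the mechanism you indicate is the one most likely to fail against the borderline face maps.
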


The proof of Proposition~\ref{proposition: existence of homotopy} is given
in Section~\ref{section: homotopy of Ds} as
Proposition~\ref{proposition: simplicial homotopy}.

The second collection of auxiliary spaces is defined (as was the case with~$\Dcal_{\bullet}$) for $0\leq i\leq k+1$. It gathers the different ways that $\Ccal\filter{m}\Ccal^{k}\Fcal$
can distribute over a wedge. For $1\leq i\leq k+1$, we define
\begin{align}
E_{k}^{i}    \label{eq: definition of Eki}
   &=\Ccal\filter{m}\Ccal^{k-i+1}
         \left(\Largestrut\Ccal^{i-1}\Fcal X\vee\Ccal^{i-1}\Fcal Y\right),
\\
\intertext{and we have one additional space }
E_{k}^{0}&=A_{k}=        \label{eq: ident of Ek0 as Ak}
          \Ccal\filter{m}
          \Ccal^{k}\Fcal\left(\largestrut X\vee Y\right)
   \\
\intertext{while at the other end}
E_{k}^{k+1}
   &=\Ccal\filter{m}
         \left(
         \Largestrut\Ccal^{k}\Fcal X\vee\Ccal^{k}\Fcal Y
         \right).
\end{align}
Just as in the case of~$\Dcal_{\bullet}$, we define
\begin{equation}  \label{eq: define E}
\Ecal_{k}=\bigvee_{0\leq i\leq k+1}E_{k}^{i},
\end{equation}
we define face and degeneracy maps for~$\Ecal_{\bullet}$, and we have $A_{\bullet}$ sitting inside the large simplicial space as a retract.

\begin{proposition}    \label{proposition: E is a simplicial space}
$\Ecal_{\bullet}$ is a simplicial space, and there are
simplicial inclusion and retraction maps
$i\colon A_{\bullet}\hookrightarrow\Ecal_{\bullet}$
and $r\colon\Ecal_{\bullet}\rightarrow A_{\bullet}$.
\end{proposition}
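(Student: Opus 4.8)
The plan is to put the simplicial structure on $\Ecal_\bullet$ one wedge summand at a time, following closely the construction of $\Dcal_\bullet$ carried out in Section~\ref{section: D is a simplicial space}, and then to identify $A_\bullet=E_\bullet^0$ as a retract. Fix $m$. Reading the composite that defines $E_k^i=\Ccal\filter{m}\Ccal^{k-i+1}\bigl(\Ccal^{i-1}\Fcal X\vee\Ccal^{i-1}\Fcal Y\bigr)$ from the outside in, there is an outer $\Ccal$, then $\filter{m}$, then $k-i+1$ copies of $\Ccal$, then a wedge, then $i-1$ copies of $\Ccal$ inside each wedge summand --- in total $k+1$ copies of $\Ccal$, as one expects of an object in simplicial degree $k$. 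Most of the $k+1$ face maps, and all the degeneracies, are defined by applying $\Ccal\filter{m}$ (together with the appropriate number of further copies of $\Ccal$) to $\mu_\Ccal$, to $\mu_\Fcal\colon\Ccal\Fcal\to\Fcal$ on each wedge summand, or to the monad unit $\eta_\Ccal$, exactly as in the bar construction of Section~\ref{section: stable filtration}; the one face map $d_0$ crossing $\filter{m}$ is defined using the natural transformation $\Ccal\filter{m}\Ccal\to\Ccal\filter{m}$ supplied by Lemma~\ref{lemma: filtration of composition, Greg's version}. A routine index check --- of precisely the kind carried out for $\Dcal_\bullet$ in Section~\ref{section: D is a simplicial space} --- shows that each of these maps lands in the appropriate summand: the degeneracies that insert a copy of $\Ccal$ into the wedge summands (necessarily into both at once, via $\eta_\Ccal\vee\eta_\Ccal$) raise the upper index to $i+1$, while the remaining face and degeneracy maps keep it at $i$, except for one face map in each dimension, discussed next, which lowers it to $i-1$.

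As with $\Dcal_\bullet$, in each simplicial dimension there is exactly one face map that needs more care: the ``boundary'' face map $E_k^i\to E_{k-1}^{i-1}$ that multiplies the innermost of the $k-i+1$ copies of $\Ccal$ --- the one applied directly to the wedge $W=\Ccal^{i-1}\Fcal X\vee\Ccal^{i-1}\Fcal Y$ --- with the outermost copy of $\Ccal$ inside each wedge summand. For $i\ge 2$, write $W=\Ccal A\vee\Ccal B$ with $A=\Ccal^{i-2}\Fcal X$ and $B=\Ccal^{i-2}\Fcal Y$, and let $\kappa\colon\Ccal A\vee\Ccal B\to\Ccal(A\vee B)$ denote the canonical map coming from the two inclusions $A,B\hookrightarrow A\vee B$. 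The boundary face map is then $\Ccal\filter{m}\Ccal^{k-i}$ applied to the composite
\[
\Ccal\bigl(\Ccal A\vee\Ccal B\bigr)\xrightarrow{\ \Ccal\kappa\ }\Ccal\Ccal(A\vee B)\xrightarrow{\ \mu_\Ccal\ }\Ccal(A\vee B);
\]
one checks, using the wedge-filtration formula~\eqref{eq: wedge filtration}, that $\kappa$ is filtration-preserving, so that wrapping $\Ccal\filter{m}$ around the composite is legitimate, and a final index check identifies the target as $E_{k-1}^{i-1}$. The remaining low cases --- $i=1$ (where $\mu_\Fcal$ replaces $\mu_\Ccal$ and $\Fcal X\vee\Fcal Y$ maps canonically to $\Fcal(X\vee Y)$, landing in $E_{k-1}^{0}=A_{k-1}$) and $d_0$ on the top summand $E_k^{k+1}$ (which combines $\kappa$ with the transformation of Lemma~\ref{lemma: filtration of composition, Greg's version}) --- are assembled from the same ingredients. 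With all the structure maps defined, the simplicial identities are verified exactly as for $\Dcal_\bullet$: since every summand inclusion $E_k^i\hookrightarrow\Ecal_k$ is a monomorphism, it is enough to unwind each composite in terms of $\mu_\Ccal$, $\mu_\Fcal$, $\eta_\Ccal$, $\kappa$, and the transformation of Lemma~\ref{lemma: filtration of composition, Greg's version}, and to invoke the associativity, unit, and naturality relations these satisfy. I expect this bookkeeping --- and in particular the commuting square expressing the compatibility of $\kappa$ with $\mu_\Ccal$ --- to be the main obstacle, although it is conceptually routine.

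It remains to produce $i$ and $r$. The inclusion $i\colon A_\bullet\hookrightarrow\Ecal_\bullet$ is the inclusion of the bottom summand $E_\bullet^0=A_\bullet$; by the description of the structure maps above, every face and degeneracy map of $\Ecal_\bullet$ carries the summand $E_\bullet^0$ into itself and there restricts to the corresponding structure map of $A_\bullet=\FilteredBK{m}{\Fcal}(X\vee Y)$, so $i$ is a map of simplicial spaces. For the retraction, let $\rho_i\colon\Ccal^{i-1}\Fcal X\vee\Ccal^{i-1}\Fcal Y\to\Ccal^{i-1}\Fcal(X\vee Y)$ be the canonical map induced by the two inclusions $X\hookrightarrow X\vee Y\hookleftarrow Y$, define $r$ on the summand $E_k^i$ to be $\Ccal\filter{m}\Ccal^{k-i+1}$ applied to $\rho_i$ (with image $\Ccal\filter{m}\Ccal^{k-i+1}\Ccal^{i-1}\Fcal(X\vee Y)=\Ccal\filter{m}\Ccal^{k}\Fcal(X\vee Y)=A_k$), and let $r$ be the identity on $E_\bullet^0$. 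Then $r\circ i=\id_{A_\bullet}$ is immediate, and $r$ is simplicial because each $\rho_i$ is natural and is compatible with $\mu_\Ccal$, $\mu_\Fcal$, $\eta_\Ccal$, and $\kappa$ --- the compatibility with $\kappa$ being a direct consequence of the universal property of the wedge --- so that, as before, the verification reduces to naturality squares. For context, I would note that the summands $E_k^i$ are in general not special; $\Ecal_\bullet$ is introduced purely as a combinatorial scaffold, its usefulness coming from the levelwise equivalence $\Ecal_\bullet\to\Dcal_\bullet$ constructed in the following sections rather than from any homotopical property it has on its own.
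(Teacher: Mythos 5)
Your proposal is correct and follows essentially the same route as the paper: the structure maps you put on each summand $E_k^i$, with the one delicate face given by $\mu_{\Ccal}\circ\Ccal\kappa$, are exactly the paper's maps (your $\kappa$-composite is its map $\gamma$ of \eqref{eq: define gamma}), and your inclusion $i$ and retraction $r$ coincide with those of Lemma~\ref{lemma: E to A retraction}. The only difference is one of emphasis: where you call the remaining checks routine bookkeeping, the paper discharges the identities involving the borderline face by noting that $\gamma$ is a map of $\Ccal$-modules (Lemma~\ref{lemma: structure maps give C-modules}) and verifying equalities on generators via Lemma~\ref{lemma: check on generators}.
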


The first part of Proposition~\ref{proposition: E is a simplicial space} is proved in Section~\ref{section: E is a simplicial space} as Proposition~\ref{proposition: prove Ecal is a simplicial space}. The second part is proved in
Section~\ref{section: maps out of E} as
Lemma~\ref{lemma: E to A retraction}.
Indeed, one could prove directly that $i\colon A_{\bullet}\hookrightarrow\Ecal_{\bullet}$ is actually a \emph{deformation} retraction, but we do not need this statement and it would require writing down (and verifying) yet another simplicial homotopy.

Lastly, having related $A_{\bullet}$ to $\Ecal_{\bullet}$ and $B_{\bullet}$ to~$\Dcal_{\bullet}$, we need to relate $\Ecal_{\bullet}$ to $\Dcal_{\bullet}$.
There are weak equivalences $E_{k}^{i}\rightarrow D_{k}^{i}$ for each $i$ and~$k$, taking the innermost wedge of $E_{k}^{i}$ to a product using the special $\Gamma$-space~$\Ccal$ (or
$\Fcal(X\vee Y)\xrightarrow{\simeq}\Fcal X\times\Fcal Y$, when $i=0$):
\begin{equation}    \label{diagram: map of E to D}
\begin{gathered}
\xymatrix{
E_{k}^{i}
   =\Ccal\filter{m}\Ccal^{k-i}\Ccal
         \left(\Largestrut\Ccal^{i-1}\Fcal X\vee\Ccal^{i-1}\Fcal Y\right)
   \ar[d]^-{\simeq}
\\
D_{k}^{i}
   =\Ccal\filter{m}\Ccal^{k-i}
         \left(\Largestrut\Ccal^{i}\Fcal X\times\Ccal^{i}\Fcal Y
         \right).
}
\end{gathered}
\end{equation}
(Note that $E_{k}^{i}$ has a slightly different appearance from \eqref{eq: definition of Eki} because one factor of $\Ccal$ has been split off.) 

\begin{proposition}    \label{proposition: map E to D}
The map $p\colon \Ecal_{\bullet}\rightarrow\Dcal_{\bullet}$
defined by~\eqref{diagram: map of E to D} is a simplicial map and a levelwise weak equivalence.
\end{proposition}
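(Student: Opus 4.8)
The plan is to handle the two assertions separately, in each case working one wedge summand at a time. Recall that in simplicial degree $k$ the map $p$ is the wedge over $0\le i\le k+1$ of the maps $E_{k}^{i}\to D_{k}^{i}$ of~\eqref{diagram: map of E to D}, each obtained by applying $\Ccal\filter{m}\Ccal^{k-i}$ (and, when $i=0$, the functor $\Ccal\filter{m}\Ccal^{k}$) to a linearization map $\lambda$ of the special $\Gamma$-space $\Ccal$, or, in the innermost case $i=0$, of $\Fcal$. To see that $p$ is simplicial I would first take from the proofs of Propositions~\ref{proposition: D is a simplicial space} and~\ref{proposition: E is a simplicial space} the explicit face and degeneracy maps of $\Dcal_{\bullet}$ and $\Ecal_{\bullet}$. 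All of these except one face map per simplicial degree are built functorially out of $\mu_{\Ccal}$, $\eta_{\Ccal}$, and $\mu_{\Fcal}$ applied inside an iterate of $\Ccal$, so for those, compatibility of $p$ with the structure maps is immediate from naturality of $\lambda$. The one remaining point is the ``exceptional'' face map in each degree — the one that multiplies the two outermost copies of $\Ccal$ across $\filter{m}$ and shifts the target summand index. For this map I would unwind the exceptional face maps of $\Ecal_{\bullet}$ and of $\Dcal_{\bullet}$ to their definitions in terms of $\mu_{\Ccal}$ and the linearization and check that the relevant square commutes; this is a diagram chase of strict maps, with no homotopies, using naturality of $\lambda$ and its compatibility with $\mu_{\Ccal}$.

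To see that $p$ is a levelwise weak equivalence, fix $k$. Since $\Ecal_{k}$ and $\Dcal_{k}$ are finite wedges of well-pointed spaces and $p_{k}$ respects the wedge decomposition, it is enough to show that each $E_{k}^{i}\to D_{k}^{i}$ is a weak equivalence. Writing $W_{1}=\Ccal^{i-1}\Fcal X$ and $W_{2}=\Ccal^{i-1}\Fcal Y$ when $i\ge 1$, this map is $\Ccal\filter{m}\Ccal^{k-i}$ applied to $\lambda\colon\Ccal(W_{1}\vee W_{2})\to\Ccal W_{1}\times\Ccal W_{2}$, while for $i=0$ it is $\Ccal\filter{m}\Ccal^{k}$ applied to $\lambda\colon\Fcal(X\vee Y)\to\Fcal X\times\Fcal Y$. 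I would then argue in four steps: (1) $\lambda$ is a weak equivalence because $\Ccal$ (resp.\ $\Fcal$) is special; (2) $\lambda$ is in fact a \emph{filtered} equivalence, i.e.\ $\filter{j}\lambda$ is a weak equivalence for every $j$; (3) $\Ccal$ preserves filtered equivalences, so $\Ccal^{k-i}\lambda$ is again one — this follows from the coend formula $\Ccal(-)=E\Sigma_{n}\otimes_{n\in\Ical}(-)^{n}$ together with the fact that the product filtration $\filter{m}(Z^{n})$ is assembled from finite products of the $\filter{j}Z$ by filtered unions along closed inclusions; and (4) therefore $\filter{m}\Ccal^{k-i}\lambda$ is a weak equivalence, being stage $m$ of a filtered equivalence, and since $\Ccal$ preserves ordinary weak equivalences, $\Ccal\filter{m}\Ccal^{k-i}\lambda\colon E_{k}^{i}\to D_{k}^{i}$ is a weak equivalence. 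For step~(2) in the case $i=0$ the argument is automatic: $\Sp^\infty$ sends finite sets to discrete spaces, so $\filter{j}\Fcal(X\vee Y)$ and $\filter{j}(\Fcal X\times\Fcal Y)$ are the preimages of the same subset of a discrete set under two maps sitting in a commuting triangle whose third edge is the weak equivalence $\lambda$, whence the restriction is a disjoint union of weak equivalences.

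The main obstacle is step~(2) in the case $i\ge 1$. The operation $\filter{m}$ is not homotopy invariant — this is exactly the source of difficulty in Theorem~\ref{theorem: linearity theorem} — so the fact that $\lambda$ is a weak equivalence of underlying spaces does not by itself give that $\filter{j}\lambda$ is. One must produce the equivalence one filtration degree at a time, using that a point of $\filter{j}\Ccal(W_{1}\vee W_{2})$ is a configuration of points distributed between $W_{1}$ and $W_{2}$ whose filtrations sum to at most $j$, so that $\lambda$ identifies $\filter{j}\Ccal(W_{1}\vee W_{2})$ with $\bigcup_{j_{1}+j_{2}\le j}\filter{j_{1}}\Ccal W_{1}\times\filter{j_{2}}\Ccal W_{2}=\filter{j}(\Ccal W_{1}\times\Ccal W_{2})$, up to the standard equivalence $E\Sigma_{a+b}/(\Sigma_{a}\times\Sigma_{b})\xrightarrow{\,\simeq\,}B\Sigma_{a}\times B\Sigma_{b}$ on each piece; gluing these over the filtration poset, whose maps are closed cofibrations, yields the conclusion. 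This is the same flavor of filtration-degree analysis used in the proof of Lemma~\ref{lemma: filtered Freudenthal}, and everything else is routine bookkeeping.
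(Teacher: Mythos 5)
Your plan is essentially the paper's plan, but two points deserve comment, one on each half of the statement.

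On the simplicial-map half, your method (naturality for most structure maps, explicit strict diagram chases for the delicate ones, with the key input being compatibility of the linearization with the $\Ccal$-multiplication) is exactly what the paper does in Proposition~\ref{proposition: prove map E to D}, but you have misidentified the delicate faces. The face that ``multiplies the two outermost copies of $\Ccal$ across $\filter{m}{}$'' is $d_{0}$, and it does not shift the summand index; compatibility of $p$ with $d_{0}$ is automatic, because the restricted multiplication $\Ccal\filter{m}{\Ccal}\to\Ccal\filter{m}{}$ of Lemma~\ref{lemma: filtration of composition, Greg's version} is a natural transformation of endofunctors of $\filtered$ and the linearization is a filtered map. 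The squares that genuinely need checking are two per summand, and they occur at \emph{different} face indices because the borderline faces of $\Ecal_{\bullet}$ and $\Dcal_{\bullet}$ are different: at $d_{k-i}$ the target face is the borderline map $\beta$ of $\Dcal_{\bullet}$ while the source face is an ordinary $\mu_{\Ccal}$, and at $d_{k-i+1}$ the source face is the borderline map $\gamma$ of $\Ecal_{\bullet}$ while the target face is an ordinary vertical multiplication. The paper disposes of the first using that $\Ccal(A\vee B)\to\Ccal A\times\Ccal B$ is a map of filtered $\Ccal$-modules (Lemma~\ref{lemma: special is a C-module}) and of the second by checking on generators (Lemma~\ref{lemma: check on generators}); your ``naturality of $\lambda$ and its compatibility with $\mu_{\Ccal}$'' is essentially this input, so your chase goes through once it is aimed at the right two squares. (Also note the top summand $i=k+1$ has a different shape: its target is $B_{k}$, a product of two copies of $\Ccal\filter{m}{}(-)$, and there the comparison uses that the wedge filtration gives $\filter{m}{(W_{1}\vee W_{2})}=\filter{m}{W_{1}}\vee\filter{m}{W_{2}}$.)

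On the levelwise-equivalence half you are more careful than the paper: the proof of Proposition~\ref{proposition: prove map E to D} verifies only simpliciality, and the weak equivalence is attributed to the specialness of $\Ccal$ (resp.\ $\Fcal$ when $i=0$) without discussing its interaction with $\filter{m}{}$. You correctly identify that, since $\filter{m}{}$ is not homotopy invariant, what is actually needed is that the linearization $\Ccal(W_{1}\vee W_{2})\to\Ccal W_{1}\times\Ccal W_{2}$ is a \emph{filtered} equivalence and that $\Ccal$ preserves filtered equivalences; your $i=0$ reduction via discreteness of $\Sp^\infty$ on finite sets is valid, and your configuration-space argument for $i\ge 1$ (comparing $E\Sigma_{a+b}$-decorations with $E\Sigma_{a}\times E\Sigma_{b}$-decorations piece by piece and gluing over the filtration poset, with the attendant cofibrancy bookkeeping) is the right filtered refinement of the statement that $\Ccal$ is special. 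So this part of your write-up supplies detail that the paper leaves implicit rather than diverging from it; the only caveat is that steps (2)--(3) are sketches whose point-set hypotheses (cofibrations along the filtration, behavior of the strict coend) would need to be spelled out to count as a complete proof.
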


Proposition~\ref{proposition: map E to D} is proved in
Section~\ref{section: E is a simplicial space}
as Proposition~\ref{proposition: prove map E to D}.

The simplicial spaces and maps between them are summarized in the following diagram:
\begin{equation}   \label{diagram: square of auxiliaries}
\thatdiagram
\end{equation}

\begin{lemma}          \label{lemma: commuting squares}
\CommutingSquareLemmaText
\end{lemma}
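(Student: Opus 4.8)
The plan is to prove the two assertions separately, deducing the homotopy commutativity of the inner square formally from the strict commutativity of the outer square together with Proposition~\ref{proposition: existence of homotopy}.

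For the outer square $f\circ r = q\circ p$, I would first note that both sides are maps of simplicial spaces $\Ecal_{\bullet}\to B_{\bullet}$ (each of $f$, $r$, $q$, $p$ being simplicial by the propositions above), so it suffices to check equality on each wedge summand $E_{k}^{i}$ of $\Ecal_{k}=\bigvee_{0\le i\le k+1}E_{k}^{i}$. Fix $i$ with $1\le i\le k+1$ and write $W=\Ccal^{i-1}\Fcal X\vee\Ccal^{i-1}\Fcal Y$, with pinch maps $\pi_{X}\colon W\to\Ccal^{i-1}\Fcal X$ and $\pi_{Y}\colon W\to\Ccal^{i-1}\Fcal Y$. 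By \eqref{diagram: map of E to D}, the map $p$ restricts on $E_{k}^{i}=\Ccal\filter{m}\Ccal^{k-i}\Ccal(W)$ to $\Ccal\filter{m}\Ccal^{k-i}$ applied to the linearization $\Ccal\pi_{X}\times\Ccal\pi_{Y}$, and $q$ restricts on $D_{k}^{i}$ to $\Ccal\filter{m}\Ccal^{k-i}$ applied to the two product projections; composing, and using that the first projection of a product of maps is the first map, the $X$-component of $q\circ p$ on $E_{k}^{i}$ is $\Ccal\filter{m}\Ccal^{k-i+1}(\pi_{X})$, and similarly the $Y$-component is $\Ccal\filter{m}\Ccal^{k-i+1}(\pi_{Y})$. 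On the other side, $r$ restricts on $E_{k}^{i}$ to $\Ccal\filter{m}\Ccal^{k-i+1}$ applied to the map $W\to\Ccal^{i-1}\Fcal(X\vee Y)$ induced by the two inclusions $X\hookrightarrow X\vee Y\hookleftarrow Y$, while $f=\FilteredBK{m}{\Fcal}(\collapsemap{X})\times\FilteredBK{m}{\Fcal}(\collapsemap{Y})$; so the $X$-component of $f\circ r$ on $E_{k}^{i}$ is $\Ccal\filter{m}\Ccal^{k-i+1}$ applied to the composite $W\to\Ccal^{i-1}\Fcal(X\vee Y)\to\Ccal^{i-1}\Fcal X$ whose second arrow is $\Ccal^{i-1}\Fcal(\collapsemap{X})$. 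Since $\collapsemap{X}$ is the identity on $X$ and constant on $Y$ and $\Ccal^{i-1}\Fcal$ is a pointed functor, this composite is the identity on the first wedge summand of $W$ and constant on the second, hence equals $\pi_{X}$; so the $X$-components agree, and likewise the $Y$-components. The case $i=0$ is the identical computation with $\Fcal$ (which is special) in place of $\Ccal$, using that the components of the linearization $\lambda\colon\Fcal(X\vee Y)\to\Fcal X\times\Fcal Y$ are $\Fcal(\collapsemap{X})$ and $\Fcal(\collapsemap{Y})$ by definition. Summand by summand this gives $f\circ r=q\circ p$.

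For the inner square I would argue purely formally. Let $h$ be the simplicial homotopy of Proposition~\ref{proposition: existence of homotopy} from $j\circ q$ to $\id_{\Dcal_{\bullet}}$; precomposing $h$ with $p\circ i\colon A_{\bullet}\to\Dcal_{\bullet}$ produces a simplicial homotopy $p\circ i\simeq j\circ q\circ p\circ i$. By the first part $q\circ p=f\circ r$, and $r\circ i=\id_{A_{\bullet}}$ since $r$ is a simplicial retraction of the inclusion $i$ (Proposition~\ref{proposition: E is a simplicial space}); hence $j\circ q\circ p\circ i=j\circ f\circ r\circ i=j\circ f$, and therefore $j\circ f\simeq p\circ i$.

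I expect essentially all of the genuine work to sit in the first part, and it is bookkeeping rather than conceptual difficulty: one has to track carefully which factor of $\Ccal$ and which copy of $\filter{m}$ each structure map acts on, and to recognize the common formula that $f\circ r$ and $q\circ p$ reduce to on each summand. The conceptual content is the single elementary observation that the $X$-component of a linearization map is the collapse (pinch) map onto the $X$-summand; granting that, everything reduces to naturality of the functors $\Ccal$, $\filter{m}$, and $\Fcal$.
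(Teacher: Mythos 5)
Your proof is correct and takes essentially the same route as the paper: the outer square is checked summand by summand on each $E_{k}^{i}$, where both composites reduce to $\Ccal\filter{m}\Ccal^{k-i+1}$ applied to the pinch maps onto $\Ccal^{i-1}\Fcal X$ and $\Ccal^{i-1}\Fcal Y$, and the inner square then follows from the simplicial homotopy $j\circ q\simeq \id_{\Dcal_{\bullet}}$ of Proposition~\ref{proposition: existence of homotopy}. The only cosmetic difference is that the paper verifies the strict identity $f=q\circ p\circ i$ directly on the summand $E_{k}^{0}=A_{k}$, whereas you deduce it from the outer square together with the retraction identity $r\circ i=\id_{A_{\bullet}}$ — the same computation, organized differently.
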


The proof of Lemma~\ref{lemma: commuting squares} is provided at the end of
Section~\ref{section: maps out of E}.

With these statements in hand, and most of the detail swept under the rug, we are able to prove Theorem~\ref{theorem: linearity theorem}.

\begin{proof}[Proof of Theorem~\ref{theorem: linearity theorem}]
We must show that
\[f:
\underbrace{\Hugestrut\FilteredBK{m}{\Fcal}(X\vee Y)}_{A_{\bullet}}
\xrightarrow{\qquad}
\underbrace{\Hugestrut\FilteredBK{m}{\Fcal} X\times \FilteredBK{m}{\Fcal}Y}_{B_{\bullet}}
\]
is a weak equivalence on realizations for all finite pointed sets $X$ and~$Y$. However,
$f=q\circ j\circ f
 \simeq q\circ p\circ i$
 by Lemma~\ref{lemma: commuting squares}. Hence, in the homotopy category
the map $f$ is a retract of~$p$. Since $p$ is a weak equivalence on realizations by Proposition~\ref{proposition: map E to D},
the map $f$ is as well.
\end{proof}

\begin{remark*}
It follows from Proposition~\ref{proposition: existence of homotopy} that the inclusion $j: B_{\bullet}\hookrightarrow\Dcal_{\bullet}$ is a weak homotopy equivalence. Since
$p: \Ecal_{\bullet}\rightarrow\Dcal_{\bullet}$ is a weak equivalence,
and $f:A_{\bullet}\rightarrow B_{\bullet}$ is also a weak equivalence, we actually
find out after the fact that all of the maps in
\eqref{diagram: square of auxiliaries} are weak homotopy equivalences (on realizations).
\end{remark*}

\section{The two special maps}      \label{section: two structure maps}

To flesh out the details of the argument of Section~\ref{section: strategy} and define
the structure maps of $\Dcal_{\bullet}$ and $\Ecal_{\bullet}$, we need two maps that relate the monad structure of $\Ccal$ (and its action on $\Ccal$-modules) to wedges and products.
Suppose that $\Gcal$ is a $\Gamma$-space taking values in~$\filtered$, with a filtration-preserving action of~$\Ccal$. (In later sections, $\Gcal$
will be $\Ccal^{i}\Fcal$ for some nonnegative integer~$i$ and an augmented $\Ccal$-module $\Fcal$, as in Theorem~\ref{theorem: linearity theorem}.)
We define the following composites, natural in sets $A$ and~$B$, where $\pi$ indicates a projection, $\mu$ denotes multiplication for a monad or algebra structure, and $\iota$ indicates an inclusion:
\begin{align} \label{eq: define beta}
\beta\colon
&        \Ccal\left(\strut\Gcal A \times\Gcal B\right)
\xrightarrow{\ \Ccal\pi_{1}\times\Ccal\pi_{2}\ }
        \Ccal\Gcal A \times \Ccal\Gcal B
\xrightarrow{\mu_{\Gcal}\times\mu_{\Gcal}}
       \Gcal A \times \Gcal B
\\
\label{eq: define gamma}
\gamma\colon
&        \Ccal\left(\strut\Gcal A \vee \Gcal B\right)
\xrightarrow{\Ccal(\Gcal\iota_{A}\vee\Gcal\iota_{B})}
        \Ccal\Gcal\left(\strut A\vee B\right)
\xrightarrow{\quad \mu_{\Gcal}\quad }
        \Gcal(A\vee B).
\end{align}
When necessary, we will use a subscript to clarify the functor~$\Gcal$, writing $\beta_{\Gcal}$ or~$\gamma_{\Gcal}$.

\begin{remark}         \label{remark: beta and gamma preserve filtration}
The maps $\beta$ and $\gamma$ are both maps of filtered objects. In each case, the second map in the composition respects filtration because the action of $\Ccal$ is assumed to respect filtration.
Further, Lemma~\ref{lemma: boring lemma} establishes that if $\Gcal$ is filtered, then $\Ccal\pi_{1}\times\Ccal\pi_{2}$ respects the filtration,
so $\beta$ is a map of filtered objects. For $\gamma$, we need only note that
$\Gcal A\xrightarrow{} \Gcal(A\vee B)$ is filtration preserving by naturality, since
$A\rightarrow A\vee B$ is filtration-preserving by definition~\eqref{eq: wedge filtration}.
\end{remark}

We need the following well-known lemma, which spells out the intuition that $\Ccal A$ is a free $\Ccal$-module. Let $\eta$ denote the unit map for the monad~$\Ccal$, i.e. $\eta(A):A\rightarrow \Ccal A$.
The first part of the lemma gives a criterion for a map out of $\Ccal A$ to be a map of $\Ccal$-modules. The second part of the lemma identifies the image of $\eta(A)$ as the
``generators" of~$\Ccal A$, in the sense that module maps $\Ccal A\rightarrow X$ are determined by that they do when restricted from $\Ccal A$ to~$A$. Both are analogous to properties of free modules over a ring.

\begin{lemma}    \label{lemma: check on generators}
\mbox{}\hfill
\begin{enumerate}
\item \label{item: composites}
Suppose that $X$ is a $\Ccal$-module with structure map $\mu_{X}\colon\Ccal X\rightarrow X$.
Then a map $f:\Ccal A \rightarrow X$ is a map of $\Ccal$-modules if and only if
\[
f=\mu_{X}\circ\Ccal(f\circ\eta(A)).
\]
\item
\label{item: former corollary}
Two $\Ccal$-module maps $f, \fwiggle:\Ccal A\rightarrow X$ are equal if and only if $f\circ\eta(A)=\fwiggle\circ\eta(A)$.
\end{enumerate}
\end{lemma}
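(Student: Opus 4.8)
The plan is to run a purely formal diagram chase, using only the monad axioms for~$\Ccal$ (unit and associativity) and the $\Ccal$-module axioms for~$X$, together with the naturality of the monad multiplication $\mu_{\Ccal}\colon\Ccal\Ccal\to\Ccal$. Recall first that $\Ccal A$ carries the \emph{free} $\Ccal$-module structure whose action map is $\mu_{\Ccal}(A)\colon\Ccal\Ccal A\to\Ccal A$, and that a map $f\colon\Ccal A\to X$ is by definition a map of $\Ccal$-modules precisely when $f\circ\mu_{\Ccal}(A)=\mu_{X}\circ\Ccal f$. Both halves of the lemma reduce to manipulating this identity.

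For the first assertion, the forward direction is immediate: assuming $f$ is a module map, one computes $\mu_{X}\circ\Ccal(f\circ\eta(A))=\mu_{X}\circ\Ccal f\circ\Ccal\eta(A)=f\circ\mu_{\Ccal}(A)\circ\Ccal\eta(A)=f$, where the first equality is functoriality of~$\Ccal$, the middle equality is the module-map condition, and the last uses the monad unit axiom $\mu_{\Ccal}(A)\circ\Ccal\eta(A)=\id_{\Ccal A}$. For the reverse direction, set $g=f\circ\eta(A)\colon A\to X$, so that the hypothesis reads $f=\mu_{X}\circ\Ccal g$; I would then verify the module-map condition via the chain
\[
\begin{aligned}
f\circ\mu_{\Ccal}(A)
&=\mu_{X}\circ\Ccal g\circ\mu_{\Ccal}(A)
 =\mu_{X}\circ\mu_{\Ccal}(X)\circ\Ccal\Ccal g\\
&=\mu_{X}\circ\Ccal\mu_{X}\circ\Ccal\Ccal g
 =\mu_{X}\circ\Ccal f,
\end{aligned}
\]
in which the second equality is naturality of $\mu_{\Ccal}$ applied to~$g$, the third is the module associativity axiom $\mu_{X}\circ\mu_{\Ccal}(X)=\mu_{X}\circ\Ccal\mu_{X}$, and the last combines functoriality of~$\Ccal$ with $f=\mu_{X}\circ\Ccal g$.

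The second assertion is then a one-line consequence. One direction is trivial; for the other, if $f\circ\eta(A)=\fwiggle\circ\eta(A)$ then applying the first assertion to the module maps $f$ and $\fwiggle$ gives $f=\mu_{X}\circ\Ccal(f\circ\eta(A))=\mu_{X}\circ\Ccal(\fwiggle\circ\eta(A))=\fwiggle$. There is no serious obstacle here: the only points demanding care are to keep the two instances $\mu_{\Ccal}(A)$ and $\mu_{\Ccal}(X)$ of the monad multiplication distinct, and to invoke naturality of $\mu_{\Ccal}$ at the correct step of the reverse direction of the first assertion. Since $\Ccal$ is a genuine (strict) monad on pointed spaces and $X$ a strict $\Ccal$-algebra, all of these identities hold on the nose, so no homotopies enter the argument.
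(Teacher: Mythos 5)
Your proof is correct and follows essentially the same route as the paper: the forward direction of (1) is the same unit-axiom computation, and (2) is deduced from (1) exactly as in the paper. The only cosmetic difference is in the reverse direction of (1), where you verify the module-map identity by an explicit chase using naturality of $\mu_{\Ccal}$ and module associativity, whereas the paper packages the same two facts by observing that $f=\mu_{X}\circ\Ccal(f\circ\eta(A))$ is a composite of $\Ccal$-module maps.
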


\begin{proof}
For \eqref{item: composites}, consider the following diagram:
\begin{equation}     \label{diagram: C-modules}
\begin{gathered}
\xymatrix{
\Ccal A             \ar[rr]^-{\Ccal(\eta(A))}
                    \ar[rrd]_-{=}
   && \Ccal(\Ccal A) \ar[rr]^-{\Ccal(f)}
                     \ar[d]^-{\mu_{\Ccal}(A)}
   && \Ccal X \ar[d]^-{\mu_{X}}\\
&&\Ccal A\ar[rr]_-{f}&& X.
}
\end{gathered}
\end{equation}
If $f$ is a map of $\Ccal$-modules, then the rectangle commutes. Since the triangle necessarily commutes by the unital property of~$\Ccal$, we find that the perimeter commutes, that is,  $f=\mu_{X}\circ\Ccal(f\circ\eta(A))$.

Next, suppose that $f=\mu_{X}\circ\Ccal(f\circ\eta(A))$; we want to show that $f$ is a map of $\Ccal$-modules. However, associativity of a $\Ccal$-module action tells us that $\mu_{X}$ is a map of $\Ccal$-modules, and $\Ccal(f\circ\eta(A))$ is one likewise, simply because it is $\Ccal$ applied to a map. Hence $f$ is a composition of maps of $\Ccal$-modules, and the result follows.

For~\eqref{item: former corollary}, observe that if $f$ and $\fwiggle$ are $\Ccal$-module maps, then diagram~\eqref{diagram: C-modules} commutes for each of them.
Consider the top row. By assumption $f\circ\eta(A)=\fwiggle\circ\eta(A)$,
and the top row is $\Ccal$ applied to this composition in each case. Hence
$f$ and $\fwiggle$ give the same map across the top row of diagram~\eqref{diagram: C-modules}. Therefore they give the same bottom row as well, that is, $f=\fwiggle$.
\end{proof}

The remainder of the section shows how $\beta$ and $\gamma$, defined in \eqref{eq: define beta} and~\eqref{eq: define gamma}, interact with the $\Ccal$-module action maps, which are used to define face maps in $\Dcal_{\bullet}$ and~$\Ecal_{\bullet}$.
First, the following elementary lemma. In this lemma, $X$ and $Y$ can be
objects in $\filtered$, or $\Gamma$-spaces taking values in~$\filtered$.

\begin{lemma}     \label{lemma: product of modules}
Suppose that $X$ and $Y$ are filtered $\Ccal$-modules with filtration-preserving structure maps $\mu_{X}$ and~$\mu_{Y}$, respectively.
\begin{enumerate}
\item $X\times Y$ is endowed with a (two-variable) filtration-preserving $\Ccal$-module structure via the structure maps
\[
\beta\colon
        \Ccal\left(X\times Y\right)
\xrightarrow{\ \Ccal\pi_{1}\times\Ccal\pi_{2}\ }
        \Ccal X \times \Ccal Y
\xrightarrow{\mu_{\X}\times\mu_{Y}}
       X \times Y
\]
\[
\eta\colon X\times Y  
\xrightarrow{\ \eta_{\Ccal}\ }
 \Ccal(X\times Y).
\]
\item \label{item: map into product}
If $f\colon X\rightarrow W$ and $g\colon Y\rightarrow Z$ are maps of filtered $\Ccal$-modules, then $f\times g\colon X\times Y\rightarrow W\times Z$ is likewise a map of filtered $\Ccal$-modules.
\end{enumerate}
\end{lemma}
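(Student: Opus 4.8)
\emph{Approach.} Conceptually, the lemma records that the forgetful functor from $\Ccal$-modules to (filtered) spaces creates finite products: a product of $\Ccal$-modules is computed on underlying objects, carrying the unique module structure for which the two projections are $\Ccal$-module maps, and a product of module maps is again a module map. The formula for $\beta$ in~\eqref{eq: define beta} is exactly this forced structure, since the requirements $\pi_1\circ\beta=\mu_{X}\circ\Ccal\pi_{1}$ and $\pi_2\circ\beta=\mu_{Y}\circ\Ccal\pi_{2}$ pin down a map with target $X\times Y$ uniquely. Rather than invoke this abstractly, the plan is to check the axioms by hand, using throughout the elementary fact that a map into $X\times Y$ is determined by its composites with $\pi_1$ and $\pi_2$; this lets every module identity be verified ``one factor at a time,'' collapsing it onto the already-known axioms for $\mu_{X}$ and $\mu_{Y}$.

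\emph{Part (1).} First record the two intertwining identities $\pi_{i}\circ\beta=\mu_{(-)}\circ\Ccal\pi_{i}$, which hold by the very definition of $\beta$ as $(\mu_{X}\times\mu_{Y})\circ(\Ccal\pi_{1}\times\Ccal\pi_{2})$. For the unit axiom, post-compose $\beta\circ\eta_{\Ccal}(X\times Y)$ with $\pi_{1}$: naturality of $\eta_{\Ccal}$ rewrites $\Ccal\pi_{1}\circ\eta_{\Ccal}(X\times Y)$ as $\eta_{\Ccal}(X)\circ\pi_{1}$, and then the unit axiom for $\mu_{X}$ yields $\pi_{1}\circ\beta\circ\eta_{\Ccal}(X\times Y)=\pi_{1}$; symmetrically with $\pi_{2}$, so $\beta\circ\eta_{\Ccal}(X\times Y)=\id$. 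For associativity, post-compose both $\beta\circ\Ccal\beta$ and $\beta\circ\mu_{\Ccal}(X\times Y)$ with $\pi_{1}$; using the intertwining identity, functoriality of $\Ccal$, and naturality of $\mu_{\Ccal}$, they become $\mu_{X}\circ\Ccal\mu_{X}\circ\Ccal\Ccal\pi_{1}$ and $\mu_{X}\circ\mu_{\Ccal}(X)\circ\Ccal\Ccal\pi_{1}$ respectively, which agree by associativity for $\mu_{X}$; symmetrically with $\pi_{2}$. Finally, $\beta$ preserves filtration because $\Ccal\pi_{1}\times\Ccal\pi_{2}$ does (Lemma~\ref{lemma: boring lemma}) and $\mu_{X}\times\mu_{Y}$ does ($\mu_{X},\mu_{Y}$ are filtration-preserving and the product filtration~\eqref{eq: product filtration} is built factorwise), cf.\ Remark~\ref{remark: beta and gamma preserve filtration}; and $\eta_{\Ccal}(X\times Y)$ preserves filtration by Lemma~\ref{lemma: filtered monad}.

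\emph{Part (2).} We must show $(f\times g)\circ\beta_{X\times Y}=\beta_{W\times Z}\circ\Ccal(f\times g)$. Post-compose with the projection $W\times Z\to W$. On the left, $\pi_{1}\circ(f\times g)=f\circ\pi_{1}$ and the intertwining identity for $\beta_{X\times Y}$ give $f\circ\mu_{X}\circ\Ccal\pi_{1}$; on the right, the intertwining identity for $\beta_{W\times Z}$ and naturality of $\pi_{1}$ give $\mu_{W}\circ\Ccal\pi_{1}\circ\Ccal(f\times g)=\mu_{W}\circ\Ccal f\circ\Ccal\pi_{1}$. These agree because $f$ is a $\Ccal$-module map, i.e.\ $f\circ\mu_{X}=\mu_{W}\circ\Ccal f$; symmetrically for the other projection and $g$. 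Filtration-preservation of $f\times g$ is immediate, since $f\times g$ carries $\filter{m_1}X\times\filter{m_2}Y$ into $\filter{m_1}W\times\filter{m_2}Z$ and the product filtration of each side is the union of these over $m_1+m_2\le m$.

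\emph{Main obstacle.} There is no genuine difficulty; the proof is bookkeeping with the naturality squares of $\eta_{\Ccal}$ and $\mu_{\Ccal}$ evaluated on the projection maps. The only point needing care is to organize the argument around the joint monicity of the two projections out of a product, so that each module identity is reduced, after applying $\pi_{1}$ and $\pi_{2}$ separately, to the corresponding axiom already known for $\mu_{X}$ and $\mu_{Y}$. (Note that Lemma~\ref{lemma: check on generators} is not directly applicable here, since $X\times Y$ is not a priori a free $\Ccal$-module.)
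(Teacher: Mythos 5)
Your proof is correct and follows essentially the same route as the paper: both arguments reduce each module axiom (unit, associativity, and the module-map square in part (2)) factorwise to the known axioms for $\mu_{X}$ and $\mu_{Y}$ together with naturality of $\eta_{\Ccal}$ and $\mu_{\Ccal}$, and handle filtrations via Lemma~\ref{lemma: boring lemma} and the product filtration. The paper packages this as explicit commuting diagrams (e.g.\ both composites in the associativity square factoring through $\Ccal\Ccal X\times\Ccal\Ccal Y$), while you organize it by post-composing with the two projections; the content is the same.
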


\begin{proof}
The content of the lemma in the statements about $\Ccal$-modules, because preservation of filtration has already been established.

The outside square in~\eqref{diagram: module map}, below, must commute to establish the associativity for~$\beta$. It does so because both ways around the square factor through the middle, and the parallel diagonal arrows are actually the same map:

\begin{equation}    \label{diagram: module map}
\begin{gathered}
\xymatrix{
   \Ccal\left(\Largestrut X\times Y
                \right)\qquad\quad
   \ar@<-5.0ex>[dd]_-{\beta}
&& \quad
\Ccal\Ccal\left(\Largestrut X\times Y
              \right)
   \ar[ll]_-{\Ccal(\beta)}
   \ar[dd]^-{\mu_{\Ccal}}
   \ar[dl]^{\Ccal\Ccal\pi_{1}\times\Ccal\Ccal\pi_{2}}
\\
&
\Ccal\Ccal X\times\Ccal\Ccal Y
   \ar@<.5ex>[dl]^-{
   \quad(\mu_{X}\times\mu_{Y})\circ(\mu_{\Ccal}\times\mu_{\Ccal})
   }
   \ar@<-1.0ex>[dl]_-{
   (\mu_{X}\times\mu_{Y})\circ(\mu_{X}\times\mu_{Y})\quad
   }
 &\\
X\times Y \qquad\qquad
&& \quad
\Ccal\left(\Largestrut X\times Y\right).
   \ar@<.5ex>[ll]^{\beta}
}
\end{gathered}
\end{equation}
\noindent The unit condition for $\beta$ comes from the diagram below,
in which each of the smaller triangles commutes:
\[
\xymatrix{
X\times Y
    \ar[d]_-{\eta}
    \ar[drr]^{\eta\times\eta}
    \ar@<.8ex>[drrrr]^{\id\times\id}
\\
\Ccal\left( X\times Y\right)
    \ar@/^-1.0pc/[rrrr]_{\beta}
    \ar[rr]^{\Ccal\pi_{1}\times\Ccal\pi_{2}}
&& \Ccal X\times\Ccal Y
    \ar[rr]^{\mu_{\Gcal}\times\mu_{\Gcal}}
&& X\times Y.
}
\]

Now suppose that $f$ and $g$ are as indicated; we must show that $f\times g$ is a map of $\Ccal$-modules. This is established by the following diagram, in which the outer rectangle commutes because the two squares do so:
\[\xymatrix{
\Ccal(W\times Z)\,
    \ar[d]_-{\Ccal\pi_{1}\times\Ccal\pi_{2}} &
\,\Ccal(X\times Y)
     \ar[l]^-{\Ccal(f\times g)}
     \ar[d]^-{\Ccal\pi_{1}\times\Ccal\pi_{2}}\\
\Ccal W\times\Ccal Z
     \ar[d]_-{\mu_W\times\mu_Z}
&\Ccal X\times\Ccal Y
     \ar[l]_-{\Ccal f\times\Ccal g}
     \ar[d]^-{\mu_X\times\mu_Y}
\\
W\times Z
     &X\times Y.
     \ar[l]_-{f\times g}
}
\]
\end{proof}

The resulting corollary is what we need to show that $\Dcal_{\bullet}$
\eqref{eq: define D} is a simplicial space, since the face maps make use of~$\beta$ \eqref{eq: define beta}.

\begin{corollary}
\label{corollary: product of multiplications is module map}
Let $\Gcal$ be a filtered $\Ccal$-module. Then the map $\beta$ of \eqref{eq: define beta}
endows $\Gcal A \times \Gcal B$ with the structure of a (two-variable) filtered $\Ccal$-module.
The map
$\mu_{\Gcal}\times\mu_{\Gcal}\colon
     \Ccal\Gcal A\times\Ccal\Gcal B\rightarrow\Gcal A\times\Gcal B
$
is a map of filtered $\Ccal$-modules.
\end{corollary}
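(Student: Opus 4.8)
The plan is to read the corollary off from Lemma~\ref{lemma: product of modules} by taking $X=\Gcal A$ and $Y=\Gcal B$. Since $\Gcal$ is a filtered $\Ccal$-module with a filtration-preserving action, for any finite pointed sets $A$ and $B$ the spaces $\Gcal A$ and $\Gcal B$ are filtered $\Ccal$-modules with filtration-preserving structure maps (each the appropriate evaluation of $\mu_{\Gcal}$). Part~\eqref{item: map into product} aside for the moment, the first part of Lemma~\ref{lemma: product of modules} then equips $\Gcal A\times\Gcal B$ with a two-variable filtered $\Ccal$-module structure, and the structure map it produces is exactly the composite $\beta$ of~\eqref{eq: define beta}, since that composite is literally the map displayed in Lemma~\ref{lemma: product of modules}(1). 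Filtration-preservation of $\beta$ was already recorded in Remark~\ref{remark: beta and gamma preserve filtration}. This gives the first assertion.

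For the second assertion, I would first observe that $\mu_{\Gcal}\colon\Ccal\Gcal A\to\Gcal A$ is a map of filtered $\Ccal$-modules, where $\Ccal\Gcal A$ carries its free $\Ccal$-module structure with structure map $\mu_{\Ccal}(\Gcal A)$. That this structure map preserves filtration is Lemma~\ref{lemma: filtered monad} (applied to the filtered space $\Gcal A$), and the fact that $\mu_{\Gcal}$ intertwines $\mu_{\Ccal}(\Gcal A)$ with $\mu_{\Gcal}$ is precisely the associativity axiom for the action of $\Ccal$ on $\Gcal$ --- the same observation used in the proof of Lemma~\ref{lemma: check on generators}, where it is noted that the structure map of a $\Ccal$-module is itself a $\Ccal$-module map. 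The analogous statement holds with $B$ in place of $A$. Now I would apply part~\eqref{item: map into product} of Lemma~\ref{lemma: product of modules} with $f=\mu_{\Gcal}\colon\Ccal\Gcal A\to\Gcal A$ and $g=\mu_{\Gcal}\colon\Ccal\Gcal B\to\Gcal B$: it follows that $\mu_{\Gcal}\times\mu_{\Gcal}$ is a map of filtered $\Ccal$-modules, where the source $\Ccal\Gcal A\times\Ccal\Gcal B$ carries the two-variable structure from part~(1) of the lemma applied to $X=\Ccal\Gcal A$, $Y=\Ccal\Gcal B$, and the target carries the structure built in the previous step of this paragraph.

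There is no real obstacle here: the corollary is a formal consequence of Lemma~\ref{lemma: product of modules}. The only point worth spelling out explicitly is the identification in the second paragraph of the free $\Ccal$-module structure on $\Ccal\Gcal A$ and the verification that $\mu_{\Gcal}$ respects it, which is standard module theory over a monad. As indicated in the discussion preceding the statement, the reason for isolating this corollary is that the face maps of $\Dcal_{\bullet}$ are defined using $\beta$, so one needs to know that spaces of the form $\Gcal A\times\Gcal B$ are genuine $\Ccal$-modules and that $\beta$, together with the relevant products of multiplication maps, are $\Ccal$-module maps.
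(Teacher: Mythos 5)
Your proposal is correct and matches the paper's intent exactly: the paper states this corollary without a separate proof, treating it as an immediate consequence of Lemma~\ref{lemma: product of modules} with $X=\Gcal A$, $Y=\Gcal B$ (and, for the second assertion, part~\eqref{item: map into product} applied to $f=g=\mu_{\Gcal}$, using that $\mu_{\Gcal}$ is a filtered $\Ccal$-module map out of the free module $\Ccal\Gcal A$ by associativity and Lemma~\ref{lemma: filtered monad}). You have simply made that deduction explicit, which is fine.
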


Next we work on $\gamma$ \eqref{eq: define gamma}, which appears in simplicial structure maps for~$\Ecal_{\bullet}$ \eqref{eq: define E}.

\begin{lemma} Let $\Gcal$ be a $\Ccal$-module.
     \label{lemma: structure maps give C-modules}
Then
$\gamma\colon
     \Ccal\left[\largestrut\Gcal A\vee\Gcal B\right]
     \rightarrow\Gcal(A\vee B)$ is a filtration-preserving map of (two-variable) $\Ccal$-modules.
\end{lemma}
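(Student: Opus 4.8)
The plan is to handle the two assertions separately. Filtration-preservation is essentially already in hand: Remark~\ref{remark: beta and gamma preserve filtration} records exactly this, since $\Gcal\iota_A\vee\Gcal\iota_B$ is filtration-preserving (each $\Gcal\iota$ being $\Gcal$ applied to a filtration-preserving inclusion, cf.~\eqref{eq: wedge filtration}), hence so is $\Ccal(\Gcal\iota_A\vee\Gcal\iota_B)$, and $\mu_{\Gcal}$ is filtration-preserving by the standing hypothesis on~$\Gcal$. So the real task is to check that $\gamma$ is a morphism of (two-variable) $\Ccal$-modules.

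First I would pin down the module structures involved. On the source, $\Ccal(\Gcal A\vee\Gcal B)$ is the free $\Ccal$-module on the two-variable functor $(A,B)\mapsto\Gcal A\vee\Gcal B$, with structure map $\mu_{\Ccal}$; on the target, $\Gcal(A\vee B)$ carries the $\Ccal$-module structure $\mu_{\Gcal}$ obtained by precomposing the $\Ccal$-module $\Gcal$ with the wedge functor. The key observation is then that, by its very definition~\eqref{eq: define gamma}, $\gamma$ is the composite
\[
\Ccal\left(\Gcal A\vee\Gcal B\right)
\xrightarrow{\ \Ccal(\Gcal\iota_A\vee\Gcal\iota_B)\ }
\Ccal\Gcal(A\vee B)
\xrightarrow{\ \mu_{\Gcal}\ }
\Gcal(A\vee B),
\]
in which the first arrow is $\Ccal$ applied to a map, hence a map of free $\Ccal$-modules (this is just naturality of $\mu_{\Ccal}$), while the second arrow is the structure map of the $\Ccal$-module $\Gcal(A\vee B)$ and is a $\Ccal$-module map by associativity of the action --- the source $\Ccal\Gcal(A\vee B)$ being given its free structure --- exactly as in the ``if'' direction of the proof of Lemma~\ref{lemma: check on generators}. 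A composite of $\Ccal$-module maps is a $\Ccal$-module map, and naturality in $(A,B)$ is clear since both factors are natural; this finishes the proof. Alternatively, one may verify the criterion of Lemma~\ref{lemma: check on generators}\eqref{item: composites} levelwise, after computing $\gamma\circ\eta_{\Ccal}(\Gcal A\vee\Gcal B)=\Gcal\iota_A\vee\Gcal\iota_B$ by naturality of $\eta_{\Ccal}$ and the unit axiom $\mu_{\Gcal}\circ\eta_{\Ccal}=\id$.

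There is no substantive obstacle here; the lemma is formal monad--module bookkeeping. The only point demanding attention is to articulate the two-variable $\Ccal$-module structures on the source and target correctly, and to recognize that $\gamma$, as written, is already in the standard ``$\mu_{\Gcal}$ after $\Ccal$ of a map'' form that characterizes module maps out of a free module.
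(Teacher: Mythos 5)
Your proposal is correct and is essentially the paper's own argument: the paper proves the module condition by splitting the associativity square for $\gamma$ into two squares, one commuting by naturality of $\mu_{\Ccal}$ (your ``$\Ccal$ of a map is a map of free $\Ccal$-modules'') and one by associativity of the $\Ccal$-action on $\Gcal$ (your ``$\mu_{\Gcal}$ is a module map''), and it handles filtration-preservation by the same appeal to Remark~\ref{remark: beta and gamma preserve filtration}. Your alternative check via Lemma~\ref{lemma: check on generators}\eqref{item: composites} is also valid, just not the route the paper takes.
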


\begin{proof}
The assertion that $\gamma$ is filtration preserving was addressed in Remark~\ref{remark: beta and gamma preserve filtration}.
We must show that the outside of the following diagram commutes to establish the multiplication condition:
\[
\xymatrix{
\Ccal\Gcal(A\vee B)\quad
    \ar[d]_-{\mu_{\Gcal}(A\vee B)}
&&\Ccal\Ccal\Gcal( A\vee B)
    \ar[ll]_-{\Ccal(\mu_{\Gcal}(A\vee B))}
    \ar[d]^-{\mu_{\Ccal}(\Gcal(A\vee B))}
&&
\Ccal\Ccal\left[\Largestrut\Gcal A\vee \Gcal B\right]
    \ar[ll]_-{\Ccal\Ccal(\Gcal\iota_{A}\vee\Gcal\iota_{B})}
    \ar[d]^-{\mu_{\Ccal}(\Gcal A\vee\Gcal B)}
    \ar@/_1.8pc/[llll]_-{\Ccal(\gamma)}
    \\
\Gcal (A\vee B)
&& \Ccal\Gcal (A\vee B)
     \ar[ll]_-{\mu_{\Gcal}(A\vee B)}
&& \Ccal\left[\Largestrut\Gcal A\vee \Gcal B\right].
     \ar[ll]_-{\Ccal(\Gcal\iota_{A}\vee\Gcal\iota_{B})}
     \ar@/^1.3pc/[llll]^-{\gamma}
}
\]
The right-hand square commutes by naturality of~$\mu_{\Ccal}$. The left-hand square commutes by the associativity of the action of $\Ccal$ on~$\Gcal$.
\end{proof}

The maps $\beta$ and $\gamma$ are involved in defining the simplicial structure of $\Dcal_{\bullet}$ and~$\Ecal_{\bullet}$ (Propositions~\ref{proposition: D is a simplicial space} and~\ref{proposition: E is a simplicial space}). We also need a map
$\Ecal_{\bullet}\rightarrow\Dcal_{\bullet}$ (see~\eqref{diagram: map of E to D}), for which the following lemma is essential. Recall that $\collapsemap{A}: A\vee B\rightarrow A$ denotes the map that collapses $B$ to a point.

\begin{lemma}   \label{lemma: special is a C-module}
The map
$\Ccal(A\vee B)
\xrightarrow{\Ccal\collapsemap{A}\times \Ccal\collapsemap{B}}
\Ccal A\times\Ccal B$
is a map of filtered $\Ccal$-modules.
\end{lemma}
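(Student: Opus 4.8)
The plan is to reduce the statement to the universal property of the free $\Ccal$-module, that is, to the criterion of Lemma~\ref{lemma: check on generators}\eqref{item: composites}. The first point to settle is which $\Ccal$-module structure the target $\Ccal A\times\Ccal B$ carries: it is \emph{not} the product of the two free module structures (that product would not even receive a module map from $\Ccal(A\vee B)$), but rather the two-variable structure map $\beta$ of~\eqref{eq: define beta}. Explicitly, Corollary~\ref{corollary: product of multiplications is module map} applied with $\Gcal=\Ccal$ --- which is a filtered $\Ccal$-module via $\mu_{\Ccal}$ by Lemma~\ref{lemma: filtered monad} --- makes $\Ccal A\times\Ccal B$ a filtered $\Ccal$-module with structure map $\beta=(\mu_{\Ccal}\times\mu_{\Ccal})\circ(\Ccal\pi_{1}\times\Ccal\pi_{2})$. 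So the claim splits into two parts: (a)~$f\definedas\Ccal\collapsemap{A}\times\Ccal\collapsemap{B}$ respects filtration, and (b)~$f$ is a map of $\Ccal$-modules for the $\beta$-structure.

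For part (b) I would apply Lemma~\ref{lemma: check on generators}\eqref{item: composites}, which reduces the problem to checking the single identity $f=\beta\circ\Ccal\bigl(f\circ\eta(A\vee B)\bigr)$, where $\eta=\eta_{\Ccal}$ is the monad unit. Naturality of $\eta$ applied to the collapse maps $\collapsemap{A}$ and $\collapsemap{B}$ rewrites $f\circ\eta(A\vee B)$ as the map into $\Ccal A\times\Ccal B$ with components $\eta(A)\circ\collapsemap{A}$ and $\eta(B)\circ\collapsemap{B}$. Applying $\Ccal$ and then unwinding $\beta$, the first component of $\beta\circ\Ccal\bigl(f\circ\eta(A\vee B)\bigr)$ becomes $\mu_{\Ccal}(A)\circ\Ccal\eta(A)\circ\Ccal\collapsemap{A}$, which collapses to $\Ccal\collapsemap{A}$ by the monad unit identity $\mu_{\Ccal}(A)\circ\Ccal\eta(A)=\id_{\Ccal A}$; by symmetry the second component is $\Ccal\collapsemap{B}$. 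Hence $\beta\circ\Ccal\bigl(f\circ\eta(A\vee B)\bigr)=f$, and the criterion is satisfied.

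For part (a) I would factor $f$ through the canonical inclusion $\iota\colon A\vee B\hookrightarrow A\times B$: since $\pi_{1}\circ\iota=\collapsemap{A}$ and $\pi_{2}\circ\iota=\collapsemap{B}$, we have $f=(\Ccal\pi_{1}\times\Ccal\pi_{2})\circ\Ccal\iota$; the inclusion $\iota$ is a map of filtered spaces by~\eqref{eq: wedge filtration} and~\eqref{eq: define product filtration}, hence $\Ccal\iota$ respects filtration since $\Ccal$ is an endofunctor of $\filtered$, and $\Ccal\pi_{1}\times\Ccal\pi_{2}$ respects filtration by Lemma~\ref{lemma: boring lemma}. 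I do not anticipate any real difficulty here: the content is entirely bookkeeping, the two things to be careful about being (i) identifying the correct ($\beta$-)module structure on the target, and (ii) using the monad unit axiom $\mu_{\Ccal}(A)\circ\Ccal\eta(A)=\id_{\Ccal A}$ rather than the other one, $\mu_{\Ccal}(A)\circ\eta(\Ccal A)=\id_{\Ccal A}$. Everything else follows from naturality of $\eta$ and of $\mu_{\Ccal}$.
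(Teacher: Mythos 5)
Your proof is correct, but it distributes the work differently from the paper, essentially swapping which half of the statement gets the ``free module'' trick. For the module-map property, the paper does a direct diagram chase: it splits the relevant square into a lower triangle (the definition of $\beta$), a quadrilateral (naturality of $\mu_{\Ccal}$ applied to the collapse maps), and a right triangle (pure functoriality, $\Ccal(\pi_{1})\circ\Ccal(\Ccal\collapsemap{A}\times\Ccal\collapsemap{B})=\Ccal(\Ccal\collapsemap{A})$), whereas you invoke the ``if'' direction of Lemma~\ref{lemma: check on generators}\eqref{item: composites} and verify $f=\beta\circ\Ccal\bigl(f\circ\eta(A\vee B)\bigr)$ by naturality of $\eta$ and the unit axiom $\mu_{\Ccal}\circ\Ccal\eta=\id$; both computations are essentially the same length and equally valid. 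For filtration-preservation the routes genuinely diverge: the paper first needs the module-map property in hand, restricts $f$ to the generators $A\vee B$ to get the filtration-preserving map $g=(\text{wedge-to-product})\circ(\eta\vee\eta)$, and then uses Lemma~\ref{lemma: check on generators}\eqref{item: composites} to identify $f=\beta\circ\Ccal g$, with $\beta$ filtration-preserving by Lemma~\ref{lemma: product of modules}; your factorization $f=(\Ccal\pi_{1}\times\Ccal\pi_{2})\circ\Ccal\iota$ through $\Ccal(A\times B)$, combined with Lemma~\ref{lemma: boring lemma} and the fact that $\Ccal$ is an endofunctor of $\filtered$, gives filtration-preservation without any appeal to the module structure, so your two halves are logically independent where the paper's are not. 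What each approach buys is modest: your filtration argument is slightly more direct, while the paper's reuses machinery it needs anyway and avoids introducing the intermediate object $\Ccal(A\times B)$. One small phrasing caveat: your parenthetical that the ``product of the two free module structures'' would not receive a module map is a bit misleading, since $\beta$ \emph{is} built from $\mu_{\Ccal}\times\mu_{\Ccal}$; the real point, which your proof uses correctly, is that the $\Ccal$-action on the product is mediated by $\Ccal\pi_{1}\times\Ccal\pi_{2}$ as in \eqref{eq: define beta}.
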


\begin{proof}
The $\Ccal$-module structure on the target is provided by~$\beta$.
Here is the necessary commutative diagram to show that
$\Ccal(A\vee B)\rightarrow \Ccal A\times\Ccal B $ is a map of $\Ccal$-modules:
\[\xymatrix{
\Ccal(A\vee B)
      \ar[dd]_-{\Ccal\collapsemap{A}\times\Ccal\collapsemap{B}}
&&
&& \Ccal\Ccal(A\vee B)
      \ar[llll]^-{\mu_{\Ccal}}
      \ar[lld]^-{\Ccal\Ccal\collapsemap{A}\times\Ccal\Ccal\collapsemap{B}}
      \ar[dd]^-{\Ccal(\Ccal\collapsemap{A}\times\Ccal\collapsemap{B})}\\
&& \Ccal\Ccal A\times\Ccal\Ccal B
      \ar[lld]_-{\mu_{\Ccal}\times\mu_{\Ccal}}
      \\
\Ccal A\times\Ccal B && && \Ccal(\Ccal A\times\Ccal B).
      \ar[llu]_-{\Ccal\pi_{1}\times\Ccal\pi_{2}}
      \ar[llll]_-{\beta}
}
\]
The lower triangle commutes by definition of~$\beta$. The upper quadrilateral commutes because the map $\Ccal\Ccal\to \Ccal$ is a natural transformation.
For commutativity of the right triangle, we consider the maps into~$\Ccal\Ccal A$,
the other factor being similar:
\[
\Ccal(\pi_{1})\circ\Ccal(\Ccal\collapsemap{A}\times\Ccal\collapsemap{B})
       =\Ccal\left(\pi_{1}\circ (\Ccal\collapsemap{A}\times\Ccal\collapsemap{B})\right)
       =\Ccal(\Ccal\collapsemap{A}).
\]

To establish that
$\Ccal\collapsemap{A}\times \Ccal\collapsemap{B}\colon\Ccal(A\vee B)
\xrightarrow{}
\Ccal A\times\Ccal B $
preserves filtration, we apply Lemma~\ref{lemma: check on generators}.
Let $g$ denote the restriction of
$\Ccal\collapsemap{A}\times \Ccal\collapsemap{B}$
to $A\vee B\subseteq\Ccal(A\vee B)$. We can express $g$ as the composite
\[
A\vee B  \xrightarrow{\,\eta\vee\eta\,} \Ccal A \vee\Ccal B
         \xrightarrow{\quad} \Ccal A \times\Ccal B,
\]
where the second map is the standard inclusion of a wedge into a product.
Observe that $g$ preserves filtration because
the unit map for $\Ccal$ is filtration-preserving (Lemma~\ref{lemma: filtered monad}),
and so is the wedge-to-product inclusion of filtered spaces
(see \eqref{eq: wedge filtration} and~\eqref{eq: define product filtration}).
Now consider the composite
\[
\Ccal(A\vee B)\xrightarrow{\ \Ccal g\ }\Ccal(\Ccal A \times\Ccal B)
              \xrightarrow{\ \beta\ } \Ccal A \times\Ccal B.
\]
Since $g$ preserves filtration, so does $\Ccal g$ by naturality. Further, $\beta$ is filtration-preserving by Lemma~\ref{lemma: product of modules}.
But by Lemma~\ref{lemma: check on generators},
$\beta\circ\Ccal g=\Ccal\collapsemap{A}\times \Ccal\collapsemap{B}$, and this finishes the proof.
\end{proof}


\section{The simplicial space $\Dcal_{\bullet}$}
    \label{section: D is a simplicial space}

In this section, we show that
$\Dcal_{\bullet}$ (see equation~\eqref{eq: define D}) is a simplicial space.
We need to define the face and degeneracy maps. A feature to notice is that not all face and degeneracy maps from a fixed summand $D_{k}^{i}$ have the same target.
The face and degeneracy maps that originate from the summand
\begin{equation}   \label{eq: Dki}
D_{k}^{i}=\Ccal\left[\LARGEstrut\filter{m}
   \Ccal^{k-i}\left(
     \Largestrut\Ccal^{i}\Fcal X\times\Ccal^{i}\Fcal Y
              \right)
                 \right]
\end{equation}
come in four general types, plus one special case, the face map~$d_{k-i}$. The diagram after the list below gives a visualization of the organization of the structure maps, but note that to save space, the degeneracy maps originating from $D_{k}^{i}$ are not shown and instead the full set of degeneracies originating from $D_{k-2}^{i-2}$ is
exhibited. We make frequent use of Lemma~\ref{lemma: filtration of composition, Greg's version},
which tells us that $\mu_{\Ccal}:\Ccal\Ccal\rightarrow\Ccal$ restricts to a map
$\Ccal(\filter{m}\Ccal)\rightarrow\Ccal\filter{m}$.
\begin{enumerate}
\item (Horizontal degeneracies) The maps $s_{0},...,s_{k-i}:D_{k}^{i}\rightarrow D_{k+1}^{i}$
    come from all the ways of inserting a $\Ccal$ into $\Ccal^{k-i}$ in~\eqref{eq: Dki}, which is $k-i+1$ ways altogether. These maps are given exactly the same way as
    the degeneracy maps of the same names in $\FilteredBK{m}{\Fcal}$ (see Proposition~\ref{proposition: structure maps respect filtration}).
\item (Vertical degeneracies)
\label{item: vertical degeneracies D}
The maps $s_{k-i+1},\dots,s_{k}: D_{k}^{i}\rightarrow D_{k+1}^{i+1}$ come from the ways of inserting $\Ccal$ to the {\emph{right}} of a $\Ccal$ in $\Ccal^{i}\Fcal$, which is $i$~ways altogether.
\item (Horizontal faces) The maps $d_{0},\ldots,d_{k-i-1}:D_{k}^{i}\rightarrow D_{k-1}^{i}$ come from applying the monad multiplication $\mu_{\Ccal}:\Ccal^{2}\rightarrow\Ccal$ to
    neighboring factors of $\Ccal$ in $\Ccal\left[\filter{m}\Ccal^{k-i}\right]$. There are $k-i$ such pairs to multiply, and the faces correspond to the face maps with the same names in $\FilteredBK{m}{\Fcal}$. (See Proposition~\ref{proposition: structure maps respect filtration}. Note the definition of $d_{0}$ relies on Lemma~\ref{lemma: filtration of composition, Greg's version}.)
\item (``Borderline" face, horizontal) The face map
$d_{k-i}:D_{k}^{i}\rightarrow D_{k-1}^{i}$ requires special attention. It multiplies the closest factor of $\Ccal$ exterior to the product onto each factor in the product. More precisely, $d_{k-i}$ is induced by applying
$\Ccal\filter{m}\Ccal^{k-i-1}$ to the map $\beta$
of \eqref{eq: define beta} with the filtered spaces
$A=\Ccal^{i-1}\Fcal X$ and $B=\Ccal^{i-1}\Fcal Y$.
To know that this is possible, we have to know that $\beta$ is filtration-preserving, which is true by Remark~\ref{remark: beta and gamma preserve filtration}.
\item (Vertical faces)
\label{item: vertical faces D}
The maps
$d_{k-i+1},\ldots,d_{k}:D_{k}^{i}\rightarrow D_{k-1}^{i-1}$
come from applying the monad multiplication $\mu_{\Ccal}\colon\Ccal^{2}\rightarrow\Ccal$
(or the module multiplication $\mu_{\Fcal}\colon\Ccal\Fcal\rightarrow\Ccal$ in the case of~$d_{k}$) to neighboring pairs $\Ccal\Ccal$ in both factors of $\Ccal^{i}\Fcal X\times\Ccal^{i}\Fcal Y$ (using the same neighboring pairs in each factor).
\end{enumerate}

The following diagram shows how the faces and degeneracies are organized, with the borderline face map indicated in bold font:
\[
\xymatrix{
{\overbrace{\Ccal\left[\LARGEstrut\filter{m}
   \Ccal^{k-i-1}\left(\Largestrut\Ccal^{i}\Fcal X\times\Ccal^{i}\Fcal Y
                \right)
                 \right]}^{D_{k-1}^{i}}}\quad
   \ar@<-1.5ex>[r]
   \ar@<-2.0ex>[r]
   \ar@<-2.5ex>[r]_-{s_{0},\ldots, s_{k-i-1}}
   \ar@<-6ex>[dd]_-{d_{k-i},\ldots,d_{k-1}}
   \ar@<-5ex>[dd]
   \ar@<-4ex>[dd]
   \ar@<-3ex>[dd]
&\Hugestrut\quad
{\overbrace{\Ccal\left[\LARGEstrut\filter{m}
   \Ccal^{k-i}\left(\Largestrut\Ccal^{i}\Fcal X\times\Ccal^{i}\Fcal Y
              \right)
                 \right]}^{D_{k}^{i}}}
   \ar@<-6ex>[dd]_-{d_{k-i+1},\ldots,d_{k}}
   \ar@<-5ex>[dd]
   \ar@<-4ex>[dd]
   \ar@<-3ex>[dd]
         \ar@<-4.0ex>[l]_-{d_{0},\ldots,d_{k-i-1}}
         \ar@<-3.5ex>[l]
         \ar@<-3.0ex>[l]
         \ar@<-2.0ex>[l]^-{{\mathbf{d_{k-i}}}}
\\
& \\
{\underbrace{\Ccal\left[
   \LARGEstrut\filter{m}
   \Ccal^{k-i-1}\left(\Largestrut\Ccal^{i-1}\Fcal X\times\Ccal^{i-1}\Fcal Y\right)
   \right]}_{D_{k-2}^{i-1}}}\quad
   \ar@<-4ex>[uu]
   \ar@<-5ex>[uu]
   \ar@<-6ex>[uu]_-{s_{k-i},\ldots,s_{k-2}}
   \ar@<-1.5ex>[r]
   \ar@<-2.0ex>[r]
   \ar@<-2.5ex>[r]_-{s_{0},\ldots, s_{k-i-1}}
& \quad{\underbrace{\Ccal\left[
   \LARGEstrut\filter{m}
   \Ccal^{k-i}\left(\Largestrut\Ccal^{i-1}\Fcal X\times\Ccal^{i-1}\Fcal Y\right)
   \right]}_{D_{k-1}^{i-1}}}.
   \ar@<-4ex>[uu]
   \ar@<-5ex>[uu]
   \ar@<-6ex>[uu]_-{s_{k-i+1},\ldots,s_{k-1}}
         \ar@<-4.0ex>[l]_-{d_{0},\ldots,d_{k-i-1}}
         \ar@<-3.5ex>[l]
         \ar@<-3.0ex>[l]
         \ar@<-2.0ex>[l]^-{{\mathbf{d_{k-i}}}}
}
\]

\begin{proposition}
\label{proposition: proving D is a simplicial space}
$\Dcal_{\bullet}$ is a simplicial space.
\end{proposition}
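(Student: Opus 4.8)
The plan is to verify the simplicial identities summand by summand, using the structural lemmas of Section~\ref{section: two structure maps} to handle the face map $d_{k-i}$ — the only structure map not obtained by applying $\Ccal$, $\filter{m}{}$, $\mu_\Ccal$, $\eta_\Ccal$, or $\mu_\Fcal$ to an existing map. Since each of the five families of structure maps is defined on a single summand $D_k^i$ of $\Dcal_k=\bigvee_i D_k^i$ and lands in a single summand of $\Dcal_{k\pm 1}$, they assemble without comment into maps $\Dcal_k\to\Dcal_{k\pm 1}$, and the whole content of the proposition is the collection of simplicial identities; as every constituent operation is filtration-preserving (Lemma~\ref{lemma: filtration of composition, Greg's version}, Remark~\ref{remark: beta and gamma preserve filtration}) it suffices to check them in unfiltered spaces. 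The bookkeeping to keep in mind throughout is that applying a structure map to $D_k^i$ may shift the index of the ``borderline'' face: a horizontal face or the borderline face sends $D_k^i$ to $D_{k-1}^i$, whose borderline is $d_{(k-1)-i}$, while a vertical face sends $D_k^i$ to $D_{k-1}^{i-1}$, whose borderline is still $d_{k-i}$; the analogous remarks hold for the degeneracies and for the extreme summands $i=0$ and $i=k+1$, where some of the five families are empty.

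I would first dispose of the identities that do not involve the borderline face; these are formal. An identity between two horizontal faces, or a horizontal face and a horizontal degeneracy, becomes — after stripping off the common outer operator $\Ccal\filter{m}{}$ and the common inner functor $\Ccal^i\Fcal X\times\Ccal^i\Fcal Y$ — the corresponding identity among the structure maps of the simplicial $\Gamma$-space $\FilteredBK{m}{\Fcal}$, which is Proposition~\ref{proposition: structure maps respect filtration}; in the extreme case $i=k+1$ it is just the product of the simplicial identities in $\FilteredBK{m}{\Fcal}X$ and $\FilteredBK{m}{\Fcal}Y$. An identity between two vertical structure maps is obtained by applying the corresponding identity for the two-sided bar construction to the inner factor $\Ccal^i\Fcal$, diagonally in $X$ and $Y$. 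Finally, a horizontal structure map and a vertical one act on disjoint portions of $\Ccal\filter{m}\Ccal^{k-i}(\Ccal^i\Fcal X\times\Ccal^i\Fcal Y)$, so they commute by naturality, which is exactly the content of the mixed identity in question.

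The substance of the proof — and the step I expect to be the main obstacle, though the difficulty is organizational rather than conceptual — is matching the identities that do involve the borderline face $d_{k-i}$ to the right statement from Section~\ref{section: two structure maps}. There are three patterns. First, the borderline face composed with the borderline face of its target (equivalently, composed with the horizontal face immediately adjacent to it): after the index shifts are sorted out this unwinds to the associativity identity $\beta\circ\Ccal\beta=\beta\circ\mu_\Ccal$ for the two-variable $\Ccal$-module structure that $\beta$ carries on $\Ccal^i\Fcal X\times\Ccal^i\Fcal Y$, which is part of Lemma~\ref{lemma: product of modules} and is restated in Corollary~\ref{corollary: product of multiplications is module map} (with the evident variant using $\beta_\Fcal$ and $\mu_\Fcal$ when $i=0$). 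Second, the borderline face composed, in either order, with a vertical face or a vertical degeneracy: the vertical structure map is a map of $\Ccal$-modules in each of the two factors — it is a monad multiplication, the module action $\mu_\Fcal$, or $\Ccal$ applied to a map — hence a map of two-variable $\Ccal$-modules by the second part of Lemma~\ref{lemma: product of modules}, so it commutes with the structure map $\beta$, which is precisely the required identity. Third, the borderline face composed with a non-adjacent horizontal face or with a horizontal degeneracy: these commute by naturality of $\beta$ with respect to the monad multiplication or unit applied inside $\Ccal^{k-i}$, and the contraction $d_{k-i+1}\circ s_{k-i}=\id$ reduces to the unit axiom $\beta\circ\eta=\id$ for the $\Ccal$-module $\Ccal^i\Fcal X\times\Ccal^i\Fcal Y$, again from Lemma~\ref{lemma: product of modules}. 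Once the numerous simplicial identities, with their index shifts and the boundary summands $i=0$ and $i=k+1$, have been organized into these three patterns and the naturality arguments above, each individual verification is a short diagram chase.
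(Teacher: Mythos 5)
Your proposal is correct and follows essentially the paper's own route: dispose of the identities not involving the borderline face $d_{k-i}$ by monad/module axioms and ``commuting operations,'' and reduce the borderline-face identities to the two-variable $\Ccal$-module structure $\beta$ on $\Ccal^{i}\Fcal X\times\Ccal^{i}\Fcal Y$ (associativity, the module-map property of $\mu\times\mu$ as in Corollary~\ref{corollary: product of multiplications is module map}, and the unit axiom from Lemma~\ref{lemma: product of modules}), exactly the three special cases checked in the paper. The only cosmetic difference is that the paper settles the mixed borderline/vertical-face square by checking on generators via Lemma~\ref{lemma: check on generators}, and files the borderline/vertical-degeneracy case under ``commuting operations'' (with the footnote that no degeneracy inserts a $\Ccal$ at the leftmost inner position), whereas you invoke the module-map statements directly—same content, equally valid.
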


\begin{proof}
We need to verify the simplicial identities,
\begin{enumerate}
\item $d_{i}d_{j} = d_{j-1}d_{i}$ if $ i<j$
\item $s_{i}s_{j}=s_{j+1}s_{i}$ if $i\leq j$
\item $d_{i}s_{j}=s_{j-1}d_{i}$ if $i<j$
\item $d_{j}s_{j}=\id=d_{j+1}s_{j}$
\item $d_{i}s_{j}=s_{j}d_{i-1}$ if $i>j+1$.
\end{enumerate}
First, consider a simplicial identity that does not involve a borderline face map.
\begin{itemize}
\item If the identity involves only vertical or only horizontal maps (but not the borderline face), the identity is satisfied in a straightforward way because $\Ccal$ is a monad, and (in the case of vertical maps) because $\Fcal$ is a left module over~$\Ccal$.
\item If one side of a simplicial identity involves both a vertical map and a horizontal map, then so does the other.
    The vertical map is applying the monad or module structure for $\Ccal$ and $\Fcal$ \emph{inside} the product $\Ccal^{i}\Fcal X\times\Ccal^{i}\Fcal Y$.
    The horizontal map is applying the monad structure for $\Ccal$
    \emph{outside} the product.
    Provided that the borderline face map is not involved, these operations commute by functoriality. We call this argument ``commuting operations."
\end{itemize}

%
The tricky situations are those where a simplicial identity involves a borderline face map. Most of the required identities are true by ``commuting operations" even when a borderline face map is
involved\footnote{It is critical here that we do {\bf{not}} have a degeneracy that takes
$\Ccal^{i}\Fcal X\times\Ccal^{i}\Fcal Y$ to
$\Ccal\Ccal^{i}\Fcal X\times\Ccal\Ccal^{i}\Fcal Y$,
i.e. no degeneracy puts a $\Ccal$ immediately to the left of the functors $\Ccal^{i}$ and inside the parentheses in
$
\Ccal\left[\strut\filter{m}
   \Ccal^{k-i}\left(
   \strut\Ccal^{i}\Fcal X \times\Ccal^{i}\Fcal Y
              \right)
                 \right]
$.},
but there are three situations involving
$d_{k-i}:D_{k}^{i}\rightarrow D_{k-1}^{i}$
that require individual checking. As in the diagram above, a borderline face map is indicated in a bold font.
\begin{itemize}
\item (Going from $D_{k}^{i}$ to $D_{k-2}^{i}$)
Two borderline faces in a row:
\begin{equation}   \label{eq: D borderline face identity}
\begin{gathered}
{\mathbf{d_{k-i-1}}}{\mathbf{d_{k-i}}}={\mathbf{d_{k-i-1}}}d_{k-i-1}
\qquad\qquad
\xymatrix{
D_{k-1}^{i}\ \ar[d]_-{{\mathbf{d_{k-i-1}}}}
       & D_{k}^{i}
            \ar[l]_-{{\mathbf{d_{k-i}}}}
            \ar[d]^-{d_{k-i-1}}\\
D_{k-2}^{i}\        & D_{k-1}^{i}\ar[l]^-{{\mathbf{d_{k-i-1}}}}
}
\end{gathered}
\end{equation}
The business part of the diagram is given by
\[
\xymatrix{
   \Ccal\left(\Largestrut\Ccal^{i}\Fcal X\times\Ccal^{i}\Fcal Y
                \right)
   \ar[d]_-{\beta}
&
   \Ccal\Ccal\left(\Largestrut\Ccal^{i}\Fcal X\times\Ccal^{i}\Fcal Y
              \right)
   \ar[l]_{\Ccal(\beta)}
   \ar[d]_-{\mu_{\Ccal}}
\\
   \Ccal^{i}\Fcal X\times\Ccal^{i}\Fcal Y
&
   \Ccal\left(\Largestrut\Ccal^{i}\Fcal X\times\Ccal^{i}\Fcal Y\right),
   \ar[l]_-{\beta}
}
\]
which commutes because $\beta$ defines a $\Ccal$-module
structure on the product $\Ccal^{i}\Fcal X\times\Ccal^{i}\Fcal Y$
(Lemma~\ref{lemma: structure maps give C-modules}).
\\
\item (Going from $D_{k}^{i}$ to $D_{k-2}^{i-1}$)
Vertical face $d_{k-i+1}$, then the borderline face map:
\begin{equation}  \label{eq: D borderline and other face}
\begin{gathered}
{\mathbf{d_{k-i}}}d_{k-i+1} = d_{k-i}{\mathbf{d_{k-i}}}
\qquad\qquad
\xymatrix{
D_{k-1}^{i}\ \ar[d]_-{d_{k-i}}
       & D_{k}^{i}
            \ar[l]_-{{\mathbf{d_{k-i}}}}
            \ar[d]^-{d_{k-i+1}}\\
D_{k-2}^{i-1}\ 
      & D_{k-1}^{i-1}
            \ar[l]^-{{\mathbf{d_{k-i}}}}
}
\end{gathered}
\end{equation}
The business part of the diagram, below, commutes by applying
Lemma~\ref{lemma: check on generators} (all the maps
are maps of $\Ccal$-modules), and observing that
both ways around the square are the same map on
$\Ccal\Ccal^{i-1}\Fcal X\times\Ccal\Ccal^{i-1}\Fcal Y$:
\[
\xymatrix{
\Ccal\Ccal^{i-1}\Fcal X\times\Ccal\Ccal^{i-1}\Fcal Y
   \ar[d]_-{\mu_{\Ccal^{i-1}\Fcal}\times\mu_{\Ccal^{i-1}\Fcal}}
&\Ccal\left(\Largestrut\Ccal\Ccal^{i-1}\Fcal X\times\Ccal\Ccal^{i-1}\Fcal Y
              \right)
   \ar[l]_-{\beta}
   \ar[d]^-{\Ccal\left(\mu_{\Ccal^{i-1}\Fcal}\times\mu_{\Ccal^{i-1}\Fcal}\right)}
\\
\Ccal^{i-1}\Fcal X\times\Ccal^{i-1}\Fcal Y
& \Ccal\left(\Largestrut\Ccal^{i-1}\Fcal X\times\Ccal^{i-1}\Fcal Y\right).
   \ar[l]_-{\beta}
}
\]
\item (Going from $D_{k-1}^{i}$ to itself)
Horizontal degeneracy $s_{k-i-1}$, then the borderline face map:
\begin{equation}   \label{eq: D degeneracy identity}
{\mathbf{d_{k-i}}}s_{k-i-1}=\id
\qquad\qquad
\xymatrix{
D_{k-1}^{i}\ar[r]^{s_{k-i-1}}
           \ar[dr]_-{\id}
       & D_{k}^{i}
            \ar[d]^{{\mathbf{d_{k-i}}}}\\
       & D_{k-1}^{i}
}
\end{equation}
The business part of the diagram, below, commutes by
Lemma~\ref{lemma: structure maps give C-modules}:
\[
\xymatrix{
\Ccal^{i}\Fcal X\times\Ccal^{i}\Fcal Y
      \ar[r]^-{\eta}
      \ar[dr]_(.4){\id}
&\Ccal\left(\Largestrut\Ccal^{i}\Fcal X\times\Ccal^{i}\Fcal Y
              \right)
      \ar[d]^{\beta}
\\
%
& \left(\Largestrut\Ccal^{i}\Fcal X\times\Ccal^{i}\Fcal Y\right).
}
\]
\end{itemize}

\end{proof}

\section{Deformation retraction
$\Bcal_{\bullet}\hookrightarrow\Dcal_{\bullet}$}
\label{section: homotopy of Ds}

In this section, we turn our attention to the right-hand column of
diagram~\eqref{diagram: square of auxiliaries}, which we reproduce here for reference:
\begin{equation}   \tag{\ref{diagram: square of auxiliaries}}
\thatdiagram
\end{equation}
There
is an inclusion $j: B_{\bullet}\hookrightarrow\Dcal_{\bullet}$ given by
the fact that $B_{k}$ is one wedge summand of~$\Dcal_{k}$:
\begin{equation}   \label{eq: defn j}
j_{k}\colon B_{k}=\Ccal\left[\LARGEstrut\filter{m}
   \Largestrut\Ccal^{k}\Fcal X
                 \right]
\times
\Ccal\left[\LARGEstrut\filter{m}
   \Largestrut\Ccal^{k}\Fcal Y
                 \right]
   =D_{k}^{k+1}\hookrightarrow\Dcal_{k}.
\end{equation}
The face and degeneracy maps are defined the same way on $B_{k}$
and~$D_{k}^{k+1}$
(see Proposition~\ref{proposition: structure maps respect filtration} for~$B_{k}$, and items
\eqref{item: vertical degeneracies D} and \eqref{item: vertical faces D}
after \eqref{eq: Dki} for~$D_{k}^{k+1}$), and as a result we have the following statement.

\begin{lemma}
$j:B_{\bullet}\hookrightarrow\Dcal_{\bullet}$ is the inclusion of a simplicial subspace.
\end{lemma}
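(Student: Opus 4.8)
The plan is to verify that $j$ is compatible with all face and degeneracy operators; granting that, the lemma follows at once, since each $j_{k}$ is the inclusion of a wedge summand of $\Dcal_{k}$ and hence a (split) monomorphism, so $B_{\bullet}$ is a simplicial subspace as soon as $j$ is shown to be a simplicial map.

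First I would determine which structure maps of $\Dcal_{\bullet}$ actually emanate from the summand $D_{k}^{k+1}$. Specializing the list of maps issuing from $D_{k}^{i}$ in Section~\ref{section: D is a simplicial space} to $i=k+1$, one has $k-i=-1$, so the index ranges for the horizontal degeneracies $s_{0},\dots,s_{k-i}$, the horizontal faces $d_{0},\dots,d_{k-i-1}$, and the ``borderline'' face $d_{k-i}$ are all vacuous---in particular there is no $d_{-1}$. Hence the only operators originating from $D_{k}^{k+1}$ are the vertical degeneracies $s_{0},\dots,s_{k}\colon D_{k}^{k+1}\to D_{k+1}^{k+2}$ and the vertical faces $d_{0},\dots,d_{k}\colon D_{k}^{k+1}\to D_{k-1}^{k}$. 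Since $D_{k+1}^{k+2}=B_{k+1}$ and $D_{k-1}^{k}=B_{k-1}$ by definition, all of these maps land inside the image of $B_{\bullet}$, so the simplicial operators of $\Dcal_{\bullet}$ restrict along $j$ to well-defined operators on $j(B_{\bullet})$.

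Next I would check that these restricted operators coincide with the simplicial structure of $B_{\bullet}=\FilteredBK{m}{\Fcal}X\times\FilteredBK{m}{\Fcal}Y$. By the descriptions in items~\eqref{item: vertical degeneracies D} and~\eqref{item: vertical faces D}, on the summand $D_{k}^{k+1}=\Ccal\filter{m}\Ccal^{k}\Fcal X\times\Ccal\filter{m}\Ccal^{k}\Fcal Y$ the vertical degeneracy $s_{j}$ and the vertical face $d_{j}$ are the products of the corresponding operators on the two factors, and on the factor $\Ccal\filter{m}\Ccal^{k}\Fcal X$ these are exactly the $j$-th degeneracy and $j$-th face of the simplicial $\Gamma$-space $\FilteredBK{m}{\Fcal}$ evaluated at $X$ from Proposition~\ref{proposition: structure maps respect filtration} (and similarly for the $Y$-factor). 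Thus the operators of $\Dcal_{\bullet}$ restrict along $j$ precisely to those of $B_{\bullet}$, so $j$ is simplicial; being a levelwise inclusion of a wedge summand, it presents $B_{\bullet}$ as a simplicial subspace of $\Dcal_{\bullet}$.

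I do not anticipate a genuine difficulty: the argument is essentially index-chasing with the conventions of Section~\ref{section: D is a simplicial space}. The one point that merits care is confirming that the genuinely new ``borderline'' face map appearing in each simplicial degree never issues from $D_{k}^{k+1}$ (it would be the nonexistent $d_{-1}$), so that no operator whose behavior could differ from that of $B_{\bullet}$ ever occurs on this summand; once that is observed, everything else is immediate from the definitions.
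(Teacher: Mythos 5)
Your proof is correct and follows essentially the same route as the paper, which simply observes that for $i=k+1$ the only structure maps issuing from $D_{k}^{k+1}$ are the vertical ones (items \eqref{item: vertical degeneracies D} and \eqref{item: vertical faces D}), landing in $D_{k+1}^{k+2}=B_{k+1}$ and $D_{k-1}^{k}=B_{k-1}$ and agreeing with the structure maps of $B_{\bullet}$ from Proposition~\ref{proposition: structure maps respect filtration}. Your index-chase (no horizontal or borderline face $d_{-1}$ on this summand) is exactly the content the paper leaves implicit.
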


Our goal in this section is to produce the retraction $q\colon\Dcal_{\bullet}\rightarrow B_{\bullet}$ on the right side of \eqref{diagram: square of auxiliaries},
and a simplicial homotopy $h$ between the composite
$\Dcal_{\bullet}\xrightarrow{\ q\ }B_{\bullet}\xrightarrow{\ j\ }\Dcal_{\bullet}$
and the identity map of $\Dcal_{\bullet}$.
(This will prove Proposition~\ref{proposition: existence of homotopy}.)

Producing the retraction map $q$ is not difficult. Define
$q:\Dcal_{\bullet}\rightarrow\Dcal_{\bullet}$ on the summands $D_{k}^{i}$ of $\Dcal_{\bullet}$ as
the restriction of
$
q_{k}=\Ccal^{k-i+1}(\pi_1)\times\Ccal^{k-i+1}(\pi_2)
$,
taking
\[
\underbrace{\Ccal\left[\LARGEstrut\filter{m}
   \Ccal^{k-i}\left(\Largestrut\Ccal^{i}\Fcal X\times\Ccal^{i}\Fcal Y
              \right)\right]}_{D_{k}^{i}}
\xlongrightarrow{\quad}
\underbrace{\Ccal\left[\LARGEstrut\filter{m}
   \Largestrut\Ccal^{k}\Fcal X
                 \right]
\times
\Ccal\left[\LARGEstrut\filter{m}
   \Largestrut\Ccal^{k}\Fcal Y
                 \right]}_{D_{k}^{k+1}=B_{k}}.
\]
While defining the simplicial homotopy $j\circ q\simeq \id_{\Dcal_{\bullet}}$ is necessarily more involved than defining~$q$, a straightforward candidate presents itself and turns out to work. Thus the principal result of this section is the following.

\begin{proposition}  \label{proposition: simplicial homotopy}
The map $q:\Dcal_{\bullet}\rightarrow B_{\bullet}$ is a simplicial retraction of the map~$j$. Further, there is a simplicial homotopy~$h$, defined in~\eqref{eq: define simplicial homotopy}, between the identity map of~$\Dcal_{\bullet}$ and $j\circ q$, and $h$ is the identity on~$\left\{D_{k}^{k+1}\right\}$.
\end{proposition}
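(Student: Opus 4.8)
The plan is to establish the three assertions of Proposition~\ref{proposition: simplicial homotopy} in turn: that $q$ is a simplicial map, that $q\circ j=\id_{B_{\bullet}}$, and that there is a simplicial homotopy $h$ from $\id_{\Dcal_{\bullet}}$ to $j\circ q$ that is stationary on each summand $D_{k}^{k+1}=B_{k}$. The first two are essentially formal. Restricted to $D_{k}^{i}$, the map $q$ is $\Ccal\filter{m}\Ccal^{k-i}$ applied to the two projections $\pi_{1},\pi_{2}$ out of $\Ccal^{i}\Fcal X\times\Ccal^{i}\Fcal Y$, paired into a single map with target $\Ccal\filter{m}\Ccal^{k}\Fcal X\times\Ccal\filter{m}\Ccal^{k}\Fcal Y=B_{k}$. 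Every face and degeneracy of $\Dcal_{\bullet}$ that does not touch the internal product is natural in that product, and hence commutes with $q$ automatically; the only structure maps that touch the product are the vertical faces and degeneracies and the borderline face $d_{k-i}$, and commutation with $q$ for those follows from naturality of $\mu_{\Ccal}$ and $\mu_{\Fcal}$ together with the fact that $\pi_{1},\pi_{2}$ are maps of $\Ccal$-modules for the product structure map $\beta$ (Lemma~\ref{lemma: product of modules}, Corollary~\ref{corollary: product of multiplications is module map}). On the summand $D_{k}^{k+1}=B_{k}$, which is already a product, $q$ equals $\pi_{1}\times\pi_{2}=\id$; hence $q\circ j=\id_{B_{\bullet}}$ and $q$ is a retraction of $j$.

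The substance of the section is the construction of $h$ and the verification of the simplicial homotopy identities. I would write down the maps $h_{\ell}\colon\Dcal_{k}\to\Dcal_{k+1}$, $0\le\ell\le k$, one wedge summand at a time, as the natural candidate: on each $D_{k}^{i}$ it is assembled from the degeneracies of $\Dcal_{\bullet}$, the $q$-style projections $\Ccal\filter{m}\Ccal^{k-i}\pi_{1}$ and $\Ccal\filter{m}\Ccal^{k-i}\pi_{2}$, and the unit $\eta$ (a section of $\beta$ by the $\Ccal$-module axioms), distributed over the range $0\le\ell\le k$ so as to interpolate between the inclusion of $D_{k}^{i}$ into $\Dcal_{k}$ at one end and the composite $j\circ q$ landing in $B_{k}$ at the other; on the summand $D_{k}^{k+1}=B_{k}$ it is simply the degeneracy $s_{\ell}$ of $B_{\bullet}$, so that $h$ restricts there to the constant homotopy on $B_{\bullet}$ and is the identity on $\{D_{k}^{k+1}\}$, as required. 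The bookkeeping of which summand $h_{\ell}$ lands in, and of the threshold at which the map switches character, is delicate but routine; with $h$ so defined, the two endpoint identities follow from the simplicial identities $d_{0}s_{0}=\id=d_{k+1}s_{k}$ together with the definition of $q$, and the interior simplicial homotopy identities split into a handful of cases according to which branch of $h$ is involved.

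The step I expect to be the main obstacle is precisely the one that made Proposition~\ref{proposition: proving D is a simplicial space} delicate: those homotopy identities in which a piece of $h$ meets the borderline face $d_{k-i}\colon D_{k}^{i}\to D_{k-1}^{i}$ (defined by applying $\Ccal\filter{m}\Ccal^{k-i-1}$ to $\beta$ across the filtration functor, which is legitimate by Lemma~\ref{lemma: filtration of composition, Greg's version} and Remark~\ref{remark: beta and gamma preserve filtration}), because there one cannot conclude simply by ``commuting operations.'' After cancelling a common outer functor of the form $\Ccal\filter{m}\Ccal^{k-i-1}$, each such identity reduces to a small diagram built from $\beta$, $\mu_{\Ccal}$, the unit $\eta$, and the projections $\pi_{1},\pi_{2}$ — the same sort of square as in~\eqref{eq: D borderline face identity}--\eqref{eq: D degeneracy identity}. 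These are dispatched with the two tools already used in that proof: the $\Ccal$-module axioms for the product structure map $\beta$ (Lemma~\ref{lemma: product of modules}), and Lemma~\ref{lemma: check on generators}, which reduces equality of two $\Ccal$-module maps out of a free $\Ccal$-module to equality on generators. Once these finitely many squares are verified, all of the simplicial homotopy identities hold; since $h$ is stationary on $\{D_{k}^{k+1}\}$ by construction, this proves Proposition~\ref{proposition: simplicial homotopy}, and therefore Proposition~\ref{proposition: existence of homotopy}. In particular the inclusion $j\colon B_{\bullet}\hookrightarrow\Dcal_{\bullet}$ is then a deformation retract, hence a homotopy equivalence on geometric realizations.
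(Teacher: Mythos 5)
Your opening paragraph (that $q$ is simplicial and that $q\circ j=\id_{B_{\bullet}}$) is fine, and you have correctly identified the tools the verification runs on: the $\Ccal$-module structure $\beta$ on the internal product, Lemma~\ref{lemma: check on generators}, and the fact that only the identities involving the borderline face are delicate. But the heart of the proposition is the explicit construction of $h$, and your proposal never produces it. ``Assembled from the degeneracies, the $q$-style projections and the unit $\eta$, distributed \ldots so as to interpolate,'' with ``the bookkeeping \ldots delicate but routine,'' describes the expected shape of an answer rather than a homotopy whose identities one can check; the burden of the statement is precisely that bookkeeping. Moreover, the ingredients you list (projections all the way down to $B_k$, plus $\eta$) point toward a homotopy that interpolates each summand $D_k^n$ directly against its image in $B_k$, and such a homotopy cannot satisfy the simplicial homotopy identities: in the identity pairing the borderline face $d_{k-n}\colon D_k^n\to D_{k-1}^n$ (which uses the module action) with the homotopy one stage past the identity on $D_k^n$, one side lands in the summand $D_{k-1}^{n}$ while a ``jump straight to $B$'' puts the other side in $B_{k-1}=D_{k-1}^{k}$, so the two sides cannot agree. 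The homotopy is forced to climb through the intermediate wedge summands one stage at a time, and nothing in your sketch records this.

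The paper's proof supplies exactly the missing formula. It encodes the homotopy in the ``same-degree'' form, as maps $h_k^i\colon\Dcal_k\to\Dcal_k$ for $0\le i\le k+1$ subject to \eqref{eq: compatibility with degeneracies} and \eqref{eq: compatibility with faces}, and defines $h_k^i$ on the summand $D_k^n$ in \eqref{eq: define simplicial homotopy} to be the identity when $i\le n$ and, when $n\le i$, the partial distribution map $\delta_k^i\colon D_k^n\to D_k^i$ obtained by pushing $\Ccal^{i-n}$ across the internal product; the unit $\eta$ plays no role in the definition. Then $h_k^0=\id$, $h_k^{k+1}=j\circ q$, $h$ is the identity on $\{D_k^{k+1}\}$, and Lemmas~\ref{lemma: homotopy compatible with degeneracies} and~\ref{lemma: homotopy compatible with faces} verify the compatibilities — the borderline cases by exactly the module-map and check-on-generators arguments you anticipate — after which the simpliciality of $q$ comes for free. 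Your degree-raising formulation $h_\ell\colon\Dcal_k\to\Dcal_{k+1}$ would be an acceptable alternative, but only once you specify, for every $\ell$ and every summand $D_k^n$, which summand of $\Dcal_{k+1}$ it lands in and by which map; as written, that specification — the actual content of the proposition — is absent.
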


Proposition~\ref{proposition: simplicial homotopy} follows from
Lemma~\ref{lemma: homotopy compatible with degeneracies} and
Lemma~\ref{lemma: homotopy compatible with faces} below, which
state that a particular set of maps we define below satisfies the necessary requirements to be a homotopy. (As a consequence, we find for free that $q$ is in fact a simplicial map.)
The data required to specify a simplicial homotopy
in simplicial dimension $k$ is a sequence of maps
\[
 h^{0}_{k},...,h^{k+1}_{k}:\Dcal_{k}\longrightarrow \Dcal_{k},
\]
which have to satisfy the following conditions.
\begin{itemize}
\item Compatibility with degeneracies
\begin{equation}
\label{eq: compatibility with degeneracies}
s_{j}\left[h^{i}_{k}(a)\right]=
\begin{cases}
h^{i}_{k+1}\left[s_{j}(a)\right]
           &\text{for $0\leq j\leq k-i$}\\
\LARGEstrut h^{i+1}_{k+1}\left[s_{j}(a)\right]
           &\text{for $k-i< j\leq k$.}
\end{cases}
\end{equation}
\item Compatibility with faces
\begin{equation}   \label{eq: compatibility with faces}
d_{j}\left[h^{i}_{k}(a)\right]=
\begin{cases}
h^{i}_{k-1}\left[d_{j}(a)\right]
           &\text{for $0\leq j\leq k-i$}\\
\LARGEstrut h^{i-1}_{k-1}\left[d_{j}(a)\right]
           &\text{for $k-i< j\leq k$.}
\end{cases}
\end{equation}
\end{itemize}
Given such data, the collection $\left\{h_{k}^{i}\right\}_{0\leq i\leq k}$ defines a simplicial homotopy between $\left\{h_{k}^{0}\right\}_{k\in\integers_{\geq 0}}$ and $\left\{h_{k}^{k+1}\right\}_{k\in\integers_{\geq 0}}$, which are, by the required identities, necessarily maps of simplicial spaces.


\bigskip

The motivation for the simplicial homotopy we define is to interpolate between the source and the target of the map defined in Proposition~\ref{proposition: simplicial homotopy} by ``moving one $\Ccal$ at a time across the parentheses."
If $0\leq n\leq i$, there is an evident map~$\delta_{k}^{i}$,
\[
\begin{gathered}
\xymatrix
{
D_{k}^{n}
 =\Ccal\left[
   \LARGEstrut\filter{m}
   \Ccal^{k-i}\Ccal^{i-n}\left(\Largestrut\Ccal^{n}\Fcal X \times\Ccal^{n}\Fcal Y \right)
   \right]
   \ar[d]^{\delta_{k}^{i}}\\
D_{k}^{i}
 =\Ccal\left[
   \LARGEstrut\filter{m}
   \Ccal^{k-i}\left(\Largestrut\Ccal^{i}\Fcal X \times\Ccal^{i}\Fcal Y \right)
   \right],
}
\end{gathered}
\]
induced by using $\beta$ (see \eqref{eq: define beta}) to push $\Ccal^{i-n}$ across the parentheses. For each $i=0,...,k+1$,
we define the corresponding map in the proposed simplicial homotopy,  $h_{k}^{i}:\Dcal_{k}\rightarrow\Dcal_{k}$, by
\begin{equation}   \label{eq: define simplicial homotopy}
h_{k}^{i}\left(D_{k}^{n}\right)=
\begin{cases}
\id_{D_{k}^{n}}
        & \text{if $i\leq n$}\\
\delta_{k}^{i}:D_{k}^{n}\rightarrow D_{k}^{i}
        & \text{if $0\leq n\leq i$.}
\end{cases}
\end{equation}
To help with visualizing the homotopy, we show its organization below.
The rows in the diagram consist of $D_{k}^{i}$ for a fixed value of~$i$,
and the diagonals are the spaces wedged together to give $\Dcal_{k}$ for various values of~$k$.
Degeneracies go upward or to the right, and faces go downward or to the left
(cf. diagram preceding Proposition~\ref{proposition: proving D is a simplicial space}).
\[\xymatrix{
D_{2}^{3} & D_{3}^{3} & D_{4}^{3} & D_{5}^{3} & D_{6}^{3}\\
D_{1}^{2} & D_{2}^{2} & D_{3}^{2} & D_{4}^{2} & D_{5}^{2}\\
D_{0}^{1} & D_{1}^{1} & D_{2}^{1} & D_{3}^{1} & D_{4}^{1}\\
          & D_{0}^{0}\ar@<.7ex>[ul]^{h_{0}^{1}\!\!\!}
          & D_{1}^{0}\ar@<.7ex>[ul]^{h_{1}^{1}\!\!\!} \ar@<0ex>[uull]^(.8){h^{2}_{1}\!\!\!}
          & D_{2}^{0}\ar@<.7ex>[ul]^{h_{2}^{1}\!\!\!}
          \ar@<0ex>[uull]^(.8){h^{2}_{2}\!\!\!}
%
          & D_{3}^{0}\ar@<.7ex>[ul]^{h_{3}^{1}\!\!\!}
          \ar@<0ex>[uull]^(.8){h^{2}_{3}\!\!\!}
}
\]
All of the maps $h_{k}^{i}$ push diagonally upward and to the left.
For a fixed value of~$i$, the maps $h_{k}^{i}$ push the $0$-th through the $(i-1)$-st rows up to the $i$-th row, and leave the $i$-th row and anything above it alone. The last map in each simplicial dimension, $h_{k}^{k+1}$ in dimension~$k$, pushes everything along the diagonal to the leftmost column, which is actually $B_{\bullet}$ embedded in~$\Dcal_{\bullet}$ via $B_{k}=D_{k}^{k+1}$.

We tackle proving the compatibility requirements in two stages.

\begin{lemma}      \label{lemma: homotopy compatible with degeneracies}
$\left\{h_{k}^{i}\right\}$ is compatible with degeneracies, as
in~\eqref{eq: compatibility with degeneracies}.
\end{lemma}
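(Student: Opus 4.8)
The plan is to verify the two cases of~\eqref{eq: compatibility with degeneracies} by unwinding the definitions of the degeneracy maps of $\Dcal_{\bullet}$ and of the maps $h_{k}^{i}$, arguing summand by summand. Since $\Dcal_{k}=\bigvee_{0\le n\le k+1}D_{k}^{n}$, it suffices to check the identity after restricting to each wedge summand $D_{k}^{n}$, and the answer depends on how $n$ compares to~$i$. First I would recall that the degeneracy maps originating from $D_{k}^{n}$ split into the ``horizontal'' degeneracies $s_{0},\dots,s_{k-n}\colon D_{k}^{n}\to D_{k+1}^{n}$ (inserting a $\Ccal$ into the $\Ccal^{k-n}$ block) and the ``vertical'' degeneracies $s_{k-n+1},\dots,s_{k}\colon D_{k}^{n}\to D_{k+1}^{n+1}$ (inserting a $\Ccal$ to the right of a factor inside $\Ccal^{n}\Fcal X\times\Ccal^{n}\Fcal Y$). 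The map $h_{k}^{i}$ restricted to $D_{k}^{n}$ is the identity when $i\le n$ and is $\delta_{k}^{i}\colon D_{k}^{n}\to D_{k}^{i}$ when $n\le i$, where $\delta_{k}^{i}$ uses $\beta$ to push $\Ccal^{i-n}$ across the parentheses.

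Next I would work through the cases. \textbf{Case $i\le n$:} here $h_{k}^{i}$ and $h_{k+1}^{i}$ are identities on the relevant summands, so one just needs $s_{j}=s_{j}$; but one must confirm that the bookkeeping index in~\eqref{eq: compatibility with degeneracies} is consistent, i.e.\ that for $j\le k-i$ the target of $s_{j}$ lands where $h^{i}_{k+1}$ is still the identity, and for $j>k-i$ the target lands where $h^{i+1}_{k+1}$ is the identity --- both hold because $i\le n$ forces $k-i\ge k-n$, so all horizontal degeneracies and the vertical ones up to index $k-i$ keep the ``row'' index $\ge i$, while a vertical degeneracy with $j>k-i$ produces a summand of row index $n+1\ge i+1$. \textbf{Case $n<i$:} now $h_{k}^{i}|_{D_{k}^{n}}=\delta_{k}^{i}$ lands in $D_{k}^{i}$, and I would check separately the sub-cases according to whether $s_{j}$ is one of the horizontal degeneracies of $D_{k}^{n}$ (those with $j\le k-n$) or one of the vertical ones. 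For the horizontal ones that are ``to the left'' of the pushed block, inserting a $\Ccal$ into $\Ccal^{k-i}$ commutes with pushing $\Ccal^{i-n}$ across the parentheses, because these operations act on disjoint factors --- this is the ``commuting operations'' principle from Section~\ref{section: D is a simplicial space}. For the remaining horizontal degeneracies of $D_{k}^{n}$ (those with $k-i<j\le k-n$), inserting a $\Ccal$ \emph{within} the block $\Ccal^{i-n}$ that is about to be pushed corresponds, after pushing, to a vertical degeneracy of $D_{k}^{i}$ --- matching the clause $h^{i+1}_{k+1}\circ s_{j}$; here one uses that $\beta$ is natural and that $\delta$ is built from $\beta$ applied iteratively. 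For the vertical degeneracies $s_{j}$ of $D_{k}^{n}$, pushing $\Ccal^{i-n}$ further only increases the row index and these two moves again act on nested but compatible factors, so they commute on the nose.

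The main obstacle I anticipate is the index bookkeeping: matching the range splits $\{0\le j\le k-i\}$ versus $\{k-i<j\le k\}$ in~\eqref{eq: compatibility with degeneracies} against the range splits $\{0\le j\le k-n\}$ versus $\{k-n<j\le k\}$ that govern which of $s_{j}$ is horizontal vs.\ vertical \emph{on the summand $D_{k}^{n}$}, and simultaneously tracking which row $D_{k+1}^{?}$ each composite lands in so that the correct clause of the homotopy definition~\eqref{eq: define simplicial homotopy} applies. The actual commutation facts --- that pushing $\Ccal$'s across the parentheses via $\beta$ commutes with monad multiplications and unit insertions acting on disjoint factors --- are immediate from functoriality and from $\beta$ being a $\Ccal$-module structure map (Lemma~\ref{lemma: structure maps give C-modules} and Corollary~\ref{corollary: product of multiplications is module map}); the delicacy is purely combinatorial. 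I would organize the proof as a short table of the cases $(n<i$ vs.\ $n\ge i)\times(j$ horizontal vs.\ $j$ vertical on $D_{k}^{n})$, dispatching each entry by either ``commuting operations'' or the $\Ccal$-module identity for $\beta$, and checking in each that the landing row matches the prescribed clause of~\eqref{eq: compatibility with degeneracies}.
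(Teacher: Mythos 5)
Your proposal is correct and follows essentially the same route as the paper: split on whether $h_{k}^{i}$ is the identity on the summand $D_{k}^{n}$ (the case $i\le n$, where only the index bookkeeping needs checking) and otherwise treat the three sub-cases determined by how $j$ compares with $k-i$ and $k-n$, dispatching each by ``commuting operations'' or naturality of $\beta$ and the unit. The paper likewise writes out only the first of these diagrams explicitly and leaves the remaining two to straightforward naturality, so your level of detail matches its proof.
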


\begin{proof}
Checking that \eqref{eq: compatibility with degeneracies} holds on $D_{k}^{n}$ depends on the relationship of $n$
to~$i$. For $n\geq i$, we have defined $h^{i}_{k}$ to be the identity on $D_{k}^{n}$.
On the other hand, $s_{j}$ either takes $D_{k}^{n}$ to $D_{k+1}^{n}$ or $D_{k+1}^{n+1}$, on which $h^{i}_{k+1}$ is also the identity, so \eqref{eq: compatibility with degeneracies} is certainly satisfied in these cases.

Next suppose that $n< i$ (so $k-i< k-n$). There are three possible cases, depending on the relationship of $j$, $k-i$, and $k-n$ (both degeneracies horizontal, one horizontal and one vertical, or both vertical). The required diagrams all turn out to commute by straightforward naturality. For the lower degeneracies, $s_{j}$ for $j\leq k-i$, we need to look at
\[\xymatrix{
D_{k}^{i}\ 
     \ar[rr]^-{s_{j\leq k-i}}
&& D_{k+1}^{i}
        \\
&&D_{k}^{n}
     \ar[ull]^{h_{k}^{i}}
     \ar[rr]_-{s_{j\leq k-i}}
&&
D_{k+1}^{n}\ .
     \ar[ull]_{h_{k+1}^{i}}
}
\]
Writing out the diagram shows the required condition is satisfied by ``commuting operations," because the degeneracy is happening on the outside and the homotopy is working on the inside:
\[
{
{
\xymatrix{
\Ccal\left[
   \LARGEstrut\filter{m}
   \Ccal^{k-i}\left(\Largestrut\Ccal^{i}\Fcal X \times\Ccal^{i}\Fcal Y \right)
   \right]
   \ar[r]^-{s_{j\leq k-i}}
&
\Ccal\left[
   \LARGEstrut\filter{m}
   \Ccal^{k-i+1}\left(\Largestrut\Ccal^{i}\Fcal X \times\Ccal^{i}\Fcal Y \right)
   \right]
\\
\Ccal\left[
   \LARGEstrut\filter{m}
   \Ccal^{k-i}\Ccal^{i-n}\left(\Largestrut\Ccal^{n}\Fcal X \times\Ccal^{n}\Fcal Y \right)
   \right]
   \ar[u]^{h_{k}^{i}}
   \ar[r]^-{s_{j\leq k-i}}
&
\ \Ccal\left[
   \LARGEstrut\filter{m}
   \Ccal^{k-i+1}\Ccal^{i-n}\left(\Largestrut\Ccal^{n}\Fcal X \times\Ccal^{n}\Fcal Y \right)
   \right].
   \ar[u]^{h_{k+1}^{i}}
}}     
}      
\]

The case $k-i<j\leq k-n$ involves one horizontal and one vertical degeneracy, while $k-n<j$ involves two vertical degeneracies:
\[
\xymatrix{
D_{k+1}^{i+1}
\\
D_{k}^{i}
   \ar[u]^-{s_{k-i<j}}
\\
               && D_{k}^{n}
                  \ar[rr]_-{s_{j\leq k-n}}
                  \ar[ull]^{h_{k}^{i}}
                    && D_{k+1}^{n}
                       \ar[uullll]_{h_{k+1}^{i+1}}
}
\xymatrix{
D_{k+1}^{i+1}
\\
D_{k}^{i}
   \ar[u]^-{s_{j>k-n}}
               && D_{k+1}^{n+1}
                   \ar[ull]_{h_{k+1}^{i+1}}
\\
               && D_{k}^{n}.
                  \ar[u]_-{s_{j>k-n}}
                  \ar[ull]^{h_{k}^{i}}
}
\]

Both of the necessary diagrams again commute by straightforward  naturality considerations, and we leave the details to the reader.
\end{proof}

\begin{lemma}    \label{lemma: homotopy compatible with faces}
$\{h_{k}^{i}\}$ is compatible with faces.
\end{lemma}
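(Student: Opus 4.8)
The plan is to mirror the structure of the proof of Lemma~\ref{lemma: homotopy compatible with degeneracies}, checking the face-compatibility identity~\eqref{eq: compatibility with faces} on each wedge summand $D_{k}^{n}$ of $\Dcal_{k}$, with the verification split according to how $n$ compares with $i$, and (when $n<i$) according to how the face index $j$ compares with the two ``borderlines'' $k-i$ and $k-n$. First I would dispose of the easy regime $n\ge i$: there $h_{k}^{i}$ is the identity on $D_{k}^{n}$ and every face map $d_{j}$ takes $D_{k}^{n}$ to $D_{k-1}^{n}$ or $D_{k-1}^{n-1}$, on each of which $h_{k-1}^{i}$ (resp.\ $h_{k-1}^{i-1}$) is again the identity (using $n\ge i$, and for the $d_{j}$ with $j>k-n$ the target is $D_{k-1}^{n-1}$ with $n-1\ge i-1$); so~\eqref{eq: compatibility with faces} holds trivially.

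Next I would treat $n<i$, so $k-i<k-n$, where $h_{k}^{i}$ restricted to $D_{k}^{n}$ is the map $\delta_{k}^{i}\colon D_{k}^{n}\to D_{k}^{i}$ that uses $\beta$ to push $\Ccal^{i-n}$ across the parentheses. There are three sub-cases. For $0\le j\le k-i-1$ the face $d_{j}$ is a horizontal face, applying $\mu_{\Ccal}$ to a neighboring pair of $\Ccal$'s lying strictly outside the parentheses; it commutes with $\delta_{k}^{i}$ (which manipulates only factors inside or at the boundary of the parentheses) by the ``commuting operations'' argument of Section~\ref{section: D is a simplicial space}, giving $d_{j}h_{k}^{i}=h_{k-1}^{i}d_{j}$. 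For $k-i< j\le k-n$ the face $d_{j}$ is a vertical face, applying $\mu_{\Ccal}$ (or $\mu_{\Fcal}$ when $j=k$) to a neighboring pair $\Ccal\Ccal$ inside the product in \emph{both} factors; since $\delta_{k}^{i}$ is built from $\beta$, which by Lemma~\ref{lemma: structure maps give C-modules} is a map of (two-variable) $\Ccal$-modules compatible with these internal multiplications, the resulting square commutes, and the target on the $\Dcal_{\bullet}$ side drops from row $i$ to row $i-1$, yielding $d_{j}h_{k}^{i}=h_{k-1}^{i-1}d_{j}$ — exactly the second branch of~\eqref{eq: compatibility with faces}. For $j> k-n$ the face $d_{j}$ is again a vertical face but now acting on the $\Ccal^{n}$ part that sits inside the image of $\delta$; here too naturality of $\mu_{\Ccal}$ (and $\mu_{\Fcal}$) together with the $\Ccal$-module compatibility of $\beta$ make the square commute, and one checks the target indices match ($D_{k}^{n}\to D_{k-1}^{n-1}$, with $n-1<i-1$, so $h_{k-1}^{i-1}$ is $\delta_{k-1}^{i-1}$).

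The one genuinely delicate point — the analogue of what forced the case analysis in Proposition~\ref{proposition: proving D is a simplicial space} — is the borderline face $d_{k-i}\colon D_{k}^{i}\to D_{k-1}^{i}$, which is itself built from $\beta$. I expect this to be the main obstacle. The identity to verify there, on the summand $D_{k}^{n}$ with $n<i$, reads $d_{k-i}\circ\delta_{k}^{i}=\delta_{k-1}^{i-1}\circ d_{k-i}$ (the face index $k-i$ falls in the range $k-i<j$ is false, $0\le j\le k-i$ is true on the left but after composing with $\delta_{k}^{i}$ the relevant range on $D_{k}^{n}$ is governed by $k-n$, so care with indices is needed). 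Unwinding, both composites amount to pushing $\Ccal^{i-n+1}$ across the parentheses by iterated $\beta$, and the equality is precisely the associativity of the $\Ccal$-module structure on the product $\Ccal^{n}\Fcal X\times\Ccal^{n}\Fcal Y$ given by $\beta$ (Lemma~\ref{lemma: product of modules}\eqref{item: composites} via Lemma~\ref{lemma: check on generators}), combined with naturality of $\mu_{\Ccal}$; this is the same commuting square that appeared in~\eqref{eq: D borderline face identity}. I would write this square out explicitly, note it commutes by Lemma~\ref{lemma: structure maps give C-modules}, and observe that all remaining sub-cases involving $d_{k-i}$ against the image of $\delta$ reduce either to that square or to ``commuting operations.'' Finally, having verified~\eqref{eq: compatibility with faces} and~\eqref{eq: compatibility with degeneracies}, the collection $\{h_{k}^{i}\}$ is a simplicial homotopy from $\{h_{k}^{0}\}=\id_{\Dcal_{\bullet}}$ to $\{h_{k}^{k+1}\}=j\circ q$, which is what Proposition~\ref{proposition: simplicial homotopy} (hence Proposition~\ref{proposition: existence of homotopy}) asserts; as a by-product $q$ is forced to be simplicial.
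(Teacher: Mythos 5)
Your overall strategy is the paper's: dispose of the summands $D_{k}^{n}$ on which $h_{k}^{i}$ is the identity, then check the remaining squares by splitting the face index $j$ at $k-i$ and $k-n$, with everything reducing to functoriality (``commuting operations'') plus the $\Ccal$-module properties of $\beta$. (Your slightly larger trivial regime $n\ge i$ is fine; the paper restricts the trivial case to $i<n$ and absorbs $i=n$ into the displayed diagrams, where it is tautological. Also note the relevant $\beta$-facts are Lemma~\ref{lemma: product of modules} and Corollary~\ref{corollary: product of multiplications is module map}; Lemma~\ref{lemma: structure maps give C-modules} is about $\gamma$.)

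There is, however, a concrete bookkeeping error at exactly the point you single out as delicate. For $j=k-i$ the \emph{first} branch of \eqref{eq: compatibility with faces} applies, so on a summand $D_{k}^{n}$ with $n<i$ the identity to check is $d_{k-i}\circ\delta_{k}^{i}=\delta_{k-1}^{i}\circ d_{k-i}$, both composites landing in $D_{k-1}^{i}$ (on the right, $d_{k-i}\colon D_{k}^{n}\to D_{k-1}^{n}$ is an ordinary horizontal face of row $n$, and then $h^{i}_{k-1}$ on $D_{k-1}^{n}$ is $\delta_{k-1}^{i}$, not $\delta_{k-1}^{i-1}$). The equation you wrote, $d_{k-i}\circ\delta_{k}^{i}=\delta_{k-1}^{i-1}\circ d_{k-i}$, does not even typecheck: its two sides land in the distinct wedge summands $D_{k-1}^{i}$ and $D_{k-1}^{i-1}$. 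With the superscripts corrected, the verification you propose is the right one — pushing $\Ccal$'s across one at a time versus multiplying first and then pushing across, i.e.\ the associativity square for the $\beta$-module structure (the same mechanism as in \eqref{eq: D borderline face identity}); in the paper this subcase is folded into the first displayed diagram ($d_{j\le k-i}$) and dismissed as naturality of $\mu_{\Ccal}$. A second, smaller slip: in the range $k-i<j\le k-n$ the face $d_{j}$ acting on the source summand $D_{k}^{n}$ is a horizontal face (or, at $j=k-n$, the row-$n$ borderline face built from $\beta$); it is vertical only after applying $\delta_{k}^{i}$. This is precisely the paper's ``one horizontal and one vertical face map'' case, and your commuting reason still applies, but your description of that square only accounts for the $D^{i}$ side. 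Neither issue changes the approach, but both need to be fixed for the case analysis to close up correctly.
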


\begin{proof}
As in the case of the degeneracies
(Lemma~\ref{lemma: homotopy compatible with degeneracies}),
we get a lot for free: when
$i<n$ we know that $h_{k}^{i}$ is the identity on $D_{k}^{n}$,
and compatibility is satisfied because $d_{j}$ either takes $D_{k}^{n}$ to $D_{k-1}^{n}$ or $D_{k-1}^{n-1}$, where $h_{k}^{i}$ is also the identity map
(note the need for $i<n$ rather than $i\leq n$). Hence we only need to consider $i\geq n$ (so $k-i\leq k-n$).

For low-index face maps, the diagram that needs to commute is
\[\xymatrix{
D_{k-1}^{i}
&& D_{k}^{i}
     \ar[ll]_-{d_{j\leq k-i}}
        \\
&&D_{k-1}^{n}
     \ar[ull]^-{h_{k-1}^{i}}
&&
D_{k}^{n}.
     \ar[ll]^-{d_{j\leq k-i}}
     \ar[ull]_-{h_{k}^{i}}
}
\]
This becomes the following diagram, which is tautological for $i=n$, and for $i>n$ commutes for all $j\leq k-i$ by naturality of the monad multiplication of~$\Ccal$:
\[
\xymatrix{
\Ccal\left[
   \LARGEstrut\filter{m}
   \Ccal^{k-i-1}\left(\Largestrut\Ccal^{i}\Fcal X \times\Ccal^{i}\Fcal Y \right)
   \right]
&
\Ccal\left[
   \LARGEstrut\filter{m}
   \Ccal^{k-i}\left(\Largestrut\Ccal^{i}\Fcal X \times\Ccal^{i}\Fcal Y \right)
   \right]
      \ar[l]_-{d_{j\leq k-i}}
\\
\Ccal\left[
   \LARGEstrut\filter{m}
   \Ccal^{k-i-1}\Ccal^{i-n}
       \left(\Largestrut\Ccal^{n}\Fcal X \times\Ccal^{n}\Fcal Y
       \right)
   \right]
   \ar[u]^-{h_{k-1}^{i}}
&
\ \Ccal\left[
   \LARGEstrut\filter{m}
   \Ccal^{k-i}\Ccal^{i-n}
        \left(\Largestrut\Ccal^{n}\Fcal X \times\Ccal^{n}\Fcal Y
        \right)
   \right].
   \ar[u]^-{h_{k}^{i}}
   \ar[l]^(.48){d_{j\leq k-i}}
}      
\]

As with the degeneracies, the case $k-i<j\leq k-n$ involves one horizontal and one vertical face map, while $k-n<j$ involves two vertical face maps:
\[
\xymatrix{
D_{k}^{i}
   \ar[d]_-{d_{j>k-i}}
\\
D_{k-1}^{i-1}
\\
               && D_{k-1}^{n}
                  \ar[ull]^-{h_{k-1}^{i-1}}
                    && D_{k}^{n}
                       \ar[ll]^-{d_{j\leq k-n}}
                       \ar[uullll]_-{h_{k}^{i}}
}
\xymatrix{
D_{k}^{i}
   \ar[d]_{d_{j>k-n}}
\\
D_{k-1}^{i-1}
               && D_{k}^{n}
                   \ar[ull]_-{h_{k}^{i}}
                   \ar[d]^{d_{j>k-n}}
\\
               && D_{k-1}^{n-1}.
                  \ar[ull]^-{h_{k-1}^{i-1}}
}
\]
Both of the necessary diagrams again commute for straightforward reasons of naturality, and we once again leave the details to the reader.
\end{proof}


\section{The simplicial space $\Ecal_{\bullet}$}
       \label{section: E is a simplicial space}

In this section, we study the auxiliary simplicial space~$\Ecal_{\bullet}$
(see \eqref{eq: define E}), which is very similar to $\Dcal_{\bullet}$, but with wedges instead of products. The goal is to prove the first part of
Proposition~\ref{proposition: E is a simplicial space},
which states that $\Ecal_{\bullet}$ is a simplicial space
(Proposition~\ref{proposition: prove Ecal is a simplicial space} below),
as well as Proposition~\ref{proposition: map E to D}, which says that the natural map $\Ecal_{\bullet}\rightarrow\Dcal_{\bullet}$ is a simplicial map and a
weak equivalence.

The faces and degeneracies of $\Ecal_{\bullet}$ are organized as in~$\Dcal_{\bullet}$,
although this time it is a vertical face map that is singled out as a ``borderline" map:
\[
\xymatrix{
{\overbrace{\Ccal\left[\LARGEstrut\filter{m}
   \Ccal^{k-i-1}\Ccal\left(\Largestrut\Ccal^{i-1}\Fcal X\vee\Ccal^{i-1}\Fcal Y
                \right)
                 \right]}^{E_{k-1}^{i}}}\quad
   \ar@<-.5ex>[r]
   \ar@<-1.0ex>[r]
   \ar@<-1.5ex>[r]_-{s_{0},\ldots, s_{k-i-1}}
   \ar@<-14ex>[dd]_-{{\mathbf{d_{k-i}}}}
   \ar@<-11ex>[dd]
   \ar@<-10ex>[dd]
   \ar@<-9ex>[dd]^-{{d_{k-i+1},\ldots,d_{k-1}}}
&\Hugestrut\quad
{\overbrace{\Ccal\left[\LARGEstrut\filter{m}
   \Ccal^{k-i}\Ccal\left(\Largestrut\Ccal^{i-1}\Fcal X\vee\Ccal^{i-1}\Fcal Y
              \right)
                 \right]}^{E_{k}^{i}}}
   \ar@<-12ex>[dd]_{{\mathbf{d_{k-i+1}}}}
   \ar@<-9ex>[dd]
   \ar@<-8ex>[dd]
   \ar@<-7ex>[dd]^-{{d_{k-i+2},\ldots,d_{k}}}
         \ar@<-3.0ex>[l]_{d_{0},\ldots,d_{k-i}}
         \ar@<-2.5ex>[l]
         \ar@<-2.0ex>[l]
         \ar@<-1.5ex>[l]
\\
& \\
{\underbrace{\Ccal\left[
   \LARGEstrut\filter{m}
   \Ccal^{k-i-1}\Ccal\left(\Largestrut\Ccal^{i-2}\Fcal X\vee\Ccal^{i-2}\Fcal Y\right)
   \right]}_{E_{k-2}^{i-1}}}\quad
   \ar@<-6ex>[uu]
   \ar@<-7ex>[uu]
   \ar@<-8ex>[uu]_-{s_{k-i},\ldots,s_{k-2}}
   \ar@<-2.5ex>[r]
   \ar@<-3.0ex>[r]
   \ar@<-3.5ex>[r]_-{s_{0},\ldots, s_{k-i-1}}
& \quad{\underbrace{\Ccal\left[
   \LARGEstrut\filter{m}
   \Ccal^{k-i}\Ccal\left(\Largestrut\Ccal^{i-2}\Fcal X\vee\Ccal^{i-2}\Fcal Y\right)
   \right]}_{E_{k-1}^{i-1}}}
   \ar@<-6ex>[uu]
   \ar@<-7ex>[uu]
   \ar@<-8ex>[uu]_-{s_{k-i+1},\ldots,s_{k-1}}
         \ar@<-1.0ex>[l]_-{d_{0},\ldots,d_{k-i}}
         \ar@<-.5ex>[l]
         \ar@<-.0ex>[l]
         \ar@<+.5ex>[l]
}
\]

As in the case of $\Dcal_{\bullet}$, the faces and degeneracies
originating from $E_{k}^{i}$ come in two classes each, plus the special ``borderline" face map.
\begin{enumerate}
\item (Horizontal degeneracies) The maps $s_{0},...,s_{k-i}:E_{k}^{i}\rightarrow E_{k+1}^{i}$
    come from all the ways of inserting a $\Ccal$ in $\Ccal^{k-i}$.
\item (Vertical degeneracies) The maps $s_{k-i+1},\dots,s_{k}: E_{k}^{i}\rightarrow E_{k+1}^{i+1}$ come from the ways of inserting $\Ccal$ in $\Ccal^{i-1}\Fcal$.
\item (Horizontal faces) The maps $d_{0},\ldots,d_{k-i-1}:E_{k}^{i}\rightarrow E_{k-1}^{i}$ come from multiplying neighboring factors of $\Ccal$ in $\Ccal\filter{m}\Ccal^{k-i}$. The last horizontal face, $d_{k-i}$, multiplies the rightmost $\Ccal$ of $\Ccal^{k-i}$ with the remaining $\Ccal$ outside the wedge.
\item (``Borderline" face, vertical) The face map
$d_{k-i+1}:E_{k}^{i}\rightarrow E_{k-1}^{i}$ is induced by applying
    $\Ccal\filter{m}\Ccal^{k-i}$ to the natural map
\[
\gamma: \Ccal(\Ccal A \vee \Ccal B)
       \xrightarrow{\Ccal(\Ccal\iota_{A}\vee\Ccal\iota_{B})} \Ccal\Ccal(A\vee B)
       \xrightarrow{\ \mu_{\Ccal}\ } \Ccal(A\vee B)
\]
where $A=\Ccal^{i-2}\Fcal X$ and $B=\Ccal^{i-2}\Fcal Y$. For this we need the preservation of filtration noted in Remark~\ref{remark: beta and gamma preserve filtration}.
\item (Vertical faces) The maps $d_{k-i+2},\ldots,d_{k}:E_{k}^{i}\rightarrow E_{k-1}^{i-1}$ come from applying
    the monad multiplication $\Ccal^{2}\rightarrow\Ccal$
(or the module multiplication $\Ccal\Fcal\rightarrow\Ccal$)
to corresponding neighboring pairs $\Ccal^2$ (or $\Ccal\Fcal$) in both summands of  $\Ccal^{i-1}\Fcal X\vee\Ccal^{i-1}\Fcal Y$.
\end{enumerate}

\begin{proposition}      \label{proposition: prove Ecal is a simplicial space}
$E_{\bullet}$ is a simplicial space.
\end{proposition}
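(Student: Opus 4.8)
The plan is to mirror, step for step, the proof of Proposition~\ref{proposition: proving D is a simplicial space}, transposing products to wedges throughout; the face and degeneracy maps of $\Ecal_\bullet$ have been written down above, so what remains is to verify the five simplicial identities. As for $\Dcal_\bullet$, I would first dispose of the bulk of the cases with two observations. If an identity involves two maps both of which are ``horizontal'' (built from $\mu_\Ccal$ on neighbouring factors of the outer $\Ccal^{\bullet}$, including the last horizontal face $d_{k-i}$ that multiplies across the parenthesis, and invoking Lemma~\ref{lemma: filtration of composition, Greg's version} wherever a factor of $\Ccal$ must be carried into $\filter{m}$), or two maps both of which are ``vertical'' (built from $\mu_\Ccal$ or $\mu_\Fcal$ applied in the same position of the two summands $\Ccal^{i-1}\Fcal X$ and $\Ccal^{i-1}\Fcal Y$ simultaneously), then the identity is forced by the monad axioms for $\Ccal$ and the left $\Ccal$-module axioms for $\Fcal$. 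If an identity pairs a horizontal with a vertical map and does not involve the borderline face, the two operations act on disjoint parts of $\Ccal\bigl[\filter{m}\Ccal^{k-i}\Ccal(\Ccal^{i-1}\Fcal X\vee\Ccal^{i-1}\Fcal Y)\bigr]$---one outside, one inside the innermost parenthesized wedge---so they commute by functoriality, the ``commuting operations'' argument of Proposition~\ref{proposition: proving D is a simplicial space}.

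What is left are the identities featuring the borderline vertical face $d_{k-i+1}\colon E_k^i\to E_{k-1}^{i-1}$, which is $\Ccal\filter{m}\Ccal^{k-i}$ applied to the map $\gamma$ of~\eqref{eq: define gamma} (taken with $\Gcal=\Ccal$, $A=\Ccal^{i-2}\Fcal X$, $B=\Ccal^{i-2}\Fcal Y$, and with $\gamma_\Fcal$ in place of $\gamma_\Ccal$ when $i=1$). Here the inputs I would use are: $\gamma$ is filtration-preserving (Remark~\ref{remark: beta and gamma preserve filtration}); $\gamma$ is a map of (two-variable) $\Ccal$-modules whose source is free (Lemma~\ref{lemma: structure maps give C-modules}); wedges and products of module maps are module maps (Lemma~\ref{lemma: product of modules}); and module maps out of a free module are detected on generators (Lemma~\ref{lemma: check on generators}). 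With these in hand, I expect the genuine checks to be the wedge-analogues of the three situations in the proof of Proposition~\ref{proposition: proving D is a simplicial space}: \emph{(i)} a composite of $\gamma$-type faces, which reduces to the associativity square for $\gamma$, commuting because $\gamma$ makes $\Ccal^{i-1}\Fcal X\vee\Ccal^{i-1}\Fcal Y$ into a $\Ccal$-algebra (cf.~\eqref{eq: D borderline face identity}); \emph{(ii)} a $\gamma$-type face composed with an adjacent ordinary vertical face, where every map in the comparison square is a module map, so that equality follows by checking on generators via Lemma~\ref{lemma: check on generators} (cf.~\eqref{eq: D borderline and other face}); and \emph{(iii)} a $\gamma$-type face composed with the adjacent horizontal degeneracy, which is the identity because $\gamma\circ\eta=\id$, the unit axiom from the proof of Lemma~\ref{lemma: structure maps give C-modules} (cf.~\eqref{eq: D degeneracy identity}). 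The one remaining pairing, the borderline face $d_{k-i+1}$ against its horizontal neighbour $d_{k-i}$ (an instance of $\mu_\Ccal$), I would handle by using again that $\gamma$ is a map of $\Ccal$-modules, so that it commutes with the module multiplication---a final instance of ``commuting operations.''

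The main obstacle I anticipate is not any individual diagram, each being small and forced by one of the lemmas above, but the bookkeeping of indices. The delicate points are: keeping straight the two presentations of $E_k^i$, namely $\Ccal\filter{m}\Ccal^{k-i+1}(\Ccal^{i-1}\Fcal X\vee\Ccal^{i-1}\Fcal Y)$ and the ``$\Ccal$ split off'' form $\Ccal\filter{m}\Ccal^{k-i}\Ccal(\Ccal^{i-1}\Fcal X\vee\Ccal^{i-1}\Fcal Y)$ of~\eqref{diagram: map of E to D}, and correctly identifying the source and target of $d_{k-i+1}$ in each; tracking which of the $k+2$ summands $E_k^0,\dots,E_k^{k+1}$ receives each face or degeneracy, the index shift differing for horizontal versus vertical maps exactly as in the schematic diagram displayed above; and accounting for the small boundary discrepancies with $\Dcal_\bullet$---in $\Ecal_\bullet$ the borderline face $d_{k-i+1}$ is adjacent to the non-trivial horizontal face $d_{k-i}$, whose compatibility must therefore be checked rather than absorbed into the generic cases. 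Once the indices are pinned down, the remaining verification is a finite, routine check running in parallel with Proposition~\ref{proposition: proving D is a simplicial space}.
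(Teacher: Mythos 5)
Your strategy is the paper's: dispose of the generic identities via the monad and module axioms and ``commuting operations,'' and isolate the identities involving the borderline vertical face $d_{k-i+1}$, which is $\Ccal\filter{m}\Ccal^{k-i}$ applied to $\gamma$. Your treatment of the pairing of $d_{k-i+1}$ with its horizontal neighbour $d_{k-i}$ (commuting because $\gamma$ is a map of $\Ccal$-modules, Lemma~\ref{lemma: structure maps give C-modules}) is exactly the paper's check \eqref{eq: E borderline and other face}, and your (ii) and (iii) correspond to \eqref{eq: E borderline face identity} and \eqref{eq: E degeneracy identity}.

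One justification, as stated, is wrong, although your own toolkit repairs it. In (i) you claim the composite of two $\gamma$-type faces commutes ``because $\gamma$ makes $\Ccal^{i-1}\Fcal X\vee\Ccal^{i-1}\Fcal Y$ into a $\Ccal$-algebra.'' It does not: the target of $\gamma$ is $\Gcal(A\vee B)$, not $\Gcal A\vee\Gcal B$, so unlike $\beta$ --- which genuinely is a $\Ccal$-module structure on the product, and whose associativity drives \eqref{eq: D borderline face identity} --- the map $\gamma$ is merely a module map out of the free module $\Ccal\left(\Gcal A\vee\Gcal B\right)$, and there is no associativity axiom to invoke. Moreover, your (i) and (ii) are the two sides of a single simplicial identity, $d_{k-i+1}d_{k-i+2}=d_{k-i+1}d_{k-i+1}$, and the paper verifies the corresponding square in one stroke: all four maps are $\Ccal$-module maps, so by Lemma~\ref{lemma: check on generators} commutativity is checked on the generators $\Ccal\Gcal A\vee\Ccal\Gcal B$, where it follows from naturality of $\mu_{\Gcal}$ --- precisely the argument you already give in (ii), which should be used for (i) as well. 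Two cosmetic slips: in (iii) the degeneracy paired with the borderline face is the adjacent \emph{vertical} degeneracy $s_{k-i+1}$ (inserting $\eta$ inside the wedge), not a horizontal one, and the composite to trivialize is $\gamma\circ\Ccal(\eta\vee\eta)$, again handled on generators using the unit of the monad.
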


\begin{proof}
The proof is very similar to that of
Proposition~\ref{proposition: proving D is a simplicial space}.
The key matter to consider is that $E_{k}^{i}$, like $D_{k}^{i}$, has one borderline face, but this time it is $d_{k-i+1}$ (instead of $d_{k-i}$) and goes vertically (instead of horizontally). This means that there are slightly different diagrams that require special focus.
We need to check the following identities, which are the ones that do not follow straightforwardly from a ``commuting operations" argument.

\begin{itemize}
\item (From $E_{k}^{i}$ to $E_{k-2}^{i-2}$) Two borderline faces in a row:
\begin{equation}   \label{eq: E borderline face identity}
\begin{gathered}
{\mathbf{d_{k-i+1}}}d_{k-i+2}={\mathbf{d_{k-i+1}d_{k-i+1}}}
\qquad\qquad
\xymatrix{
E_{k-1}^{i-1}\ar[d]_-{{\mathbf{d_{k-i+1}}}}
       & E_{k}^{i}
            \ar[l]_-{d_{k-i+2}}
            \ar[d]^-{{\mathbf{d_{k-i+1}}}}\\
E_{k-2}^{i-2}  & E_{k-1}^{i-1}
                \ar[l]^-{{\mathbf{d_{k-i+1}}}}
}
\end{gathered}
\end{equation}
We write $A=\Ccal^{i-3}\Fcal X$ and
$B=\Ccal^{i-3}\Fcal Y$. Also, for the purpose of distinguishing a particular $\Ccal$ in~$E_{k}^{i}$, let $\Gcal=\Ccal$, and let $\mu_{\Ccal}$ indicate the transformation $\Ccal^2\rightarrow\Ccal$, while $\mu_{\Gcal}$ indicates the transformation $\Ccal\Gcal\rightarrow\Gcal$. The business part of the diagram comes down to
\[
\xymatrix{
   \Ccal\left(\Largestrut\Gcal A\vee\Gcal B\right)
   \ar[d]_-{\gamma_{\Gcal}}
&
   \Ccal\left(\Largestrut\Ccal\Gcal A\vee\Ccal\Gcal B
              \right)
   \ar[l]_-{\gamma_{\Ccal}}
   \ar[d]^-{\Ccal\left(\mu_{\Gcal}(A)\vee\mu_{\Gcal}(B)\right)}
\\
   \Gcal\left(\Largestrut A\vee B\right)
&
   \Ccal\left(\Largestrut\Gcal A\vee\Gcal B\right).
   \ar[l]_-{\gamma_{\Gcal}}
}
\]
All the maps in the diagram are maps of $\Ccal$-modules, so by Lemma~\ref{lemma: check on generators} we only need to verify commutativity restricted to
$\Ccal\Gcal A\vee\Ccal\Gcal B$ in the upper right-hand corner. Commutativity is easily verified on the two summands separately using naturality of $\Ccal\Gcal\rightarrow\Gcal$.\\

%

\item (From $E_{k}^{i}$ to $E_{k-2}^{i-1}$)

\begin{equation} \label{eq: E borderline and other face}
\begin{gathered}
d_{k-i}{\mathbf{d_{k-i+1}}}={\mathbf{d_{k-i}}}d_{k-i}
\qquad\qquad
\xymatrix{
E_{k-1}^{i}
\ar[d]_-{{\mathbf{d_{k-i}}}}
       & E_{k}^{i}
            \ar[l]_-{d_{k-i}}
            \ar[d]^-{{\mathbf{d_{k-i+1}}}} \\
E_{k-2}^{i-1}
      & E_{k-1}^{i-1}
          \ar[l]^-{d_{k-i}}
}
\end{gathered}
\end{equation}
Here the required diagram commutes because $\gamma$ is a map of $\Ccal$-modules, which was proved in Lemma~\ref{lemma: structure maps give C-modules}.
\\
\item (From $E_{k-1}^{i-1}$ to itself)
\begin{equation}     \label{eq: E degeneracy identity}
{\mathbf{d_{k-i+1}}}s_{k-i+1}=\id
\end{equation}

\noindent In this case what we need commutativity of
\[
\xymatrix{
\Ccal\left(\Largestrut A\vee B
                \right)
   \ar[rr]^{\Ccal(\eta\vee\eta)}
   \ar[drr]_{\id}
&&\Hugestrut\quad
\Ccal\left(\Largestrut\Ccal A\vee\Ccal B
              \right)
   \ar[d]^{\gamma}
\\
%
&& \Ccal\left(\Largestrut A\vee B
                \right),
}
\]
where $A=\Ccal^{i-2}\Fcal X$ and $B=\Ccal^{i-2}\Fcal Y$.
Since all the maps are $\Ccal$-module maps, by
Lemma~\ref{lemma: check on generators} we only have to check commutativity
restricted to $A\vee B$, where it is true by the unital property of the
monad~$\Ccal$.
\end{itemize}
\end{proof}

The last thing we need for this section is to relate $\Ecal_{\bullet}$
to~$\Dcal_{\bullet}$.
\begin{proposition}     \label{proposition: prove map E to D}
The map
\[
\xymatrix{
E_{k}^{i}
   =\Ccal\filter{m}\Ccal^{k-i}
   \left[
   \LARGEstrut\Ccal
         \left(\Largestrut\Ccal^{i-1}\Fcal X\vee\Ccal^{i-1}\Fcal Y\right)
   \right]
   \ar[d]^{\simeq}
\\
D_{k}^{i}
   =\Ccal\filter{m}
   \Ccal^{k-i}
      \left[
         \LARGEstrut\Ccal^{i}\Fcal X\times\Ccal^{i}\Fcal Y
   \right]
}
\]
defines a map of simplicial spaces $\Ecal_{\bullet}\rightarrow\Dcal_{\bullet}$.
\end{proposition}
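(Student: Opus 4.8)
The map $p$ is defined, in simplicial degree $k$ and on the wedge summand $E_{k}^{i}$, by applying $\Ccal\filter{m}{\Ccal^{k-i}}$ to the wedge-to-product comparison $\Ccal\bigl(\Ccal^{i-1}\Fcal X\vee\Ccal^{i-1}\Fcal Y\bigr)\to\Ccal^{i}\Fcal X\times\Ccal^{i}\Fcal Y$; for $i\ge 1$ this comparison is the map $\Ccal\collapsemap{A}\times\Ccal\collapsemap{B}$ of Lemma~\ref{lemma: special is a C-module} with $A=\Ccal^{i-1}\Fcal X$ and $B=\Ccal^{i-1}\Fcal Y$, and for $i=0$ it is the linearization $\Fcal(X\vee Y)\to\Fcal X\times\Fcal Y$. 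The first thing to check is that $p$ is well defined, i.e.\ that these comparison maps are filtration preserving so that $\filter{m}{}$ may be applied: for $i\ge 1$ this is exactly the last clause of Lemma~\ref{lemma: special is a C-module}, together with the fact that $\Ccal$ and $\filter{m}{}$ send filtration-preserving maps to filtration-preserving maps; for $i=0$ the linearization is compatible with the augmentations to $\Sp^{\infty}$ by naturality of $\epsilon$ (using $\Sp^{\infty}(X\vee Y)\cong\Sp^{\infty}X\times\Sp^{\infty}Y$), hence filtration preserving after the relevant functors are applied. Granting this, the levelwise weak-equivalence statement is essentially immediate: for $i\ge 1$ the comparison map is a \emph{filtered} homotopy equivalence because $\Ccal$ is a special $\Gamma$-space---one can write down a filtration-preserving homotopy inverse by concatenating configurations by means of the $E_{\infty}$-structure on $\Ccal(-)$---and for $i=0$ it is a filtered equivalence because $\Fcal$ is augmented special; then $\Ccal\filter{m}{\Ccal^{k-i}}$ preserves this equivalence, and one wedges over~$i$.

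The substantive part of the proof is to show that $p$ commutes with all face and degeneracy maps, and the plan is to compare the structure maps of $\Ecal_{\bullet}$ and $\Dcal_{\bullet}$ one index at a time. In simplicial degree~$k$, the faces $d_{0},\dots,d_{k-i-1}$ act on both sides inside the outer block $\Ccal\filter{m}{\Ccal^{k-i}}$, the faces $d_{k-i+2},\dots,d_{k}$ act inside the innermost block $\Ccal^{i-1}\Fcal$ (in the wedge summands on the $\Ecal$-side, in the product factors on the $\Dcal$-side), and every degeneracy inserts a copy of $\Ccal$ into one of these two blocks; none of these touches the single spot where $p$ replaces a wedge by a product, so each such square commutes by a single naturality square---the ``commuting operations'' argument already used in Propositions~\ref{proposition: proving D is a simplicial space} and~\ref{proposition: prove Ecal is a simplicial space}. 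This leaves exactly two squares per simplicial degree, at face indices $k-i$ and $k-i+1$, which must be verified by hand: under $p$ the borderline face $d_{k-i}$ of $\Dcal_{\bullet}$ (the $\beta$-map) is matched with the last horizontal face $d_{k-i}$ of $\Ecal_{\bullet}$ (multiplication of the outermost $\Ccal$ into the preceding factor of $\Ccal^{k-i}$), and the borderline face $d_{k-i+1}$ of $\Ecal_{\bullet}$ (the $\gamma$-map) is matched with the first ``vertical'' face $d_{k-i+1}$ of $\Dcal_{\bullet}$ (multiplication of the two leftmost copies of $\Ccal$ inside each product factor). When $i=0$ or $i=1$ the formulas degenerate, with $\mu_{\Fcal}$ taking the place of $\mu_{\Ccal}$, but the arguments are the same.

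For the square at index $k-i$: after stripping off the common outer functor $\Ccal\filter{m}{\Ccal^{k-i-1}}$ and writing $U=\Ccal^{i-1}\Fcal X$, $V=\Ccal^{i-1}\Fcal Y$, commutativity reduces to the identity $\beta\circ\Ccal\bigl(\Ccal\collapsemap{U}\times\Ccal\collapsemap{V}\bigr)=\bigl(\Ccal\collapsemap{U}\times\Ccal\collapsemap{V}\bigr)\circ\mu_{\Ccal}$ of maps $\Ccal\Ccal(U\vee V)\to\Ccal U\times\Ccal V$, which is precisely the assertion of Lemma~\ref{lemma: special is a C-module} that $\Ccal\collapsemap{U}\times\Ccal\collapsemap{V}$ is a map of $\Ccal$-modules, $\beta$ being the module structure on the target. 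For the square at index $k-i+1$: writing $U'=\Ccal^{i-2}\Fcal X$, $V'=\Ccal^{i-2}\Fcal Y$ (so $\Ccal U'=\Ccal^{i-1}\Fcal X$), both composites are maps $\Ccal(\Ccal U'\vee\Ccal V')\to\Ccal U'\times\Ccal V'$ and both are maps of two-variable $\Ccal$-modules---$\gamma$ by Lemma~\ref{lemma: structure maps give C-modules}, the collapse maps by Lemma~\ref{lemma: special is a C-module}, and $\mu_{\Ccal}\times\mu_{\Ccal}$ by Lemma~\ref{lemma: product of modules} (or Corollary~\ref{corollary: product of multiplications is module map})---so by the second part of Lemma~\ref{lemma: check on generators} it is enough to check they agree after restriction along the unit $\Ccal U'\vee\Ccal V'\hookrightarrow\Ccal(\Ccal U'\vee\Ccal V')$. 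Using $\gamma\circ\eta=\Ccal\iota_{U'}\vee\Ccal\iota_{V'}$ with $\collapsemap{U'}\circ\iota_{U'}=\id$ on one side, and naturality of $\eta$ with $\mu_{\Ccal}\circ\eta_{\Ccal}=\id$ on the other, both restrictions become the canonical inclusion of the wedge $\Ccal U'\vee\Ccal V'$ into the product $\Ccal U'\times\Ccal V'$, so the square commutes. With these two cases done, $p$ respects all structure maps; it is therefore a map of simplicial spaces, and by the first paragraph a levelwise weak equivalence.

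The main obstacle is expected to be organizational rather than conceptual. Because the borderline face sits in position $k-i$ for $\Dcal_{\bullet}$ but in position $k-i+1$ for $\Ecal_{\bullet}$, the correspondence of structure maps under $p$ is ``offset,'' so the real work is to keep the bookkeeping straight and to isolate the two squares that genuinely require the $\Ccal$-module formalism and the check-on-generators principle, as opposed to the many squares that commute by bare functoriality. Once the bookkeeping is settled, each of those two squares collapses to the short diagram chase indicated above, and in both cases the needed identity is essentially already recorded in Lemmas~\ref{lemma: special is a C-module} and~\ref{lemma: check on generators}.
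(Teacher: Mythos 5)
Your proof is correct and follows essentially the same route as the paper: every square except the two involving the borderline faces at indices $k-i$ and $k-i+1$ commutes by plain naturality, the first of those reduces exactly to Lemma~\ref{lemma: special is a C-module} (the wedge-to-product map is a map of $\Ccal$-modules, with $\beta$ the module structure on the target), and the second is checked on generators via Lemma~\ref{lemma: check on generators}, both restrictions being the canonical inclusion of the wedge into the product. Your extra sketch of the levelwise (filtered) weak equivalence goes beyond what the paper's own proof records and is the only place where your argument is looser than fully detailed, but it does not affect the simplicial-map verification that the proposition's proof actually requires.
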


\begin{proof}
As usual, ``commuting operations" (in this case, in the form of naturality of the wedge-to-product map) take care of all but the compatibility involving the borderline face maps. Since the borderline face maps in $\Ecal_{\bullet}$ and $\Dcal_{\bullet}$ are different, this leaves us with two conditions to check.

To check that
$E_{k}^{i}\rightarrow D_{k}^{i}$ is compatible with $d_{k-i}$ (borderline face map for~$E_{k}^{i}$),
\[\xymatrix{
E_{k-1}^{i}\ar[d]_{\simeq}
        & E_{k}^{i}\ar[l]_-{d_{k-i}}
                   \ar[d]^{\simeq}\\
D_{k-1}^{i} & D_{k}^{i},\ar[l]^-{\mathbf{d_{k-i}}}
}
\]
%
the business part of the diagram is
\[
\xymatrix{
   \Ccal
         \left(\Largestrut\Ccal^{i-1}\Fcal X\vee\Ccal^{i-1}\Fcal Y\right)
   \ar[d]^{\simeq}
&
   \Ccal\Ccal
         \left(\Largestrut\Ccal^{i-1}\Fcal X\vee\Ccal^{i-1}\Fcal Y\right)
   \ar[d]^{\simeq}
   \ar[l]^{\mu_{\Ccal}(\whatever)}
\\
\Largestrut\Ccal^{i}\Fcal X\times\Ccal^{i}\Fcal Y
&
\Ccal \left(\Largestrut\Ccal^{i}\Fcal X\times\Ccal^{i}\Fcal Y
      \right).
   \ar[l]_-{\beta}
}
\]
The diagram commutes because the
wedge-to-product map is a map of $\Ccal$-modules
(Lemma~\ref{lemma: special is a C-module}).

Next we check $d_{k-i+1}$ (borderline face map for~$D_{k}^{i}$):
\[\xymatrix{
E_{k-1}^{i-1}\ar[d]_{\simeq}
        & E_{k}^{i}\ar[l]_(.42){\mathbf{d_{k-i+1}}}
                   \ar[d]^{\simeq}\\
D_{k-1}^{i-1} & D_{k}^{i}.\ar[l]_(.42){d_{k-i+1}}
}
\]
%
%
In this case, the business part of the diagram is the square below, where $A=\Ccal^{i-2}\Fcal X$ and $B=\Ccal^{i-2}\Fcal Y$:
\[
\xymatrix{
   \Ccal(A\vee B)
   \ar[d]^{\simeq}_{\Ccal\collapsemap{A}\times\Ccal\collapsemap{B}}
&&
   \Ccal
         \left(
         \Largestrut\Ccal A\vee\Ccal B
         \right)
    \ar[d]_{\simeq}
        ^{\Ccal\collapsemap{\Ccal\! A}\times\Ccal\collapsemap{\Ccal\! B}}
   \ar[ll]^{\gamma}
\\
\Ccal A\times\Ccal B
&&
\Ccal\Ccal A\times \Ccal\Ccal B.
   \ar[ll]_{\mu_{\Ccal}\times\mu_{\Ccal}}
}
\]
All of the maps in the diagram are $\Ccal$-module maps by
Lemma~\ref{lemma: special is a C-module} for the vertical maps,
Lemma~\ref{lemma: structure maps give C-modules} for the top horizontal map, and
Corollary~\ref{corollary: product of multiplications is module map}
for the bottom horizontal map. Hence by
Lemma~\ref{lemma: check on generators}, we only need to check commutativity on the parenthesized $\Ccal A\vee\Ccal B$ in the upper righthand corner. Both ways around the square are the standard inclusion of the wedge
$\Ccal A\vee\Ccal B$
into the product $\Ccal A\times\Ccal B$,
 by using the unital property of~$\Ccal$ for the clockwise direction.
\end{proof}

\bigskip
\section{Maps from $\Ecal_{\bullet}$ to $\Dcal_{\bullet}$ and $A_{\bullet}$}
\label{section: maps out of E}

We continue to assume that $\Fcal$ is an augmented, special $\Gamma$-space with an augmentation-preserving action of~$\Ccal$.
The last several sections have studied parts of
diagram~\eqref{diagram: square of auxiliaries}, which we reproduce here for the convenience of the reader:
\begin{equation}   
\tag{\ref{diagram: square of auxiliaries}}
\thatdiagram
\end{equation}

\noindent In this section, we finish the proof of Lemma~\ref{lemma: commuting squares},
which we also restate here for convenience.

\begin{CommutingSquareLemma}
\CommutingSquareLemmaText
\end{CommutingSquareLemma}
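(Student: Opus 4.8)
The plan is to prove the two assertions of Lemma~\ref{lemma: commuting squares} separately, since the outer square commutes strictly while the inner square only commutes up to homotopy. For the outer square, I would trace the composites $f\circ r$ and $q\circ p$ summand by summand. Recall that $\Ecal_\bullet=\bigvee_i E_k^i$, and that the retraction $r\colon\Ecal_\bullet\to A_\bullet=\{E_k^0\}$ (constructed in Lemma~\ref{lemma: E to A retraction}) collapses each $E_k^i$ with $i\ge 1$ onto $E_k^0=A_k$, while $f\colon A_\bullet\to B_\bullet$ is the wedge-to-product map on the innermost $\Fcal$. On the other side, $p\colon\Ecal_\bullet\to\Dcal_\bullet$ is the levelwise equivalence of Proposition~\ref{proposition: map E to D} (turning each innermost wedge into a product via the special $\Gamma$-space $\Ccal$, or via $\Fcal$ when $i=0$), and $q\colon\Dcal_\bullet\to B_\bullet$ is the projection $\Ccal^{k-i+1}(\pi_1)\times\Ccal^{k-i+1}(\pi_2)$ restricted to each $D_k^i$. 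The point is that both $f\circ r$ and $q\circ p$, when restricted to a summand $E_k^i$, amount to: push all the relevant copies of $\Ccal$ (and the inner $\Fcal$) to the outside via $\beta$-type maps, project off one of the two factors, and end up in $B_k=\Ccal\filter{m}\Ccal^k\Fcal X\times\Ccal\filter{m}\Ccal^k\Fcal Y$. I would verify that these agree by a direct diagram chase, the key input being the associativity/naturality of $\beta$ and $\gamma$ recorded in Section~\ref{section: two structure maps} (in particular Lemma~\ref{lemma: product of modules} and Lemma~\ref{lemma: special is a C-module}), together with the compatibility of $p$ with the borderline face maps established in Proposition~\ref{proposition: prove map E to D}.

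For the inner square, the assertion is only $j\circ f\simeq p\circ i$, where $i\colon A_\bullet\hookrightarrow\Ecal_\bullet$ is the inclusion as the summand $\{E_k^0\}$ and $j\colon B_\bullet\hookrightarrow\Dcal_\bullet$ is the inclusion as $\{D_k^{k+1}\}$. Now $j\circ f$ sends $A_k=\Ccal\filter{m}\Ccal^k\Fcal(X\vee Y)$ into the top summand $D_k^{k+1}=B_k$ of $\Dcal_k$ via the innermost wedge-to-product map, whereas $p\circ i$ sends $A_k=E_k^0$ into the \emph{bottom} summand $D_k^0=\Ccal\filter{m}\Ccal^k(\Fcal X\times\Fcal Y)$ of $\Dcal_k$, again via the innermost wedge-to-product map. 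These two maps land in different wedge summands of $\Dcal_k$, so they are not equal, but they become homotopic after realization because of the deformation retraction $h$ of Proposition~\ref{proposition: existence of homotopy}: the homotopy $h$ pushes $D_k^0$ diagonally up to $D_k^{k+1}=B_k$ by iterating $\beta$, i.e.\ by moving each copy of $\Ccal$ across the parentheses. Concretely, I would argue that $h\circ(p\circ i)$ is simplicially homotopic to $j\circ f$: both start from $A_k$, both apply the innermost special-$\Gamma$-space equivalence $\Fcal(X\vee Y)\to\Fcal X\times\Fcal Y$, and then one applies the iterated $\beta$ that constitutes $\delta_k^{k+1}\colon D_k^0\to D_k^{k+1}$, while the other applies $j$; the claim is that $\delta_k^{k+1}$ composed with the inner equivalence out of $E_k^0$ agrees with $j\circ f$ up to the coherence of $\beta$. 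Since $h\colon j\circ q\simeq\id_{\Dcal_\bullet}$ and $q\circ j=\id$, this gives $p\circ i\simeq j\circ q\circ p\circ i$, and combining with the (strictly commutative) outer square $q\circ p\circ i = f\circ r\circ i = f$ (using $r\circ i=\id_{A_\bullet}$, which is part of Proposition~\ref{proposition: E is a simplicial space}) yields $j\circ f = j\circ q\circ p\circ i \simeq p\circ i$, as desired.

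The main obstacle I anticipate is the bookkeeping in the inner-square homotopy: one must identify the restriction of the simplicial homotopy $h$ to the bottom summand $D_\bullet^0$ with precisely the right iterate of $\beta$, and then check that this iterate, precomposed with $p\circ i$, is simplicially homotopic (or even equal, up to the associativity coherence of $\beta$) to $j\circ f$. This requires carefully unwinding the definition \eqref{eq: define simplicial homotopy} of $h_k^i$ on the $n=0$ row and matching it against the definition of $p$ on $E_k^0$. Everything else is naturality and the module-map lemmas of Section~\ref{section: two structure maps}; the only genuinely delicate point is that $p\colon\Ecal_\bullet\to\Dcal_\bullet$ is a levelwise weak equivalence (Proposition~\ref{proposition: map E to D}) while $f$ itself is not, so the homotopy cannot be produced levelwise and must be assembled from the simplicial homotopy $h$ after passing to realizations — which is exactly why the statement of the inner square is ``$\simeq$'' rather than ``$=$''.
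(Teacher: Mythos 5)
Your proposal is correct and, in its essentials, reproduces the paper's own argument: the outer square is verified strictly, summand by summand, and the inner square follows formally from the identity $j\circ f=j\circ q\circ p\circ i$ together with the simplicial homotopy $h\colon j\circ q\simeq\id_{\Dcal_\bullet}$ of Proposition~\ref{proposition: existence of homotopy}. (The paper checks $f=q\circ p\circ i$ directly on the summand $E^0_\bullet=A_\bullet$, which is the same computation as your $q\circ p\circ i=f\circ r\circ i=f$ using $r\circ i=\id_{A_\bullet}$.) Two places where your picture of the maps is off, though neither breaks the argument. First, for the outer square there is no $\beta$-type pushing in either composite: $q$ is defined on $D_k^i$ by applying $\Ccal\filter{m}\Ccal^{k-i}$ to the two projections $\pi_1,\pi_2$ (the $\delta$-maps built from $\beta$ constitute the homotopy $h$, not $q$), and $r$ is induced by the wedge inclusions into $\Ccal^{i-1}\Fcal(X\vee Y)$; consequently, on each summand $E_k^i$ both composites into either factor of $B_k$ are simply ``collapse the other wedge summand to the basepoint,'' and the check is immediate, with no appeal needed to Lemma~\ref{lemma: product of modules}, Lemma~\ref{lemma: special is a C-module}, or the borderline-face compatibilities of Proposition~\ref{proposition: prove map E to D}. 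Second, the ``main obstacle'' you anticipate — identifying the restriction of $h$ to $D^0_\bullet$ with a specific iterate of $\beta$ and matching it against $j\circ f$ — is unnecessary: once $j\circ f=j\circ q\circ p\circ i$ holds on the nose, the homotopy $h$ is used wholesale, and since $h$ is a simplicial homotopy the relation $j\circ f\simeq p\circ i$ already holds at the level of simplicial $\Gamma$-spaces, not only after geometric realization.
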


At this point we have dealt with the top row,
\begin{itemize}
\item $f:A_{\bullet}\rightarrow B_{\bullet}$ is a map of simplicial spaces (see~\eqref{eq: desired equivalance of SS}),
\end{itemize}
the lower row,
\begin{itemize}
\item $p:\Ecal_{\bullet}\rightarrow\Dcal_{\bullet}$
is a map of simplicial spaces and a weak equivalence (Propositions~\ref{proposition: proving D is a simplicial space}, \ref{proposition: prove Ecal is a simplicial space},
and~\ref{proposition: prove map E to D}),
\end{itemize}
and the right-hand column,
\begin{itemize}
\item $j$ is a simplicial inclusion (equation~\eqref{eq: defn j}), with simplicial
deformation retraction $q$ (Proposition~\ref{proposition: simplicial homotopy}).
\end{itemize}
To complete the proof of Lemma~\ref{lemma: commuting squares}, we must handle the left-hand column (i.e., exhibit simplicial inclusion and retraction $A_{\bullet}\hookrightarrow \Ecal_{\bullet}\rightarrow A_{\bullet}$), and we must establish the square's two
commutativity properties.

Recall $\Ecal_{k}=\bigvee_{i=0}^{k+1}E_{k}^{i}$, and from \eqref{eq: definition of Eki}  and~\eqref{eq: ident of Ek0 as Ak}, the definition of $E_{k}^{i}$ and the special cases $E_{k}^{0}$ and $\Largestrut E_{k}^{k+1}$:
\begin{align*}
E_{k}^{i}
   &=\Ccal\filter{m}\Ccal^{k-i+1}
         \left(\Largestrut\Ccal^{i-1}\Fcal X\vee\Ccal^{i-1}\Fcal Y\right)
\\
E_{k}^{0}&=A_{k}=
          \Ccal\filter{m}
          \Ccal^{k}\Fcal\left(\largestrut X\vee Y\right)
   \\
E_{k}^{k+1}
   &=\Ccal\filter{m}
         \left(
         \Largestrut\Ccal^{k}\Fcal X\vee\Ccal^{k}\Fcal Y
         \right).
\end{align*}
At the beginning of Section~\ref{section: E is a simplicial space}, we exhibited the simplicial structure maps for $\Ecal_{\bullet}$. Observe that for $i=0$, all of the face and degeneracy maps of $E_{k}^{0}$ go to $E_{k-1}^{0}$ and $E_{k+1}^{0}$, respectively. Hence $\left\{E_{k}^{0}\right\}$ is a simplicial subobject of~$\Ecal_{\bullet}$, that is $A_{\bullet}$ is a simplicial subspace of~$\Ecal_{\bullet}$. We denote the inclusion by~$i$, as in~\eqref{diagram: square of auxiliaries}.

We need to prove that
$A_{\bullet}$ is actually a retract of $\Ecal_{\bullet}$.
We define $r_{k}:\Ecal_{k}\rightarrow A_{k}$ on each wedge summand of the domain by
\[
\begin{gathered}
\xymatrix{
E_{k}^{i}=\Ccal\filter{m}
   \Ccal^{k-i+1}
   \left(\Largestrut\Ccal^{i-1}\Fcal X\vee\Ccal^{i-1}\Fcal Y
              \right)
   \ar[d]^{r_{k}}\\
A_{k}=E_{k}^{0}=\Ccal\filter{m}
   \Ccal^{k}
   \Fcal\left(\strut X\vee Y\right),
}   
\end{gathered}
\]
induced by $\Ccal\filter{m}\Ccal^{k-i+1}$ applied to the natural map
\[
\Ccal^{i-1}\Fcal X\vee\Ccal^{i-1}\Fcal Y
   \xrightarrow{\ \Ccal^{i-1}\Fcal(\iota_{X})\vee\Ccal^{i-1}\Fcal(\iota_{Y})
   \ }
   \Ccal^{i-1}\Fcal(X\vee Y).
\]

\begin{lemma}     \label{lemma: E to A retraction}
$r_{\bullet}\colon \Ecal_{\bullet}\rightarrow A_{\bullet}$ is a simplicial retraction of $i\colon A_{\bullet}\hookrightarrow\Ecal_{\bullet}$.
\end{lemma}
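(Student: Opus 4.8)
The plan is to verify two things: that $r_\bullet$ is a simplicial map (compatible with all face and degeneracy maps), and that $r_\bullet\circ i=\id_{A_\bullet}$. The second is immediate from the definitions, since $i$ is the inclusion of the summand $E_k^0=A_k$, and the map $r_k$ restricted to $E_k^0$ is $\Ccal\filter{m}\Ccal^k$ applied to the identity map of $\Fcal(X\vee Y)$, hence the identity. So the real content is checking that $r_\bullet$ commutes with the simplicial structure maps of $\Ecal_\bullet$ and $A_\bullet$.

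The approach to compatibility is the same ``commuting operations'' philosophy used throughout Sections~\ref{section: D is a simplicial space}--\ref{section: E is a simplicial space}. The retraction $r_k$ on the summand $E_k^i$ is $\Ccal\filter{m}\Ccal^{k-i+1}$ applied to the natural transformation $\Ccal^{i-1}\Fcal X\vee\Ccal^{i-1}\Fcal Y\to\Ccal^{i-1}\Fcal(X\vee Y)$, which is just $\Ccal^{i-1}\Fcal$ applied to the fold-type inclusion $X\vee Y\to X\vee Y$ post-composed appropriately---more precisely it is $\Ccal^{i-1}\Fcal(\iota_X)\vee\Ccal^{i-1}\Fcal(\iota_Y)$ followed by the fold $\Fcal(X\vee Y)\vee\Fcal(X\vee Y)\to\Fcal(X\vee Y)$ (i.e. $\Ccal^{i-1}$ applied to the codiagonal on $\Fcal(X\vee Y)$). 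For a horizontal face or degeneracy map, which acts via $\mu_\Ccal$ or $\eta_\Ccal$ on the outer $\Ccal^{k-i+1}$ factor ``outside'' the wedge, commutativity with $r_k$ is pure naturality: the retraction acts inside the wedge/parentheses, the structure map acts outside, and they commute. For a vertical face or degeneracy (acting on one of the inner $\Ccal$'s of $\Ccal^{i-1}\Fcal$ in each summand, simultaneously), one again gets commutativity from naturality of $\mu_\Ccal$ (or $\mu_\Fcal$) together with the fact that the codiagonal $\Fcal(X\vee Y)\vee\Fcal(X\vee Y)\to\Fcal(X\vee Y)$ is compatible with applying $\Ccal^{i-1}$ to it---again a naturality statement.

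The one place requiring genuine attention, exactly as in the earlier propositions, is the borderline face map $d_{k-i+1}\colon E_k^i\to E_{k-1}^i$, which is $\Ccal\filter{m}\Ccal^{k-i}$ applied to $\gamma$ of~\eqref{eq: define gamma} with $A=\Ccal^{i-2}\Fcal X$, $B=\Ccal^{i-2}\Fcal Y$, so it lowers the $i$-index by mapping $E_k^i\to E_{k-1}^{i-1}$? Actually $d_{k-i+1}$ lands in $E_{k-1}^i$ (the borderline face is the one that stays at the same vertical level), so the compatibility diagram one must check is that, after applying $r$ on the left (to $E_{k-1}^i$) and $r$ on the right (to $E_k^i$), both routes agree; this reduces to showing that the map $\gamma$ on $\Ccal(\Ccal A\vee\Ccal B)$ is compatible with the natural maps $\Ccal A\vee\Ccal B\to\Ccal(A\vee B)$ collapsing into $\Fcal(X\vee Y)$-land. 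Since $\gamma$ is a map of $\Ccal$-modules (Lemma~\ref{lemma: structure maps give C-modules}) and the relevant collapse maps are as well, the diagram can be checked after restricting to generators via Lemma~\ref{lemma: check on generators}, where it is a routine verification on the two wedge summands using the unit and associativity of $\Ccal$ acting on $\Fcal$. I expect the borderline-face compatibility to be the main (and only) obstacle, and its verification will follow the template of diagrams~\eqref{eq: E borderline face identity}--\eqref{eq: E degeneracy identity}; the rest is naturality, which I would state briefly and leave the bookkeeping to the reader, as is done elsewhere in the paper.
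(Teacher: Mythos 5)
Your proposal is correct and follows essentially the same route as the paper: $r\circ i=\id$ is immediate because $r_k$ restricts to the identity on the summand $E_k^0=A_k$; compatibility with all structure maps except the borderline face is plain naturality; and the borderline face is handled by noting that every map in the relevant square is a map of $\Ccal$-modules (Lemma~\ref{lemma: structure maps give C-modules}, plus the fact that the other maps are either of the form $\Ccal(\text{--})$ or instances of $\mu_{\Ccal}$) and then checking on generators via Lemma~\ref{lemma: check on generators}, where the unit triangle for $\Ccal$ and naturality finish the argument. One correction, though: your first instinct about the indexing was right and your second guess wrong. The borderline face of $\Ecal_{\bullet}$ is the \emph{vertical} one, $d_{k-i+1}\colon E_k^i\to E_{k-1}^{i-1}$ (unlike $\Dcal_{\bullet}$, whose borderline face $d_{k-i}$ is horizontal and preserves the index~$i$); with that target, the bottom edge of the compatibility square is the face $d_{k-i+1}$ of $A_{\bullet}$, which multiplies the corresponding adjacent pair of $\Ccal$'s in $\Ccal^{k}\Fcal(X\vee Y)$, and on the generators $\Ccal^{i-1}\Fcal X\vee\Ccal^{i-1}\Fcal Y$ both composites become the natural map into $\Ccal^{i-1}\Fcal(X\vee Y)$, exactly as in the paper. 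A last small slip that does not affect anything: the map $\Ccal^{i-1}\Fcal X\vee\Ccal^{i-1}\Fcal Y\to\Ccal^{i-1}\Fcal(X\vee Y)$ defining $r$ is the map determined on the two wedge summands by $\Ccal^{i-1}\Fcal(\iota_X)$ and $\Ccal^{i-1}\Fcal(\iota_Y)$ (i.e.\ followed by the fold of the space $\Ccal^{i-1}\Fcal(X\vee Y)$), which is not the same thing as ``$\Ccal^{i-1}$ applied to the codiagonal of $\Fcal(X\vee Y)$,'' since $\Ccal^{i-1}$ does not commute with wedges.
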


\begin{proof}
The inclusion $A_{\bullet}\hookrightarrow\Ecal_{\bullet}$ takes
$A_{k}$ identically to~$E_{k}^{0}$, and
$r_{k}:\Ecal_{k}\rightarrow A_{k}$ is the identity on the summand~$E_{k}^{0}$.
So we just need to check that the collection of maps $r_{k}$ is
compatible with the face and degeneracy maps of~$\Ecal_{\bullet}$.

All of the diagrams except the one with the borderline face map, $d_{k-i+1}$,  commute for straightforward naturality reasons. For $d_{k-i+1}$, we
need commutativity of the following diagram:
\begin{equation*}
\begin{gathered}
\xymatrix{
{\overbrace{\Ccal\filter{m}
   \Ccal^{k-i+1}
   \left(\Largestrut\Ccal^{i-1}\Fcal X\vee\Ccal^{i-1}\Fcal Y
              \right)}^{E_{k}^{i}}} \quad
   \ar[d]^{r_{k}}
   \ar[r]^-{d_{k-i+1}}
&
\quad{\overbrace{\Ccal\left[
   \LARGEstrut\filter{m}
   \Ccal^{k-i+1}\left(\Largestrut\Ccal^{i-2}\Fcal X\vee\Ccal^{i-2}\Fcal Y\right)
   \right]}^{E_{k-1}^{i-1}}}
    \ar[d]^{r_{k-1}}
              \\
{\underbrace{\Ccal\filter{m}
   \Ccal^{k}
   \Fcal\left(\strut X\vee Y\right)}_{A_{k}=E_{k}^{0}}}
      \ar[r]^{d_{k-i+1}}
&
{\underbrace{\Ccal\filter{m}
   \Ccal^{k-1}
   \Fcal\left(\strut X\vee Y\right)}_{A_{k-1}=E_{k-1}^{0}}}.
}   
\end{gathered}
\end{equation*}
Recall from Section~\ref{section: E is a simplicial space}
that the borderline face map
$d_{k-i+1}$ of $\Ecal_{k}^{i}$ is induced by the map $\gamma_{\Gcal}$ below
(see also~\eqref{eq: define gamma}), where the $\Ccal$-module $\Gcal$ is $\Ccal$ itself, and the spaces
$A$ and $B$ are $\Ccal^{i-2}\Fcal X$ and $\Ccal^{i-2}\Fcal Y$, respectively:
\[
\gamma_{\Gcal}\colon
        \Ccal\left(\strut\Gcal A \vee \Gcal B\right)
\xrightarrow{\Ccal(\Gcal\iota_{A}\vee\Gcal\iota_{B})}
        \Ccal\Gcal\left(\strut A\vee B\right)
\xrightarrow{\quad \mu_{\Gcal}\quad }
        \Gcal(A\vee B).
\]
With this notation, the necessary commuting diagram above
is induced by:
\[
\begin{gathered}
\xymatrix{
   \Ccal
   \left(\Largestrut\Ccal\Ccal^{i-2}\Fcal X\vee\Ccal\Ccal^{i-2}\Fcal Y
              \right) \quad
   \ar[d]_{\Ccal\left(\Ccal^{i-1}\Fcal\iota_{X}\,\vee\,\Ccal^{i-1}\Fcal\iota_{Y}\right)}
   \ar[r]^{\gamma_{\Ccal}}
&
\quad   \Ccal
   \left(\Largestrut\Ccal^{i-2}\Fcal X\vee\Ccal^{i-2}\Fcal Y\right)
   \ar[d]^{\Ccal\left(\Ccal^{i-2}\Fcal\iota_{X}\,\vee\,\Ccal^{i-2}\Fcal\iota_{Y}\right)}
              \\
   \Ccal^{i}
   \Fcal\left(\strut X\vee Y\right)
      \ar[r]^-{\ \mu_{\Ccal}(\Ccal^{i-2}\Fcal) \ }
&
   \Ccal^{i-1}
   \Fcal\left(\strut X\vee Y\right).
}   
\end{gathered}
\]
By Lemma~\ref{lemma: check on generators} and Lemma~\ref{lemma: structure maps give C-modules}, it is sufficient to show commutativity restricting to the inside of the parentheses in the upper left-hand corner. Following $\Ccal\Ccal^{i-2}\Fcal X=\Ccal^{i-1}\Fcal X$ around the diagram clockwise, we find the composite to be the natural map
\[
\Ccal^{i-1}\Fcal X
     \rightarrow \Ccal^{i-1}\Fcal\left(\strut X\vee Y\right),
\]
and the same is true if we go around counter-clockwise, by using the unital property of~$\Ccal$.
\end{proof}

\begin{proof}[Proof of Lemma~\ref{lemma: commuting squares}]

Consider the diagram
\begin{equation}   
\tag{\ref{diagram: square of auxiliaries}}
\thatdiagram
\end{equation}
We must first show that the outer square commutes, that is, $f\circ r=q\circ p$. Below is the diagram for simplicial level $k$ and the wedge summands $E_{k}^{i}$ and~$D_{k}^{i}$ of $\Ecal_{k}$ and~$\Dcal_{k}$:
\[
\begin{gathered}
\xymatrix{
\Ccal\filter{m}\Ccal^{k}\Fcal\left(\strut X\vee Y\right)
\ar[r]
& \Ccal\filter{m}\Ccal^{k}\Fcal X
\times
\Ccal\filter{m}\Ccal^{k}\Fcal Y
\\
\Ccal\filter{m}\Ccal^{k-i+1}
   \left(\Largestrut\Ccal^{i-1}\Fcal X\vee\Ccal^{i-1}\Fcal Y
              \right)
   \ar[u]^{r_{k}}
   \ar[r]
&
\Ccal\filter{m}
   \Ccal^{k-i}\left(\Largestrut\Ccal^{i}\Fcal X \times\Ccal^{i}\Fcal Y \right).
   \ar[u]_{q_{k}}
}   
\end{gathered}
\]
We only need to check commutativity into each factor in the upper right-hand corner. Let $\Gcal=\Ccal^{i-1}\Fcal$. Considering only the first factor, the diagram is induced by
\[
\begin{gathered}
\xymatrix{
\Ccal\Gcal(X\vee Y)
   \ar[r]
& \Ccal\Gcal X\\
\Ccal(\Gcal X\vee\Gcal Y)
   \ar[r]
   \ar[u]^-{\Ccal(\Gcal\iota_{X}\vee\Gcal\iota Y)}
&
\Ccal\Gcal X \times \Ccal\Gcal Y,
   \ar[u]
}
\end{gathered}
\]
which obviously commutes since both ways around the diagram collapse $Y$ to a point (see Proposition~\ref{proposition: simplicial homotopy} for the definition of~$q_{k}$).

Lastly, we need to show that the inner square of ~\eqref{diagram: square of auxiliaries}
is homotopy commutative.
First we prove that $f=q\circ p\circ i$. To see this, recall that $A_{k}=E_{k}^{0}$, so we only have to use that summand of $\Ecal_{\bullet}$ in the lower left-hand corner:
\[
\begin{gathered}
\xymatrix{
A_{k}=\Ccal\filter{m}\Ccal^{k}\Fcal\left(\strut X\vee Y\right)
\ar[d]_{=}
\ar[r]^-{f_{k}}
& \, B_{k}=\Ccal\filter{m}\Ccal^{k}\Fcal X
\times
\Ccal\filter{m}\Ccal^{k}\Fcal Y
\\
E_{k}^{0}=\Ccal\filter{m}\Ccal^{k}\Fcal\left(\strut X\vee Y\right)
   \ar[r]_-{p_{k}}
&
\displaystyle
 D_{k}^{0}=  \Ccal\filter{m}\Ccal^{k}\left(\Largestrut\Fcal X \times\Fcal Y \right)
 \ar[u]_{q_{k}}
}   
\end{gathered}
\]
Both ways around the diagram are the same, because the map into each factor of $B_{k}$ is given by collapsing $X$ or $Y$ to a point. Postcomposing
$f=q\circ p\circ i$ with $j$ gives
\[
j\circ f=j\circ q\circ p\circ i.
\]
But by Proposition~\ref{proposition: simplicial homotopy}, $j\circ q$ is homotopic to the identity map of $\Dcal_{\bullet}$, and the lemma follows.

\end{proof}

%
%
%
%

\bibliographystyle{amsalpha}

\begin{thebibliography}{ADL20}

\bibitem[AD01]{Arone-Dwyer}
G.~Z. Arone and W.~G. Dwyer, \emph{Partition complexes, {T}its buildings and
  symmetric products}, Proc. London Math. Soc. (3) \textbf{82} (2001), no.~1,
  229--256. \MR{1794263 (2002d:55003)}

\bibitem[ADL16]{ADL2}
G.~Z. Arone, W.~G. Dwyer, and K.~Lesh, \emph{Bredon homology of partition
  complexes}, Doc. Math. \textbf{21} (2016), 1227--1268. \MR{3578208}

\bibitem[ADL20]{ADL3}
Gregory~Z. Arone, William~G. Dwyer, and Kathryn Lesh, \emph{p-toral
  approximations compute bredon homology}, arXiv:1901.07330v2 [math.AT].

\bibitem[AL07]{Arone-Lesh-Crelle}
Gregory~Z. Arone and Kathryn Lesh, \emph{Filtered spectra arising from
  permutative categories}, J. Reine Angew. Math. \textbf{604} (2007), 73--136.
  \MR{2320314 (2008c:55013)}

\bibitem[AL10]{Arone-Lesh-Fundamenta}
\bysame, \emph{Augmented {$\Gamma$}-spaces, the stable rank filtration, and a
  {$bu$} analogue of the {W}hitehead conjecture}, Fund. Math. \textbf{207}
  (2010), no.~1, 29--70. \MR{2576278}

\bibitem[BF78]{Bousfield-Friedlander}
A.~K. Bousfield and E.~M. Friedlander, \emph{Homotopy theory of {$\Gamma
  $}-spaces, spectra, and bisimplicial sets}, Geometric applications of
  homotopy theory ({P}roc. {C}onf., {E}vanston, {I}ll., 1977), {II}, Lecture
  Notes in Math., vol. 658, Springer, Berlin, 1978, pp.~80--130. \MR{513569}

\bibitem[Goo90]{Goodwillie-Calculus-I}
Thomas~G. Goodwillie, \emph{Calculus. {I}. {T}he first derivative of
  pseudoisotopy theory}, $K$-Theory \textbf{4} (1990), no.~1, 1--27.
  \MR{1076523}

\bibitem[KP85]{Kuhn-Priddy}
Nicholas~J. Kuhn and Stewart~B. Priddy, \emph{The transfer and {W}hitehead's
  conjecture}, Math. Proc. Cambridge Philos. Soc. \textbf{98} (1985), no.~3,
  459--480. \MR{803606 (87g:55030)}

\bibitem[Kuh82]{Kuhn-Whitehead}
Nicholas~J. Kuhn, \emph{A {K}ahn-{P}riddy sequence and a conjecture of {G}.
  {W}. {W}hitehead}, Math. Proc. Cambridge Philos. Soc. \textbf{92} (1982),
  no.~3, 467--483. \MR{677471 (85f:55007a)}

\bibitem[Lyd99]{Lydakis-Gamma}
Manos Lydakis, \emph{Smash products and {$\Gamma$}-spaces}, Math. Proc.
  Cambridge Philos. Soc. \textbf{126} (1999), no.~2, 311--328. \MR{1670245}

\bibitem[Man10]{Mandell-Inverse}
Michael Mandell, \emph{An inverse $K$-theory functor}, Doc. Math. \textbf{15} (2010), 765–791. \MR{2735988}

\bibitem[May72]{May-Geometry}
J.~P. May, \emph{The geometry of iterated loop spaces}, Springer-Verlag,
  Berlin-New York, 1972, Lectures Notes in Mathematics, Vol. 271. \MR{0420610}

\bibitem[Rie14]{Riehl-Categorical}
Emily Riehl, \emph{Categorical homotopy theory}, New Mathematical Monographs,
  vol.~24, Cambridge University Press, Cambridge, 2014. \MR{3221774}

\bibitem[Rog92]{Rognes-Topology}
John Rognes, \emph{A spectrum level rank filtration in algebraic {$K$}-theory},
  Topology \textbf{31} (1992), no.~4, 813--845. \MR{1191383 (94d:19007)}

\bibitem[Seg74]{Segal-Categories}
Graeme Segal, \emph{Categories and cohomology theories}, Topology \textbf{13}
  (1974), 293--312. \MR{0353298}

\end{thebibliography}

\providecommand{\bysame}{\leavevmode\hbox to3em{\hrulefill}\thinspace}
\providecommand{\MR}{\relax\ifhmode\unskip\space\fi MR }
\providecommand{\MRhref}[2]{%
  \href{http://www.ams.org/mathscinet-getitem?mr=#1}{#2}
}
\providecommand{\href}[2]{#2}

\end{document}